\newtheorem{maintheorem}{Theorem}								\newtheorem{maincorollary}{Corollary}									
\newtheorem{theorem}{Theorem}[section]
\newtheorem{lemma}[theorem]{Lemma}
\newtheorem{proposition}[theorem]{Proposition}
\newtheorem{corollary}[theorem]{Corollary}
\theoremstyle{definition}
\newtheorem{definition}[theorem]{Definition}
\newtheorem{example}[theorem]{Example}
\newtheorem{remark}[theorem]{Remark}
\newcommand{\comment}[1]{}
\newcommand{\R}{\mathbb{R}}
\newcommand{\Rbar}{\overline{\mathbb{R}}}
\newcommand{\Z}{\mathbb{Z}}
\newcommand{\ZZ}{\mathbb{Z}}
\newcommand{\Q}{\mathbb{Q}}
\newcommand{\N}{\mathbb{N}}
\newcommand{\PP}{\mathbb{P}}
\newcommand{\QQ}{\mathbb{Q}}
\newcommand{\RR}{\mathbb{R}}
\newcommand{\G}{\mathbb{G}}
\newcommand{\GG}{\mathbb{G}}
\renewcommand{\SS}{\mathbb{S}}
\newcommand{\mcF}{\mathcal{F}}
\newcommand{\mcC}{\mathcal{C}}
\newcommand{\Mbar}{\overline{M}}
\newcommand{\calA}{\mathcal{A}}
\newcommand{\calC}{\mathcal{C}}
\newcommand{\calD}{\mathcal{D}}
\newcommand{\calF}{\mathcal{F}}
\newcommand{\calM}{\mathcal{M}}
\newcommand{\calW}{\mathcal{W}}
\newcommand{\calX}{\mathcal{X}}
\newcommand{\calY}{\mathcal{Y}}
\newcommand{\calMbar}{\overline{\mathcal{M}}}
\newcommand{\bfC}{\mathbf{C}}
\newcommand{\bfD}{\mathbf{D}}
\def\RPC{\mathbf{RPC}}
\def\RPCC{\mathbf{RPCC}}
\def\RRPC{\mathbf{PC}}
\def\Top{\mathbf{Top}}
\def\ShpMon{\mathbf{ShpMon}}
\def\ComMon{\mathbf{ComMon}}
\DeclareMathOperator{\Pic}{Pic}
\DeclareMathOperator{\Spec}{Spec}
\DeclareMathOperator{\Hom}{Hom}
\DeclareMathOperator{\Aut}{Aut}
\def\trop{\mathrm{trop}}
\def\an{\mathrm{an}}
\DeclareMathOperator{\val}{val}
\DeclareMathOperator{\LOG}{LOG}
\DeclareMathOperator{\pr}{pr}
\DeclareMathOperator{\Span}{Span}
\DeclareMathOperator{\Cone}{Cone}
\DeclareMathOperator{\ACP}{ACP}
\newcommand{\et}{\mathrm{\acute{e}t}}
\newcommand{\Et}{\mathrm{\acute{E}t}}
\title[A moduli stack of tropical curves]{A moduli stack of tropical curves}
\author{Renzo Cavalieri}
\address{Department of Mathematics, Colorado State University, Fort Collins, Colorado 80523-1874}
\email{\href{mailto:renzo@math.colostate.edu}{renzo@math.colostate.edu}}
\thanks{} 
\author{Melody Chan}
\address{Department of Mathematics, Brown University, Providence, Rhode Island 02912}
\email{\href{mailto:melody_chan@brown.edu}{melody\_chan@brown.edu}}
\thanks{} 
\author{Martin Ulirsch}
\address{Institut f\"ur Mathematik, Goethe-Universit\"at Frankfurt, 60325 Frankfurt am Main, Germany}
\email{\href{mailto:ulirsch@math.uni-frankfurt.de}{ulirsch@math.uni-frankfurt.de}}
\thanks{} 
\author{Jonathan Wise}
\address{Department of Mathematics, University of Colorado, Boulder, Boulder, Colorado 80309-0395}
\email{\href{mailto:jonathan.wise@math.colorado.edu}{jonathan.wise@math.colorado.edu}}
\thanks{} 
\subjclass[2010]{14T05; 14A20}
\begin{document}

\begin{abstract} 

We contribute to the foundations of tropical geometry with a view towards formulating tropical moduli problems, and with the moduli space of curves as our main
example.  We
propose a moduli functor for the moduli space of curves and show that it is representable by a geometric stack over the category of rational polyhedral cones. In this framework the natural forgetful morphisms between moduli spaces of curves with marked points function as universal curves.

Our approach to tropical geometry permits tropical moduli problems---moduli of curves or otherwise---to be extended to logarithmic schemes.  We use this to construct a smooth tropicalization morphism from the moduli space of algebraic curves to the moduli space of tropical curves, and we show that this morphism commutes with all of the tautological morphisms.

\end{abstract}

\maketitle

\setcounter{tocdepth}{1}
\tableofcontents

\part*{Introduction}

Let $g$ and $n$ be non-negative integers such that $2g-2+n>0$. The moduli spaces $M_{g,n}^{\trop}$ of stable tropical curves of genus $g$ with $n$ marked points, and variants of it, are some of the most-studied objects in tropical geometry (see e.g.  \cite{Caporaso_tropicalmoduli, ChanMeloViviani_tropicalmoduli, Caporaso_tropicalmoduliII, Chan_lectures} for survey papers). In particular, we refer the reader to \cite{Mikhalkin_ICM}, \cite{Mikhalkin_Gokova}, \cite{GathmannKerberMarkwig_tropicalfans}, and \cite{GathmannMarkwig_tropicalKontsevich} for the case of genus $g=0$, as well as to \cite{CaporasoViviani_tropicalTorelli}, \cite{BrannettiMeloViviani_tropicalTorelli}, \cite{Chan_tropicalTorelli}, and \cite{Viviani_tropvscompTorelli} for the higher genus case and the connections to the  tropical Torelli map. A new chapter in these developments was begun by \cite{AbramovichCaporasoPayne_tropicalmoduli}, where the authors recast the foundations of the subject by embedding $M_{g,n}^{\trop}$ as a non-Archimedean analytic skeleton of the Berkovich analytic stack $\calM_{g,n}^{\an}$ using Thuillier's non-Archimedean geometry of toroidal embeddings \cite{Thuillier_toroidal}. From there, the focus of the subject has moved in two directions: towards a deeper understanding of the geometry of $M_{g,n}^{\trop}$ with a view towards computations of the top-weight cohomology of $\mathcal{M}_{g,n}$ (see e.g. \cite{ChanGalatiusPayne_tropicalmoduliII, Chan_topologyM02, ChanGalatiusPayne_Mgn,ChanFaberGalatiusPayne}), and towards  generalizations to other moduli spaces, often with enumerative applications in mind (see  e.g. \cite{CavalieriMarkwigRanganathan_tropicalHurwitz, CavalieriHampeMarkwigRanganathan_tropicalHassett, Ulirsch_tropicalHassett, Yu_tropstablemaps, Ranganathan_ratcurtorvar&nonArch}). 

All of the aforementioned perspectives on $M_{g,n}^{\trop}$ are essentially $1$-categorical; that is, they regard $M_{g,n}^{\trop}$ as a set with additional structure.  
Without accounting for automorphisms from a $2$-categorical point of view, these moduli spaces do not admit universal curves.  In particular, the tropical forgetful map $\pi_{g,n+1}\mathrel{\mathop:}M_{g,n+1}^{\trop}\rightarrow M_{g,n}^{\trop}$, as constructed in \cite[Section 8]{AbramovichCaporasoPayne_tropicalmoduli}, cannot be the universal curve. The problem lies in the fact that the topological preimage $\pi_{g,n+1}^{-1}\big([\Gamma]\big)$ of a point $[\Gamma]$ in $M_{g,n}^{\trop}$ is not the tropical curve $\Gamma$ itself, but rather  the quotient $\Gamma/\!\Aut(\Gamma)$ (see Figure \ref{figure_M12}). In the case $g=0$, when no stack structure is necessary due to the absence of nontrivial automorphisms, this is not a problem (see \cite{FrancoisHampe_universalfamilies}).

\begin{figure}[h]\begin{tikzpicture}

\fill[pattern color=gray, pattern=north west lines] (0,0) -- (2.8,0) -- (2.8,2.8) -- (0,0);
\fill[pattern color=gray, pattern=north east lines] (0,0) -- (2.8,0) -- (2.8,-2.8) -- (0,-2.8) -- (0,0);
\draw [->](0,0) -- (3,0);
\draw[dashed, ->] (0,0) -- (3,3);
\draw [->](0,0) -- (0,-3);
\fill (0,0) circle (0.05);

\begin{scope}[scale=0.8, shift={(0.5,0)}]
\draw[thick, gray] (4,0) circle (0.4);
\draw[gray] (4.4,0) -- (5,0.5);
\draw[gray] (4.4,0) -- (5,-0.5);
\fill[gray] (4.4,0) circle (0.05);
\node[gray, draw=none, label={[gray]0:$\scriptstyle 1$}] (mark1) at (4.7,.5) {};
\node[gray, draw=none, label={[gray]0:$\scriptstyle 2$}] (mark2) at (4.7,-.5) {};
\end{scope}

\begin{scope}[scale=1, shift={(0.6,0)}]
\draw[gray] (2.8,1.7) -- (3.3,1.7);
\draw[gray] (3.7,1.7) circle (0.4);
\draw[gray] (4.1,1.7) -- (4.6,1.7);
\node[gray, draw=none, label={[gray]180:$\scriptstyle 1$}] (mark1) at (3,1.7) {};
\node[gray, draw=none, label={[gray]0:$\scriptstyle 2$}] (mark2) at (4.4,1.7) {};
\fill[gray] (3.3,1.7) circle (0.05);
\fill[gray] (4.1,1.7) circle (0.05);
\end{scope}

\begin{scope}[scale=0.9, shift={(1.5,-1.5)}]
\draw[gray] (2,-2) circle (0.4);
\draw[gray] (2.4,-2) -- (3,-2);
\draw[gray] (3,-2) -- (3.6,-1.5);
\draw[gray] (3,-2) -- (3.6,-2.5);
\node[gray, draw=none, label={[gray]0:$\scriptstyle 1$}] (mark1) at (3.5,-1.5) {};
\node[gray, draw=none, label={[gray]0:$\scriptstyle 2$}] (mark2) at (3.5,-2.5) {};
\fill[gray] (2.4,-2) circle (0.05);
\fill[gray] (3,-2) circle (0.05);
\end{scope}

\begin{scope}[scale=0.8, shift={(-1.1,-.7)}]
\draw[gray] (-0.3,-4) -- (0.3,-4);
\draw[gray] (0.3,-4) -- (0.9,-3.5);
\draw[gray] (0.3,-4) -- (0.9,-4.5);
\node[gray] at (-0.5,-3.8) {$1$};
\node[gray, draw=none, label={[gray]0:$\scriptstyle 1$}] (mark1) at (0.8,-3.5) {};
\node[gray, draw=none, label={[gray]0:$\scriptstyle 2$}] (mark2) at (0.8,-4.5) {};
\fill[gray] (-0.3,-4) circle (0.05);
\fill[gray] (0.3,-4) circle (0.05);
\end{scope}

\begin{scope}[scale=0.8, shift={(-.4,0)}]
\draw[gray] (-1,0) -- (-0.4,0.5);
\draw[gray] (-1,0) -- (-0.4,-0.5);
\node[gray, draw=none, label={[gray]0:$\scriptstyle 1$}] (mark1) at (-0.5,.5) {};
\node[gray, draw=none, label={[gray]0:$\scriptstyle 2$}] (mark2) at (-0.5,-.5) {};
\fill[gray] (-1,0) circle (0.05);
\node[gray] at (-1.2, 0.3) {$1$};
\end{scope}

\begin{scope}[scale=1, shift={(0.0,0)}]
\draw[->] (0,-6) -- (3,-6);
\fill (0,-6) circle (0.05);
\end{scope}

\begin{scope}[scale=1, shift={(0,0)}]
\fill[gray] (-1,-6) circle (0.05);
\node[gray, draw=none, label={[gray]0:$\scriptstyle 1$}] (mark1) at (-0.6,-6) {};
\draw[gray] (-1,-6) -- (-0.4,-6);
\node[gray] at (-1.2, -5.7) {$1$};
\end{scope}

\begin{scope}[scale=1, shift={(0.0,0)}]
\draw[gray] (4,-6) circle (0.4);
\draw[gray] (4.4,-6) -- (5,-6);
\node[gray, draw=none, label={[gray]0:$\scriptstyle 1$}] (mark1) at (4.8,-6) {};
\fill[gray] (4.4,-6) circle (0.05);
\end{scope}

\draw[->] (1.5,-4) -- (1.5,-5);
\node at (2,-4.5) {$\pi_{1,2}^{\trop}$};
\node at (6,-1) {$M_{1,2}^{\trop}$};
\node at (6,-6) {$M_{1,1}^{\trop}$};

\fill (1.5,-6) circle (0.07);
\node[draw=none, label={90:$[\Gamma]$}] at (1.5,-7) {};

\draw[very thick, ->] (1.5,0)-- (1.5,-2.8);
\draw[very thick] (1.5,0)--(0.75,0.75);





\end{tikzpicture}\caption{The preimage $(\pi_{1,2}^{\trop})^{-1}\big([\Gamma]\big)$ in the coarse moduli space $M_{1,2}^{\trop}$ of a point $[\Gamma]\in M_{1,1}^{\trop}$ under the forgetful morphism is not $\Gamma$ itself, but rather $\Gamma/\!\Aut(\Gamma)$. Here the nontrivial automorphism of $\Gamma$ flips the loop edge of $\Gamma$. 
Compare this with Figures~\ref{figure_M12stack} and~\ref{figure_M11stack}.} \label{figure_M12}\end{figure}
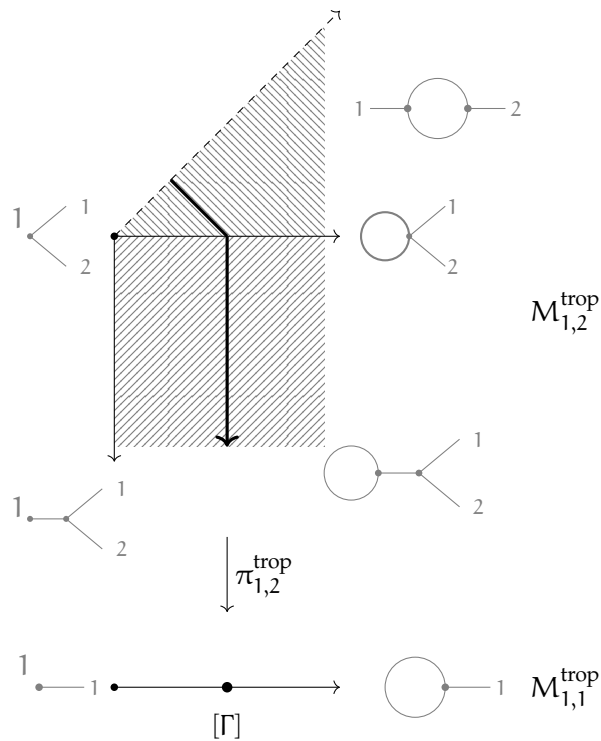

An analogous problem also arises in the algebraic world: Given a point $[C]$ in the coarse algebraic moduli space $\Mbar_{g,n}$,  its preimage under the forgetful morphism (on the level of coarse moduli spaces) is not $C$ itself but rather the quotient $C/\!\Aut(C)$. {The solution to this problem}, by Deligne--Mumford \cite{DeligneMumford_moduliofcurves} and Knudsen \cite{Knudsen_projectivityII}, was to replace $\Mbar_{g,n}$ with the fine moduli stack $\calMbar_{g,n}$. 
Using this language, the preimage via $\pi_{g,n+1}$ of $[C]$ in $\calMbar_{g,n+1}$ (in the sense of taking $2$-categorical fibered products) is in fact $C$ itself. 

\subsection*{A tropical moduli functor} In this article we develop a stack-theoretic approach to the moduli spaces of tropical curves that naturally resolves the same issue in tropical geometry. We propose a \emph{tropical moduli functor}
\begin{equation*}
\calM_{g,n}^{\trop}\mathrel{\mathop:}\RPC^\mathrm{op}\longrightarrow \mathbf{Groupoids}
\end{equation*}
over the category of rational polyhedral cones.  It associates, to a rational polyhedral cone $\sigma$, the groupoid of genus $g$, $n$-marked stable tropical curves $\Gamma$ with edge lengths taking values in the dual monoid $S_\sigma$ to $\sigma$. {This is the notion of} a \emph{tropical curve over $\sigma$} (see Section \ref{section_modulifunctor} for details). The idea is that, while integer, or even real, edge lengths on a tropical curve encode the deformation parameters of a one-parameter degeneration of an algebraic curve, e.g., over the spectrum of a valuation ring with value group $\mathbb{Z}$ or $\mathbb{R}$, allowing the edges to take lengths in $S_\sigma$ keeps track of higher-dimensional deformation parameters of multi-parameter degenerations. We will see in Section \ref{sec:real} how allowing edge lengths in $S_\sigma$ may be generalized to edge lengths in more general monoids including $\mathbb{R}_{\geq 0}^n$. We prove the following representability theorem in Section~\ref{sec:tropcurves}.

\begin{maintheorem}\label{thm_geometric}
The moduli functor $\calM_{g,n}^{\trop}$ is representable by a cone stack, i.e. by a geometric stack over the category of rational polyhedral cone complexes. 
\end{maintheorem}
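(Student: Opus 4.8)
The plan is to show that the moduli functor $\calM_{g,n}^{\trop}$ is a geometric stack over the category $\RPC$ of rational polyhedral cones by exhibiting an explicit presentation. The key structural fact is that a tropical curve over $\sigma$ is combinatorial data (a stable weighted dual graph $\mathbb{G}$ of genus $g$ with $n$ legs) together with a tropical structure, namely an assignment of an element of $S_\sigma$ to each edge, i.e. a monoid homomorphism from the free monoid on the edge set of $\mathbb{G}$ into $S_\sigma$, equivalently a morphism $\sigma \to \sigma_{\mathbb{G}}$ where $\sigma_\mathbb{G}$ is the cone whose dual monoid is $\N^{E(\mathbb{G})}$. This is the standard moduli cone of the graph $\mathbb{G}$, and its points over $\sigma$ recover tropical curves with underlying graph $\mathbb{G}$.

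First I would fix, for each isomorphism class of stable weighted graph $\mathbb{G}$ of type $(g,n)$, the associated cone $\sigma_\mathbb{G} = \Cone(\mathbb{G})$ whose $S_\sigma$-points are precisely the edge-length assignments. Taking the disjoint union $\coprod_\mathbb{G} \sigma_\mathbb{G}$ over a set of representatives of isomorphism classes yields an object of $\RPC$ (a rational polyhedral cone complex, after allowing coproducts) equipped with a natural morphism to $\calM_{g,n}^{\trop}$ classifying the tautological tropical curve on each piece. The next step is to verify that this morphism is representable, surjective, and a suitable covering in the topology on $\RPC$: surjectivity holds because every tropical curve over $\sigma$ has \emph{some} underlying graph $\mathbb{G}$ and hence factors through $\sigma_\mathbb{G}$. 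I would then identify the fiber product $\big(\coprod_\mathbb{G}\sigma_\mathbb{G}\big)\times_{\calM_{g,n}^{\trop}}\big(\coprod_{\mathbb{G}'}\sigma_{\mathbb{G}'}\big)$, which parametrizes pairs of graphs together with an isomorphism of their associated tropical curves; this is governed by the automorphisms $\Aut(\mathbb{G})$ of the graphs and the isomorphisms between them, each of which induces an isomorphism (or identification along a face) of the corresponding cones.

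The heart of the argument is to show that this fiber product is again representable by a cone complex, so that the presentation $\coprod_\mathbb{G}\sigma_\mathbb{G}$ exhibits $\calM_{g,n}^{\trop}$ as a geometric stack in the sense established earlier in the paper. Concretely, isomorphisms of tropical curves decompose as a combinatorial automorphism of the underlying graph followed by a compatible identification of edge lengths; the contractions of edges (sending a length to $0$) correspond to face inclusions among the cones $\sigma_\mathbb{G}$, and graph automorphisms act by automorphisms of $\sigma_\mathbb{G}$. I expect the main obstacle to be bookkeeping these two kinds of morphisms coherently: one must check that the groupoid of isomorphisms is itself represented by a cone complex, which amounts to verifying that the source and target maps of the groupoid $\big(\coprod \Aut(\mathbb{G})\ltimes\sigma_\mathbb{G}\big)\rightrightarrows \coprod\sigma_\mathbb{G}$ are morphisms in $\RPC$ and that the resulting groupoid object satisfies the axioms making its stackification geometric.

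Finally, I would assemble these pieces: having produced a smooth (indeed étale, since automorphism groups are finite) groupoid presentation by objects of $\RPC$, I would invoke the general criterion for representability by a geometric stack over $\RPC$ developed in the foundational sections of the paper. The subtlety to watch is the treatment of automorphisms that fix a cone $\sigma_\mathbb{G}$ but act nontrivially on it---precisely the phenomenon (such as the loop-flip automorphism in genus one) that obstructs the coarse space from being a fine moduli space---so the stacky quotient $[\sigma_\mathbb{G}/\Aut(\mathbb{G})]$, rather than the bare cone, is what correctly captures the local structure of $\calM_{g,n}^{\trop}$.
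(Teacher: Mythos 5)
Your atlas coincides with the paper's: $U=\coprod_G \sigma_G$ with $\sigma_G=\R_{\geq 0}^{E(G)}$, rigidified by remembering the contraction $G\to\GG(\Gamma)$, and the overall strategy (a groupoid presentation fed into the general machinery of Section~\ref{section_geometricstacks}) is the intended one. But there is a genuine gap at the step you yourself identify as the heart of the argument: the fiber product $R=U\times_{\calM_{g,n}^{\trop}}U$ is \emph{not} the action groupoid $\coprod_G \Aut(G)\ltimes\sigma_G$. An $\alpha$-point of $R$ is a pair of contractions $G_1\to\GG(\Gamma)$ and $G_2\to\GG(\Gamma)$ of the \emph{same} tropical curve, so the cones of $R_{G_1,G_2}$ are indexed by triples $\big(S_1\subseteq E(G_1),\,S_2\subseteq E(G_2),\,\phi\colon G_1/S_1\xrightarrow{\sim}G_2/S_2\big)$ (Section~\ref{subsec:groupoidexamples}), and such isomorphisms of contractions need not lift to isomorphisms of $G_1$ and $G_2$. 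Concretely, for $(g,n)=(1,2)$: if $G_1$ consists of two vertices marked $1$ and $2$ joined by two parallel edges, contracting one edge produces a loop whose flip automorphism does not come from $\Aut(G_1)$ (any marking-preserving automorphism of $G_1$ fixes both vertices) --- this is exactly the non-pure-dimensionality of $R_{G_1,G_1}$ visible in Figure~\ref{figure_M12stack}. If you stackify your action groupoid instead, you obtain $\coprod_G\big[\sigma_G/\Aut(G)\big]$, a \emph{disjoint union} in which the boundary faces of the various $\sigma_G$ are never glued to one another and the extra automorphisms of contracted graphs are absent; this stack is not $\calM_{g,n}^{\trop}$.

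Two smaller points. First, in this geometric context the distinguished class $\PP$ is the class of \emph{strict} morphisms, not smooth or \'etale ones, so finiteness of automorphism groups is beside the point; what must be verified is that $U\to\calM_{g,n}^{\trop}$ and the projections of the corrected $R$ are strict (they are, since each cone of $R_{G_1,G_2}$ maps by a face morphism to $\sigma_{G_1}$ and $\sigma_{G_2}$, but this is the thing to check). Second, the paper sidesteps verifying the groupoid axioms and the equivalence $[U/R]\simeq\calM_{g,n}^{\trop}$ entirely: by Lemma~\ref{lemma_diagonalrepresentable} it suffices to produce a single representable, strict, surjective morphism $U\to\calM_{g,n}^{\trop}$ from a cone complex, with representability established by the explicit complex $\Sigma_{G,\Gamma}$ whose cones are indexed by pairs $\big(\tau\preceq\sigma,\,\phi\colon G\to\GG(\Gamma\vert_\tau)\big)$ (Lemma~\ref{lemma_atlas=strict}); the correct groupoid $R$ then falls out as a byproduct (Section~\ref{subsec:groupoidexamples}) rather than serving as an input. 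Your route can be repaired by replacing the action groupoid with this $R$, at the cost of then proving directly that its quotient stack is $\calM_{g,n}^{\trop}$.
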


We refer the reader to Section \ref{section_conestacks}  for the precise definition of a cone stack. Here, we emphasize the following analogy with algebraic geometry instead:

\vspace{1em}
\begin{center}
\begin{tabular}{c | c}
algebraic geometry & tropical geometry \\ \hline
rings & monoids \\
affine schemes  & cones\\
schemes & cone complexes \\
algebraic spaces & cone spaces \\
Deligne--Mumford stacks & cone stacks \\
\end{tabular}
\end{center}
\vspace{1em}

\subsection*{The universal curve} Knudsen \cite{Knudsen_projectivityII} constructs a natural forgetful morphism \begin{equation*}
\pi_{g,n+1}\colon\overline{\calM}_{g,n+1}\longrightarrow\overline{\calM}_{g,n}
\end{equation*}
which removes the $(n+1)$-st marked point of a curve in $\overline{\calM}_{g,n+1}$ and then stabilizes the resulting $n$-marked curve. Via this morphism, $\overline{\calM}_{g,n+1}$ functions as the universal curve over $\overline{\calM}_{g,n}$ in the sense that every family of stable curves in $\overline{\calM}_{g,n}$ arises as a pullback of this family. 

Similarly, expanding on \cite[Section 8]{AbramovichCaporasoPayne_tropicalmoduli}, we construct in Section \ref{section_families}  a natural \emph{forgetful morphism} 
\begin{equation*}
\pi_{g,n+1}^{\trop}\mathrel{\mathop:}\calM_{g,n+1}^{\trop}\longrightarrow \calM_{g,n}^{\trop}
\end{equation*}
given by forgetting the last marked leg of a tropical curve over a cone $\sigma$ and stabilizing all non-stable vertices.

\begin{maintheorem}\label{thm_universalcurve}
The tropical forgetful morphism functions as a universal curve.
\end{maintheorem}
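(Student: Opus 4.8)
The plan is to first make the statement precise as the assertion that, for every tropical curve $\Gamma$ over a cone $\sigma$---equivalently, every morphism $f_\Gamma\colon\sigma\to\calM_{g,n}^{\trop}$---the tropical curve $\Gamma$ itself, realized as a cone space over $\sigma$, is canonically isomorphic to the $2$-categorical fiber product
\begin{equation*}
\sigma\times_{\calM_{g,n}^{\trop},\,f_\Gamma}\calM_{g,n+1}^{\trop},
\end{equation*}
compatibly with the projection to $\sigma$ and with pullback along morphisms of cones. By Theorem~\ref{thm_geometric} both sides are cone stacks, so by the Yoneda lemma it suffices to produce a natural equivalence of groupoids on $\tau$-points, functorially in the cone $\tau$.

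Next I would unwind the fiber product. A $\tau$-point of the right-hand side consists of a morphism $\phi\colon\tau\to\sigma$, a stable $(n+1)$-marked tropical curve $\Gamma'$ over $\tau$, and an isomorphism $\pi_{g,n+1}^{\trop}(\Gamma')\cong\phi^\ast\Gamma$ identifying the stabilization of $\Gamma'$ (after forgetting its last leg) with the pullback $\phi^\ast\Gamma$. The content of the theorem is that this data is the same as a $\tau$-point of $\phi^\ast\Gamma$ lying over $\phi$, that is, a \emph{position} for the new marked leg on the tropical curve $\phi^\ast\Gamma$.

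The key step is to construct this correspondence in both directions. Given a position $x$ on $\phi^\ast\Gamma$, I would produce $\Gamma'$ by marking $x$ according to its location: if $x$ lies in the relative interior of a bounded edge $e$ of length $\ell_e\in S_\tau$ at parameter $t$, subdivide $e$ into two edges of lengths $t$ and $\ell_e-t$ and attach the $(n+1)$-st leg at the new bivalent vertex; if $x$ lies at a vertex, attach the new leg there directly; and if $x$ lies at parameter $t>0$ along a leg $\lambda$ emanating from a vertex $v$, introduce a new vertex $w$ joined to $v$ by an edge of length $t$ and move both $\lambda$ and the $(n+1)$-st leg onto $w$. In every case $\Gamma'$ is stable, and forgetting the $(n+1)$-st leg and stabilizing contracts exactly the structure just created---smoothing the new bivalent vertex, or contracting the edge $vw$---so that $\pi_{g,n+1}^{\trop}(\Gamma')\cong\phi^\ast\Gamma$ canonically and the construction lands in the fiber product. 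Conversely, any $\tau$-point $(\phi,\Gamma',\cong)$ comes with a stabilization morphism $\Gamma'\to\phi^\ast\Gamma$, and the image of the vertex carrying the $(n+1)$-st leg is a well-defined position $x$. These assignments are mutually inverse on isomorphism classes and carry isomorphisms of fiber-product data to isomorphisms of positions; checking that they are natural in $\tau$ and respect the monoid of edge lengths---so that the parameter $t$ ranges over exactly the interval $0\le t\le\ell_e$ cut out in $S_\tau$, matching the edge-cone in the realization of $\Gamma$---upgrades the equivalence of groupoids to an isomorphism of cone spaces over $\sigma$.

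The main obstacle I anticipate is the bookkeeping around stabilization together with the stack structure. On the combinatorial side one must verify that the subdivide/attach trichotomy exhausts all stable $(n+1)$-marked curves stabilizing to $\Gamma$, including the boundary cases where a position approaches a vertex along an edge or a leg, and show that the resulting charts glue along these loci; matching the length data across them is precisely what assembles the separate cones into the single realization of $\Gamma$, and requires that the stabilization morphism $\Gamma'\to\phi^\ast\Gamma$ be canonical and compatible with pullback along $\phi$. On the stack side, the payoff---and the most delicate point---is that automorphisms of $\Gamma$ act on positions exactly as they act on its geometric realization, so the fiber product recovers $\Gamma$ itself rather than the quotient $\Gamma/\!\Aut(\Gamma)$ appearing in Figure~\ref{figure_M12}; keeping track of these isomorphisms throughout the equivalence of groupoids, rather than merely on isomorphism classes, is where the argument must be handled with the greatest care.
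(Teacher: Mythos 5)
Your proposal is correct and takes essentially the same route as the paper: your trichotomy of positions (a vertex, a distance $d\in S_\tau$ along a leg, or a splitting $d+d'=d(e)$ along an edge) is precisely the characterization of sections of $\Cone(\Gamma)$ in Proposition~\ref{prop:univ-curve}, and your subdivide/attach construction, with inverse given by deleting the $(n+1)$-st leg and stabilizing while marking the position, together with the degenerations over faces where a length becomes zero, is exactly the paper's proof of Theorem~\ref{thm_universalcurve}. The only organizational difference is that you work with the $2$-fiber product directly, whereas the paper factors the argument through the intermediate fibered category $\calX_{g,n}^{\trop}$ of sections of $\Cone(\Gamma)\to\sigma$.
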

 
More specifically, given a tropical curve $\Gamma$ over $\sigma$, 
i.e., a morphism $u_{[\Gamma]}\mathrel{\mathop:}\sigma\rightarrow \calM_{g,n}^{\trop}$, we construct in Section \ref{section_families}  a cone space $\Cone(\Gamma)\rightarrow \sigma$, the \emph{cone over $\Gamma$}. Using this construction, one way to rephrase Theorem \ref{thm_universalcurve}  is that, for a tropical curve $\Gamma$ over $\sigma$, there is a $2$-cartesian diagram
\begin{equation*}\xymatrix{
\Cone(\Gamma) \ar[r] \ar[d]_c & \calM_{g,n+1}^{\trop} \ar[d]^{\pi_{g,n+1}^{\trop}} \\
\sigma  \ar[r]^{u_{[\Gamma]}} & \calM_{g,n}^{\trop}
}\end{equation*}

When $\sigma=\R_{\geq 0}$, a tropical curve $\Gamma$ over $\sigma$ is nothing but a classical tropical curve (with integer edge lengths) and the cone $\Cone(\Gamma)$ is the topological cone over $\Gamma$. In this case the preimage $c^{-1}(1)$ of $1\in\R_{\geq 0}$, with  metric coming from the lattice length on each component of $\Cone(\Gamma)$, is isometric to $\Gamma$ (see Proposition \ref{prop_preimageisometry}). In the genus zero case, where no stack structure is necessary, this recovers the main result of \cite{FrancoisHampe_universalfamilies}. 

The fact that $\R_{\geq 0}$-valued tropical curves must have integer edge lengths may seem at first like a restriction in our theory, but it is not. We explain in Section~\ref{sec:real} how  our construction of $\calM_{g,n}^{\trop}$ may be canonically extended to admit tropical curves with edge lengths in $\R_{>0}$, or indeed edge lengths valued in any sharp monoid. Moreover, we also explain how to interpret $\calM_{g,n}^{\trop}$ as a topological stack {and how to include the  notion of \emph{extended tropical curves} (as in \cite{Caporaso_tropicalmoduli}) into our framework.} We obtain these various generalizations canonically, via pullback of the original construction of $\calM_{g,n}^{\trop}$ relative to morphisms of topoi over the appropriate sites.

\subsection*{Non-Archimedean tropicalization}
In \cite{AbramovichCaporasoPayne_tropicalmoduli}, Abramovich, Caporaso, and Payne 
describe a natural continuous tropicalization map $\trop_{g,n}^{\ACP}\colon M_{g,n}^{\an}\rightarrow M_{g,n}^{\trop}$ from the Berkovich analytification $M_{g,n}^{\an}$ to the set-theoretic tropical moduli space $M_{g,n}^{trop}$ 
(see also \cite{Viviani_tropvscompTorelli}).  Their map is given by associating a tropical curve with edge lengths in $\R_{>0}$ to a stable one-parameter degeneration of a smooth algebraic curve over a valuation ring extending a trivially valued field $k$. The edge lengths correspond to the deformation parameters of the degeneration. One of the goals of Part 2 of this paper is to  reinterpret and refine this construction in the language of logarithmic geometry.

{\em Logarithmic geometry} in the sense of Kato-Fontaine-Illusie \cite{Kato_logstr} is a hybrid between algebraic and tropical geometry 
that allows us to describe multi-parameter degenerations of algebraic curves (see \cite{Kato_logsmoothcurves} and \cite{Abramovichetal_log&moduli}). Our stack-theoretic framework in tropical geometry can be thought of as a ``combinatorial shadow'' of the underlying logarithmic theory. 

To give an idea of what follows, we describe the process of \emph{tropicalization} for logarithmic curves \cite{Kato_logsmoothcurves} over {\em logarithmic points}. In Section~\ref{sec:log-curves}, we  expand this description to families and produce a smooth morphism from $\mathcal M_{g,n}^{\log}$ to $\mathcal M_{g,n}^{\trop}$.

\subsection*{Tropicalization of logarithmic curves over a point} Let $k$ be an algebraically closed field, let $P$ be a sharp monoid, and  denote by $S$ the standard logarithmic point $S=(\Spec k, k^\ast \oplus P)$.
A {\em logarithmically smooth curve} $X\rightarrow S$ may be thought of as a geometrically connected, proper, nodal curve $X$, where each geometric point $x$ of $X$ is endowed with a monoid $\Mbar_{X,x}$. By \cite[Theorem 1.3]{Kato_logsmoothcurves}, the structure of these monoids is highly constrained:
\begin{enumerate}[(i)]
\item At all but a finite number of smooth points $x$ of $X$ we have $\Mbar_{X,x}\simeq P$; 
\item At the remaining smooth points $x_i$ on $X$ we have $\Mbar_{X,x_i}\simeq \N\oplus P$ (these are the marked points);
\item At each node $x_e$ of $X$, the monoid $\overline M_{X,x_e}$ is obtained from $P$ by adjoining two formal generators $a$ and $b$, modulo a relation $a + b = \delta_e$, for some $\delta_e\in P$.

\end{enumerate}

We define \emph{the dual tropical curve of $X/S$} as the tropical curve $\Gamma_{X}$ over $\sigma_S=\Hom(P,\R_{\geq 0})$ given by endowing the dual graph $G_X$ of the pointed curve $(X,x_1, \ldots, x_n)$ with the generalized edge length that associates, to the edge e of $G_X$ corresponding to the node $x_e$ of $X$, the smoothing parameter $\delta_e\in P$. See Figure \ref{figure_dualgraph} for an illustration.

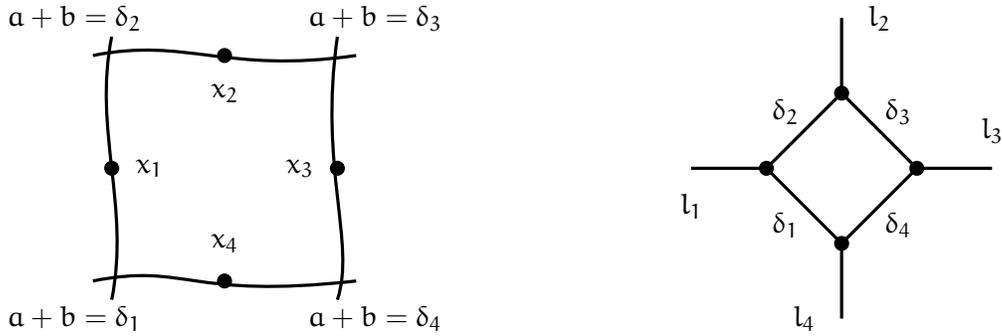
\begin{figure}
\begin{minipage}{0.49\textwidth}\centering
\begin{tikzpicture}
\draw[very thick] (-0.25,0) .. controls (1,0.25) and (1,-0.25) .. (3.25,0);
\draw[very thick] (0,-0.25) .. controls (0.25,1) and (-0.25,2) .. (0,3.25);
\draw[very thick] (-0.25,3) .. controls (1,3.25) and (1.5,2.75) .. (3.25,3);
\draw[very thick] (3,-0.25) .. controls (3.25,0.5) and (2.75,1.5) .. (3,3.25);

\fill (0,1.5) circle (0.10);
\fill (1.5,0) circle (0.10);
\fill (3,1.5) circle (0.10);
\fill (1.5,3) circle (0.10);

\node at (0.5,1.5) {$x_1$};
\node at (1.5,2.5) {$x_2$};
\node at (2.5,1.5) {$x_3$};
\node at (1.5,0.5) {$x_4$};


\node at (-0.5,3.5) {$a+b=\delta_{2}$};
\node at (-0.5,-0.5) {$a+b=\delta_{1}$};
\node at (3.5,-0.5) {$a+b=\delta_{4}$};
\node at (3.5,3.5) {$a+b=\delta_{3}$};

\end{tikzpicture}
\end{minipage}
\begin{minipage}{0.49\textwidth}\centering
\begin{tikzpicture}
\fill (0,1) circle (0.10);
\fill (2,1) circle (0.10);
\fill (1,0) circle (0.10);
\fill (1,2) circle (0.10);

\draw[very thick] (1,0) -- (2,1);
\draw[very thick] (2,1) -- (1,2);
\draw[very thick] (1,2) -- (0,1);
\draw[very thick] (0,1) -- (1,0);

\draw[very thick] (1,0) -- (1,-1);
\draw[very thick] (2,1) -- (3,1);
\draw[very thick] (1,2) -- (1,3);
\draw[very thick] (0,1) -- (-1,1);

\node at (-1,0.5) {$l_1$};
\node at (3,1.5) {$l_3$};
\node at (0.5,-1) {$l_4$};
\node at (1.5,3) {$l_2$};

\node at (0.25,0.25) {$\delta_{1}$};
\node at (0.25,1.75) {$\delta_{2}$};
\node at (1.75,0.25) {$\delta_{4}$};
\node at (1.75,1.75) {$\delta_{3}$};
\end{tikzpicture}
\end{minipage}
\caption{A logarithmic curve $X$ over a standard logarithmic point $S=(\Spec k, k^\ast \oplus P)$ (on the left) and its associated tropical curve $\Gamma_X$ (on the right). The dual graph has a vertex for each component, a leg for each marked point, and an edge for each node of $X$. The edge lengths of $\Gamma_X$ are  the logarithmic deformation parameters $\delta_{1},\ldots, \delta_{4}$ at the nodes of $X$.}\label{figure_dualgraph}
\end{figure}

The association $X/S\mapsto \Gamma_X/\sigma_S$ defines a natural \emph{tropicalization functor}
\begin{equation*}
\trop_{g,n,S}\colon \calM_{g,n}^{\log}(S)\longrightarrow \calM_{g,n}^{\trop}(\sigma_S)
\end{equation*}
that specializes to the well-known tropicalization map of one-parameter degenerations  in the case $P=\N$ (as in \cite{AbramovichCaporasoPayne_tropicalmoduli}).

Intuitively, one may think of $X/S$ as encoding data coming from a multi-parameter smoothing of a stable, marked curve: the $x_i$'s are the marked points on the central fiber, and each $\delta_e\in P$ is the deformation parameter at the corresponding node. The point is that the structure of a logarithmic curve contains this data even in the absence of an actual multi-parameter degeneration. Tropicalization  then extracts just the combinatorial information: the topological type of the central fiber and {\it how fast} the nodes smooth. 
This  information  determines a well-defined image in the interior of the tropicalization. 

A reasonable question is {then} how to bring into this picture families of curves where the general fiber is not smooth, or equivalently, {where} nodes of the central fiber persist throughout the deformation. On the combinatorial side this leads to {\it extended tropical curves}, as introduced in \cite{Caporaso_tropicalmoduli}: edges corresponding to nodes that do not smooth have infinite length. Their  parameter space is the {generalized} extended cone complex $\overline{M}_{g,n}^{\trop}$ {defined in \cite{Caporaso_tropicalmoduli, AbramovichCaporasoPayne_tropicalmoduli}. From the logarithmic point of view, this means working with {\it pointed monoids}, i.e. monoids that must contain an {\it absorbing element} (playing the role of  $\infty$). We refer the reader to \cite{HuszarMarcusUlirsch_troplogclutch&glue} for a careful treatment of this generalization. 
In the framework of this article, it is still possible to recover the extended cone complex by evaluating the functor of points of the tropicalization on the commutative monoid  $\R_{\geq 0} \cup \{ \infty \}$ (see Section~\ref{sec:com-mon} for details).   }
 
\subsection*{Tropicalization via Artin fans} The theory of \emph{Artin fans} was developed in  \cite{AbramovichWise_invariance} and \cite{AbramovichChenMarcusWise_boundedness} (see also \cite{Abramovichetal_logsurvey} and \cite{Ulirsch_nonArchArtin}) as an incarnation of cone stacks in the category of logarithmic algebraic stacks. In fact, as we will show in Theorem~\ref{thm:artin-fans}, every cone stack can be lifted to a logarithmic algebraic stack: 

\begin{maintheorem}\label{thm_artinfans}
The $2$-category of cone stacks is equivalent to the $2$-category of Artin fans.
\end{maintheorem}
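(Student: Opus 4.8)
The plan is to realize both $2$-categories as the geometric stacks over equivalent sites, with the equivalence of sites induced by a single assignment $\sigma\mapsto\calA_\sigma$ on local models. First I would attach to a rational polyhedral cone $\sigma$, with dual monoid $S_\sigma$, its \emph{Artin cone} $\calA_\sigma = \big[\Spec k[S_\sigma]\,/\,\Spec k[S_\sigma^{\mathrm{gp}}]\big]$, the quotient of the affine toric variety by its torus, endowed with its canonical logarithmic structure. This is functorial: a morphism of cones $\sigma\rightarrow\tau$ induces a monoid homomorphism $S_\tau\rightarrow S_\sigma$, hence compatible morphisms of the toric varieties and of their tori, and therefore a morphism $\calA_\sigma\rightarrow\calA_\tau$ of logarithmic algebraic stacks. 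Thus $\sigma\mapsto\calA_\sigma$ is a $2$-functor from the category of cones to logarithmic algebraic stacks, landing by definition in Artin cones.

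The crux is the local (affine) equivalence, which rests on a representability computation: $\calA_\sigma$ represents the functor sending a logarithmic scheme $T$ to the set of monoid homomorphisms $S_\sigma\rightarrow\overline{M}_T(T)$ into the global sections of the characteristic sheaf of $T$. Granting this, a Yoneda argument identifies the groupoid of morphisms $\Hom(\calA_\sigma,\calA_\tau)$ with $\Hom_{\RPC}(\sigma,\tau)$, \emph{including automorphisms}: the automorphisms of the quotient stack $\calA_\sigma$, a priori involving $\Spec k[S_\sigma^{\mathrm{gp}}]$-torsors and equivariant data, are rigidified by the logarithmic structure down to exactly the lattice automorphisms of $\sigma$. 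Full faithfulness together with the definition of an Artin cone then gives essential surjectivity onto Artin cones, so $\sigma\mapsto\calA_\sigma$ is an equivalence from the category of cones onto the category of Artin cones. I expect this combination of representability and rigidity to be the main obstacle, precisely because one must show that the potentially large morphism groupoid of the quotient stacks collapses to the discrete set of cone morphisms.

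Next I would promote the local equivalence to an equivalence of sites. The covers used to build cone spaces and cone stacks out of cones---face inclusions and the \'etale-type covers of the cone site---correspond under $\sigma\mapsto\calA_\sigma$ to strict open immersions and strict \'etale covers of Artin cones, and conversely. Hence the functor is continuous and cocontinuous, carrying the cone site equivalently onto the site of Artin cones with its strict \'etale topology.

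Finally, since cone stacks are by definition (Section~\ref{section_conestacks}, in the sense made precise by Theorem~\ref{thm_geometric}) the geometric stacks over the cone site, while by the structure theory of \cite{AbramovichWise_invariance, AbramovichChenMarcusWise_boundedness} Artin fans are precisely the geometric stacks that are strict-\'etale-locally Artin cones, the equivalence of sites lifts to the desired $2$-equivalence. Essential surjectivity at the global level amounts to the observation that the combinatorial gluing data presenting an Artin fan descend, via the local equivalence, to gluing data for a cone stack, completing the proof.
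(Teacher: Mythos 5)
Your proposal follows essentially the same route as the paper: the representability of $\calA_\sigma$ as a \emph{functor} on logarithmic schemes (the paper's Lemma~\ref{lemma_Artincone}, due to Olsson) yields $\Hom(\calA_\sigma,\calA_\tau)=\Hom_{\RPC}(\sigma,\tau)$ --- in the paper via the computation that $\Gamma\big(\calA_\sigma,\overline{M}_{\calA_\sigma}\big)=S_\sigma$ by torus invariance, which your Yoneda step implicitly requires --- and the global statement then follows because strict (respectively surjective) cone morphisms correspond exactly to strict \'etale (respectively surjective) morphisms of Artin cones, so strict covers by cones match strict \'etale covers by Artin cones. Your packaging of this last step as an equivalence of sites lifting to geometric stacks is only a mild reformulation of the direct argument in the paper's proof of Theorem~\ref{thm:artin-fans}.
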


In our case, $\calM_{g,n}^{\trop}$ lifts to a logarithmic algebraic stack that we denote $a^\ast\calM_{g,n}^{\trop}$.  In Section \ref{sec:log-curves}  we interpret $a^\ast\calM_{g,n}^{\trop}$ as a moduli functor
\begin{equation*}
a^\ast\calM_{g,n}^{\trop}\colon\mathbf{LogSch}\longrightarrow \mathbf{Groupoids}
\end{equation*}
that associates to a logarithmic scheme $S$ the groupoid of \emph{families of tropical curves over $S$}, i.e., of collections $(\Gamma_s)$ of tropical curves, indexed by the geometric points $s$ of $S$ with a generalized edge length with values in the characteristic monoid $\Mbar_{S,s}$ that are compatible under specialization (see Definition \ref{def:tropical-curve-log-base} for details). The following theorem collects the
statements of Corollaries \ref{cor:rep}, \ref{cor:ext}, and Theorem \ref{thm:strictsmooth}.

\begin{maintheorem}\label{thm_tropicalization}
The moduli functor $a^\ast\calM_{g,n}^{\trop}$ is representable by a logarithmic algebraic stack that is logarithmically \'etale over $k$, and the natural \emph{tropicalization morphism}
\begin{equation} \label{eqn:tropicalization of mgn}
\trop_{g,n}\colon \calM_{g,n}^{\log}\longrightarrow a^\ast\calM_{g,n}^{\trop}
\end{equation}
that associates to a logarithmic curve $X/S$ its family of dual tropical curves is strict, smooth, and surjective. 
\end{maintheorem}

Theorem \ref{thm_tropicalization}, in particular, shows that the realization problem for abstract tropical curves is always solvable: given an abstract tropical curve $\Gamma$ (for simplicity with edge lengths in $\N$), since $\trop_{g,n}$ is surjective, we may find a logarithmic curve over the standard logarithmic point $\big(\Spec k, k^\ast \oplus \N\big)$, whose dual tropical curve is $\Gamma$. Moreover, as $\trop_{g,n}$ is also strict and smooth (i.e. logarithmically smooth), we may smooth this nodal curve in a logarithmically smooth family. 

Of course, for logarithmically smooth curves, this is a very well-known fact (see e.g. \cite[Section 3.4]{Gross_book}), but the model of Theorem~\ref{thm_tropicalization} is a convenient frame in which to consider other tropical realization problems, which can thus be divided into questions of surjectivity and smoothness of tropicalization morphisms analogous.  For example, the realization question for tropical genus~$1$ curves in toric varieties was answered by this method~\cite{RanganathanSantosParkerWiseII}.


In \cite[Theorem 4.4]{FosterRanganathanTalpoUlirsch_logPic} it is shown that  the moduli stack $\calM_{g,n}^{\log}$ of logarithmic curves may be viewed as a fine moduli stack parameterizing of {\em metrized curve complexes} in the sense of \cite{AminiBaker_metrizedcurvecomplexes} over the category $\mathbf{LogSch}$ of logarithmic schemes. Theorem~\ref{thm_tropicalization} shows that $a^\ast \calM_{g,n}^{\trop}$ functions as a lift of the moduli stack $\calM_{g,n}^{\trop}$  to $\mathbf{LogSch}$, thereby realizing both of these objects within the same category. From this point of view, the tropicalization map $\trop_{g,n}$  is the map that associates to a metrized curve complex its underlying tropical curve. 

\subsection*{Non-Archimedean tropicalization revisited}
 Theorem \ref{thm_tropicalization} allows for a reinterpretation of the non-Archimedean tropicalization map of \cite{AbramovichCaporasoPayne_tropicalmoduli}. Let $\calM_{g,n}^{an}$ be the non-Archimedean analytic stack\footnote{Although this is a non-Archimedean analytic stack, we always think of $\calM_{g,n}^{an}$  and other non-Archimedean analytic stacks in terms of its underlying topological space (see \cite[Section 3]{Ulirsch_trop=quot}).} associated to the moduli stack $\calM_{g,n}$ of smooth $n$-pointed algebraic curves of genus $g$. Denote by $M_{g,n}^{trop}$ the (set-theoretic) moduli space of stable $n$-marked tropical curves. The tropicalization map 
\begin{equation*}
\trop_{g,n}^{\ACP}\colon \calM_{g,n}^{an}\longrightarrow M_{g,n}^{trop}
\end{equation*}
associates to a point in $\calM_{g,n}^{an}$, which corresponds to a smooth curve over a non-Archimedean field extending the base field, the dual tropical curve of a stable degeneration. For a node given by $xy=f$ in local coordinates, the length of the associated edge is the real number given by $\val(f)$, i.e. by the valuation of the deformation parameters.

By applying Thuillier's \cite{Thuillier_toroidal} generic fiber functor $(.)^\beth$ over trivially valued fields, Theorem \ref{thm_tropicalization} now immediately implies the following Corollary \ref{cor_trop=anal} (see Section \ref{section_nonArchtrop}). 

\begin{maincorollary}\label{cor_trop=anal}
There is a natural continuous, open, and dense injection  $M_{g,n}^{trop}\hookrightarrow\big(a^\ast \calM_{g,n}^{trop}\big)^\beth$ that makes the following diagram commute:
\begin{equation*}\begin{CD}
\calM_{g,n}^{an}@>\trop_{g,n}^{\ACP}>> M_{g,n}^{trop}\\
@V\subseteq VV @VV\subseteq V\\
\overline{\calM}_{g,n}^\beth @>\trop_{g,n}^\beth>> \big(a^\ast\calM_{g,n}^{trop}\big)^\beth
\end{CD}\end{equation*}

\end{maincorollary}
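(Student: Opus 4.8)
The plan is to obtain the corollary by applying Thuillier's generic fiber functor $(.)^\beth$ to the tropicalization morphism $\trop_{g,n}$ of Theorem \ref{thm_tropicalization}, and then to identify the resulting objects and maps with the non-Archimedean data appearing in the bottom and top rows of the diagram. First I would recall that $(.)^\beth$ sends a logarithmically smooth (or more generally toroidal) algebraic stack over a trivially valued field $k$ to a non-Archimedean analytic stack, and that it is functorial; applying it to $\trop_{g,n}\colon \calM_{g,n}^{\log}\to a^\ast\calM_{g,n}^{\trop}$ directly produces the bottom arrow $\trop_{g,n}^\beth\colon \calM_{g,n}^\beth\to (a^\ast\calM_{g,n}^{\trop})^\beth$. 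The left vertical inclusion $\calM_{g,n}^{an}\subseteq\calM_{g,n}^\beth$ is the standard comparison: $(.)^\beth$ produces the analytic locus of semistable (bounded) reductions, which contains the analytification of the generic fiber as an open dense subset. This is exactly the content already used in \cite{Thuillier_toroidal} and \cite{Ulirsch_trop=quot}, so I would cite it rather than reprove it.

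Next I would analyze the target $(a^\ast\calM_{g,n}^{\trop})^\beth$. The key computation is that, for a cone stack (equivalently, by Theorem \ref{thm_artinfans}, for its associated Artin fan) $\calX$, the Berkovich space $\calX^\beth$ has an underlying topological space canonically homeomorphic to the \emph{geometric realization} of the cone stack, i.e.\ to the colimit of the cones $\sigma$ mapping into $\calX$ after passing to $\Hom(S_\sigma,\R_{\geq 0})$-points. Applying this to $a^\ast\calM_{g,n}^{\trop}$, whose cone stack is $\calM_{g,n}^{\trop}$ by construction, identifies the underlying space of $(a^\ast\calM_{g,n}^{\trop})^\beth$ with the colimit $\varinjlim_\sigma \sigma(\R_{\geq 0})$ over the tropical moduli functor. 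The set-theoretic space $M_{g,n}^{\trop}$ sits inside this colimit as the locus of tropical curves with positive real (rather than merely $\R_{\geq 0}$-valued, possibly with some coordinates zero) edge lengths, matching the identification sketched in Section \ref{sec:real} where $\calM_{g,n}^{\trop}$ is extended to real edge lengths. This gives the right vertical injection $M_{g,n}^{\trop}\hookrightarrow(a^\ast\calM_{g,n}^{\trop})^\beth$ and its continuity, openness, and density; the density follows because every $\R_{\geq 0}$-point is a specialization of a point with all coordinates positive, and openness from the fact that the positive locus is open in each cone.

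Finally I would check commutativity of the square. On a point of $\calM_{g,n}^{an}$, corresponding to a stable curve over a valuation ring extending $k$, $\trop_{g,n}^{\ACP}$ returns the dual tropical curve with edge length $\val(f)\in\R_{>0}$ at the node $xy=f$. On the other hand, tracing the same point through the left inclusion into $\calM_{g,n}^\beth$ and then through $\trop_{g,n}^\beth$ amounts to applying the naive tropicalization functor $\trop_{g,n,S}$ fiberwise and reading off the image in $(a^\ast\calM_{g,n}^{\trop})^\beth$; by the construction of $\trop_{g,n}$ in Section \ref{sec:log-curves} this records precisely the same generalized edge lengths, whose valuations recover $\val(f)$. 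The two composites therefore agree, and the right-hand inclusion was defined exactly so as to make this matching hold. The main obstacle I anticipate is the identification of $(a^\ast\calM_{g,n}^{\trop})^\beth$ with the colimit of cones and the verification that the ACP map factors through it compatibly: this requires care because $\calM_{g,n}^{\trop}$ is a genuine \emph{stack} (tropical curves carry automorphisms), so the colimit must be taken in a $2$-categorical/stacky sense and one must confirm that Thuillier's $\beth$-functor interacts correctly with the stacky structure and with the étale-local presentation of the Artin fan provided by Theorem \ref{thm_artinfans}. Once that compatibility is in place, the remaining verifications are formal.
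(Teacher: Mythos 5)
Your overall frame (apply $(.)^\beth$ to $\trop_{g,n}$, identify the target combinatorially, embed $M_{g,n}^{trop}$, check commutativity on points) is reasonable, but the key computation in your second paragraph is false, and it is precisely the point the paper is careful about. For a single Artin cone, $\calA_\sigma^\beth$ is \emph{not} $\sigma=\Hom(S_\sigma,\R_{\geq 0})$ but the canonical compactification $\Hom(S_\sigma,\Rbar_{\geq 0})$ with $\Rbar_{\geq 0}=[0,\infty]$: a point of $\calA_\sigma^\beth$ is a morphism $\Spec R\to\big[V_\sigma/T\big]$ from a rank-one valuation ring, i.e.\ (torsors over a local ring being trivial) a monoid homomorphism $S_\sigma\to (R,\cdot)/R^\ast$, and composing with the valuation yields values in $[0,\infty]$, with $\infty$ occurring exactly when some element of $S_\sigma$ is sent to $0\in R$. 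Hence $\big(a^\ast\calM_{g,n}^{trop}\big)^\beth$ is the \emph{extended} tropical moduli space $\Mbar_{g,n}^{trop}$ (edge lengths allowed to be $\infty$), as the paper remarks immediately after the Corollary --- not the colimit $\varinjlim_\sigma \sigma(\R_{\geq 0})$. Note moreover that this latter colimit is already all of $M_{g,n}^{trop}$ (a point with some vanishing coordinates is identified, via contraction, with a strictly positive point of a smaller cone), so under your identification the inclusion would be a bijection and your openness/density claims would have no content; your density argument (``every $\R_{\geq 0}$-point is a specialization of a positive point'') trivializes in the colimit, whereas the true density statement is that finite points are dense in each extended cone $\Hom(S_\sigma,[0,\infty])$. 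The correct description of the image: $M_{g,n}^{trop}$ embeds as the locus of points of $\big(a^\ast\calM_{g,n}^{trop}\big)^\beth$ representable by \emph{logarithmic} morphisms $\Spec R\to a^\ast\calM_{g,n}^{trop}$ (equivalently, all edge lengths finite), which is open and dense.

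This misidentification also undercuts your continuity argument, and here the paper's route is genuinely different: it deliberately avoids proving any global homeomorphism (``we do not explore this homeomorphism further'') and instead works pointwise with valuation rings. Well-definedness and injectivity of $M_{g,n}^{trop}\hookrightarrow\big(a^\ast\calM_{g,n}^{trop}\big)^\beth$ are checked by noting that edge lengths of the special fiber lie in $R/R^\ast$, that their valuations are invariant under non-Archimedean extension $R\subset R'$, and that two families with the same class in $M_{g,n}^{trop}$ become isomorphic over a common extension $R''$. Continuity is then deduced from two quotient-map facts: $\trop_{g,n}^\beth$ induces the quotient topology on $\big(a^\ast\calM_{g,n}^{trop}\big)^\beth$ because $\trop_{g,n}$ is smooth and surjective (Theorem \ref{thm:strictsmooth}), and $\trop_{g,n}^{\ACP}$ is itself a topological quotient map because it is the retraction onto the non-Archimedean skeleton. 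Your commutativity check matches the paper's in spirit, except that you should make explicit the use of properness of $\calMbar_{g,n}$ to extend a $K$-point of $\calM_{g,n}$ to a logarithmic morphism $\Spec R\to\calMbar_{g,n}$ after passing to a finite extension. In short: either replace your cone colimit by the extended-cone colimit and redo openness/density accordingly, or, as the paper does, bypass the global identification entirely and argue at the level of $\Spec R$-points.
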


In fact, one may show that the inclusion $M_{g,n}^{trop}\subset \big(a^\ast \calM_{g,n}^{trop}\big)^\beth$ extends to a homeomorphism $\Mbar_{g,n}^{trop} \cong \big(a^\ast \calM_{g,n}^{trop}\big)^\beth$, where $\Mbar_{g,n}^{trop}$ denotes the set-theoretic moduli space of \emph{extended tropical curves} (with edge lengths allowed to be $\infty$).  In this case, the analytic map $\trop_{g,n}^\beth$ is nothing but the extended non-Archimedean tropicalization map of \cite{AbramovichCaporasoPayne_tropicalmoduli}.

\subsection*{Skeletons and tropicalization} The authors of  \cite{AbramovichCaporasoPayne_tropicalmoduli} also show that $M_{g,n}^{trop}$ is the non-Archi\-medean skeleton of $\calM_{g,n}^{an}$ in the sense of Thuillier \cite{Thuillier_toroidal} (defined with respect to the coordinates coming from the Deligne-Mumford boundary) and that the non-Archimedean tropicalization map is nothing but the natural retraction onto this skeleton. We refer the reader to \cite{Ulirsch_nonArchArtin} where the third author explains how one can recover this result via the theory of Artin fans. 

By \cite[Proposition 3.1.1]{AbramovichChenMarcusWise_boundedness} to every logarithmic stack $\calX$ there is an Artin fan $\calA_\calX$ with faithful monodromy as well as a strict morphism $\calX\rightarrow \calA_\calX$ that is initial among all strict morphism to Artin fans with faithful monodromy. In this language \cite[Theorem 1.3]{Ulirsch_nonArchArtin} says that that the tropicalization morphism induces an equivalence between the Artin fan associated to $\calM_{g,n}^{log}$ and the $a^\ast\calM_{g,n}^{trop}$ (thought of as a logarithmic stack). 

An Artin fan $\calA$ is said to have \emph{faithful monodromy} if the natural morphism $\calA\rightarrow \LOG_k$ to Olsson's classifying stack  of logarithmc structures (see \cite{Olsson_LOG}) is represenatable. In combinatorial terms this means that the morphisms in the associated  cone stack are all proper face morphisms. We may therefore think of this construction as a version of a ``coarse moduli space" associated to a cone stack and \cite[Theorem 1.3]{Ulirsch_nonArchArtin} says that the Artin fan associated to $\calMbar_{g,n}^{log}$ is naturally equivalent to the ``coarse moduli space" of $\calM_{g,n}^{trop}$. We point out that, in general, $\calM_{g,n}^{trop}$ does not have faithful monodromy, since, for example, the loop flip in $\calM_{1,1}^{trop}$ cannot be distinguished from the identity by considering only their actions on the set of lengths of edges in the graph.

For a logarithmically regular scheme $X$ (defined in the Zariski topology), the initial strict morphism $X\rightarrow{\calA_X}$ can be phrased as a morphism of locally monoidal spaces to a Kato fan, which can be identified with the map $\rho\colon X\rightarrow X$ that sends points of $X$ to the generic points of the corresponding logarithmic strata of $X$ (see \cite{Kato_toricsing} and \cite[Lemma~4.18]{AbramovichChenMarcusUlirschWise}). This defines a strong deformation retraction of X (with the Zariski topology) via the continuous homotopy $H\colon X\times [0,1]\rightarrow X$ given by 
\begin{equation*}
    H(x,t)=\begin{cases} r(x) &\textrm{ if } t<1\\
    x &\textrm{ if } t=0\ .
    \end{cases}
\end{equation*}
At this point we know of no natural topology in which the tropicalization morphism is a homotopy equivalence in more general situations.

\subsection*{The difference between analytic and logarithmic tropicalization}

In this paper, we think of tropical geometric objects as set- or groupoid-valued covariant functors on the category of commutative monoids.  This is analogous to the way schemes and algebraic stacks can be viewed as set- or groupoid-valued functors on the category of commutative rings.
As with schemes over $\mathbb C$, where the set underlying the complex analytification is recovered as the set of $\mathbb C$-valued points, the complex of real polyhedral cones that one generally thinks of as a tropical variety is recovered from a functor $F : \mathbf{Mon} \to \mathbf{Sets}$ by evaluating $F$ on the monoid $\mathbb R_{\geq 0}$ of nonnegative real numbers.

This shift in perspective allows one to define a morphism directly from a logarithmic scheme to its tropicalization, without any intermediate analytification step.  We explain in Section~\ref{sec:log-generalities} how a functor from monoids to sets can be extended to a presheaf on the category of monoidal spaces.  Since logarithmic schemes are monoidal spaces with additional structure, this makes it possible to define morphisms directly from logarithmic schemes to tropical varieties.

These tropicalization morphisms are natural transformations of functors. If one wants to pass to the $\mathbb R_{\geq 0}$-points of the target, one must also do so on the source.  This amounts to looking at morphisms from a logarithmic point whose characteristic monoid is $\mathbb R_{\geq 0}$ into a given logarithmic scheme.

This  process is similar, but not identical, to analytification. Indeed, taking the $\mathbb R_{\geq 0}$-points of a logarithmic scheme does not require a generic fiber to analytify.  Moreover, the $\mathbb R_{\geq 0}$-points of the functor represented by a rational polyhedral cone $\sigma$ are simply the members of the underlying cone of $\sigma$.  This means that the logarithmic tropicalization map sends the $\mathbb R_{\geq 0}$-points of the boundary of a logarithmic scheme to the cone complex underlying its tropicalization and \emph{not to the extended cone complex} considered by Abramovich, Caporaso, and Payne \cite{AbramovichCaporasoPayne_tropicalmoduli}, among others.  (This does not come up in Corollary~\ref{cor_trop=anal} because the tropical moduli functor is also analytified there.)

To see how logarithmic tropicalization works in a concrete example,
let $X$ be a stable non-smooth curve over a non-Archimedean field $K$ extending the base field $k$, i.e., a $K$-valued point in the boundary of $\calMbar_{g,n}$. By the valuative criterion for the properness of $\calMbar_{g,n}$, after replacing $K$ by a finite extension, there is a stable curve $\calX$ over the valuation ring $R$ of $K$ extending $X$. The analytic tropicalization map $\trop_{g,n}^{ACP}$ sends the point $[X]$ to a tropical curve that has an edge of infinite length, since the equation $xy=0$ is not smoothed in the generic fiber.

The $R$-valued point $[\calX]$ of $\calMbar_{g,n}$ only lifts to a point of $\calM_{g,n}^{log}$ if $S=\Spec R$ is endowed with a  logarithmic structure $M_S$ that is non-trivial over the generic point. The tropicalization of $\calX\rightarrow\Spec R$ is then a tropical curve with edge lengths in the monoid $\Mbar_{S}$ and not an extended tropical curve.  Notably, $\Mbar_S$ must contain an element $\ell$, the length of the edge that is not smoothed, that is not contained in the preimage of the valuation monoid of $R$, and therefore $\Mbar_S$ cannot be $\mathbb R_{\geq 0}$ if $\calX$ is to be a logarithmic curve over $S$.


One can recover the extended theory by looking at $(\mathbb R_{\geq 0} \cup \{ \infty \})$-valued points instead of $\mathbb R_{\geq 0}$-valued points, both of $X$ and of its tropicalization,  as we explain in detail  in Section~\ref{sec:com-mon}.  In the example above, the monoid $\mathbb R_{\geq 0} \cup \{ \infty \}$ does contain an element not in the preimage of the valuation monoid of $R$, namely $\infty$.

Working with pointed monoids like $\R_{\geq 0}\cup\{\infty\}$ requires the more general framework of pointed logarithmic structures introduced in \cite{HuszarMarcusUlirsch_troplogclutch&glue}. 
With an eye towards certain applications, we find it advantageous to work consistently with integral monoids in this article.  For example, logarithmic curves are more neatly characterized over integral bases than non-integral ones~\cite{Kato_logsmoothcurves}, with a more manageable deformation theory, and it seems impossible to define a reasonable logarithmic or tropical Picard group~\cite{MolchoWise} over a non-integral base.  

Restricting attention to integral base monoids does not make it impossible to study a family of nodal curves with non-smooth generic fiber.  Rather, as in the example above, we must adjoin a new element $\ell$ to $\overline M_S$ that is independent of the valuation monoid of $R$ to account for the node that is not smoothed in the generic fiber. In the partial order on $\overline M_S^{\rm gp}$ induced from $\overline M_S$, this element is larger than any element of the preimage of the valuation monoid of $R$, which we interpret to mean that $\ell$ is infinite \emph{relative to} the valuation monoid of $R$.

\subsection*{Tautological morphisms} Knudsen \cite{Knudsen_projectivityII} introduced and studied a collection of natural morphisms between the algebraic moduli stacks $\calM_{g,n}$ that have since become a fundamental tool to study the geometry of these moduli spaces:
\begin{itemize}
\item the forgetful morphism 
\begin{equation*}
\pi_{g,n+1}\colon\calMbar_{g,n+1}\longrightarrow\calMbar_{g,n}
\end{equation*}
already mentioned above, given by forgetting the $(n+1)$-st marked point (and possibly stabilizing) as well as
\item the clutching maps 
\begin{equation*}
\calMbar_{g_1,n_1+1}\times\calMbar_{g_2,n_2+1}\longrightarrow\calMbar_{g_1+g_2,n_1+n_2} \ ,
\end{equation*}
given by gluing two curves by identifying the $(n_1+1)$-st marked point of one curve with the $(n_2+1)$-st of the other, and 
\begin{equation*}
\calMbar_{g-1,n+2}\longrightarrow \calMbar_{g,n} \ ,
\end{equation*}
given by gluing a curve with itself by identifying the last two marked points. 
\end{itemize}
Expanding on \cite{AbramovichCaporasoPayne_tropicalmoduli}, we introduce tropical and logarithmic analogues of those maps in Sections \ref{section_families} and \ref{sec:tropicalization} respectively. The following Theorem \ref{thm_tautological}  is an informal summary of Theorem \ref{thm:clutch} and Theorem \ref{thm:twodiag}. 

\begin{maintheorem}\label{thm_tautological}
The logarithmic and tropical  tautological morphisms commute with the tropicalization morphism.
\end{maintheorem}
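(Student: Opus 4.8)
The plan is to verify the $2$-commutativity of each tautological square directly on objects, functorially in the logarithmic base. Each tautological morphism---forgetful, clutching, and self-clutching---is defined on both the logarithmic and the tropical side by an explicit operation on logarithmic, respectively tropical, curves, and the tropicalization morphism $\trop_{g,n}$ is the functor $X/S\mapsto\Gamma_X/\sigma_S$ sending a logarithmic curve to its dual tropical curve. Since $\calM_{g,n}^{\log}$ and $a^\ast\calM_{g,n}^{\trop}$ are representable (Theorem~\ref{thm_tropicalization}) and every arrow in the squares is a natural transformation of moduli functors, it suffices to produce, for each logarithmic curve $X/S$, a canonical isomorphism of dual tropical curves witnessing commutativity, and to check naturality in $S$. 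As $\trop_{g,n}$ is nothing but the \emph{dual graph} functor decorated with the smoothing parameters $\delta_e$, the entire problem reduces to showing that this decorated-dual-graph functor intertwines the logarithmic and the tropical operations.

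For the forgetful morphism I would compare the two traversals of the square. On the tropical side, forgetting the $(n+1)$-st leg of $\Gamma_X$ and stabilizing deletes the leg $l_{n+1}$ and then contracts any resulting unstable vertex (a genus-$0$ vertex of valence less than $3$); on the logarithmic side, forgetting $x_{n+1}$ and stabilizing in the sense of Knudsen contracts the corresponding unstable rational component of $X$. The dual graph of the stabilized logarithmic curve is thus obtained from $\Gamma_X$ by exactly the same deletion and contraction, so the underlying graphs agree on the nose. The only point requiring care is the length data: when a rational bridge whose two nodes carry smoothing parameters $\delta_1,\delta_2$ is contracted, the two edges merge into one, and the local structure of a logarithmic curve at a node (\cite[Theorem~1.3(iii)]{Kato_logsmoothcurves}) forces the smoothing parameter of the merged node to be $\delta_1+\delta_2$. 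This is exactly the tropical rule for summing lengths across a contracted valence-$2$ vertex, so the decorated dual graphs coincide.

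For the two clutching morphisms the combinatorics is if anything simpler: the dual graph of a logarithmically clutched curve is the disjoint union of the dual graphs of the two input curves, with the two distinguished legs joined into a single new edge $e_{\mathrm{new}}$ (for the self-clutching $\calMbar_{g-1,n+2}\to\calMbar_{g,n}$, the two distinguished legs of one dual graph joined into a single new edge, a loop when they are supported at the same vertex). All other vertices, edges, legs, and their decorations are carried over unchanged. The clutching operation creates one new node and accordingly adjoins one new generator to the characteristic monoid of the base recording its smoothing parameter; tropically this is precisely the new length coordinate attached to $e_{\mathrm{new}}$. The content of the verification is that these two new data---the logarithmic smoothing parameter and the tropical edge length---are identified by the dual-graph functor, after which naturality in $S$ is immediate from the functoriality of both constructions.

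The step I expect to be the main obstacle is the logarithmic bookkeeping in the clutching case: one must describe precisely how the characteristic monoid of the base is enlarged by the new node and confirm that the resulting edge length matches the tropical clutching morphism of Section~\ref{sec:tropicalization} exactly, rather than merely up to a chart or a positive scalar. A secondary obstacle is the complete case analysis of stabilization underlying the forgetful morphism of Section~\ref{section_families}: rational tails, rational bridges, and the sliding of a marked leg onto an adjacent vertex must each be checked to transform the length data correctly, with the rational-bridge case above being the only one that alters lengths.
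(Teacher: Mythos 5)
Your overall strategy---verify commutativity on geometric points via the decorated dual-graph functor---is indeed how the paper checks commutativity once all the arrows exist, and your length bookkeeping (the merged node acquiring parameter $\delta_1+\delta_2$; the new edge length matching the smoothing parameter of the new node) reaches the right conclusions. But the substance of the paper's proof lies in \emph{constructing} the logarithmic arrows in the squares, and there your proposal has two genuine gaps. First, the clutching maps do not exist as morphisms of logarithmic stacks at all: the interior of $\calM^{\log}_{g_1,I_1\cup\{\star\}}\times\calM^{\log}_{g_2,I_2\cup\{\bullet\}}$ lands in the boundary of $\calM^{\log}_{g,n}$, and there are no logarithmic morphisms from a scheme with trivial logarithmic structure to one with everywhere nontrivial logarithmic structure. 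The paper therefore replaces each clutching map by a correspondence through a stack $Q$ (resp.\ $R$) whose logarithmic structure is pulled back from the \emph{target} along the classical clutching map, as in \eqref{logclutch}, matching the tropical correspondence \eqref{tropclutch} whose extra $\R_{\geq 0}$-factor is your ``new length coordinate.'' Moreover, even the backward arrow $Q\to\calM^{\log}_{g_1,I_1\cup\{\star\}}$ is not a mere restriction: pulling $M_X$ back to the genus-$g_1$ subcurve $\underline Y$ fails to be logarithmically smooth at the preimage $y$ of the node, and one must cut $\overline M_{Y,y}$ down to the submonoid of $\overline M_{X,y}$ generated by $\overline M_{S,s}$ and the single generator $\alpha$ whose image generates the ideal of $y$ (Section~\ref{section_clutching}). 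Until these constructions are in place, your square has no well-posed ``two traversals'' to compare; the ``logarithmic bookkeeping'' you flag as an obstacle is not a check but the main construction of Theorem~\ref{thm:clutch}.

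Second, for the forgetful morphism you invoke stabilization ``in the sense of Knudsen'' as if it automatically carried a logarithmic structure; it does not. Lemma~\ref{lem:stab} defines $M_Y=\tau_\ast M_X$ on the Knudsen stabilization $\tau\colon\underline X\to\underline Y$ and must \emph{prove} this is a logarithmic structure making $Y$ a logarithmic curve, via the exact sequence $0\to\mathcal O^\ast_{Y,y}\to M_{Y,y}\to\tau_\ast\overline M_{X_y}\to\Pic(\tau^{-1}y)\cong\Z^V$ and the identification of $\tau_\ast\overline M_{X_y}$ with piecewise linear functions on the contracted dual subgraph; the computation exhibits the new node's characteristic monoid as $\big\{(\rho,\sigma)\:\big|\:\rho-\sigma\in\Z\delta\big\}$ with $\delta$ the total path length between the two external vertices---which is where your $\delta_1+\delta_2$ actually comes from. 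Your appeal to ``the local structure of a logarithmic curve at a node'' is circular, since Theorem~\ref{thm:log-curve-local} applies only after one knows $Y$ is a logarithmic curve. Finally, Theorem~\ref{thm:twodiag} also identifies $\calX_{g,n}^{\log}\simeq\calM_{g,n+1}^{\log}$, and this requires the puncturing construction of Lemma~\ref{lem:puncture}: a section of a logarithmic curve may pass through a node or a marked point, and one must first perform a logarithmic modification---tropically, subdividing an edge or leg at the distance determined by the section---before the new marking can be added. This logarithmic-modification step is entirely absent from your proposal and cannot be avoided if the forgetful square is to be interpreted through the universal curve.
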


The main difficulty in Section \ref{sec:tropicalization} stems from the fact that the clutching morphisms are not  logarithmic maps, as they factor through embeddings of closed logarithmic boundary strata. Our approach is to not describe those maps as morphisms but rather as correspondences. We refer the reader to  \cite{HuszarMarcusUlirsch_troplogclutch&glue} for an alternative resolution of this issue using \emph{pointed logarithmic structures}, i.e., logarithmic structures that contain an absorbing element.

\subsection*{Further developments}
This work is a first step  in the development of the foundations of tropical moduli stacks. We note that while rational polyhedral cones are certainly part of an as yet not properly defined class of ``tropical varieties'', they  are far from exhaustive. So we would expect that our moduli functor is a special case of a moduli functor that is defined on a much larger category. The recent progress towards a notion of a \emph{tropical scheme} (see \cite{GiansiracusaGiansiracusa_tropicalschemes}, \cite{MacLaganRincon_tropicalschemes}, \cite{Lorscheid_tropicalschemes}, and \cite{MacLaganRincon_tropicalideals}) might eventually provide us with a candidate for both a base category and a notion of families of tropical curves over tropical base schemes.

Furthermore, as opposed to the tropical schemes appearing in \cite{GiansiracusaGiansiracusa_tropicalschemes}, a particular feature of our construction is that the cone stacks really have no structure at the origin. The reason for this is that Giansiracusa and Giansiracusa deal with tropicalization of closed subschemes of toric varieties, while our framework is suitable for all logarithmic schemes, which may not admit a logarithmic embedding into a toric variety. In \cite[Section 15]{Lorscheid_tropicalschemes} Lorscheid proposes a partial solution for this discrepancy based on his theory of blueprints.

We {envision} that our precise stack-theoretic description of the moduli spaces of tropical curves will eventually foster the development of a robust tropical intersection theory that applies to all tropical moduli spaces. Such a theory is, in particular, expected to give a firm algebraic geometric foundation to the many enumerative applications of tropical geometry, as  exemplified by Ranganathan's proof  \cite{Ranganathan_ratcurtorvar&nonArch} of the Nishinou-Siebert correspondence theorem \cite{NishinouSiebert_correspondence} for rational curves on toric varieties. A first step in this direction has been undertaken by Gross in \cite{Gross_toroidalintersectiontheory}, where he develops a tropical intersection theory for toroidal embeddings without self-intersections and uses this theory to prove a correspondence theorem for relative rational descendent Gromov-Witten invariants. 

Since this article has appeared on the arXiv, several other projects have made use of the techniques developed here: In \cite{RanganathanSantosParkerWiseI, RanganathanSantosParkerWiseII}, Ranganathan, Santos-Parker, and the fourth author give a modular reinterpretation of the Vakil-Zinger resolution of the moduli space of elliptic stable maps to projective space and give a complete solution of the realizability problem for elliptic tropical stable maps using Speyer's well-spacedness condition. In \cite{MolchoWise}, Molcho and the fourth author study the properties of the logarithmic Picard group and its tropicalization in the framework of this article. As already hinted upon above, in \cite{HuszarMarcusUlirsch_troplogclutch&glue}, Huszar, Marcus, and the third author extend the moduli-theoretic framework developed here to the categories of pointed logarithmic schemes and extended cone complexes. Finally, expanding on Corollary \ref{cor_trop=anal}, in \cite{Ulirsch_nonArchArtin} the third author is giving a new proof of the main result of \cite{AbramovichCaporasoPayne_tropicalmoduli} comparing the Artin fan of $\calM_{g,n}^{log}$ with the Artin fan of $\calM_{g,n}^{trop}$.

\subsection*{Acknowledgements}
We thank Dan Abramovich, Mattias Jonsson, Eric Katz, Oliver Lorscheid, Diane Maclagan, Steffen Marcus, Margarida Melo, Dhruv Ranganathan, and Filippo Viviani for several important discussions during the development of this article. We also thank Zhi Jin for catching an inaccuracy in an earlier version of Figure \ref{figure_M12stack}.

Much of this article was written while the authors were participating in a SQuaRE at the American Institute of Mathematics. We thank AIM for hosting us and providing us with this unique opportunity. Parts of this project were also completed while R.~C., M.~C., and M.~U. were visiting the Fields Institute in Toronto, whose hospitality we gratefully acknowledge. The final revisions were undertaken when M.~U. was visiting the Max Planck Institute for Mathematics in the Sciences in Leipzig; he would like to thank his host Bernd Sturmfels. M.~C. and J.~W. were supported in part by NSA Young Investigators Grants. R.~C. gratefully acknowledges support from the Simons Foundation through Collaboration Grant 420720.
Many thanks are also due to the anonymous referee(s) whose comments improved the paper in many ways.


\bigskip

\part{Foundations of tropical moduli stacks}
\bigskip

\section{Geometric stacks in arbitrary contexts}\label{section_geometricstacks}

In \cite{Simpson_nstacks} Simpson (based on conversations with Walter) proposed the concept of a \emph{geometric stack}, which gives meaning to a geometric theory of stacks over any site that contains a reasonable class of ``smooth morphisms". The notion of a \emph{geometric context} formalizes these minimal requirements needed in order to build a meaningful theory of 
geometric spaces and geometric stacks (see e.g. \cite[Section 2.2]{ToenVaquie_algebrisation}, \cite[Section 1.3.2]{ToenVezzosi_HAGII}, and \cite[Section 2.2]{PortaYu_higherGAGA}).  This section is foundational, and the reader may wish to skip to Section 2, referring back as necessary.

\begin{definition}\label{def_context}
A \emph{geometric context} $(\bfC, \tau, \PP)$ consists of a category $\mathbf{C}$, a Grothendieck topology $\tau$ on $\mathbf{C}$, as well as a class $\PP$ of morphisms in $\bfC$ that fulfill the following axioms:
\begin{enumerate}[(i)]
\item The category $\bfC$ admits finite limits as well as finite disjoint unions. 
\item The topology $\tau$ on $\bfC$ is subcanonical, i.e. for every object $X$ in $\bfC$ the presheaf 
\begin{equation*}\begin{split}
h_X\mathrel{\mathop:}\bfC^{op}&\longrightarrow \mathbf{Sets}\\
Y&\longmapsto \Hom(Y, X)
\end{split}\end{equation*}
is a sheaf on the site $\bfC_\tau$. 
\item The class of morphisms in $\PP$ is stable under composition and base change, it contains all isomorphisms, and every $\tau$-covering has a refinement consisting of morphisms in $\PP$. 
\item Let $f\mathrel{\mathop:} Y\rightarrow X$ be a morphism in $\bfC$. If there is a $\tau$-covering $V_i\rightarrow Y$ such that the compositions $V_i\rightarrow X$ are in $\PP$, then $f$ is also a morphism in $\PP$. 
\end{enumerate} 
\end{definition}

\begin{remark}
This definition of a geometric context is a variant of the one given in \cite[Definition 2.3]{ToenVaquie_algebrisation}. In particular, unlike in \cite[Section 1.3.2]{ToenVezzosi_HAGII} and \cite[Section 2.2]{PortaYu_higherGAGA}, we do not use the language of $\infty$-categories and we only work with $1$-stacks (or $(2,1)$-sheaves). 
\end{remark}

By Yoneda's Lemma and Definition \ref{def_context} (ii), the association $X\mapsto h_X$ identifies the objects in $\bfC$ with a full and faithful subcategory of the category $\mathbf{Sh}(\bfC,\tau)$ of sheaves on the site $(\bfC,\tau)$. We say a sheaf on $\bfC$ is \emph{representable} if it is isomorphic to the sheaf $h_X$ associated to an object $X$ in $\bfC$. In a common abuse of notation we do not distinguish between an object $X$ and its associated sheaf $h_X$ and simply denote both by $X$.

A morphism $Y\rightarrow X$ is \emph{representable (by objects in $\bfC$)} if for every morphism $S\rightarrow X$ from an object $S$ in $\bfC$, the fibered product $Y\times_XS$ is representable by an object $T$ in $\bfC$. It is a standard fact that the diagonal morphism $\Delta_X\mathrel{\mathop:}X\rightarrow X\times X$ of a sheaf $X$ on $\bfC$ is representable if and only every morphism $S\rightarrow X$ from an object $S$ is representable (see e.g. \cite[Tag 045G]{stacks-project} or \cite[Proposition 7.13]{Vistoli_intersectiontheorystacks} for a proof in the algebraic context).

\begin{definition}
A \emph{geometric space} in the context $(\bfC,\tau,\PP)$ is a sheaf $X\mathrel{\mathop:}\bfC_\tau^{op}\rightarrow\mathbf{Sets}$ that fulfills the following conditions:
\begin{enumerate}[(i)]
\item The diagonal morphism $\Delta\mathrel{\mathop:}X\rightarrow X\times X$ is representable by objects in $\bfC$. 
\item There is a (necessarily representable) morphism $U\rightarrow X$ from an object $U$ in $\bfC$ that is a covering and in $\PP$. 
\end{enumerate}
\end{definition}

The category of geometric spaces is the full and faithful subcategory of $\mathbf{Sh}(C,\tau)$ whose objects are isomorphic to a geometric space. We may associate to every geometric space $X$ a category $(\bfC/X)$ fibered in groupoids over $\bfC$, whose fiber over an object $S$ in $\bfC$ consists of all morphisms $S\rightarrow X$ in $\bfC$. It is a stack over $(\bfC,\tau)$, since $h_X$ is sheaf on $\bfC_\tau$ (see \cite[Tag 0430]{stacks-project}). We say that a category fibered in groupoids over $\bfC$ is \emph{representable by a geometric space $X$} if it is equivalent to $(\bfC/X)$. Again, in a very common abuse of notation, we use the letter $X$ to denote both the sheaf $\bfC^{op}_\tau\rightarrow\mathbf{Sets}$ and the stack $(\bfC/X)$. 

A morphism $\calX\rightarrow \calY$ of categories fibered in groupoids over $\bfC$ is said to be \emph{representable by geometric spaces} if for every morphism $V\rightarrow\calY$ from a geometric space $V$ the ($2$-)fiber product $\calX\times_\calY V$ is representable by a geometric space $U$. Again, the diagonal morphism $\Delta_\calX\mathrel{\mathop:}\calX\rightarrow\calX\times\calX$ of a category fibered in groupoids is representable by geometric spaces if and only if every morphism $U\rightarrow \calX$ from a geometric space is representable by geometric spaces. 

\begin{definition}\label{def:geometric-stack}
A \emph{geometric stack} in the context $(\bfC, \tau, \PP)$ is a stack $\calX$ over the site $\bfC_\tau$ that fulfills the following two axioms:
\begin{enumerate}[(i)]
\item The diagonal morphism $\Delta\mathrel{\mathop:}\calX\rightarrow\calX\times\calX$ is representable by geometric spaces.
\item There is a (necessarily representable) morphism $U\rightarrow \calX$ from a geometric space $U$ that is a covering and in $\PP$. 
\end{enumerate}
\end{definition}

The $2$-category of geometric stacks in $(\bfC,\tau,\PP)$ is the full and faithful subcategory of the $2$-category of categories fibered in groupoids over $\bfC$ whose objects are equivalent to a geometric stack. Recall that a \emph{setoid} is a groupoid with at most one arrow between any two objects or, equivalently, a category that is equivalent to a set. A geometric stack is equivalent to (the stack associated to) a geometric space if and only if it is fibered in setoids. 

\begin{example}
Let $\bfC_\tau$ be the category $\mathbf{Sch}_{et}$ of schemes, endowed with the \'etale topology, and let $\PP$ be the class of smooth morphisms. Then our definitions yield the familiar theory of \emph{algebraic spaces} and \emph{algebraic stacks} (as in \cite{stacks-project}). 
\end{example}

\begin{example}\label{example_topstacks}
Let $\bfC$ be the category $\mathbf{Top}$ of topological spaces. There is a natural Grothendieck topology $\tau_{open}$ on $\mathbf{Top}$, whose coverings consists of coverings by open subsets.  In order to define the notion of a \emph{topological stack} there are many choices for $\PP$ (see \cite{Noohi_topstacksI}, \cite{Metzler_top&smoothstacks}, and \cite{BehrendGinotNoohiXu_stringtopology} for details). 
\end{example}



\begin{example}
Let $k$ be a non-Archimedean field and let $\bfC$ be the category $\mathbf{An}_k$ of Berkovich analytic spaces, endowed with the \'etale topology (see \cite{Berkovich_book} and \cite{Berkovich_etalecoho}). If we consider $\PP$ to be the class of universally submersive $G$-smooth morphisms (as defined in \cite{ConradTemkin_descent}), then geometric stacks are nothing but the non-Archimedean analytic stacks defined in \cite{Ulirsch_trop=quot}. We also refer the reader to \cite{Yu_Gromovcompactness} and \cite{PortaYu_higherGAGA}, where the authors propose an alternative version of this theory.
\end{example}

\begin{example}
One may enlarge the category of toric varieties to a category of \emph{toric varieties with torsion} that also includes split diagonalizable groups. Its topology is generated by torus-invariant open subsets and, taking $\PP$ be the class of smooth toric morphisms, we obtain a geometric context. \emph{Toric stacks} (in the sense of \cite{GeraschenkoSatriano_toricstacksI}) are then nothing but geometric stacks in this context.
\end{example}

A \emph{geometric groupoid} is a groupoid object $(U,R,s,t,c,i,e)$ in the category of geometric spaces, such that the two projection morphisms $s$ and $t$  are covers and in $\PP$. So $(U,R,s,t,c,i,e)$ consists of two geometric spaces $R$ and $U$ as well as 
\begin{itemize}
\item a \emph{source morphism} $s\mathrel{\mathop:} R\rightarrow U$ in $\PP$,
\item a \emph{target morphism} $t\mathrel{\mathop:} R\rightarrow U$ in $\PP$, 
\item a \emph{composition morphism} $c\mathrel{\mathop:} R\times_{s,U,t}R\rightarrow R$,
\item an \emph{inverse morphism} $i\mathrel{\mathop:} R\rightarrow R$, and 
\item a \emph{unit morphism} $e\mathrel{\mathop:} U\rightarrow R$, 
\end{itemize}
such that for every object $T$ in $\bfC$ the septuple $\big(U(T), R(T), s,t,c,i,e)$ is a groupoid in the category of sets --- in other words, describes a category in which all morphisms are invertible. We will  write a geometric groupoid simply as $(R\rightrightarrows U)$, avoiding the reference to all involved morphisms. 

A geometric groupoid $(R\rightrightarrows U)$ defines a category fibered in groupoids  whose fiber over  an object $T$ in $\bfC$ is given by $R(T)\rightrightarrows U(T)$. Its stackification is called the \emph{quotient stack} of $(R\rightrightarrows U)$  and is denoted by $\big[U/R\big]$ (see \cite[Tag 02ZO]{stacks-project}).

\begin{example}\label{example_discretegroupquotient}
Let $G$ be a (discrete) group acting on an object $X$ of $\mathbf{C}$ by automorphisms. The operation of $G$ defines a geometric groupoid $(G\times X\rightrightarrows X)$ as follows:
\begin{itemize}
\item the source morphism $s\mathrel{\mathop:}G\times X\rightarrow X$ by 
\begin{equation*}
(g,x)\longmapsto x \ ,
\end{equation*}
\item the target morphism $t\mathrel{\mathop:}G\times X\rightarrow X$ by 
\begin{equation*}
(g,x)\longmapsto g\cdot x \ ,
\end{equation*}
\item the composition morphism $c\mathrel{\mathop:}(G\times X)\times_X (G\times X)\rightarrow (G\times X)$ by 
\begin{equation*}
\big((g,x),(g',x')\big)\longmapsto (g'g,x) \ ,
\end{equation*}
\item the inverse morphism $i\mathrel{\mathop:}G\times X\rightarrow G\times X$ by 
\begin{equation*}
(g,x)\longmapsto (g^{-1},g\cdot x) \ ,
\end{equation*} and
\item the unit morphism by $e \mathrel{\mathop:}X\rightarrow G\times X$ by 
\begin{equation*}
x\longmapsto (1,x) \ .
\end{equation*}
\end{itemize}
We write $\big[X/G\big]$ for its quotient stack. {By the next proposition, the quotient $\big[X/G\big]$ is a geometric stack.}
\end{example}

\begin{proposition}\label{prop_groupoidquot}
Let $(R\rightrightarrows U)$ be a geometric groupoid. Then the quotient stack $\big[U/R\big]$ is a geometric stack over $(\bfC,\tau,\PP)$. 
\end{proposition}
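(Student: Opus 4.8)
The plan is to verify the two axioms of Definition~\ref{def:geometric-stack} for the quotient stack $\calX=\big[U/R\big]$, following the classical argument in the algebraic setting (cf.\ the treatment of quotient stacks in \cite{stacks-project}). By construction $\calX$ is already a stack over $(\bfC,\tau)$, being the stackification of the presheaf of groupoids $T\mapsto\big(R(T)\rightrightarrows U(T)\big)$, so the entire content lies in the two representability statements. The structural fact that drives both is that the natural morphism $p\colon U\to\calX$ satisfies a canonical isomorphism $U\times_\calX U\cong R$, under which the two projections correspond to the source morphism $s$ and the target morphism $t$.

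First I would establish axiom~(i), representability of the diagonal by geometric spaces. By the criterion recorded in the excerpt it suffices to show that every morphism from a geometric space is representable by geometric spaces, which amounts to computing, for a geometric space $T$ and a pair of objects $\xi,\xi'\in\calX(T)$ classified by a morphism $T\to\calX\times\calX$, the fiber product $\calX\times_{\calX\times\calX}T$. This fiber product is the Isom sheaf $\underline{\Isom}_T(\xi,\xi')$. Working $\tau$-locally on $T$, the objects $\xi,\xi'$ are induced by morphisms $a,b\colon T\to U$, and then
\begin{equation*}
\underline{\Isom}_T(\xi,\xi')\;\cong\;T\times_{(a,b),\,U\times U,\,(s,t)}R.
\end{equation*}
Since $R$, $U$ and $T$ are geometric spaces and $\bfC$ admits finite limits, this fiber product is again a geometric space (geometric spaces being closed under fiber products); as the property of being a geometric space descends along the subcanonical topology $\tau$, these local identifications glue to the desired global conclusion.

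Next I would produce the presentation required by axiom~(ii), namely that $p\colon U\to\calX$ is representable, a covering, and in $\PP$. For a morphism $T\to\calX$ from a geometric space $T$ one may, after refining by a $\tau$-cover of $T$, assume it is induced by a morphism $a\colon T\to U$; then the isomorphism $U\times_\calX U\cong R$ yields
\begin{equation*}
U\times_\calX T\;\cong\;R\times_{t,\,U,\,a}T,
\end{equation*}
whose projection to $T$ is the base change of the target morphism $t$ along $a$. Because $t$ lies in $\PP$ and is a $\tau$-covering, and because both $\PP$ and coverings are stable under base change (Definition~\ref{def_context}(iii) together with the fact that $\tau$ is a topology), this base change of $p$ is a covering lying in $\PP$. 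Hence $p$ is representable by geometric spaces, a covering, and in $\PP$, establishing axiom~(ii).

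The main obstacle is the repeated appeal to ``$\tau$-locally'' in both steps. Because $\calX$ is a \emph{stackification}, a morphism $T\to\calX$ from a geometric space $T$ need not literally be induced by a morphism $T\to U$, but only becomes so after pullback along a suitable $\tau$-cover of $T$. The delicate part of the argument is therefore the descent: one must check that representability (by geometric spaces), together with membership in $\PP$ and the covering property, all established after such a refinement, genuinely descend back to $T$. This rests on the subcanonicity of $\tau$ and on the local-on-the-source detection of $\PP$ recorded in Definition~\ref{def_context}(iv). Once this descent is in hand, verifying the isomorphism $U\times_\calX U\cong R$ and the remaining identities is a formal manipulation identical to the algebraic case.
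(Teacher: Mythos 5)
Your proof is correct and follows essentially the same route as the paper, which does not write out an argument but instead cites the standard proof (\cite[Tag 04TK]{stacks-project} in the algebraic context and \cite[Proposition 2.14]{Ulirsch_trop=quot} in the analytic one): the identification $U\times_{\calX}U\cong R$, the local lifting of $T\to\calX$ through $U$ after a $\tau$-refinement, and the descent of representability, the covering property, and membership in $\PP$ via subcanonicity and Definition~\ref{def_context}(iii)--(iv) are exactly the ingredients of that standard argument. You also correctly isolate the only genuinely delicate step---that these properties, verified after refinement, descend back to $T$---which is precisely what the cited references establish.
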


The proof of Proposition \ref{prop_groupoidquot} is well-known in particular contexts. We refer the reader to \cite[Tag 04TK]{stacks-project} for a proof in the algebraic context and to \cite[Proposition 2.14]{Ulirsch_trop=quot} for a proof in the context of non-Archimedean analytic geometry. 

Conversely, every geometric stack $\calX$ admits a {\em groupoid presentation}, by which we mean an equivalence $\big[U/R\big]\simeq \calX$ for a geometric groupoid $(R\rightrightarrows U)$. This may be seen as follows (see e.g. \cite[Tag 04T3]{stacks-project} for a proof for algebraic stacks): Let $U\rightarrow \calX$ be a morphism from an object $U$ in $\bfC$ onto $\calX$ that is a cover and in $\PP$. Then the fiber product $U\times_\calX U$ is representable by a geometric space $R$. The two projections $R\rightrightarrows U$ together with the natural composition map 
\begin{equation*}
R\times_U R\simeq U\times_\calF U\times_\calF U \xlongrightarrow{\ \ \ \ \pr_{02}\ \ \ } U\times_\calF U\simeq R \ ,
\end{equation*}
make $(R\rightrightarrows U)$ into a geometric groupoid, the \emph{symmetry groupoid}, such that $\big[U/R\big]\simeq \calX$. 

\begin{definition}
A \emph{morphism $f\mathrel{\mathop:}(\bfC,\tau,\PP)\rightarrow(\bfD,\sigma,\QQ)$ of geometric contexts} is a functor $u\mathrel{\mathop:}\bfD\rightarrow\bfC$ that respects fiber products, sends covers in $\sigma$ to covers in $\tau$ and morphisms in $\QQ$ to morphisms in $\PP$. 
\end{definition}

Given a geometric stack over $\bfD$ and a morphism of contexts $f$ as above, one may define a \emph{pullback $f^\ast\calX$} of $\calX$ along $f$; see \cite[Tag 04WA]{stacks-project} for details on this rather technical construction.  However, it is relatively easy to describe the pullback in the presence of a groupoid presentation:

\begin{proposition}\label{prop_pullback}
Let $f\mathrel{\mathop:}(\bfC,\tau,\PP)\rightarrow(\bfD,\sigma,\QQ)$ be a morphism of geometric contexts, let $\calX$ a geometric stack in the context $(\bfD,\sigma,\QQ)$, and $\big[U/R\big]\simeq \calX$ a geometric groupoid presentation of $\calX$. Then the induced groupoid $(f^\ast R\rightrightarrows f^\ast U)$ is geometric and we have a natural equivalence
\begin{equation*}
\big[f^\ast R/f^\ast U\big]\simeq f^\ast \calX \ .
\end{equation*}
\end{proposition}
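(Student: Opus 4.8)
The plan is to deduce everything from two structural properties of the pullback functor $f^\ast$ of \cite[Tag 04WA]{stacks-project}: that it is \emph{left exact}, i.e. preserves finite limits (reflecting the hypothesis that $u\colon\bfD\to\bfC$ respects fibered products), and that it is a \emph{left adjoint} to the restriction functor $f_\ast$, so that it commutes with stackification and sends a representable $h_D$, for $D\in\bfD$, to $h_{u(D)}$. Granting these, the argument is essentially formal: left exactness transports the groupoid structure and the left-adjoint property transports the quotient.

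First I would record the key formal fact, for \emph{any} groupoid object $(\calH\rightrightarrows\calG)$ in sheaves on $\bfD$: there is a natural equivalence $f^\ast[\calG/\calH]\simeq[f^\ast\calG/f^\ast\calH]$. The groupoid axioms for $(\calH\rightrightarrows\calG)$ are encoded by diagrams built from the structure morphisms and the fibered products $\calH\times_\calG\calH$ and $\calH\times_\calG\calH\times_\calG\calH$; since $f^\ast$ preserves finite limits it carries these to the analogous fibered products of $f^\ast\calH$ over $f^\ast\calG$, so $(f^\ast\calH\rightrightarrows f^\ast\calG)$ is again a groupoid object. Writing $[\calG/\calH]$ as the stackification of the associated groupoid prestack and using that $f^\ast$ commutes with stackification while intertwining the groupoid-prestack construction with the correspondence above, stackification yields the claimed natural equivalence. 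Applied directly to $(R\rightrightarrows U)$ this already gives the desired $f^\ast\calX\simeq[f^\ast U/f^\ast R]$.

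It then remains to check that $(f^\ast R\rightrightarrows f^\ast U)$ is a \emph{geometric} groupoid. As $U\in\bfD$, the object $f^\ast U=h_{u(U)}$ is representable, hence a geometric space. To see that $f^\ast R$ is a geometric space I would choose an equivalence-relation presentation $R\simeq[W/E]$ with $W,E\in\bfD$, which exists because $R$ is a geometric space with representable diagonal; the formal fact above gives $f^\ast R\simeq[u(W)/u(E)]$, and since $f^\ast$ preserves the monomorphism $E\hookrightarrow W\times W$ while $u$ sends the $\sigma$-covers in $\QQ$ to $\tau$-covers in $\PP$, the right-hand side is the quotient of an equivalence relation by maps that are covers in $\PP$, hence a geometric space. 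Finally, to see that $f^\ast s$ and $f^\ast t$ are covers in $\PP$, pick an atlas $W\to R$ in $\bfD$: the composite $W\to R\xrightarrow{\,s\,}U$ is a $\QQ$-cover of objects of $\bfD$, so $u(W)\to u(U)=f^\ast U$ is a $\tau$-cover lying in $\PP$. Now $u(W)\to f^\ast R$ is a covering in $\PP$ whose composite with $f^\ast s$ is this covering in $\PP$, so axiom (iv) of Definition \ref{def_context} (extended to representable morphisms of geometric spaces by base change to objects of $\bfC$) forces $f^\ast s\in\PP$, and $f^\ast s$ is itself a covering since its composite with a covering is one; the same applies to $f^\ast t$.

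I expect the main obstacle to lie not in this formal deduction but in securing the two structural inputs of the first paragraph, namely identifying the abstract pullback of \cite[Tag 04WA]{stacks-project} with a functor that is simultaneously left exact, compatible with stackification, and explicit on representables. Once these compatibilities are in hand the rest is bookkeeping: left exactness preserves the groupoid diagrams, the left-adjoint property preserves the quotient, and axiom (iv) of Definition \ref{def_context} is precisely what upgrades ``in $\PP$ after a covering'' to ``in $\PP$'', so that the source and target of $(f^\ast R\rightrightarrows f^\ast U)$ become genuine covers in $\PP$ and the groupoid is geometric.
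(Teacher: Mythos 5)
Your proposal is correct and coincides with the proof the paper intends: the paper declares the argument ``completely formal'' and leaves it to the reader, pointing to \cite[Proposition 2.19]{Ulirsch_trop=quot} for the analytification instance, and your two structural inputs---left exactness of $f^\ast$ (so groupoid objects and equivalence relations pull back to groupoid objects and equivalence relations) together with compatibility of $f^\ast$ with stackification (so quotient stacks pull back to quotient stacks), with axiom (iv) of Definition~\ref{def_context} upgrading $f^\ast s$ and $f^\ast t$ to covers in $\PP$---are exactly that formal argument. One small repair: in a geometric groupoid the atlas $U$ is only a geometric space, not necessarily an object of $\bfD$, so instead of writing $f^\ast U = h_{u(U)}$ you should apply to $U$ the same equivalence-relation presentation you used for $R$ (and refine the composite $W \to R \to U$ through an atlas of $U$ when verifying that $f^\ast s$ lies in $\PP$); your own argument for $R$ covers this verbatim.
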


So, in particular, by Proposition \ref{prop_groupoidquot} the pullback $f^\ast\calX$ is a geometric stack. The proof of Proposition \ref{prop_pullback} is completely formal and will be left to the avid reader; a particular instance of it can for example be found in \cite[Proposition 2.19]{Ulirsch_trop=quot} in the special case of the analytification functor for non-Archimedean analytic stacks. 


\section{Cone spaces and cone stacks}\label{section_conestacks}
In this section we describe the structure of a geometric context on the category of rational polyhedral cone complexes and, using the language developed in Section \ref{section_geometricstacks}, the notions of cone spaces and cone stacks. This is the setting in which we will describe the tropical moduli stack in Section \ref{section_geometrization}. In Section \ref{section_combinatorialconestacks}  we provide an alternative combinatorial point of view on cone spaces and cone stacks that, in particular, naturally incorporates the notion of \emph{generalized cone complexes}, as introduced in \cite{AbramovichCaporasoPayne_tropicalmoduli}.

\subsection{Cone spaces and cone stacks} A \emph{rational polyhedral cone} is a pair $(N,\sigma)$ consisting of a finitely generated free abelian group $N$ and a finite intersection of half spaces in $N_\R=N\otimes\R$.
We assume rational polyhedral cones to be {\em strictly convex}, i.e. $\sigma$ contains no nontrivial linear spaces of $N_\R$.
A $\Z$-\emph{linear morphism} $(N,\sigma)\rightarrow (N',\sigma')$ of rational polyhedral cones is a homomorphism $f\mathrel{\mathop:}N\rightarrow N'$ of abelian groups such that $f(\sigma)\subseteq \sigma'$. 

Given a rational polyhedral cone $(N,\sigma)$, we write $M$ for the dual lattice $\Hom_\Z(N,\Z)$ and denote by $\langle\:-\:,\:-\:\rangle$ the duality pairing between $M$ and $N$. We may associate to a rational polyhedral cone $(N,\sigma)$ a {\em toric monoid} $S_\sigma=\sigma^\vee\cap M$ where we denote by
\begin{equation*}
\sigma^\vee=\big\{v\in M_\R\: \big\vert \: \langle v,u\rangle \ge 0 \textrm{ for all } u\in \sigma\big\}
\end{equation*}
the {\em dual cone} of $\sigma$. It is well-known (see, e.g., \cite[Proposition 2.2]{Ulirsch_functroplogsch}) that the association $(N,\sigma)\mapsto S_\sigma$ yields an equivalence between the category of rational polyhedral cones and the category of toric monoids. 

Under this equivalence, we have that $\Span{\sigma}=N_\R$ if and only if the dual monoid $S_\sigma$ is \emph{sharp}, i.e., if it has no non-trivial units. Denote the category of rational polyhedral cones $(N,\sigma)$ such that $S_\sigma$ is sharp with $\Z$-linear morphisms by $\RPC$; from now on we refer to an object in $\RPC$ simply as a \emph{rational polyhedral cone} or a \emph{cone}.  There is a faithful {\em topological realization} functor $\RPC\rightarrow \mathbf{Top}$ sending $(N,\sigma)$ to $\sigma$, thought of as a topological space.

A morphism $\tau\rightarrow\sigma$ in $\RPC$ is said to be a \emph{face morphism} if it induces an isomorphism of $\tau$ onto a face of $\sigma$. A \emph{proper face morphism} $\tau\rightarrow\sigma$ is a face morphism that is not an isomorphism onto $\sigma$, i.e., $\tau$ is isomorphic to a proper face of $\sigma$.

In the following Definition \ref{definition_conecomplexes} we give a new description of the classical notion of a \emph{(rational polyhedral) cone complex}, as introduced in \cite[Definition II.1.5]{KKMSD_toroidal} and, e.g., in \cite[Section 2.2]{AbramovichCaporasoPayne_tropicalmoduli} (see Remark \ref{rem:acp-def}). 

\begin{definition}\label{definition_conecomplexes}
A \emph{(rational polyhedral) cone complex} $\Sigma$ is a collection of cones $\mathcal{C}=\{\sigma_\alpha\}$ in $\RPC$ together with a collection of face morphisms $\mathcal{F}=\{\phi_{\alpha\beta}\mathrel{\mathop:}\sigma_\alpha\rightarrow\sigma_\beta\}$, closed under composition, such that the following three axioms are fulfilled:
\begin{enumerate}[(i)]
\item For each $\sigma_\alpha\in\mathcal{C}$, the identity map $\operatorname{id}\colon \sigma_\alpha\rightarrow\sigma_\alpha$ is in $\mathcal{F}$.
\item Every face of a cone $\sigma_\beta$ is the image of exactly one face morphism $\phi_{\alpha\beta}$ in $\mcF$.
\item There is at most one morphism $\sigma_\alpha\rightarrow\sigma_\beta$ between two cones $\sigma_\alpha$ and $\sigma_\beta$ in $\mcF$.
\end{enumerate}
\end{definition}

A \emph{morphism $f\mathrel{\mathop:}\Sigma_1\rightarrow\Sigma_2$ of cone complexes} is a choice, for each $\sigma_{\alpha_1}\in\mcC_1$, of a cone $\sigma_{\alpha_2}\in\mcC_2$ and a morphism $f_{\alpha_1\alpha_2}\mathrel{\mathop:}\sigma_{\alpha_1}\rightarrow\sigma_{\alpha_2}$ that does not factor through any proper face morphism $\tau\rightarrow\alpha_2$, such that for every $\phi_{\alpha_1\beta_1}$ in $\mcF_1$, there exists $\phi_{\alpha_2\beta_2}\colon \sigma_{\alpha_2} \rightarrow\sigma_{\beta_2}$ in $\mcF_2$
making the diagram
\begin{equation*}\xymatrix{
\sigma_{\alpha_1} \ar[r]^-{f_{\alpha_1\alpha_2}} \ar[d]_{\phi_{\alpha_1\beta_1}} & \sigma_{\alpha_2} \ar[d]^{\phi_{\alpha_2\beta_2}} \\
\sigma_{\beta_1} \ar[r]^-{f_{\beta_1\beta_2}} & \sigma_{\beta_2}
} \end{equation*}
commute. 

Denote the category of (rational polyhedral) cone complexes by $\RPCC$. There is again a topological realization functor $\RPCC\to\mathbf{Top}$ associating to a cone complex $\Sigma$ its \emph{topological realization} $\vert\Sigma\vert$, the colimit (in the category $\mathbf{Top}$) of the diagram of spaces defined by $\Sigma$. Note that $\Sigma_1\rightarrow\Sigma_2$ naturally induces a continuous map $\vert\Sigma_1\vert\rightarrow \vert\Sigma_2\vert$ that restricts to the defining maps on the faces. Condition (ii) in Definition \ref{definition_conecomplexes} 
implies that for all $\alpha$ the natural map $\sigma_\alpha\rightarrow \vert\Sigma\vert$ is injective.
We refer to the collection $\{\sigma_\alpha\}$ as the \emph{faces} of $\Sigma$.

\begin{remark}\label{rem:acp-def} 
The category $\RPCC$ is equivalent to the category of {\em rational polyhedral cone complexes}, or simply {\em cone complexes}, that was defined  in \cite[\S2.2]{AbramovichCaporasoPayne_tropicalmoduli}.  The definition there, mildly rephrased, is that a cone complex is a topological space $\vert\Sigma\vert$ 
presented as the colimit of a diagram of rational polyhedral cones and face morphisms, with the property that for each cone $\sigma$ the natural map $\sigma\to \vert \Sigma\vert$ is injective.  
A morphism of cone complexes $f\colon \Sigma\to\Sigma'$ is a morphism of topological spaces with the property that for every cone $\sigma$ in $\Sigma$, there is a cone $\sigma'$ in $\Sigma'$ such that $f|_\sigma$ factors through a morphism $\sigma \to \sigma'$.  The equivalence is obtained by associating to $\Sigma$ in $\RPCC$ the topological realization $|\Sigma|$ together with the category $\Sigma$.
\end{remark}

There is a full and faithful embedding $\RPC\rightarrow \RPCC$ that associates to a cone $\sigma$ the cone complex consisting of all of its faces, which we again denote by $\sigma$. Note that for every cone $\sigma_\alpha$ of a cone complex $\Sigma$, there is a natural morphism $\sigma_\alpha\rightarrow\Sigma$. As $\sigma_\alpha$ ranges over the cones of $\Sigma$, the morphisms $\sigma_\alpha\rightarrow\Sigma$ generate a Grothendieck topology $\tau_{face}$ on $\RPCC$. We refer to this topology as the \emph{face topology} and write $\big(\RPCC,\tau_{face}\big)$ for the corresponding site. 

The face topology on $\RPCC$ is very coarse. As a consequence we have the following useful characterization of stacks.

\begin{proposition} \label{prop:cfg-stack}
The $2$-category of stacks on $\RPCC$ is equivalent to the $2$-category of categories fibered in groupoids on $\RPC$ via the natural restriction.
\end{proposition}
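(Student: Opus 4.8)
The plan is to realize the restriction functor as one half of an equivalence, with an explicit quasi-inverse given by a right Kan extension along the embedding $\iota\colon\RPC\hookrightarrow\RPCC$. First I would record two features of the face topology, both immediate from Definition~\ref{definition_conecomplexes}. On one hand, for a single cone $\sigma$ viewed in $\RPCC$ the generating covering family $\{\sigma_\alpha\to\sigma\}$ of its faces contains the identity $\id\colon\sigma\to\sigma$, so the only $\tau_{face}$-covering sieve of $\sigma$ is the maximal one; hence $\tau_{face}$ induces the trivial topology on $\RPC$, and ``stacks on $\RPC$'' are simply categories fibered in groupoids. On the other hand, for a general cone complex $\Sigma$ the canonical covering family is exactly the family $\{\sigma_\alpha\to\Sigma\}$ of its faces, and these are closed under the fiber products occurring in the associated \v{C}ech nerve: the fiber product $\sigma_\alpha\times_\Sigma\sigma_\beta$ in $\RPCC$ is again a face of $\Sigma$, namely the common face $\sigma_\alpha\cap\sigma_\beta$ (this uses axioms (ii) and (iii), which prevent the self-intersections permitted for generalized cone complexes). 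Consequently the \v{C}ech nerve of the face covering is the nerve of the face poset of $\Sigma$.

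With these observations I would define the candidate inverse $E$, sending a category fibered in groupoids $\calY$ on $\RPC$ to the category fibered in groupoids on $\RPCC$ with
\[
E(\calY)(\Sigma)\;=\;\varprojlim_{\sigma_\alpha}\calY(\sigma_\alpha),
\]
the $2$-limit taken over the face poset of $\Sigma$. A morphism $f\colon\Sigma_1\to\Sigma_2$ of cone complexes sends each face of $\Sigma_1$ to a face of $\Sigma_2$ together with a compatible cone morphism, and thereby induces a functor of face posets and a morphism $E(\calY)(\Sigma_2)\to E(\calY)(\Sigma_1)$. By the first paragraph the terms and fiber products of the face covering are all faces, so $E(\calY)(\Sigma)$ is by construction the descent $2$-limit for the covering $\{\sigma_\alpha\to\Sigma\}$; since such coverings generate $\tau_{face}$, and it suffices to verify the stack axiom on a generating pretopology, the prestack $E(\calY)$ satisfies descent and is a stack.

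It then remains to check that $E$ is quasi-inverse to restriction. In one direction, for a cone $\sigma$ the object $\sigma$ is terminal in its own face poset, so $\varprojlim_{\tau\leq\sigma}\calY(\tau)\simeq\calY(\sigma)$ and the restriction of $E(\calY)$ to $\RPC$ is canonically equivalent to $\calY$. In the other direction, if $\calX$ is a stack on $\RPCC$ then descent along the face covering $\{\sigma_\alpha\to\Sigma\}$ furnishes a canonical equivalence $\calX(\Sigma)\simeq\varprojlim_{\sigma_\alpha}\calX(\sigma_\alpha)=E(\calX|_{\RPC})(\Sigma)$, natural in $\Sigma$. Finally I would upgrade these pointwise equivalences to an equivalence of $2$-categories by promoting $E$ to $1$-morphisms and $2$-morphisms---each of which is again determined by its values on faces and reconstructed by the same $2$-limit---so that restriction and $E$ induce mutually inverse equivalences of the relevant $\Hom$-groupoids.

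The main obstacle is the fiber-product computation underlying the first paragraph: one must verify carefully that in the categorical (not merely topological) fiber product in $\RPCC$, two faces of $\Sigma$ meet exactly in their common face, and that the higher terms of the \v{C}ech nerve collapse onto the face poset, so that the descent $2$-limit genuinely coincides with the $2$-limit over faces. Once this identification is secured, the remainder of the argument is formal, reducing to the fact that $\Sigma$ is the colimit of its faces and that a stack converts this colimit into the limit computing $E$.
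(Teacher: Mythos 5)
Your overall architecture is sound, and it is essentially an explicit unwinding of the paper's two-sentence proof: the paper observes that (a) every cone complex is covered by cones, so by the standard comparison of a site with a dense subsite, restriction identifies stacks on $\RPCC$ with stacks on $\RPC$, and (b) a cone has no nontrivial covers in the face topology, so stacks on $\RPC$ are just categories fibered in groupoids. Your functor $E$ is exactly the quasi-inverse that the paper records afterwards in Remark~\ref{remark_explicitextension}, and your observation about the induced topology on a single cone being trivial is correct. However, the step that you yourself isolate as the crux is false as stated: axioms (ii) and (iii) of Definition~\ref{definition_conecomplexes} forbid two \emph{distinct} face morphisms between the same pair of cones (this is what excludes the waffle cone of Example~\ref{example_loopcone}), but they do \emph{not} force two faces of $\Sigma$ to intersect in a single common face. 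Concretely, glue two copies $\sigma_1 \cong \sigma_2 \cong \R^2_{\geq 0}$ along \emph{both} of their boundary rays $\rho_1, \rho_2$ (the cone over a two-edge cycle; this is the dual cone complex of two curve components meeting at two nodes). Each ray maps to each $2$-cone by exactly one face morphism and each face of each $\sigma_i$ is the image of exactly one face morphism, so all axioms of Definition~\ref{definition_conecomplexes} hold; yet the fiber product $\sigma_1 \times_\Sigma \sigma_2$ is represented by the subcomplex $\rho_1 \cup_0 \rho_2$, the two rays joined at the origin, which is not a cone. Hence the \v{C}ech nerve of the face covering is \emph{not} the nerve of the face poset.

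The gap is repairable without changing your strategy, because descent does not require double intersections to be faces, only that the limit over the covering sieve reduce to the limit over the face category. Two routes: either (1) note that $\sigma_\alpha \times_\Sigma \sigma_\beta$ is always the union of the common faces, and that every morphism from a cone $T$ to $\Sigma$ factors through a unique minimal face (this is built into the definition of morphisms of cone complexes, which requires $f_{\alpha_1\alpha_2}$ not to factor through a proper face), which makes the face category initial in the covering sieve, so the descent $2$-limit agrees with your face-poset $2$-limit; or (2) bypass the fiber-product computation entirely, as the paper does, by invoking the comparison lemma for the dense subsite $\RPC \subset \RPCC$ and then only checking the (correct) triviality of covers of a single cone. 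The remainder of your argument --- $E(\calY)(\sigma) \simeq \calY(\sigma)$ via the terminal object of the face poset, the counit equivalence from descent, and the upgrade to a $2$-equivalence --- is fine once this identification is secured.
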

\begin{proof}
Since every rational polyhedral cone complex has a cover by rational polyhedral cones, stacks on $\RPCC$ are equivalent to stacks on $\RPC$.  But rational polyhedral cones have no nontrivial covers in the face topology, so stacks on $\RPC$ are the same as categories fibered in groupoids on $\RPC$.
\end{proof}

\begin{remark}\label{remark_explicitextension}
Given a category fibered in groupoids $\mathcal F$ over $\RPC$, the fiber $\mathcal F(\Sigma)$ of its extension to a stack over $\RPCC$ over a cone complex $\Sigma$ is the groupoid of collections $(\xi_\sigma)_{\sigma \in \Sigma}$ together with a collection of isomorphisms $\xi_\tau \cong i^\ast \xi_\sigma$ for every face morphism $i : \tau\hookrightarrow\sigma$ in $\Sigma$ that are compatible with composition of face morphisms.
\end{remark}

\begin{definition}
A morphism $f\mathrel{\mathop:}\Sigma_1\rightarrow\Sigma_2$ of cone complexes is called \emph{strict} if the morphisms $f_{\alpha_1\alpha_2}\mathrel{\mathop:}\sigma_{\alpha_1}\rightarrow\sigma_{\alpha_2}$ are all isomorphisms. 
\end{definition}

In the following we write $\SS$ for the class of strict morphisms. 

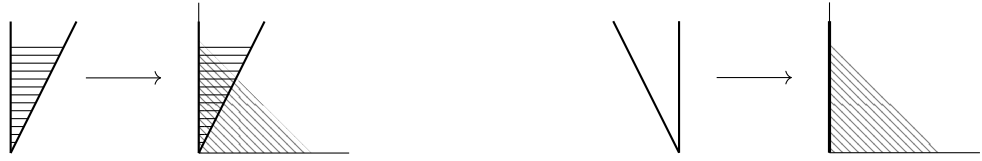
\begin{figure}
\begin{minipage}{0.49\textwidth}\centering
\begin{tikzpicture}

\fill[thick, pattern=horizontal lines] (0,0)-- (0.75, 1.5) -- (0,1.5) -- (0,0);
\draw[thick] (0,0) -- (0,1.75);
\draw[thick] (0,0) -- (0.875,1.750);

\draw[->] (1,1) -- (2,1);

\fill[pattern color=gray,pattern=north west lines] (2.5,0) -- (4,0) -- (2.5,1.5) -- (2.5,0);
\draw (2.5,0) -- (2.5,2);
\draw (2.5,0) -- (4.5,0);

\fill[thick, pattern=horizontal lines] (2.5,0) -- (3.25,1.5) -- (2.5,1.5) -- (2.5,0);
\draw[thick] (2.5,0) -- (2.5,1.75);
\draw[thick] (2.5,0) -- (3.375,1.75);

\end{tikzpicture}
\end{minipage}
\begin{minipage}{0.49\textwidth}\centering
\begin{tikzpicture}
\draw[thick] (0,0) -- (0,1.75);
\draw[thick] (0,0) -- (-0.875,1.75);

\draw[->] (0.5,1) -- (1.5,1);

\fill[pattern color=gray,pattern=north west lines] (2,0) -- (3.5,0) -- (2,1.5) -- (2,0);
\draw (2,0) -- (2,2);
\draw (2,0) -- (4,0);

\draw[very thick] (2,0) -- (2,1.75);
\end{tikzpicture}
\end{minipage}
\caption{A non-strict morphism (on the left) and a strict morphism (on the right).}
\end{figure}

\begin{proposition}
The triple $\big(\RPCC,\tau_{face},\SS\big)$ defines a geometric context. 
\end{proposition}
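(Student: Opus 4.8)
The plan is to verify the four axioms of Definition~\ref{def_context} for the triple $\big(\RPCC, \tau_{face}, \SS\big)$, treating each in turn. Most of the work is bookkeeping about cone complexes, so I would keep the argument brisk and lean on the equivalence of Proposition~\ref{prop:cfg-stack} wherever it simplifies matters.

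First I would address axiom (i), the existence of finite limits and finite disjoint unions in $\RPCC$. Finite disjoint unions are immediate: the disjoint union of two cone complexes is obtained by taking the disjoint union of their collections of cones and face morphisms, and this clearly satisfies the three conditions of Definition~\ref{definition_conecomplexes}. For finite limits it suffices (since a terminal object is given by the trivial cone $\{0\}$) to produce fiber products. Given $\Sigma_1 \to \Sigma_3 \leftarrow \Sigma_2$, I would construct $\Sigma_1 \times_{\Sigma_3} \Sigma_2$ cone-by-cone: for cones $\sigma_1 \to \sigma_3 \leftarrow \sigma_2$ mapping compatibly, the fiber product is computed on the level of the toric monoids via the equivalence $(N,\sigma)\mapsto S_\sigma$ cited in the excerpt, where it corresponds to a pushout of sharp toric monoids; one then checks the resulting collection of such fiber-product cones, with the induced face morphisms, forms a cone complex. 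The \emph{sharpness} and \emph{strict convexity} conditions need a moment of care here, but the monoid description makes this routine.

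For axiom (ii), I would invoke Proposition~\ref{prop:cfg-stack}, or rather its underlying observation: the representable presheaf $h_\Sigma$ restricted to $\RPC$ is automatically a sheaf for $\tau_{face}$, because cones have no nontrivial covers in the face topology, and a morphism out of a cone complex into $\Sigma$ is determined by its restrictions to the cones $\sigma_\alpha$ together with compatibility on faces---which is exactly the sheaf gluing condition dictated by the generators $\sigma_\alpha \to \Sigma$ of $\tau_{face}$. This is precisely the content of Remark~\ref{remark_explicitextension}, so subcanonicity follows formally. For axioms (iii) and (iv) I would check that strict morphisms, i.e. those that are isomorphisms on each cone, are closed under composition and base change, contain isomorphisms, and satisfy the local-on-target conditions; closure under base change uses that an isomorphism of cones pulls back to an isomorphism in the fiber-product construction of axiom (i). The refinement requirement in (iii) holds because the generating covers $\{\sigma_\alpha \to \Sigma\}$ of $\tau_{face}$ are themselves strict (each $\sigma_\alpha$ maps isomorphically to a face), and every $\tau_{face}$-cover is generated by such face inclusions.

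The main obstacle I anticipate is axiom (i), specifically confirming that the fiber product of cone complexes genuinely lands in $\RPCC$ rather than some larger category---that the fibered cones remain strictly convex sharp rational polyhedral cones and that the face-morphism data satisfies conditions (ii) and (iii) of Definition~\ref{definition_conecomplexes} without redundancy. The cleanest route is to perform the whole construction in the dual category of toric monoids, where fiber products of cones become pushouts and sharpness is a checkable algebraic condition, and then transport back via the stated equivalence; axioms (ii)--(iv), by contrast, should reduce to short formal verifications once the face topology and the class $\SS$ are unwound.
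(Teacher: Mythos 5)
Your proof is correct and follows essentially the same route as the paper: the paper likewise reduces axiom (i) to fiber products, constructs them cone-by-cone in analogy with gluing affine schemes (writing $\sigma_1\times_\tau\sigma_2=\{(s_1,s_2)\in\sigma_1\times\sigma_2 \mid \phi_1(s_1)=\phi_2(s_2)\}$ inside the relevant span in $(N_1\times_{N_\tau}N_2)\otimes\R$, which is exactly the dual of your monoid-pushout computation), derives (ii) immediately from the local definition of a cone complex, and dispatches (iii) and (iv) with your same observations---strict morphisms are stable under composition and base change, contain isomorphisms and the face morphisms generating $\tau_{face}$, and strictness is local on the faces. One small correction that does not affect the substance: the condition in axiom (iv) is locality on the \emph{source} (a covering $V_i\to Y$), not a local-on-target condition, though the verification you intend---that being strict can be checked on the cones of a cover---is precisely the paper's.
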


\begin{proof}
We  show properties (i)-(iv) from Definition \ref{def_context}. For (i) it is enough to show that $\RPCC$ admits fiber products. Let $\Sigma_i\rightarrow T$ be two morphisms of cone complexes. In order to construct the fibered product $\Sigma_1\times_T\Sigma_2$ along two morphisms $\phi_i\mathrel{\mathop:}\Sigma_i\rightarrow T$ we may assume that $\Sigma_i=(N_i,\sigma_i)$ and $T=(N_\tau,\tau)$ are cones (and then proceed in analogy with the construction from affine schemes to schemes in classical scheme-theoretic algebraic geometry). In this case, the fiber product $\sigma_1\times_\tau\sigma_2$ lives in the the vector space spanned by the lattice $N_1\times_{N_\tau} N_2$, and it is given by  
\begin{equation*}
\sigma_1\times_\tau\sigma_2=\big\{(s_1,s_2)\in\sigma_1\times\sigma_2\:\big\vert\: \phi_1(s_1)=\phi_2(s_2)\big\} \ .
\end{equation*}

Property (ii), i.e. that for every cone complex $\Sigma$ the presheaf 
\begin{equation*}\begin{split}
h_\Sigma\mathrel{\mathop:}\RPCC^{op}&\longrightarrow\mathbf{Sets}\\
T&\longmapsto \Hom(T,\Sigma)
\end{split}\end{equation*}
is a sheaf in the face topology, is an immediate consequence of the local definition of $\Sigma$. For (iii) it is clear that the class of strict morphisms is stable under compositions and base changes, it contains the face morphisms, and all isomorphisms are strict. Finally, for (iv) we note that the property of being strict is local on the faces of a cone complex.
\end{proof}

\begin{definition}\label{def_conestacks}
A geometric space in the context $(\RPCC,\tau_{face},\mathbb{S})$ is called a \emph{cone space}, and a geometric stack a \emph{cone stack}.
\end{definition}

The category of cone spaces is the full subcategory of the category of presheaves on $\RPCC$ whose objects are isomorphic to a cone space, and the $2$-category of cone stacks is the full subcategory of the $2$-category of categories fibered in groupoids over $\RPCC$ whose objects are equivalent to cone stacks.

\begin{example}
Let $G$ be a (discrete) group acting on a cone complex $\Sigma$ by automorphisms. As explained in Example \ref{example_discretegroupquotient}  the operation of $G$ on $\Sigma$ defines a strict groupoid object $(G\times \Sigma\rightrightarrows \Sigma)$ in $\RPCC$. The quotient stack $\big[\Sigma\big/G\big]$ is a cone stack by  Proposition \ref{prop_groupoidquot}.
\end{example}

\begin{remark}
An operation of a (discrete) group $G$ on a cone complex $\Sigma$ by automorphisms is the same thing as an admissible operation in the sense of \cite{ChanMeloViviani_tropicalmoduli}.
\end{remark}

In the following Example \ref{example_loopcone}, we define a cone space $\calW$, the so-called \emph{waffle cone}, that is used to model loop edges in the construction of the cone $\Cone(\Gamma)$ over a tropical curve $\Gamma$ in Section \ref{section_families}. In Example \ref{example_combinatorialwafflecone}  we provide an alternative construction of $\calW$, from which we can see that it is not representable by a cone complex.

\begin{example}[Waffle cone]\label{example_loopcone}
Consider the cone complex $\Sigma$ consisting of two copies $\sigma_i\simeq\R_{\geq 0}^2$ (for $i=1,2$) that are glued along the $x$-face and $y$-face respectively. In addition to the identity morphisms we define additional relations by sending $(x,y)\in\sigma_1$ to $(y,x)\in\sigma_2$ (and conversely) provided that $(x,y)\neq (0,0)$.  This induces a groupoid presentation $R=\Sigma\sqcup_{\{pt.\}}\Sigma\rightrightarrows \Sigma$ of a stack $\calW$ that is fibered in setoids, and therefore a cone space (see Figure \ref{figure_waffle}). 
\end{example}

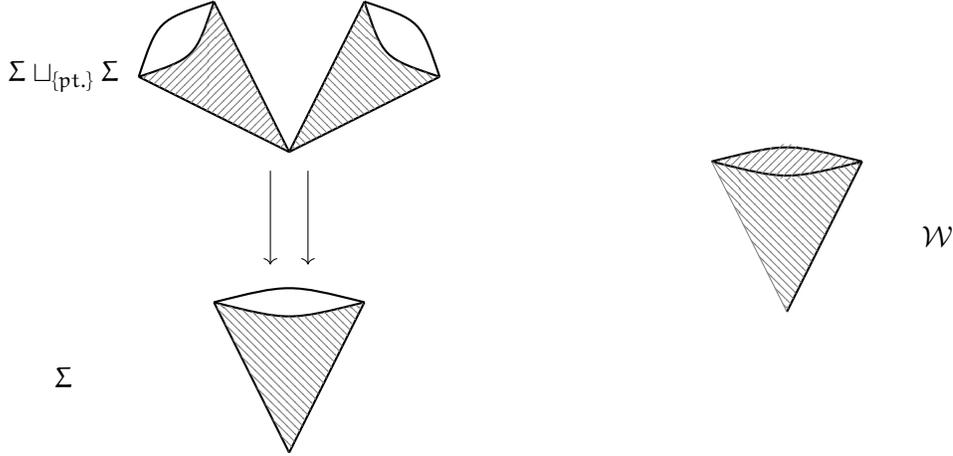
\begin{figure}[h]
\begin{minipage}{0.49\textwidth}
\centering
\begin{tikzpicture}

\draw[thick] (0,0) -- (-2,1);
\draw[thick] (0,0) -- (-1,2);
\draw[thick] (-2,1) .. controls (-1.25,1.25) and (-1.25,1.25) .. (-1,2);
\draw[thick] (-2,1) .. controls (-1.75,1.75) and (-1.75,1.75) .. (-1,2);
\fill[pattern color=gray,pattern=north east lines] (-2,1) .. controls (-1.25,1.25) and (-1.25,1.25) .. (-1,2) -- (0,0) -- (-2,1);

\draw[thick] (0,0) -- (2,1);
\draw[thick] (0,0) -- (1,2); 
\draw[thick] (2,1) .. controls (1.25,1.25) and (1.25,1.25) .. (1,2);
\draw[thick] (2,1) .. controls (1.75,1.75) and (1.75,1.75) .. (1,2);
\fill[pattern color=gray,pattern=north west lines] (2,1) .. controls (1.25,1.25) and (1.25,1.25) .. (1,2) -- (0,0) -- (2,1);

\draw[thick] (-1,-2) -- (0,-4);
\draw[thick] (1,-2) -- (0,-4);
\draw[thick] (-1,-2) .. controls (0,-1.75) and (0,-1.75) .. (1,-2);
\draw[thick] (-1,-2) .. controls (0,-2.25) and (0,-2.25) .. (1,-2);

\fill[pattern color=gray,pattern=north west lines] (-1,-2) .. controls (0,-2.25) and (0,-2.25) .. (1,-2) -- (0,-4) -- (-1,-2);

\draw[->] (-0.25,-0.25) -- (-0.25,-1.5);
\draw[->] (0.25,-0.25) -- (0.25,-1.5);

\node at (-3,1) {$\Sigma\sqcup_{\{pt.\}}\Sigma$};
\node at (-3,-3) {$\Sigma$};
\end{tikzpicture}
\end{minipage}
\begin{minipage}{0.49\textwidth}
\centering
\begin{tikzpicture}
\draw[thick] (-1,0) .. controls (0,0.25) and (0,0.25) .. (1,0);
\draw[thick] (-1,0) .. controls (0,-0.25) and (0,-0.25) .. (1,0);
\draw[thick] (1,0) -- (0,-2);
\draw[gray] (-1,0) -- (0,-2);

\fill[pattern color=gray,pattern=north west lines] (-1,0) .. controls (0,-0.25) and (0,-0.25) .. (0,-0.225) -- (0,-2) -- (-1,0);
\fill[pattern color=gray,pattern=north west lines] (1,0) .. controls (0,-0.25) and (0,-0.25) .. (0,-0.225) -- (0,-2) -- (1,0);
\fill[pattern color=gray, pattern=north east lines] (-1,0) .. controls (0,0.25) and (0,0.25) .. (0,0.225) -- (0,-0.25) -- (-1,0);
\fill[pattern color=gray, pattern=north east lines] (1,0) .. controls (0,0.25) and (0,0.25) .. (0,0.225) -- (0,-0.25) -- (1,0);

\node at (2,-1) {$\calW$};
\end{tikzpicture}
\end{minipage}
\caption{The groupoid $R\rightrightarrows \Sigma$ from Example \ref{example_loopcone} (on the left) and its quotient, the waffle cone $\calW$ (on the right).}\label{figure_waffle}
\end{figure}

The following Lemma \ref{lemma_diagonalrepresentable} will be used in the proof of Theorem \ref{thm_geometric} in Section \ref{section_geometrization}.  It says that to check that a stack over $(\RPCC,\tau_{face})$ is a cone stack, it is sufficient to check that it has a covering by a cone complex.

\begin{lemma}\label{lemma_diagonalrepresentable}
Let $\calC$ be a stack over $(\RPCC,\tau_{face})$ and suppose that there is a cone complex $U$ and a surjective, strict morphism $U\rightarrow \calC$ that is representable by cone complexes (respectively, by cone spaces). Then the diagonal morphism $\Delta\mathrel{\mathop:}\calC\rightarrow\calC\times\calC$ is representable by cone complexes (respectively, by cone spaces).
\end{lemma}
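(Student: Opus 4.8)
The plan is to run the standard ``representability of the diagonal via a presentation'' argument from the theory of algebraic stacks (cf.\ \cite[Tag 045G]{stacks-project}), transported to the geometric context $(\RPCC,\tau_{face},\SS)$. By definition, $\Delta\colon\calC\to\calC\times\calC$ is representable by cone complexes (resp.\ by cone spaces) if and only if for every morphism $T\to\calC\times\calC$ the fiber product $\mathcal{Z}=T\times_{\calC\times\calC}\calC$ is representable by a cone complex (resp.\ a cone space). By Proposition~\ref{prop:cfg-stack} and the coarseness of the face topology it suffices to test this on cones $T$ in $\RPC$. I would first record that $\mathcal{Z}$ is the sheaf of isomorphisms between the two morphisms $T\rightrightarrows\calC$ encoded by $T\to\calC\times\calC$, so it is a sheaf of setoids; the task is to show it is representable.

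Next I would manufacture a cover of $\mathcal{Z}$ out of the given cover of $\calC$. Since $U\to\calC$ is surjective, strict, and representable by cone complexes (resp.\ cone spaces), the product $U\times U\to\calC\times\calC$ is again surjective, strict, and representable by cone complexes (resp.\ cone spaces), using that covers, the class $\SS$, and representability are all stable under products and base change. Pulling back along $T\to\calC\times\calC$ produces a cone complex (resp.\ cone space) $T'=T\times_{\calC\times\calC}(U\times U)$ together with a strict surjective cover $T'\to T$. The key computation is that, because $T'\to\calC\times\calC$ factors through $U\times U$,
\[
\mathcal{Z}\times_T T' \;=\; \calC\times_{\calC\times\calC}T' \;=\; \bigl(\calC\times_{\calC\times\calC}(U\times U)\bigr)\times_{U\times U}T' \;=\; R\times_{U\times U}T',
\]
where $R=U\times_{\calC}U$ is the symmetry groupoid, a cone complex (resp.\ cone space) by hypothesis. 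As $\RPCC$ (resp.\ the category of cone spaces) has fiber products, the object $W:=\mathcal{Z}\times_T T'$ is a cone complex (resp.\ cone space).

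It then remains to descend from $T'$ to $T$. The projection $W\to\mathcal{Z}$ is the base change of the strict surjective cover $T'\to T$, hence a strict surjective cover, and the same pullback computation gives $W\times_{\mathcal{Z}}W=\mathcal{Z}\times_T(T'\times_T T')=R\times_{U\times U}(T'\times_T T')$, again a cone complex (resp.\ cone space). Thus $(W\times_{\mathcal{Z}}W\rightrightarrows W)$ is a geometric groupoid presenting $\mathcal{Z}$, and Proposition~\ref{prop_groupoidquot} identifies $\mathcal{Z}$ as a cone stack; since it is a sheaf of setoids, it is in fact a cone space. This settles the \emph{cone space} case of the lemma.

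The hard part is the \emph{cone complex} refinement of this last step: a quotient of a cone complex by a strict groupoid need not be a cone complex, as the waffle cone of Example~\ref{example_loopcone} shows. To upgrade the conclusion I would argue that, because $\mathcal{Z}\to T$ is tested only on cones in the very coarse face topology and the two projections of $(W\times_{\mathcal{Z}}W\rightrightarrows W)$ are strict, each component of the relation induces, via its two strict projections, an identification of a \emph{face} of one cone of $W$ with a face of another, rather than an identification of full cones along their interiors (the latter being exactly what prevents the waffle cone from being a cone complex). Verifying that the descent data is genuinely built from face morphisms, so that $\mathcal{Z}$ is realized as cones glued along faces, is the delicate point that I expect to require the most care and that cleanly separates the two cases of the lemma.
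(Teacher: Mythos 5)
Your proposal is correct through the construction of $T'=T\times_{\calC\times\calC}(U\times U)$ and the pullback computation $\mathcal{Z}\times_T T' = R\times_{U\times U}T'$, but the final step in the cone complex case is a genuine gap, and the repair you sketch does not work. The criterion you propose --- that strictness of the two projections of $W\times_{\mathcal{Z}}W\rightrightarrows W$ forces the identifications to be gluings of faces rather than identifications of full cones along their interiors --- is refuted by the waffle cone itself: in the presentation $R=\Sigma\sqcup_{\{pt.\}}\Sigma\rightrightarrows\Sigma$ of Example~\ref{example_loopcone}, both projections are strict (each cone of $R$ maps isomorphically onto a cone of $\Sigma$), yet the relation identifies the interiors of two maximal cones and the quotient $\calW$ is not a cone complex. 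So strictness of a groupoid's projections can never be the mechanism separating the two cases, and nothing in your setup excludes waffle-type relations from occurring inside $W\times_{\mathcal{Z}}W$. (Your cone space case does go through: the projections of $W\times_{\mathcal{Z}}W\rightrightarrows W$ are base changes of the strict surjective $T'\to T$, so the groupoid is geometric, Proposition~\ref{prop_groupoidquot} makes its quotient a cone stack, and since $\mathcal{Z}$ is an $\Isom$-sheaf it is fibered in setoids, hence a cone space --- but this is far more machinery than the paper uses.)

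The idea you are missing, which the paper uses to settle both cases at once with no descent whatsoever, is that a strict surjective morphism from a cone complex (resp.\ cone space) onto a \emph{cone} admits a section, because a cone has no nontrivial covers in the face topology (the same mechanism as in Lemma~\ref{lem:initial}): after refining by a strict cover by cones, some cone maps isomorphically onto a face of $\sigma$ whose image meets the interior, hence onto $\sigma$ itself. Since you have already reduced to testing on a cone $T=\sigma$, choose such a section $s\colon\sigma\to T'$; then $\sigma\to\calC\times\calC$ factors through $U\times U$, and composing the $2$-cartesian squares for $\Delta$ and for $U\times_\calC U\to U\times U$ exhibits $\mathcal{Z}=\sigma\times_{\calC\times\calC}\calC$ directly as $\sigma\times_{U\times U}\bigl(U\times_\calC U\bigr)$ --- equivalently, in your notation, $\mathcal{Z}\cong W\times_{T',s}\sigma$. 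This is a fiber product of cone complexes (resp.\ cone spaces), hence itself one, since both categories admit fiber products. No quotient is ever formed, so the cone complex refinement, which your groupoid-quotient route cannot reach, comes for free.
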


\begin{proof}

Given a rational polyhedral cone $\sigma$ and a morphism $\sigma\rightarrow \calC\times \calC$, it suffices to show that the fiber product
\begin{equation*}
X=\sigma\times_{\calC\times\calC}\calC
\end{equation*}
is representable by a cone complex (resp.~cone space). There is a natural $2$-cartesian diagram 
\begin{equation*}\xymatrix{
\displaystyle U\times_{\mathcal C} U \ar[r] \ar[d] & \mathcal C \ar[d]^\Delta \\
U\times U \ar[r] & \mathcal C \times \mathcal C 
}\end{equation*}
identifying the fiber product with $U\times_\calC U$. Notice that $U\times_\calC U$ may be represented by a cone complex (resp.~cone space), because $U\times_\calC U \to U$ is a base change of the morphism $U\to \calC$.

Next we note that 
$U \times U\rightarrow\calC \times \calC$ is strict, surjective, and representable by cone complexes (resp.~cone spaces), as it is the composition of the morphisms $U \times U \rightarrow U \times \calC \rightarrow \calC \times \calC$, which are both obtained by base change from the morphism $U \rightarrow \calC$.  Then we claim that the map $\sigma\to\calC\times\calC$ factors (not necessarily uniquely) through $U\times U$, since in the following 2-cartesian diagram

\begin{equation*}\xymatrix{
\displaystyle \sigma \times_{\calC \times \calC} U\times U \ar[r] \ar[d] &  U\times U \ar[d] \\
\sigma\ar[r] & \calC\times\calC
}\end{equation*}
the morphism $\sigma \times_{\calC\times\calC}U\times U \to \sigma$ is a strict surjective morphism from a cone complex (resp.~cone space) to a cone, hence admits a section.

Therefore we have a natural composition of $2$-cartesian diagrams
\begin{equation*}\xymatrix{
X \ar[r] \ar[d] & \displaystyle U \mathop\times_{\calC} U \ar[r] \ar[d] & \calC \ar[d]^\Delta \\
\sigma \ar[r] & U \times U \ar[r] & \calC \times \calC
} \end{equation*}
and, since $U\times_\calC U$ is represented by a cone complex (resp.~cone space), so is $X$.

\end{proof}

\subsection{Combinatorial cone spaces and combinatorial cone stacks}\label{section_combinatorialconestacks} We give a combinatorial characterization of cone spaces in the spirit of Definition \ref{definition_conecomplexes}. The only change is that we drop the assumption that there is at most one morphism between any two cones.  
We will then define combinatorial cone stacks.

\begin{definition} \label{def:C-space}
A {\em combinatorial cone space} is a collection of cones $\mathcal{C}=\{\sigma_\alpha\}$ in $\RPC$ together with a collection of face morphisms $\mathcal{F}=\big\{\phi_{\alpha\beta}\mathrel{\mathop:}\sigma_\alpha\rightarrow\sigma_\beta\big\}$, closed under composition, such that the following two axioms are fulfilled:
\begin{enumerate}[(i)]
\item For each $\sigma_\alpha\in\mathcal{C}$, the identity map $\operatorname{id}\colon \sigma_\alpha\rightarrow\sigma_\alpha$ is in $\mathcal{F}$.
\item Every face of a cone $\sigma_\beta$ is the image of exactly one face morphism $\phi_{\alpha\beta}$ in $\mcF$. 
\end{enumerate}
\end{definition}
A morphism of combinatorial cone spaces is defined exactly as in the definition of morphisms of cone complexes in Definition~\ref{definition_conecomplexes}.

\begin{proposition} \label{prop:C-spaces}
The category of combinatorial cone spaces is equivalent to the category of cone spaces.
\end{proposition}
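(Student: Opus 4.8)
The plan is to produce explicit functors in both directions and verify that they are mutually quasi-inverse. The constructive direction, turning a combinatorial cone space into a cone space, carries the real content; the reverse direction is the passage from a cone space to the diagram of its faces.

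First, given a combinatorial cone space $(\mathcal C,\mathcal F)$, I define a category $X$ fibered in groupoids over $\RPC$: for a cone $\sigma$, let the objects of $X(\sigma)$ be pairs $(\alpha,f)$ with $f\colon\sigma\to\sigma_\alpha$ a morphism whose image lies in no proper face of $\sigma_\alpha$, and let an arrow $(\alpha,f)\to(\beta,g)$ be a face morphism $\psi\colon\sigma_\alpha\to\sigma_\beta$ in $\mathcal F$ with $\psi f=g$. Pullback along a face morphism $\tau\hookrightarrow\sigma$ is defined by composing with $f$ and then replacing the target by the face its image spans, which is well defined by axiom (ii) of Definition \ref{def:C-space}. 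Because $g$ spans $\sigma_\beta$, any such $\psi$ must be an isomorphism onto $\sigma_\beta$, and two arrows with the same source and target agree on the spanning set $f(\sigma)$ and hence coincide; thus $X(\sigma)$ is a setoid. It is precisely the phenomenon of several distinct $\psi$'s between a fixed pair of cones---forbidden by the dropped axiom (iii) of Definition \ref{definition_conecomplexes}---that is now absorbed into automorphism-free gluing, as in the waffle cone $\calW$. By Proposition \ref{prop:cfg-stack}, $X$ is a stack on $\RPCC$, and being fibered in setoids it is a sheaf; I set $\Phi(\mathcal C,\mathcal F)=X$.

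Next I check that $X$ is a cone space. The disjoint union $U=\coprod_\alpha\sigma_\alpha$, taken with all faces, is a cone complex, and the tautological morphism $U\to X$ sending $\sigma_\alpha$ to $(\alpha,\operatorname{id})$ is strict and surjective, since every object $(\beta,f)$ is hit by the composite $\sigma\xrightarrow{f}\sigma_\beta\hookrightarrow U\to X$. A direct computation identifies the fiber product over each pair of components,
\[
\sigma_\alpha\times_X\sigma_\beta=\coprod \sigma_\gamma,
\]
as the disjoint union of the common faces cut out by the arrows of $\mathcal F$ into $\sigma_\alpha$ and $\sigma_\beta$, so $U\times_X U$ is again a cone complex and $U\to X$ is representable by cone complexes. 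Lemma \ref{lemma_diagonalrepresentable} then shows that the diagonal of $X$ is representable by cone complexes, and together with the cover $U\to X$ this exhibits $X$ as a geometric space in $(\RPCC,\tau_{face},\SS)$, i.e. a cone space.

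For the reverse functor, let $X$ be a cone space. By definition it has a strict surjective cover $U\to X$ by a cone complex and a diagonal representable by cone complexes, so $R=U\times_X U$ is a cone complex with strict projections $R\rightrightarrows U$. Decomposing $U$ into its cones and reading off the images of the components of $R$ under the two projections produces face morphisms among these cones; normalizing so that each face of each cone is recorded exactly once yields a combinatorial cone space $\Psi(X)=(\mathcal C,\mathcal F)$ satisfying axioms (i)--(ii), and by construction $\Phi(\Psi(X))=[U/R]\simeq X$. Full faithfulness follows from the description in Remark \ref{remark_explicitextension}: a morphism of combinatorial cone spaces is exactly a morphism of the underlying diagrams, which induces a morphism of the associated fibered categories, and conversely any morphism $\Phi(\mathcal C,\mathcal F)\to\Phi(\mathcal C',\mathcal F')$ restricts along the atlas $U\to X$ to compatible maps of cones, hence to a morphism of diagrams.

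I expect the main obstacle to be the reverse construction: one must define the faces of a cone space intrinsically---as nondegenerate monomorphisms from cones, rather than as faces of a chosen atlas $U$---and show both that they satisfy the sharp counting axiom (ii) and that the assignment is functorial and independent of the atlas. The difficulty is exactly the gap between cone complexes and cone spaces: several faces of $U$ may map to a single face of $X$ while the number of arrows between faces grows, and controlling this identification carefully is where the equivalence is genuinely proved.
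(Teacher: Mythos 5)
Your forward direction is essentially sound and follows the same strategy the paper uses inside Proposition~\ref{prop:C-stacks}: form the fibered category of nondegenerate maps into the diagram, exhibit the strict surjective atlas $\coprod_\alpha \sigma_\alpha$, and invoke Lemma~\ref{lemma_diagonalrepresentable}. One local repair, though: your justification that $X(\sigma)$ is a setoid is wrong as stated. Two face morphisms $\psi,\psi'\colon\sigma_\alpha\to\sigma_\beta$ agreeing on $f(\sigma)$ need not coincide, because $f(\sigma)$ can lie in no proper face of $\sigma_\alpha$ and still fail to span it linearly --- take $\sigma_\alpha=\R^2_{\geq 0}$ and $f(\sigma)$ the diagonal ray, on which the identity and the coordinate swap agree. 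The correct reason is axiom (ii) of Definition~\ref{def:C-space}: any arrow $(\alpha,f)\to(\beta,g)$ has image all of $\sigma_\beta$ (since $g(\sigma)$ lies in no proper face), and the identity is the \emph{unique} arrow of $\mathcal F$ with target $\sigma_\beta$ and image $\sigma_\beta$, so the only arrows in $X(\sigma)$ are identities.

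The genuine gap is the reverse functor, and you concede it yourself in your final paragraph without closing it. Building $\Psi(X)$ from a chosen atlas $R\rightrightarrows U$ by ``reading off'' face morphisms and then ``normalizing so that each face of each cone is recorded exactly once'' is precisely the step that carries the content: you must show (a) that the normalization is possible and canonical, i.e.\ that the sharp counting in axiom (ii) actually holds; (b) that the result is independent of the atlas and functorial; and (c) that $\Phi(\Psi(X))\simeq X$, which is not ``by construction'' --- $\Phi(\Psi(X))$ is a fibered category of maps into a diagram, and identifying it with $\big[U/R\big]$ requires computing $\Hom(\sigma,X)$ for every cone $\sigma$. The paper resolves all three points at once with Lemma~\ref{lem:initial}: every morphism $\sigma\to\calC$ admits an \emph{initial} factorization through a strict morphism $\tau\to\calC$. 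This lets one define $\Psi(X)$ intrinsically, with no atlas, as the category of all strict morphisms from cones to $X$, and initiality gives exactly the bijection $\Hom_{\text{$C$-space}}(\sigma,\Psi(X))=\Hom(\sigma,X)$ needed for $\Phi\Psi\simeq\mathrm{id}$; the uniqueness half of the factorization, proved there via the equalizer $\operatorname{eq}(f,g)$ being strict and surjective over $\tau$, is what guarantees (a), supplemented in the space case by the barycenter rigidity argument at the end of the proof of Proposition~\ref{prop:C-stacks}. Without an analogue of Lemma~\ref{lem:initial} --- or an equally careful proof that strict maps from cones to a cone space are rigid --- your $\Psi$ is not well defined, so the proposal as written establishes only one direction of the equivalence.
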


This will be proved as part of Proposition~\ref{prop:C-stacks}.

\begin{example}[Combinatorial waffle cone]\label{example_combinatorialwafflecone}
The waffle cone $\calW$ from Example \ref{example_loopcone} is the combinatorial cone space generated by the two cones $\sigma=\R_{\geq 0}^2$ and $\tau=\R_{\geq 0}$ and the two proper face morphisms $\tau\rightarrow\sigma$ (see Figure \ref{figure_combinatorialwafflecone}). We have suppressed the initial $0$-dimensional cone in our description. By Proposition \ref{prop:C-spaces} this description of $\calW$ is equivalent to a cone space that is not representable by a cone complex. 
\end{example}

\begin{figure}[h]
\begin{minipage}{0.49\textwidth}
\centering
\begin{tikzpicture}
\draw[thick] (0,0) -- (2,0);
\fill (0,0) circle (0.05);
\node at (-0.5,0) {$\tau$};

\draw[thick] (3,0) -- (5,1);
\draw[thick] (3,0) -- (5,-1);
\fill (3,0) circle (0.05);
\fill[pattern color=gray,pattern=north east lines] (3,0) -- (4.5,0.75) -- (4.5,-0.75) -- (3,0); 
\node at (5,0) {$\sigma$};

\draw[thick,->] (1,0.5) .. controls (2,1.25) and (2,1.25) .. (3.5,0.75);
\draw[thick,->] (1,-0.5) .. controls (2,-1.25) and (2,-1.25) .. (3.5,-0.75);
\end{tikzpicture}
\end{minipage}
\begin{minipage}{0.49\textwidth}
\centering
\begin{tikzpicture}
\draw[thick] (-1,0) .. controls (0,0.25) and (0,0.25) .. (1,0);
\draw[thick] (-1,0) .. controls (0,-0.25) and (0,-0.25) .. (1,0);
\draw[thick] (1,0) -- (0,-2);
\draw[gray] (-1,0) -- (0,-2);
\fill (0,-2) circle (0.05);

\fill[pattern color=gray,pattern=north west lines] (-1,0) .. controls (0,-0.25) and (0,-0.25) .. (0,-0.225) -- (0,-2) -- (-1,0);
\fill[pattern color=gray,pattern=north west lines] (1,0) .. controls (0,-0.25) and (0,-0.25) .. (0,-0.225) -- (0,-2) -- (1,0);
\fill[pattern color=gray, pattern=north east lines] (-1,0) .. controls (0,0.25) and (0,0.25) .. (0,0.225) -- (0,-0.25) -- (-1,0);
\fill[pattern color=gray, pattern=north east lines] (1,0) .. controls (0,0.25) and (0,0.25) .. (0,0.225) -- (0,-0.25) -- (1,0);

\node at (2,-1) {$\calW$};
\end{tikzpicture}
\end{minipage}
\caption{The waffle cone $\calW$ (on the right) and its presentation as a combinatorial cone space (on the left) as in Example \ref{example_combinatorialwafflecone}.}\label{figure_combinatorialwafflecone}
\end{figure}
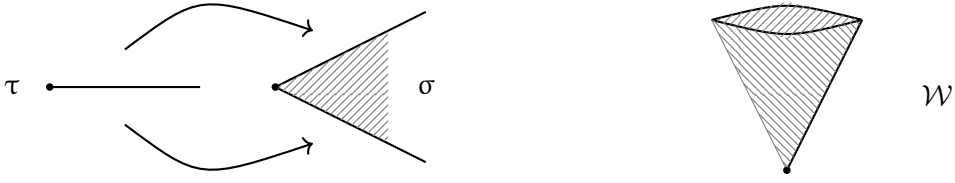

In Definition \ref{def_combinatorialconestack} we provide a combinatorial characterization of cone stacks. This should be viewed as a categorical perspective on the \emph{generalized cone complexes} of \cite[Section 2.6]{AbramovichCaporasoPayne_tropicalmoduli} (also see Remark \ref{rem:make_your_own_cone_stack} for a special case that is inspired by \cite{ChanGalatiusPayne_tropicalmoduliII}).

\begin{definition}\label{def_combinatorialconestack}
Let $\RPC^f$ denote the category of rational polyhedral cones with face inclusions as morphisms.  A \emph{combinatorial cone stack} is a category fibered in groupoids $\sigma : \Sigma \rightarrow \RPC^f$.  A morphism of combinatorial cone stacks $(\Sigma,\sigma) \rightarrow (T,\tau)$ consists of a functor $\Phi : \Sigma \rightarrow T$ and morphism $\phi_\alpha : \sigma_\alpha \rightarrow \tau_{\Phi(\alpha)}$ for each $\alpha \in \Sigma$, such that
\begin{itemize}
\item for each $\alpha \rightarrow \beta$ in $\Sigma$, the diagram
\begin{equation*} \xymatrix{
\sigma_\alpha \ar[r] \ar[d] & \tau_{\Phi(\alpha)} \ar[d] \\
\sigma_\beta \ar[r] & \tau_{\Phi(\beta)}
} \end{equation*}
commutes, and
\item the image of $\sigma_\alpha \rightarrow \tau_{\Phi(\alpha)}$ is not contained in any proper face.
\end{itemize}
\end{definition}

We recall that in this setting, the condition that $\sigma : \Sigma \rightarrow \RPC^f$ be a category fibered in groupoids means that it satisfies the following two conditions:
\begin{enumerate}[(i)]
\item for each $\alpha \in \Sigma$ and each face inclusion $\tau \to \sigma_\alpha$, there is a morphism $u : \beta \rightarrow \alpha$ of $\Sigma$ such that $\sigma_\beta = \tau$ and $\sigma_u$ is the inclusion of $\tau$ in $\sigma_\alpha$;

\item if $u : \gamma \rightarrow \alpha$ and $v : \beta \rightarrow \alpha$ are morphisms of $\Sigma$ such that the map $\sigma_\gamma \to \sigma_\alpha$ factors through $\sigma_\beta \to \sigma_\alpha$ then there is a unique morphism $w : \gamma \rightarrow \beta$ such that $uw = v$ and 
$\sigma_w$ is the inclusion of $\sigma_\gamma$ in $\sigma_\beta$.

\end{enumerate}

\begin{remark} 
\label{rem:cone-complex}
\label{rem:combinatorial-cone-space}
Cone complexes and combinatorial cone spaces may now be seen as special kinds of categories fibered in groupoids over $\RPC$.  
A combinatorial cone space is a category fibered in groupoids $\sigma\colon \Sigma\to \RPC^f$ in which $\Sigma$ has no nontrivial automorphisms, and a cone complex is a category fibered in groupoids whose underlying category is equivalent to a partially ordered set.
\end{remark}

We shall prove that combinatorial cone stacks and cone stacks are 2-equivalent.
The following lemma is an analogue of a statement proved for Artin fans in \cite[Proposition~2.3.11]{AbramovichWise_invariance}, and is equivalent to that statement by Theorem~\ref{thm_artinfans}.  It will be key to the equivalence between cone stacks and combinatorial cone stacks, to be proved in Proposiiton~\ref{prop:C-stacks}.  The proof of the lemma is essentially the same here as in \cite{AbramovichWise_invariance}, but we give the details here, free of the additional baggage from logarithmic geometry and algebraic stacks. 

\begin{remark}\label{rem:caution,caution}
In Lemma~\ref{lem:initial} and Proposition~\ref{prop:C-stacks}, it will be important to remember that a commutative diagram of stacks is not determined by the morphisms that appear in it, but requires the additional specification of $2$-isomorphisms.  For example, a commutative square
\begin{equation*} \xymatrix{
X \ar[r]^f \ar[d]_p & Y \ar[d]^q \\
Z \ar[r]^g & W
} \end{equation*}
of stacks involves a $2$-isomorphism between $qf$ and $gp$.

\end{remark}

\begin{lemma} \label{lem:initial}
Let $\mathcal C$ be a cone stack.  For any cone $\sigma$ and any morphism $\sigma \rightarrow \mathcal C$, there is an initial factorization $\sigma \rightarrow \tau \rightarrow \mathcal C$ where $\tau$ is a cone and $\tau \rightarrow \mathcal C$ is strict.
\end{lemma}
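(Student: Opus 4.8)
The plan is to reduce the statement to a computation on a groupoid presentation of $\mathcal C$ and to produce $\tau$ as a \emph{minimal cone} through which a lift of $\sigma\to\mathcal C$ factors. First I would fix a strict, surjective cover $p\colon U\to\mathcal C$ from a cone complex $U$, which I may assume to be a disjoint union of cones, and set $R=U\times_{\mathcal C}U$, so that $[U/R]\simeq\mathcal C$ and the two projections $s_R,t_R\colon R\rightrightarrows U$ are strict. The characterization that drives the whole argument is this: a morphism $\tau\to\mathcal C$ from a cone is strict if and only if it lifts through $p$ to an isomorphism of $\tau$ onto a cone of $U$. Indeed, since $p$ is strict every base change of it is strict, so the strict surjection $\tau\times_{\mathcal C}U\to\tau$ admits a section (by the argument in the proof of Lemma~\ref{lemma_diagonalrepresentable}), and the strictness of $\tau\to\mathcal C$ makes the other projection of this section an isomorphism of $\tau$ onto a cone of $U$; conversely such a lift exhibits $\tau\to\mathcal C$ as a face inclusion followed by $p$, which is strict.

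For existence I would form $P=\sigma\times_{\mathcal C}U$. Its projection $P\to\sigma$ is strict (a base change of $p$) and surjective, and $P$ is a cone space, so again by the proof of Lemma~\ref{lemma_diagonalrepresentable} it admits a section; composing with $P\to U$ yields a lift $g\colon\sigma\to U$ together with a $2$-isomorphism $p\circ g\simeq x$, where $x\colon\sigma\to\mathcal C$ is the given morphism. As $U$ is a disjoint union of cones, $g$ factors through a unique cone of $U$, and within it through a unique minimal face $\tau$, characterized by the property that $a\colon\sigma\to\tau$ does not factor through any proper face of $\tau$. Taking $\tau\to\mathcal C$ to be the inclusion $\tau\hookrightarrow U$ followed by $p$ gives a strict morphism by the characterization above, and the lift provides the factorization $\sigma\xrightarrow{a}\tau\to\mathcal C$ with a $2$-isomorphism to $x$.

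It remains to prove initiality. Given any strict factorization $\sigma\xrightarrow{h}\tau'\xrightarrow{k}\mathcal C$, the characterization lifts $k$ to an isomorphism $m\colon\tau'\xrightarrow{\sim}\sigma_j\subseteq U$ onto a cone of $U$, so $m\circ h$ is a second lift of $x$. The two lifts $g$ and $m\circ h$ are compared by a morphism $\sigma\to R$, and I read off $\phi\colon\tau\to\tau'$ from the minimal cone of $R$ through which this comparison factors: its projections $s_R,t_R$ identify that cone with $\tau$ and with a face of $\tau'$, giving $\phi$ as an isomorphism onto a face of $\tau'$. For uniqueness, suppose $(\phi_1,\nu_1)$ and $(\phi_2,\nu_2)$ are morphisms of factorizations, with $\nu_i$ the connecting $2$-isomorphisms. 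Each pair assembles into a morphism $w_i\colon\tau\to R$ with $t_R\circ w_i$ the inclusion of $\tau$ and $s_R\circ w_i=m\circ\phi_i$; the compatibility of the $2$-isomorphisms required of a morphism of factorizations (see Remark~\ref{rem:caution,caution}) forces $w_1\circ a=w_2\circ a$ as maps $\sigma\to R$. Since $a$ does not factor through any proper face of $\tau$, each $w_i$ is an isomorphism of $\tau$ onto the minimal cone $\rho_i$ of $R$ through which $w_i\circ a$ factors; hence $\rho_1=\rho_2$ and $w_i=(t_R|_{\rho_i})^{-1}$ is forced, giving $\phi_1=\phi_2$ and $\nu_1=\nu_2$.

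The main obstacle is this last step. The naive hope that $\phi$ is pinned down by the equation $\phi\circ a=h$ fails, because the image of $a$ need not generate $\tau$ (for instance a diagonal ray in a two-dimensional cone); what actually closes the argument is the $2$-categorical compatibility of the isomorphisms, which rules out the spurious automorphisms coming from the groupoid $R$. A secondary technical point I would need to pin down is that a morphism from a cone into the cone \emph{space} $R$ factors through a well-defined minimal cone; this I would obtain by lifting once more along a cone-complex cover of $R$ and invoking the minimal-face property there.
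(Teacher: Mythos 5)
Your proof is correct, and its existence half is the same as the paper's: both lift $\sigma$ along a strict cover by cones and take $\tau$ to be the smallest face containing the image. Where you genuinely diverge is in the initiality half. The paper never fixes a presentation: to produce the arrow $\tau\to\tau'$ it reruns the covering argument directly on $\tau\times_{\mathcal C}\tau'$, observing that the resulting map of cones $\omega_i\to\tau$ is strict and surjective (its image contains that of $\sigma$, which meets the interior of $\tau$), hence an isomorphism; and for uniqueness it forms the equalizer $\tau'\times_{\tau'\times_{\mathcal C}\tau'}\tau$, using that the diagonal of the strict map $\tau'\to\mathcal C$ is strict, and shows this equalizer is strict and surjective over $\tau$, hence admits a section. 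You instead route everything through a fixed atlas $U$ and the groupoid $R=U\times_{\mathcal C}U$, encoding each candidate pair $(\phi_i,\nu_i)$ as a map $w_i\colon\tau\to R$ and concluding by rigidity of maps from a cone into a setoid. Both arguments run on the same two engines---sections of strict surjections over a cone (as in the proof of Lemma~\ref{lemma_diagonalrepresentable}) and the interior-point rigidity---and both correctly treat the $2$-isomorphisms as part of the data; your observation that $\phi\circ a=h$ alone cannot pin down $\phi$ is exactly the point of Remark~\ref{rem:caution,caution} and of the example following Proposition~\ref{prop:C-stacks}. The paper's route buys economy: every step is a short fiber-product manipulation, and the only minimality needed is the smallest face of a single cone. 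Your route makes the $2$-categorical bookkeeping transparent, since maps to $R$ literally are pairs of lifts with a $2$-isomorphism, but at the cost of your deferred technical point, which is essentially the cone-space case of Lemma~\ref{lem:initial} itself; to avoid circularity it must be proved directly at the setoid level (by lifting to a cone-complex cover, as you propose), which is straightforward because no $2$-isomorphisms intervene there.
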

\begin{proof}
First we find $\tau$.  Since $\mathcal C$ is a cone stack, it admits a strict surjection $\coprod \omega_i \rightarrow \mathcal C$ from a disjoint union of cones.  Therefore $\coprod (\omega_i \mathbin\times_{\mathcal C} \sigma) \rightarrow \sigma$ is covering.  Since $\sigma$ is a cone, it has no nontrivial covers, so at least one of the $\omega_i \mathbin\times_{\mathcal C} \sigma \rightarrow \sigma$ has a section.  Composing with the projection $\omega_i \mathbin\times_{\mathcal C} \sigma \rightarrow \omega_i$, we obtain a commutative diagram:
\begin{equation*} \xymatrix{
& \omega_i \ar[d] \\
\sigma \ar[ur] \ar[r] & \mathcal C
} \end{equation*}
Let $\tau$ be the smallest face of $\omega_i$ containing the image of $\sigma$.  Note that $\tau \rightarrow \mathcal C$ is strict because it is the composition of the inclusion of a face with the strict map $\omega_i \rightarrow \mathcal C$.

Now we prove that $\tau$ is initial.  Suppose we have another factorization $\sigma \rightarrow \tau' \rightarrow \mathcal C$.  Then we have a commutative diagram of solid arrows:
\begin{equation*} \xymatrix{
\sigma \ar[r] \ar[d] & \tau' \ar[d] \\
\tau \ar@{-->}[ur] \ar[r] & \mathcal C
} \end{equation*}
We wish to show that there is a unique dashed arrow making the diagram commute.  The projection $\tau \mathbin\times_{\mathcal C} \tau' \rightarrow \tau$ is strict.  As $\tau \times_\calC \tau'$ is a cone space, it has a strict cover by a disjoint union of cones $\coprod \omega_i$.  As in the last paragraph, the map $\sigma \to \tau\times_\calC \tau'$ must factor through at least one $\omega_i$ because $\sigma$ has no nontrivial strict covers.  We therefore have a commutative diagram:
\begin{equation*} \xymatrix{
\sigma \ar[r] \ar@/^15pt/[rr] \ar[dr] & \omega_i \ar[r] \ar[d] & \tau' \ar[d] \\
& \tau  \ar[r] & \mathcal C
} \end{equation*}
But the morphism of cones $\omega_i \rightarrow \tau$ is strict, as it is the composition of the strict maps $\omega_i \rightarrow \tau \times_\calC \tau' \rightarrow \tau$, and surjective, as its image contains $\sigma$, which is in the interior of $\tau$, hence is an isomorphism.  Composing with the inverse of this isomorphism gives the desired arrow $\tau \rightarrow \tau'$.

All that remains is to show this lift is unique.  Consider a pair of dashed arrows $f$ and $g$ completing the diagram.  These give us a commutative diagram of solid arrows:
\begin{equation*} \xymatrix{
\sigma \ar[r] \ar[d] & \tau' \ar[d]^{\Delta} \\
\tau \ar@{-->}[ur] \ar[r]_-{(f,g)} & \tau' \mathbin\times_{\mathcal C} \tau'
} \end{equation*}
We wish to find a lift to show that $f = g$.  The diagonal $\tau' \rightarrow \tau' \mathbin\times_{\mathcal C} \tau'$ is strict, since $\tau'$ was assumed to be strict over $\mathcal C$.  We write $\operatorname{eq}(f,g) := \tau' \mathbin\times_{\tau' \mathbin\times_{\mathcal C} \tau'} \tau$ (the equalizer of $f$ and $g$).  The map $\operatorname{eq}(f,g) \rightarrow \tau$ is thus strict as it is the pullback of a strict map.  But it is also surjective:  it has a strict cover by cones $\eta_i \rightarrow \operatorname{eq}(f,g)$, so the maps $\eta_i \rightarrow \tau$ are strict.  But at least one of these must contain $\sigma$ in its image.  As $\sigma$ is in the interior of $\tau$ --- it is not contained in any face of $\tau$ by assumption --- this means at least one $\eta_i$ must surject onto $\tau$.  Therefore $\operatorname{eq}(f,g)$ admits a section over $\tau$; composing with the projection to $\tau'$ gives the desired lift and completes the proof.
\end{proof}

\begin{proposition} \label{prop:C-stacks}
The $2$-category of cone stacks is equivalent to the $2$-category of combinatorial cone stacks. Under this equivalence cone spaces correspond to combinatorial cone spaces.
\end{proposition}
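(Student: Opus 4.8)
The plan is to construct a pair of $2$-functors between the two $2$-categories and exhibit them as quasi-inverse. In one direction, to a cone stack $\calC$ I associate the category $\Sigma_\calC$ whose objects are the \emph{strict} morphisms $x\colon\tau\to\calC$ out of a cone $\tau$, and whose morphisms $x'\to x$ are pairs consisting of a face inclusion $i\colon\tau'\hookrightarrow\tau$ together with a $2$-isomorphism $x\circ i\cong x'$; the functor to $\RPC^f$ records the source cone. That $\Sigma_\calC\to\RPC^f$ is fibered in groupoids is exactly the content of Lemma~\ref{lem:initial}: the cartesian lift of a face inclusion $\tau'\hookrightarrow\tau$ at $x$ is the restriction $x\circ i$, which is again strict since a face inclusion followed by a strict map is strict, and the initiality clause of Lemma~\ref{lem:initial} supplies the required unique factorizations in the two axioms following Definition~\ref{def_combinatorialconestack}. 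This defines $F(\calC)=\Sigma_\calC$, visibly $2$-functorially.

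In the other direction, to a combinatorial cone stack $\sigma\colon\Sigma\to\RPC^f$ I associate the category $\calC_\Sigma$ fibered in groupoids over $\RPC$ whose fibre over a cone $\omega$ has as objects the pairs $(\alpha,f)$ with $\alpha\in\Sigma$ and $f\colon\omega\to\sigma_\alpha$ a morphism of cones whose image is not contained in any proper face of $\sigma_\alpha$, and as morphisms $(\alpha,f)\to(\alpha',f')$ those $u\colon\alpha\to\alpha'$ in $\Sigma$ with $\sigma_u\circ f=f'$. Since the images are required to meet the interior, such a $u$ must lie over an isomorphism, so each fibre is a groupoid. The pullback of $(\alpha,f)$ along $g\colon\omega'\to\omega$ is defined by \emph{reducing to the interior}: one takes the smallest face $\gamma\hookrightarrow\sigma_\alpha$ containing the image of $f\circ g$, uses the first fibered-in-groupoids axiom after Definition~\ref{def_combinatorialconestack} to lift it to a morphism $\beta\to\alpha$ with $\sigma_\beta=\gamma$, and sets $g^\ast(\alpha,f)=(\beta,\overline{fg})$ with $\overline{fg}\colon\omega'\to\gamma$ the corestriction; the second axiom makes this choice canonical up to unique isomorphism, so $\calC_\Sigma$ is a genuine category fibered in groupoids over $\RPC$. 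By Proposition~\ref{prop:cfg-stack} it is automatically a stack over $(\RPCC,\tau_{face})$, and I set $G(\Sigma)=\calC_\Sigma$.

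The heart of the argument is to check that $\calC_\Sigma$ is geometric. For each $\alpha$ in a skeleton of $\Sigma$, the pair $(\alpha,\id_{\sigma_\alpha})$ defines a strict morphism $\sigma_\alpha\to\calC_\Sigma$, and their disjoint union $U=\coprod_\alpha\sigma_\alpha$ is a cone complex with $U\to\calC_\Sigma$ strict and surjective, since every object reduces to the interior of some face. By Lemma~\ref{lemma_diagonalrepresentable} it then suffices to show this chart is representable by cone spaces, and for this I compute $U\times_{\calC_\Sigma}U=\coprod_{\alpha,\beta}\sigma_\alpha\times_{\calC_\Sigma}\sigma_\beta$. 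A point of $\sigma_\alpha\times_{\calC_\Sigma}\sigma_\beta$ consists of points of $\sigma_\alpha$ and $\sigma_\beta$ together with an isomorphism of their reductions in $\calC_\Sigma$; tracking the faces these reductions land in, the fibre product is the combinatorial cone space assembled from the cones $\omega_u$ indexed by the isomorphisms $u$ of $\Sigma$ between a face of $\sigma_\alpha$ and a face of $\sigma_\beta$. Hence the chart is representable by a cone space; together with the chart itself as the required cover, Lemma~\ref{lemma_diagonalrepresentable} shows the diagonal is representable, so $\calC_\Sigma$ is a cone stack.

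Finally I verify that $F$ and $G$ are quasi-inverse. For $F(G(\Sigma))$, a strict morphism $\tau\to\calC_\Sigma$ is an object $(\alpha,f)$ whose initial factorization is trivial, i.e. $f$ is an isomorphism, so it corresponds to an $\alpha\in\Sigma$ with $\sigma_\alpha=\tau$; this identifies $\Sigma_{\calC_\Sigma}$ with $\Sigma$. For $G(F(\calC))$, an object over $\omega$ is a strict $x\colon\tau\to\calC$ together with an interior-hitting $f\colon\omega\to\tau$, and Lemma~\ref{lem:initial} says precisely that such pairs, up to the appropriate isomorphism, are the arbitrary morphisms $\omega\to\calC$, naturally in $\omega$; thus $G(F(\calC))\simeq\calC$. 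These identifications are natural in the $2$-categorical sense, carrying along the $2$-isomorphisms as warned in Remark~\ref{rem:caution,caution}, which yields the claimed $2$-equivalence. Under it $\calC$ is fibered in setoids --- equivalently, a cone space --- exactly when $\Sigma_\calC$ has no nontrivial automorphisms, i.e. when it is a combinatorial cone space in the sense of Remark~\ref{rem:combinatorial-cone-space}, which simultaneously proves Proposition~\ref{prop:C-spaces}. I expect the geometricity step to be the main obstacle: making the reduction-to-the-interior pullback rigorously functorial and identifying $U\times_{\calC_\Sigma}U$ with an explicit cone space, both of which rest on the initiality established in Lemma~\ref{lem:initial}.
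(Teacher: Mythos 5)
Your proposal is, in substance, the paper's own proof reorganized: both directions hinge on Lemma~\ref{lem:initial}; your $\Sigma_\calC$ is exactly the paper's category of strict morphisms from cones to $\calC$, with the triangles carrying the $2$-isomorphism data of Remark~\ref{rem:caution,caution}; your $\calC_\Sigma$ of interior-hitting pairs $(\alpha,f)$ is equivalent, via the two fibered-in-groupoids axioms, to the paper's $\operatorname{Hom}_{\text{$C$-space}}(-,\Sigma)$; and your geometricity step (Lemma~\ref{lemma_diagonalrepresentable} applied to $U=\coprod_\alpha\sigma_\alpha$, plus the computation of $\sigma_\alpha\times_{\calC_\Sigma}\sigma_\beta$) is the paper's, with your complex of ``isomorphisms between a face of $\sigma_\alpha$ and a face of $\sigma_\beta$'' being the paper's category $\Sigma/\{\alpha,\beta\}$ of triples $(\gamma,u,v)$. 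The only genuine difference is packaging: you build explicit quasi-inverse functors and check both composites, where the paper proves one functor fully faithful and essentially surjective; these are interchangeable formulations.

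There is, however, one real gap. Your closing claim that ``$\calC$ is fibered in setoids exactly when $\Sigma_\calC$ has no nontrivial automorphisms'' is not a tautology in the direction needed for Proposition~\ref{prop:C-spaces}: when $\calC$ is a cone space you must show that two face inclusions $\sigma\rightrightarrows\tau$ \emph{with the same image}, lifting the same map $\sigma\to\calC$, actually coincide as maps of cones. The setoid property only tells you that the relevant commutative square carries no extra $2$-isomorphism; to force equality of the two inclusions the paper evaluates at the barycenter of $\sigma$, which lies in the interior, and invokes the uniqueness clause of Lemma~\ref{lem:initial} for the strict map $\sigma\to\calC$. Your ``i.e.'' hides precisely this barycenter argument, so Proposition~\ref{prop:C-spaces} is asserted rather than proved. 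A smaller but related elision: to know that $\sigma_\alpha\times_{\calC_\Sigma}\sigma_\beta$ is representable by a cone complex (not merely ``assembled from cones $\omega_u$''), one must check that the indexing category $\Sigma/\{\alpha,\beta\}$ is equivalent to a partially ordered set, which the paper does using the uniqueness axiom (ii) following Definition~\ref{def_combinatorialconestack}; your ``tracking the faces'' skips exactly this uniqueness verification, which you yourself flag as the remaining obstacle.
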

\begin{proof}
Note that cone complexes are examples of combinatorial cone 
stacks, so a combinatorial cone stack $\sigma\colon\Sigma\to\RPC^f$ induces a stack 
$\calC = \operatorname{Hom}_{\text{$C$-stack}}(-,\sigma)$
on $\RPCC$.  We  argue that this stack is a cone stack by showing that $\coprod_{\alpha\in\Sigma} \sigma(\alpha) \to \Sigma$  is representable by cone complexes, strict, and surjective.  From this and Lemma \ref{lemma_diagonalrepresentable} it follows that the diagonal is representable and therefore that $\calC$ is a cone stack.

This association is a full and faithful functor $F$ from combinatorial cone stacks to cone stacks. The fact that $F$ is full is a consequence of Yoneda's lemma together with the fact that morphisms of combinatorial cone stacks are defined locally on cones.  Faithfulness of $F$ follows from the local definition of morphisms of combinatorial cone stacks.
We will check essential surjectivity of $F$ at the end of the proof.

Let $\sigma : \Sigma \rightarrow \RPC^f$ be a combinatorial cone stack and let $\mathcal C$ be the associated category fibered in groupoids over $\RPCC$, as constructed above.  

\vskip1ex

\textsc{$\calC$ is a cone stack.}  We must argue that the maps $\sigma(\alpha) \rightarrow \calC$ are representable, strict, and surjective.  Surjectivity is immediate from the definition of morphisms of cone stacks.  To see that these maps are strict and representable by cone complexes, consider a cone $\tau$ and a map $\tau \rightarrow \calC$.  By definition, there is a unique $\beta \in \Sigma$ and a map $\tau \rightarrow \sigma(\beta)$.  The fiber product $\sigma(\alpha) \times_\calC \tau$ may be identified with $\sigma(\alpha) \times_\calC \sigma(\beta) \times_{\sigma(\beta)} \tau$, so it is sufficient to show that $\sigma(\alpha) \times_\calC \sigma(\beta)$ is a cone complex.
 
Let $\Sigma / \{ \alpha, \beta \}$ be the category of triples $(\gamma, u, v)$ where $\gamma \in \Sigma$ and $u : \gamma \rightarrow \alpha$ and $v : \gamma \rightarrow \beta$.  By composing with the forgetful map to $\Sigma$, we can view $\Sigma / \{ \alpha, \beta \}$ as a category fibered in groupoids over $\RPC^f$.  It is immediate that the associated category fibered in groupoids over $\RPCC$ is $\sigma(\alpha) \times_{\calC} \sigma(\beta)$.  Moreover, the underlying category of $\Sigma / \{ \alpha, \beta \} \rightarrow \RPC^f$ is a partially ordered set.  Indeed, suppose that we have two morphisms $a, b : (\gamma, u, v) \rightarrow (\gamma', u', v')$ in $\Sigma / \{ \alpha, \beta \}$.  Since $\RPC^f / \{ \sigma(\alpha), \sigma(\beta) \}$ is equivalent to a partially ordered set 
(namely, the set of common faces of $\sigma(\alpha)$ and $\sigma(\beta)$),
the two maps $a$ and $b$ must have the same projection in $\RPC^f$.  Then we get a commutative triangle
\begin{equation*} \vcenter{\xymatrix{
\gamma \ar[dr]^u \ar@<-1.5pt>[d]_a \ar@<1.5pt>[d]^b \\
\gamma' \ar[r]_{u'} & \alpha
}} \text{\qquad above\qquad} 
\vcenter{\xymatrix{
\sigma(\gamma) \ar[dr]^{\sigma(u)} \ar[d]_{\sigma(a) = \sigma(b)} \\
\sigma(\gamma') \ar[r]_{\sigma(u')} & \sigma(\alpha)
}} . \end{equation*}
As $\Sigma$ is fibered in groupoids over $\RPC^f$, there is a unique arrow $\gamma \rightarrow \gamma'$ with these properties, so $a = b$.  Thus the underlying category of $\Sigma / \{ \alpha, \beta \}$ is a partially ordered set, so its associated category fibered in groupoids over $\RPCC$ is a cone complex, by Remark~\ref{rem:cone-complex}.
Moreover, the map to $\sigma(\beta)$ is strict by construction.

\vskip1ex

\textsc{Essential surjectivity.} We have now constructed a functor from combinatorial cone stacks to cone stacks and shown it is fully faithful. All that remains is to show it is also essentially surjective.  

Suppose that $\mathcal C$ is a cone stack.  Let $\Sigma$ be the category of all \emph{strict} morphisms from cones to $\mathcal C$.  A morphism in $\Sigma$ is a commutative triangle
\begin{equation}\label{eq:sigsigSig} \xymatrix@R=8pt{
\sigma' \ar[dr] \ar[dd] \\
& \mathcal C \\
\sigma \ar[ur]
} \end{equation}
in which the vertical arrow is a face morphism.  It is immediate that this is a combinatorial cone stack.  

We verify that the associated cone stack of $\Sigma$ is $\mathcal C$.  In other words, we need to show that $\Hom_{\text{$C$-space}}(\sigma, \Sigma) = \Hom(\sigma, \mathcal C)$ for every cone $\sigma$.  But if $\sigma \rightarrow \mathcal C$ is any morphism then Lemma~\ref{lem:initial} gives a unique initial factorization through a strict map $\tau \rightarrow \mathcal C$, and therefore a uniquely determined morphism of combinatorial cone stacks from $\sigma$ to $\Sigma$, as required.

Finally, we argue that it is a combinatorial cone space when $\calC$ is a cone space.  Suppose that $\tau \rightarrow \mathcal C$ is a strict morphism and that we have two face inclusions $\sigma \rightrightarrows \tau$ \emph{with the same image} lifting the same map $\sigma \rightarrow \mathcal C$.  Let $\beta$ be the barycenter of $\sigma$ and note that the two inclusions of $\sigma$ in $\tau$ agree on $\beta$.  Each of the arrows $\sigma \to \tau$ gives a commutative diagram:
\begin{equation*} \xymatrix{
\beta \ar[r] \ar[d] & \tau \ar[d] \\
\sigma \ar[ur] \ar[r] & \mathcal C
} \end{equation*}
But $\beta$ is in the interior of $\sigma$ and $\sigma \rightarrow \mathcal C$ is strict.  By Lemma~\ref{lem:initial}, the commutative diagram above is uniquely determined by its outer square.  But because $\mathcal C$ is a cone space, the outer square is determined by the morphisms around its periphery (there is no additional $2$-isomorphism).  Thus the two maps $\sigma \rightrightarrows \tau$ must agree. 

\end{proof}

\begin{remark}
The proof of Proposition~\ref{prop:C-stacks} shows that the diagonal of a cone stack is always representable by cone complexes.
\end{remark}

\begin{example}
We offer a small example that illustrates some of the concepts at play in the proof of Proposition~\ref{prop:C-stacks}.  
Consider the combinatorial cone stack $\tau\colon \Sigma\to\RPC^f$ drawn below on the left.  

\begin{figure}[h!]

\begin{tikzpicture}
\begin{scope}[shift={(-3,0)}]

\draw[fill=black] (0,-2.5) circle (.3ex);

\draw[fill=black] (0,-1) circle (.3ex);
\draw[thick, ->] (0,-1) -- (1,-1);

\draw[fill=black] (0,0) circle (.3ex);
\draw[thick,->] (0,0) -- (1.2,1.2);
\draw[thick,->] (0,0) -- (-1.2,1.2);
\fill[pattern color=gray,pattern=north west lines] (0,0)--(1,1)--(-1,1)--(0,0);

\draw[->] (0.4,-.75) -- (0.4,0.3);
\draw[->] (0.2,-.75) to [bend left] (-.5,0.3);

\draw[->] (0,-2.3) -- (0,-1.2);
\draw[->] (-.2,-2.3) to [bend left] (-.1,-.1);

\draw[->] (0.3,1.3) to [bend right=40] (-0.3, 1.3);
\end{scope}
\begin{scope}[shift={(3,0)}]
\draw[thick] (-1,0) -- (-0.4,0.5);
\draw[thick] (-1,0) -- (-0.4,-0.5);
\node[draw=none,label={0:$1$}] (mark1) at (-0.5,.5) {};
\node[draw=none,label={0:$1$}] (mark2) at (-0.5,-.5) {};
\fill (-1,0) circle (0.07);
\fill (-0.4,0.5) circle (0.07);
\fill (-0.4,-0.5) circle (0.07);
\draw (-1,0) -- (-1.5,0);
\node at (-1.75, 0) {$\scriptstyle 1$};
\end{scope}
\end{tikzpicture}
\end{figure}

\noindent (Technically, we have only drawn a finite category that is equivalent to the desired category fibered in groupoids. We suppress all identity morphisms to simplify the picture.)  Let $\calC$ be the associated stack.

This stack is like the $\ZZ/2\ZZ$ quotient of $\RR^2_{\ge0}$, except that it has no stabilizer at the cone point.  (In fact, we note in passing that $\calC$ represents the moduli functor $\mathcal{M}_G\colon \RPC^{\mathrm{op}}\to \mathbf{Groupoids}$ defined as follows.  Let $G$ be the weighted, 1-marked, genus 2 graph shown on the right in the above figure. Then the fiber of $\mathcal{M}_G$ over a cone $\tau$ is the groupoid of tropical curves whose underlying graph $\mathbb{G}(\Gamma)$ is isomorphic to a weighted edge contraction of $G$. See Section~\ref{sec:tropcurves}.)

Now as an example, let $\sigma = \R^2_{\ge0}$, and let us study strict morphisms $\sigma\to\calC$.  We note that there are exactly two strict maps $u_1,u_2\colon \sigma\to\calC$, corresponding to the two isomorphisms from $\sigma$ to the 2-dimensional cone in the picture. And for each $i,j=1,2$, there are two $2$-isomorphisms $u_i\cong u_j$; one of the two comes from the nontrivial automorphism of the 2-cone shown in $\Sigma$.  Keeping in mind Remark~\ref{rem:caution,caution}, then, we have the commutative diagrams shown, e.g.~when $i=j=1$:

\begin{figure}[h!]
\begin{tikzcd}[row sep=tiny,column sep = large]
\sigma  \arrow[dd,"1"] \arrow[dr, ""{name=U,below}, "u_1"{above}] &\\
 & \calC \\
 \sigma \arrow[ur, ""{name=D,above}, "u_1"{below}]&
\arrow[Rightarrow, from=U, to=D, "1"]
\end{tikzcd}
\qquad
\qquad
\begin{tikzcd}[row sep=tiny,column sep = large]
\sigma  \arrow[dd,"-1"] \arrow[dr, ""{name=U,below}, "u_1"{above}] &\\
 & \calC \\
 \sigma \arrow[ur, ""{name=D,above}, "u_1"{below}]&
\arrow[Rightarrow, from=U, to=D, "-1"]
\end{tikzcd}
\end{figure}

The presence of the second diagram verifies once again that the combinatorial cone stack associated to $\calC$ is manifestly not a combinatorial cone space, since there is a nonidentity automorphism on $\sigma$.

\end{example}

\begin{remark} \label{rem:make_your_own_cone_stack}
Let $FI$ denote the category of finite sets with injections as arrows, and suppose $\Sigma$ is any category and $X\colon \Sigma \to FI$ makes $\Sigma$ into a category fibered in groupoids over $FI$.  Then $X$ determines a combinatorial cone stack, all of whose cones are smooth.  This cone stack is constructed as follows.  First, there is a fully faithful functor
$$\operatorname{sm}\colon FI \to \RPC^f$$ associating to a set $E$ the rational polyhedral cone $\RR^E_{\ge 0}$, and associating to an injective map of sets $E'\to E$ the corresponding face map.  We write $\operatorname{sm}$ for this functor, since it associates to each set a cone that is {\em smooth} in the sense of toric geometry.  As $\operatorname{sm}$ is fibered in groupoids, and the composition of functors fibered in groupoids is fibered in groupoids, the composition $$\operatorname{sm}\circ \,X \colon \Sigma \to \RPC^f$$
produces a diagram of cones and face morphisms that is fibered in groupoids over $\RPC^f$.
\end{remark}


\section{Tropical curves and their moduli}\label{sec:tropcurves}

This section introduces the main player of this article, the tropical moduli stack $\calM_{g,n}^{\trop}$ (see Section \ref{section_modulifunctor}). In Section \ref{section_geometrization}, we provide a proof of Theorem \ref{thm_geometric} from the introduction. Finally, in Section \ref{subsec:groupoidexamples}, we describe an explicit groupoid presentation of $\calM_{g,n}^{\trop}$.



\subsection{The moduli stack of tropical curves}\label{section_modulifunctor}
Expanding on the definition given in \cite{Serre_trees}, a \emph{(finite) graph} $G$ consists of the following data:
\begin{itemize}
\item a finite set $X(G) = V(G)\sqcup F(G)$, where $V(G)$ and $F(G)$ are called the \emph{vertices} $V(G)$ and  \emph{flags} of $G$, respectively;
\item a {\em root map} $r_G\colon X(G)\to X(G)$ which is idempotent and whose image is $V(G)$; and
\item an involution $\iota_G\mathrel{\mathop:} X(G)\rightarrow X(G)$ whose fixed point set contains $V(G)$.
\end{itemize}
We picture the root map as associating to a flag $f$ the vertex $v$ from which it emanates. The involution $\iota_G$ partitions $F(G)$ into a collection of subsets $\{f,\iota(f)\}$ of size $1$ or $2$. We refer to the sets consisting of two distinct flags as the \emph{finite edges} or simply as the \emph{edges} $E(G)$ of $G$, and to the sets consisting of one flag as the \emph{infinite edges} or \emph{legs} $L(G)$ of $G$. 
A {\em loop} is a finite edge $\{f,\iota_G(f)\}$ such that $r_G(f) = r_G(\iota_G(f))$.

There is a natural geometric realization functor taking a graph to a 1-complex (or a 1-complex with attached infinite legs). We say that a graph $G$ is {\em connected} if its geometric realization is.  Alternatively, $G$ is connected if, for any two vertices $v,w\in V(G)$, there is a sequence of vertices and flags $v_0\!=\!v,f_0,v_1,f_1,\ldots,v_n\!=\!w$ where $r(f_j)=v_j$ and $r(\iota(f_j))=v_{j+1}$ for each $j$.  

Let us recall some basic notions:

\begin{itemize}
\item The \emph{valence} $\val(v)$ of a vertex $v$ of $G$ is given as 
\begin{equation*}
\val(v)=\#\{f\in F(G)\:\vert\: r(f)=v\} \ , 
\end{equation*}
i.e., the number of flags emanating from $v$. 

\item A \emph{vertex weighting} on a graph $G$ is a function $h\colon V(G)\rightarrow \Z_{\geq 0}$. We refer to the pair $(G,h)$ as a \emph{(finite) weighted graph}. Given a weighted graph, we define its \emph{genus} as 
\begin{equation*}
g(G)=b_1(G)+\sum_{v\in V(G)} \hskip-1ex h(v) \ ,
\end{equation*}
where $b_1(G)$ denotes the first Betti number of $G$. A weighted graph $(G,h)$ is said to be \emph{stable} if for every vertex $v$ in $G$ we have 
\begin{equation*}
2h(v)-2+\val(v) > 0 \ .
\end{equation*}

\item Let $n=\# L(G)$. A \emph{marking} of a graph $G$ is a bijection 
\begin{equation*}
l\mathrel{\mathop:}\{1,\ldots, n\}\xlongrightarrow{\sim}L(G)
 \end{equation*}
 given by $i\mapsto l_i$.  We refer to a triple $(G,h,m)$, where $h$ is a vertex-weighting and $m$ is a marking, as an \emph{weighted ($n$-)marked graph}. 
 
\end{itemize}
Let $(G,h,m)$ and $(G',h',m')$ be two weighted graphs with marked legs.  A {\em morphism} $(G,h,m)\to (G',h',m')$ consists of a function $\pi\colon X(G)\to X(G')$ with the property that $\pi \circ r_G = r_{G'} \circ \pi$ and $\pi \circ i_G = i_{G'} \circ \pi$, which satisfies the following additional conditions:
\begin{itemize}
\item The map $\pi$ sends the legs of $G$ bijectively to the legs of $G'$, and preserves their markings; in other words $\pi(l_i) = l'_i$.
\item For each flag $f\in F(G')$, its preimage $|\pi^{-1}(f)|$ has exactly one element, which is automatically an element of $F(G)$.
\item For each vertex $v\in V(G')$, the weighted graph $\pi^{-1}(v)$ is connected with genus $h(v)$. 
\end{itemize}
The reader may verify that if there is a morphism $(G,h,m)\to (G',h',m')$ of weighted, marked graphs, then $G$ and $G'$ have the same genus (and the same number of markings).
The following notation follows \cite[Section 2.2]{ChanGalatiusPayne_tropicalmoduliII} as well as \cite[Definition 2.2]{Ulirsch_tropicalHassett}:

\begin{definition}\label{def:jgn}
We define $J_{g,n}$ to be the category whose objects are stable weighted, $n$-marked graphs $(G,h,m)$ of genus $g$, and whose morphisms are described above.  
\end{definition}

We will often pass, implicitly, to a skeleton category for $J_{g,n}$, picking one object arbitrarily for each isomorphism class. 

Note that a morphism $(G,h,m)\to(G',h',m')$ in  $J_{g,n}$ may be regarded as a composition of weighted edge contractions and isomorphisms of weighted, $n$-marked graphs, where the {\em weighted edge contraction} of an edge $e\in E(G)$ is the weighted, $n$-marked graph obtained by the following operation:
\begin{itemize}
\item If $e$ is not a loop, incident to vertices $v\ne w$, then delete $e$ and identify its endpoints $v$ and $w$ into a new vertex $x$, setting the weight of $x$ to be $h(v) + h(w)$. 
\item If $e$ is a loop based at $v$, then delete $e$ and increment the weight of its base vertex by $1$.
\end{itemize}

Given a subset $S\subseteq E(G)$ of the set of edges of $G$, we define $(G,m,w)/S$ (or $G/S$, when unambiguous) to be the weighted, marked graph obtained by contracting the edges in $S$, in any order.  

Note that this notation should not be confused with the notation for contracting a subspace of a topological space:  the connected components of the image of $S$ in $(G,h,m)/S$ are in bijection with the connected components of $S$.

\begin{definition}\label{def_tropicalcurve} Let $P$ be a sharp monoid. A \emph{(generalized) metric (with values in $P$)} on a graph $G$ is a function $d\mathrel{\mathop:}E(G)\rightarrow P$. An \emph{(abstract) tropical curve $\Gamma$ with edge lengths in $P$} of genus $g$ with $n$ marked legs consists of
\begin{itemize}
\item a connected $n$-marked, weighted graph $(G,h,m)$ of genus $g$, and
\item a generalized metric $d$ with values in $P$ such that $d(e)\neq 0$ for all $e\in E(G)$. 
\end{itemize}
Given a rational polyhedral cone $\sigma$ with dual monoid $S_\sigma=\sigma^\vee\cap M$, a tropical curve with edge lengths in $S_\sigma$ is said to be a \emph{tropical curve over $\sigma$}.
\end{definition}

Given a tropical curve $\Gamma$ with edge lengths in $P$, we write $\GG(\Gamma)$ for its underlying marked weighted graph $(G,h,m)$. We say that $\Gamma$ is \emph{stable} if $\GG(\Gamma)$ is stable.

Let $\mathcal M_{g,n}^{\trop}(\RPC)$ be the category of pairs $(\sigma, \Gamma)$ where $\sigma$ is a rational polyhedral cone and $\Gamma$ is a genus $g$ tropical curve with $n$ marked legs over $\sigma$.  A morphism $(\sigma,\Gamma) \rightarrow (\sigma',\Gamma')$ in $\mathcal M_{g,n}^{\trop}$ is a morphism of rational polyhedral cones $f \colon \sigma \rightarrow \sigma'$ and a morphism of vertex-weighted, marked graphs $g \colon \GG(\Gamma') \rightarrow \GG(\Gamma)$ such that for each edge $e'$ of $\Gamma'$, we have:
\begin{enumerate}
\item $g$ contracts $e'$ if and only if $f^*d'(e') = 0 \in S_\sigma$, and
\item if $g(e') = e \in E(\Gamma)$ then $f^*d'(e') = d(e).$
\end{enumerate}
Here $f^*\colon S_{\sigma'} \to S_\sigma$ denotes the map of toric monoids associated to $f$.
 Note that
the category $\mathcal M_{g,n}^{\trop}(\RPC)$ is fibered in groupoids over $\RPC$ under the projection $(\sigma,\Gamma) \mapsto \sigma$.

\begin{definition}\label{def_modulistack}
Let $2g-2+n>0$. Using Proposition~\ref{prop:cfg-stack}, we define $\calM_{g,n}^{\trop}$ to be the unique stack over $\RPCC$ whose fiber over $\sigma$ is the groupoid of stable tropical curves over $\sigma$ that have genus $g$ and $n$ marked legs.
\end{definition}

By Remark \ref{remark_explicitextension}, the fiber of $\calM_{g,n}^{trop}$ over a cone complex $\Sigma$ is the groupoid of collections $\Gamma_\sigma$ for each face $\sigma$ of $\Sigma$ together with isomorphisms $(\Gamma_\sigma)\vert_\tau\simeq\Gamma_\tau$ 
whenever $\tau$ is a face of $\sigma$.


\subsection{Proof of Theorem \ref{thm_geometric}}\label{section_geometrization} The goal of this section is to prove Theorem \ref{thm_geometric}, i.e., that the stack  $\calM_{g,n}^{\trop}$ introduced in Definition~\ref{def_modulistack} is a cone stack. In our proof we verify the axioms of a cone stack directly. An alternative approach via combinatorial cone stacks is sketched in Section \ref{sec:ccsc}. A priori this means that we need to show two things:
\begin{itemize}
\item The diagonal map $\Delta\mathrel{\mathop:}\calM_{g,n}^{\trop}\rightarrow \calM_{g,n}^{\trop}\times \calM_{g,n}^{\trop}$ is representable by cone spaces. 
\item  There is a (automatically representable) morphism $U\rightarrow \calM_{g,n}^{\trop}$ from a cone complex $U$ that is surjective and strict. 
\end{itemize}
Lemma \ref{lemma_diagonalrepresentable}, however, implies that we only need to find a cone complex $U$ together with a representable morphism $U\rightarrow\calM_{g,n}^{\trop}$ that is surjective and strict; the condition that the diagonal is representable by cone spaces then follows automatically. 

We will construct $U$ as a disjoint union of stacks $U_G = U_{(G,h,m)}$, for $G$ ranging over stable weighted graphs.  Each $U_G$ is a ``rigidified'' moduli functor, for tropical curves $\Gamma$ {\em together with a particular morphism from $G$ to the dual graph of $\Gamma$}. This marking by $G$ ensures that $U_G$ is actually represented by a rational polyhedral cone. One may think of this in rough analogy with the choice of a tricanonical embedding (as in \cite{DeligneMumford_moduliofcurves}) or a Teichm\"uller level structure (as in \cite{AbramovichCortiVistoli}) to rigidify the moduli problem for smooth algebraic curves.

Fix a stable weighted graph $(G,h,m)$ of genus $g$ with $n$ marked points.
Consider the functor
\begin{equation*}
U_G=U_{(G,h,m)}\mathrel{\mathop:}\RPC^{op}\longrightarrow \mathbf{Groupoids}
\end{equation*}
that associates to a rational polyhedral cone $\sigma$ the groupoid whose objects are pairs $(\Gamma, \phi)$ consisting of
\begin{itemize} 
\item a tropical curve $\Gamma$ over $\sigma$ and 
\item a morphism $\phi\colon G\rightarrow\GG(\Gamma)$ of weighted, marked graphs.
\end{itemize}
An isomorphism between two such pairs $(\Gamma, \phi)$ and $(\Gamma',\phi')$ consists of an isomorphism $f\mathrel{\mathop:}\Gamma\xrightarrow{\sim}\Gamma'$ of tropical curves over $\sigma$ such that $\GG(f)\circ\phi=\phi'$. By Proposition~\ref{prop:cfg-stack}, the functor $U_G$ determines a stack over $(\RPCC,\tau_{face})$, also denoted by $U_G$, whose fiber over a rational polyhedral cone $\sigma$ is $U_G(\sigma)$. 

\begin{lemma}
The stack $U_G$ is represented by the rational polyhedral cone $\sigma_G=\mathbb{R}_{\geq 0}^{E(G)}$.
\end{lemma}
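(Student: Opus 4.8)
The plan is to exhibit an explicit equivalence between the stack $U_G$ and the functor of points of the cone $\sigma_G = \R_{\geq 0}^{E(G)}$, i.e. between $U_G(\sigma)$ and $\Hom(\sigma, \sigma_G) = \Hom_{\text{monoid}}(S_{\sigma_G}, S_\sigma)$. Since $S_{\sigma_G} = \N^{E(G)}$ is the free monoid on $E(G)$, a map $\sigma \to \sigma_G$ is the same thing as a choice of element $d(e) \in S_\sigma$ for each edge $e \in E(G)$. So I would define a functor $\Phi\colon U_G(\sigma) \to \Hom(\sigma,\sigma_G)$ by sending a pair $(\Gamma,\phi)$, where $\phi\colon G \to \GG(\Gamma)$, to the assignment $e \mapsto d_\Gamma(\phi(e))$ if $\phi$ does not contract $e$, and $e \mapsto 0$ if $\phi$ contracts $e$. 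Here I use that a morphism of weighted marked graphs $\phi\colon G \to \GG(\Gamma)$ is a composition of edge contractions and an isomorphism, so each edge of $G$ either maps to an edge of $\GG(\Gamma)$ or is contracted; in the first case it inherits a length in $S_\sigma$ and in the second it receives length $0$. This produces a well-defined element of $\N^{E(G)} \to S_\sigma$, hence a morphism $\sigma \to \sigma_G$.

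The crux is to build the inverse functor $\Psi\colon \Hom(\sigma,\sigma_G) \to U_G(\sigma)$ and check that $\Phi$ and $\Psi$ are mutually quasi-inverse. Given a monoid map $S_{\sigma_G} = \N^{E(G)} \to S_\sigma$, equivalently an assignment $\ell\colon E(G) \to S_\sigma$, I would let $S \subseteq E(G)$ be the set of edges with $\ell(e) = 0$ and form the contracted weighted marked graph $G/S$ together with the metric on $G/S$ given by $\ell$ restricted to the non-contracted edges; this is a stable tropical curve $\Gamma$ over $\sigma$ because $(G,h,m)$ is stable and weighted edge contraction preserves genus and stability. The contraction map $\phi\colon G \to G/S = \GG(\Gamma)$ is the accompanying morphism of graphs, and $(\Gamma,\phi) := \Psi(\ell)$ lies in $U_G(\sigma)$. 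The condition $d(e) \neq 0$ for edges of a tropical curve, imposed in Definition~\ref{def_tropicalcurve}, is exactly what forces us to contract the zero-length edges, and this is precisely why $S_{\sigma_G}$ is $\N^{E(G)}$ rather than something with extra relations.

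The main work — and the step I expect to be the genuine obstacle — is verifying that $\Phi$ is an \emph{equivalence of groupoids}, in particular that it is fully faithful, which amounts to checking that the automorphisms in $U_G(\sigma)$ are trivial. An object $(\Gamma,\phi)$ with a marking $\phi\colon G \to \GG(\Gamma)$ is rigid: an isomorphism $f\colon \Gamma \to \Gamma'$ commuting with the markings via $\GG(f)\circ\phi = \phi'$ is uniquely determined, because $\phi$ is surjective on vertices and flags and so $\GG(f)$ is pinned down on all of $\GG(\Gamma)$; together with the marking of legs this kills all automorphisms, so $U_G(\sigma)$ is in fact a \emph{setoid}, matching the fact that $\Hom(\sigma,\sigma_G)$ is a set. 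I would make this rigidity argument carefully, using that a morphism of marked graphs sends legs to legs preserving markings and has singleton flag-preimages, so the presence of $\phi$ removes the automorphisms coming from $\Aut(\GG(\Gamma))$.

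Finally I would verify functoriality in $\sigma$: for a morphism $f\colon \sigma \to \sigma'$ the pullback $f^*$ on toric monoids is compatible with $\Phi$, so that $\Phi$ assembles into an equivalence of stacks $U_G \simeq \sigma_G$ over $\RPC$, hence over $\RPCC$ by Proposition~\ref{prop:cfg-stack}. This identifies $U_G$ with (the representable functor of) the cone $\R_{\geq 0}^{E(G)}$, completing the proof.
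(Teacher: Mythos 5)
Your proposal is correct and follows essentially the same route as the paper: the paper likewise first shows that the marking $\phi$ rigidifies objects so that $U_G(\sigma)$ is a setoid, and then identifies $\Hom(\sigma,\sigma_G)$ with $U_G(\sigma)$ by reading a monoid map $\N^{E(G)}\to S_\sigma$ as a generalized metric and contracting the edges of length zero. Your extra details---the explicit quasi-inverse $\Phi$, the surjectivity-of-$\phi$ rigidity argument, and functoriality in $\sigma$---merely spell out steps the paper leaves implicit.
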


\begin{proof}
Let $\sigma$ be a rational polyhedral cone. An automorphism of an  object $(\Gamma, \phi\colon G \to \GG(\Gamma))$ in the groupoid $U_G(\sigma)$ consists of an automorphism $f$ of $\Gamma$ such that $\GG(f) \circ \phi = \phi$; this condition implies that $f$ may only be the identity. It follows  that $U_G(\sigma)$ is a setoid, and to prove the lemma it suffices to show that there is a natural bijection 
\begin{equation*}
\Hom(\sigma,\sigma_G)\xlongrightarrow{\sim}U_G(\sigma) \ .
\end{equation*}
Let $f\mathrel{\mathop:}\sigma\rightarrow \sigma_G$ be a morphism of cones and denote by $f^\#$ the induced morphism $\N^E\rightarrow S_\sigma$ on the level of dual monoids. For an edge $e$ of $G$ write $[e]$ for the unique generator of $S_{\sigma_G}=\N^E$ corresponding to $e$. Consider the generalized metric
\begin{equation*}\begin{split}
\widetilde{d}\mathrel{\mathop:}E(G)&\longrightarrow S_\sigma\\
e&\longmapsto f^\#\big([e]\big)
\end{split}\end{equation*}
on $G$ and define the set $X_f\subseteq E$ as the subset of edges $e$ for which $\widetilde{d}(e)=0$. The generalized metric $\widetilde{d}$ descends to a generalized metric $d$ on the weighted edge contraction $G/X_f$ thereby defining a tropical curve $\Gamma_f$ with $\GG(\Gamma)=G/X_f$. The association $f\mapsto\big(\Gamma_f, \phi_f\colon G\to G/X_f\big)$ defines the desired bijection.
\end{proof}

There is a natural morphism $U_G\rightarrow \calM_{g,n}^{\trop}$ that, over a rational polyhedral cone $\sigma$, is given by associating to $(\Gamma, \phi)$ the tropical curve $\Gamma$ over $\sigma$. In the following two Lemmas \ref{lemma_atlas=strict} and \ref{lemma_atlas=surjective} we show that these morphisms define a strict and surjective chart of $\calM_{g,n}^{trop}$.

\begin{lemma}\label{lemma_atlas=strict}
The morphism $U_G\rightarrow \calM_{g,n}^{\trop}$ is representable by cone spaces, strict, and quasicompact. 
\end{lemma}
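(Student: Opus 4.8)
The plan is to establish all three properties by an explicit computation of a single fiber product followed by base change. It suffices to show that for every cone $\tau$ and every morphism $\tau\to\calM_{g,n}^{\trop}$, the fiber product $\tau\times_{\calM_{g,n}^{\trop}}U_G$ is representable by a cone space whose projection to $\tau$ is strict and quasicompact, since all three properties may be checked on cones. Such a morphism classifies a tropical curve $\Gamma'$ over $\tau$ with underlying graph $G'=\GG(\Gamma')$, and because $(\Gamma',\id_{G'})$ is an object of $U_{G'}(\tau)$, it factors as $\tau\to U_{G'}\to\calM_{g,n}^{\trop}$, with $U_{G'}=\sigma_{G'}=\R_{\geq 0}^{E(G')}$. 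Hence $\tau\times_{\calM_{g,n}^{\trop}}U_G\cong\tau\times_{U_{G'}}W$, where $W:=U_G\times_{\calM_{g,n}^{\trop}}U_{G'}$, and it is enough to analyze $W$ and its projection to $U_{G'}$ and then base-change along $\tau\to U_{G'}$.

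The first and main step is to identify $W$ explicitly as a combinatorial cone space in the sense of Definition~\ref{def:C-space}. Unwinding the fiber product, an object of $W$ over a cone $\rho$ is a tropical curve $\Gamma$ over $\rho$ together with two markings $\chi\colon G\to\GG(\Gamma)$ and $\chi'\colon G'\to\GG(\Gamma)$, the isomorphism appearing in the fiber product being used to transport one marking onto the graph of the other. Writing $H=\GG(\Gamma)$, these data amount to an isomorphism class of triples $(H,\chi\colon G\to H,\chi'\colon G'\to H)$ of weighted marked graphs together with a metric on $H$ valued in $S_\rho$ that is nonzero on every edge; this last condition says exactly that the induced map $\rho\to\sigma_H:=\R_{\geq 0}^{E(H)}$ does not factor through any proper face. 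I would therefore identify $W$ with the combinatorial cone space whose cones are the $\sigma_H$, indexed by the isomorphism classes of such triples, and whose face morphisms are those induced by further edge contractions $H\to H/B$. There are only finitely many contractions of $G$ and of $G'$, so this index is finite, giving quasicompactness, and by Proposition~\ref{prop:C-spaces} the object $W$ is a cone space. Note that $W$ need not be a cone complex: automorphisms of $H$ (for instance the flip of a loop) can produce two distinct face morphisms out of a single cone, exactly the phenomenon exhibited by the waffle cone in Example~\ref{example_combinatorialwafflecone}.

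Next I would check that the projection $W\to U_{G'}=\sigma_{G'}$ is strict. On the cone $\sigma_H$ indexed by $(H,\chi,\chi')$ this projection is the cone morphism $\sigma_H\to\sigma_{G'}$ dual to the monoid map $\N^{E(G')}\to\N^{E(H)}$ sending the generator of an edge $e'$ of $G'$ to the generator of $\chi'(e')$ when $e'$ is not contracted by $\chi'$, and to $0$ otherwise. Since $\chi'$ induces a bijection between the edges of $G'$ that it does not contract and the edges of $H$, this map is an isomorphism of $\sigma_H$ onto the face of $\sigma_{G'}$ cut out by the vanishing of the coordinates contracted by $\chi'$. Hence $W\to U_{G'}$ is strict.

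Finally, base-changing along $\tau\to U_{G'}$ yields $V:=\tau\times_{\calM_{g,n}^{\trop}}U_G\cong\tau\times_{U_{G'}}W$, which is again representable by a cone space (cone spaces being closed under base change along morphisms of cones), and $V\to\tau$ is strict and quasicompact because both properties are stable under base change in the geometric context $(\RPCC,\tau_{face},\SS)$. As $\tau$ was arbitrary, this proves the lemma. I expect the main obstacle to be the explicit identification of $W$ in the second step: one must verify that the fiber-product groupoid is governed exactly by the combinatorial datum of a double contraction $(H,\chi,\chi')$ equipped with a metric, confirm the face axioms of Definition~\ref{def:C-space}, and correctly account for graph automorphisms. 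This bookkeeping, rather than any single delicate estimate, is where the real content lies, and it is precisely what forces us to work with cone spaces rather than cone complexes.
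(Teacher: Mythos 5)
Your proof is correct in substance but takes a different route from the paper's. The paper fixes an arbitrary test morphism $\sigma\to\calM_{g,n}^{\trop}$, classifying a tropical curve $\Gamma$ over $\sigma$, and directly builds a cone complex $\Sigma_{G,\Gamma}$ representing the fiber product, with cones indexed by pairs $\bigl(\tau\preceq\sigma,\ \phi\colon G\to\GG(\Gamma\vert_\tau)\bigr)$; strictness and finiteness are read off from this construction. You instead factor the test map through a chart, $\tau\to U_{G'}\to\calM_{g,n}^{\trop}$ (using that $(\Gamma',\id_{G'})\in U_{G'}(\tau)$ and that $d'(e)\neq 0$ for all $e$ forces the map $\tau\to\sigma_{G'}$ to avoid proper faces), compute the universal double-chart fiber product $W=U_G\times_{\calM_{g,n}^{\trop}}U_{G'}$ once and for all, and then base-change, invoking stability of strictness and quasicompactness in the geometric context. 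Your $W$ is exactly the object $R_{G,G'}$ that the paper constructs later, in its discussion of groupoid presentations in Section~\ref{subsec:groupoidexamples}, with cones indexed by triples $(S_1\subseteq E(G),\,S_2\subseteq E(G'),\,\phi\colon G/S_1\xrightarrow{\sim}G'/S_2)$; your indexing by triples $(H,\chi,\chi')$ up to isomorphism is equivalent to it. What your approach buys is modularity: the hard combinatorics is done in the universal case and everything else is formal base change; what the paper's approach buys is that it never needs the factorization step and produces the representing complex over an arbitrary $\sigma$ in one stroke --- indeed the paper's cones $(\tau\preceq\sigma,\phi)$ are precisely what your $\tau\times_{\sigma_{G'}}W$ unwinds to.

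One genuine error, though it does not sink the proof: your claim that $W$ ``need not be a cone complex'' because automorphisms of $H$ such as a loop flip ``can produce two distinct face morphisms out of a single cone'' is false. The two markings rigidify $W$ completely. A morphism of weighted marked graphs is surjective on flags and vertices, so if $\psi\colon H/B_1\to H/B_2$ satisfies $\psi\circ c_{B_1}\circ\chi=c_{B_2}\circ\chi$ (and likewise for $\chi'$), then $\psi\circ c_{B_1}=c_{B_2}$ on all of $H$, forcing $B_1=B_2$ and $\psi$ to be the identity; hence distinct contraction sets give non-isomorphic triples, each face of $\sigma_H$ is the image of exactly one face morphism, and there is at most one morphism between any two cones, so $W$ satisfies all of Definition~\ref{definition_conecomplexes} and is a cone \emph{complex}, as the paper asserts explicitly for $R_{G_1,G_2}$ in Section~\ref{subsec:groupoidexamples}. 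The waffle-cone phenomenon of Example~\ref{example_loopcone} arises for $\Cone(\Gamma)$, i.e.\ for the universal curve, not for the atlas fiber products, precisely because there the loop's two flags are not pinned down by a rigidifying marking. Since cone complexes are in particular cone spaces, your lemma statement still follows, but you should correct the remark --- and note that the rigidification argument you would need to ``confirm the face axioms of Definition~\ref{def:C-space}'' is exactly the computation above, which proves more than you claimed.
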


\begin{proof}
Let $\sigma\rightarrow \calM_{g,n}^{\trop}$ be a morphism. We will show that the fiber product 
\begin{equation*}
X_{G,\Gamma}=U_G\times_{\calM_{g,n}^{\trop}}\sigma
\end{equation*}
is representable by a cone complex with finitely many cones and that the induced map $X_{G,\Gamma}\rightarrow \sigma$ is strict. Denote by $\Gamma$ the tropical curve over $\sigma$ that corresponds to $\sigma\rightarrow\calM_{g,n}^{\trop}$ and let $\alpha$ be a rational polyhedral cone. 
The setoid $X_{G,\Gamma}(\alpha)$ is equivalent to the set of quadruples
\begin{equation}\label{eq:Xonalpha}
\big\{(i\colon\alpha\to \sigma, \Gamma', G\to \GG(\Gamma'), \Gamma' \xrightarrow{\cong} i^*\Gamma) \big\}
\end{equation}
where $\Gamma'$ is a tropical curve over $\alpha$ and $G \rightarrow \GG(\Gamma')$ is an edge contraction.  Identifying $\Gamma'$ with $i^\ast \Gamma$, we may pare these data down to
\begin{equation*}
\big\{ (i : \alpha \rightarrow \sigma, G \rightarrow \mathbb G(i^\ast \Gamma)) \big\} .
\end{equation*}

We now produce a cone complex $\Sigma = \Sigma_{G,\Gamma}$ which represents $X_{G,\Gamma}$.  Let $\Sigma$ have cones indexed by pairs
\begin{equation*}
\big( \tau \preceq \sigma, \phi : G \rightarrow \mathbb G( \Gamma \vert_\tau ) \big)
\end{equation*}
where $G \rightarrow \mathbb G(\Gamma \vert_\tau)$ is an edge contraction and $\tau \preceq \sigma$ means $\tau$ is a face of $\sigma$.

We associate to $(\tau,\phi)$ a cone $C(\tau,  \phi)$ that is a copy of $\tau$.  Given two cones $C = C(\tau, \phi)\cong \tau$ and $C' = C(\tau',\phi') \cong \tau'$, suppose 
$\tau'\preceq \tau$  and the diagram 

\begin{equation*} \xymatrix@R=10pt{
& \mathbb G(\Gamma \vert_\tau) \ar[dd] \\
G \ar[ur]^-\phi \ar[dr]_-{\phi'} \\
& \mathbb G(\Gamma \vert_{\tau'})
} \end{equation*}
commutes.  Then we glue $C'$ to a face of $C$ via $C'\cong \tau'\preceq \tau\cong C$.

Then $\Sigma$ is a cone complex according to Definition~\ref{definition_conecomplexes}. Indeed, a face $\tau'\preceq \tau \cong C(\tau,\phi)$ is the image of exactly one face morphism $C(\tau',\phi')$, since $\phi'$ is uniquely determined from $\tau'\preceq \tau$ and $\phi$ as the composition $G \xrightarrow{\phi} \G(\Gamma\vert_\tau) \to \G(\Gamma\vert_{\tau'})$. Moreover, $\Sigma$ has finitely many cones because a cone has finitely many faces and a weighted marked graph has finitely many edge contractions.

Now we claim $\Sigma$ represents $X_{G,\Gamma}$.  Let $\alpha\in \RPC$.  Then $\Hom(\alpha, \Sigma)$ is the set of pairs consisting of a cone $C(\tau\preceq \sigma, \phi\colon G \rightarrow \GG(\Gamma\vert_\tau))$ together with a morphism $\alpha\to \tau$ in $\RPC$ that does not factor through any proper face $\tau'\prec\tau$.  This set is naturally in bijection with
\begin{equation}\label{eq:Sigmaalpha}
\big\{(i\colon \alpha\to\sigma, G \rightarrow \GG(i^*\Gamma))\big\} = X_{G,\Gamma}(\alpha) ,
\end{equation}
as required.

To conclude, we observe that $\Sigma\to \sigma$ is a strict morphism of cone complexes: each cone $C(\tau,\rho,\phi)$ of $\Sigma$ is sent isomorphically to $\tau\preceq \sigma$. 
\end{proof}

\begin{example}
Let $(g,n) =(1,2)$.  Let $\sigma = \RR_{\ge0}$, and let $\Gamma \in \mathcal{M}_{1,2}^{\trop}(\sigma)$ be a tropical curve of combinatorial type shown in the horizontal ray of $\mathcal{M}_{1,2}^{\trop}$ in Figure~\ref{figure_M12}, equipped with length $1$.  It consists of a length $1$ loop at a vertex supporting both markings $1$ and $2$. Next, let $G$ be the graph shown in the upper right of Figure~\ref{figure_M12}; it has two parallel edges $e_1$ and $e_2$ between vertices marked $1$ and $2$ respectively.  

Then $X_{G,\Gamma}$ is represented by a $1$-dimensional cone complex consisting of four rays attached at their origins.  The four rays are naturally labeled by the one of the two possible choices of an edge $e_i$ of $G$ (for $i=1,2$), together with one of the two possible isomorphisms $G/e_i \to \G(\Gamma)$. 
\end{example}

Now set $U=\coprod_{G}U_G$, where the disjoint union is taken over all stable weighted $n$-marked graphs $(G,h,m)$ of genus $g$. By Lemma \ref{lemma_atlas=strict} the induced map $U\rightarrow \calM_{g,n}^{\trop}$ is representable and strict and the following Lemma \ref{lemma_atlas=surjective} finishes the proof of Theorem \ref{thm_geometric}. 

\begin{lemma}\label{lemma_atlas=surjective}
The representable morphism $U\rightarrow \calM_{g,n}^{\trop}$ is surjective. 
\end{lemma}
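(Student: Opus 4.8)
The plan is to check surjectivity against the face topology by producing, over every cone, a section of the pullback of $U\to\calM_{g,n}^{\trop}$. Recall that for a representable morphism, being surjective (a covering) means that for each cone $\sigma$ and each morphism $\sigma\to\calM_{g,n}^{\trop}$, the base change $U\times_{\calM_{g,n}^{\trop}}\sigma\to\sigma$ is a covering in $\tau_{face}$. Since a rational polyhedral cone has no nontrivial covers in the face topology (as used in the proof of Proposition~\ref{prop:cfg-stack}), this reduces to showing that the base change admits a section over $\sigma$: a section forces $\id_\sigma$ to factor through the morphism, so the sieve it generates is the maximal, hence covering, sieve.

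First I would unwind what such a section amounts to. A morphism $\sigma\to\calM_{g,n}^{\trop}$ is, by Definition~\ref{def_modulistack}, a stable tropical curve $\Gamma$ over $\sigma$, and a section of $U\times_{\calM_{g,n}^{\trop}}\sigma\to\sigma$ is precisely a lift of this morphism along $U\to\calM_{g,n}^{\trop}$, i.e.\ an object of $U(\sigma)$ whose image is $\Gamma$. By the definitions of $U=\coprod_G U_G$ and of $U_G$, such a lift consists of a stable weighted $n$-marked graph $G$ of genus $g$ together with a morphism $\phi\colon G\to\GG(\Gamma)$ of weighted, marked graphs.

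The construction of the lift is then immediate from stability. Because $\Gamma$ is stable, its underlying graph $\GG(\Gamma)$ is itself a stable weighted $n$-marked graph of genus $g$, and therefore is isomorphic to exactly one of the graphs $G$ indexing the disjoint union $U=\coprod_G U_G$ (recall that we have passed to a skeleton of $J_{g,n}$). Fixing an isomorphism $\phi\colon G\xrightarrow{\sim}\GG(\Gamma)$, which is a legitimate morphism in $J_{g,n}$, the pair $(\Gamma,\phi)$ is an object of $U_G(\sigma)$ mapping to $\Gamma$ under $U_G\to\calM_{g,n}^{\trop}$. This is the desired lift, hence the desired section, and surjectivity follows.

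I do not expect any genuine obstacle here: the entire content is the observation that stability places $\GG(\Gamma)$ among the index graphs, so the lift is furnished by (the chosen isomorphism onto) the identity. The only point demanding care is the formal translation between ``covering in the face topology'' and ``admits a section,'' which rests on the absence of nontrivial covers of a cone; this is the same mechanism exploited in the proof of Lemma~\ref{lem:initial}.
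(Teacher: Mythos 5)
Your proof is correct and is essentially the paper's own argument: the paper likewise takes $G=\GG(\Gamma)$ and observes that the cone $C\big(\sigma, G\xrightarrow{\sim}\GG(\Gamma)\big)$ of $X_{G,\Gamma}$ maps isomorphically to $\sigma$, which is exactly your section furnished by the identity contraction. Your extra unwinding of why a section suffices for covering in $\tau_{face}$ (cones having no nontrivial covers) is left implicit in the paper but matches its framework.
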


\begin{proof}
Let $\sigma\rightarrow\calM_{g,n}^{\trop}$ be a morphism from rational polyhedral cone $\sigma$ corresponding to  a tropical curve $\Gamma$ in $\calM_{g,n}^{\trop}(\sigma)$. Write $X=\sigma\times_{\calM_{g,n}^{\trop}}U$. We need to show that the induced map $X\rightarrow\sigma$ is surjective. Using the notation from the proof of Lemma \ref{lemma_atlas=strict}, we have that $X = \coprod X_{G,\Gamma}$. Taking $G=\GG(\Gamma)$, we find that the cone $C\big(\sigma, G\xrightarrow{\sim} \GG(\Gamma)\big)$ maps isomorphically to $\sigma$, so $X\to\sigma$ is surjective.
\end{proof}

\begin{remark}\label{rem:maximalonly}
In Lemma~\ref{lemma_atlas=surjective} we may alternatively take $U = \coprod_G U_G$ where $G  =(G,h,m)$ ranges over all stable, weighted $n$-marked graphs of genus $g$ that are maximal with respect to contraction. In this case, $U\to \calM_{g,n}^{\trop}$ is still surjective; this follows directly from the fact that for any tropical curve $\Gamma \in \calM_{g,n}^{\trop}(\sigma)$, the graph $G(\Gamma)$ is a contraction of some maximal $G$.
\end{remark}

\subsection{Groupoid presentations of $\calM_{g,n}^{\trop}$}\label{subsec:groupoidexamples}
The proof of Theorem~\ref{thm_geometric}, in particular, produces an explicit description of each $\calM_{g,n}^{\trop}$ as a groupoid of cone complexes, which we explain now.  Let $U = \coprod_{G} U_G$ where $G$ ranges over all stable weighted, $n$-marked, graphs of genus $g$ that are maximal with respect to edge contraction (one for each isomorphism class).  By Remark~\ref{rem:maximalonly}, the natural map $U\to\calM_{g,n}^{\trop}$ is a strict cover, so we have $\calM_{g,n}^{\trop} \cong (R\rightrightarrows U)$, where
$R = U\times_{\calM_{g,n}^ {\trop}} U$.
By definition of $U$ as a disjoint union, $R$ also decomposes as a disjoint union
$$R = \coprod_{G_1,G_2} R_{G_1,G_2},$$
where 
$$R_{G_1,G_2} =  U_{G_1} \times_{\calM_{g,n}^ {\trop}} U_{G_2}$$
with $G_1$ and $G_2$ ranging over all ordered pairs of maximal weighted, marked graphs.

We may describe a cone complex $\Sigma_{G_1,G_2}$ representing $R_{G_1,G_2}$ as follows:  The cones of $\Sigma_{G_1,G_2}$ are indexed by triples $\big(S_1\subseteq E(G_1), S_2\subseteq E(G_2), \phi\colon G_1/S_1 \xrightarrow{\sim} G_2/S_2\big)$. To such a triple we associate a cone $C(S_1,S_2,\phi) \cong \sigma_{G_1/S_1}$; note that this cone has natural maps to $U_{G_1} = \sigma_{G_1}$ and $U_{G_2} = \sigma_{G_2}$, the latter via composition with $\phi$.  Moreover, if $E(G_1)\supseteq S_1'\supseteq S_1$ and $E(G_2)\supseteq S_2'\supseteq S_2,$ and $\phi'\colon G_1/S_1' \xrightarrow{\cong} G_2/S_2'$ is an isomorphism such that the diagram 
\begin{equation*} \xymatrix{
G_1/S_1 \ar[r]^-\phi \ar[d] &   G_2/S_2 \ar[d] \\
G_1/S_1' \ar[r]^-{\phi'} & G_2/S_2'
} \end{equation*}
commutes, then we glue $C(S_1',S_2',\phi')$ to $C(S_1,S_2,\phi)$ by the face morphism induced by $G_1/S_1\to G_2/S_2$.  

We show the groupoid presentations of $\calM_{1,1}^{\trop}$ and $\calM_{1,2}^{\trop}$ in Figures~\ref{figure_M12stack} and~\ref{figure_M11stack}, taking advantage of Remark~\ref{rem:maximalonly}.  These pictures may be compared with Figure~\ref{figure_M12}.  The pictures may temporarily be viewed monochromatically.  The red part of the figures will be explained in Section~\ref{section_families}; it illustrates the way in which $\calM_{1,2}^{\trop}$ functions as a universal curve over $\calM_{1,1}^{\trop}$, as asserted by Theorem~\ref{thm_universalcurve}.

\begin{figure}
\begin{tikzpicture}
\draw[ultra thick,->, color=red] (-4.5,0) -- (-3.5,0.5);
\draw[ultra thick,->, color=red] (-4.5,0) -- (-3.5,-0.5);
\fill[pattern color=red,pattern=north east lines] (-4.5,0) -- (-3.6,0.45) -- (-3.6,-0.45) -- (-4.5,0);

\draw[thick,->] (-4.5,0) -- (-5.5,0.5);
\draw[thick,->] (-4.5,0) -- (-5.5,-0.5);
\fill[pattern color=gray,pattern=north east lines] (-4.5,0) -- (-5.4,0.45) -- (-5.4,-0.45) -- (-4.5,0);

\draw[thick,->] (-4.5,0) -- (-5,1);
\draw[thick,->] (-4.5,0) -- (-4,1);
\draw[thick,->] (-4.5,0) -- (-4.7,1.1);
\draw[thick,->] (-4.5,0) -- (-4.3,1.1);
\node at (-4.5,1.5) {$R_{G_1,G_1}$};

\draw[ultra thick,->, color=red] (-1.5,0) -- (-0.5,0.5);
\draw[ultra thick,->, color=red] (-1.5,0) -- (-2.5,0.5);
\draw[thick,->] (-1.5,0) -- (-1,1);
\draw[thick,->] (-1.5,0) -- (-2,1);
\node at (-1.5,1.5) {$R_{G_1,G_2}$};
\node[color=red] at (-1,0) {$\lambda_1$};
\node[color=red] at (-2,0) {$\lambda_2$};

\draw[ultra thick,->, color=red] (1.5,0) -- (0.5,0.5);
\draw[ultra thick,->, color=red] (1.5,0) -- (2.5,0.5);
\draw[thick,->] (1.5,0) -- (1,1);
\draw[thick,->] (1.5,0) -- (2,1);
\node at (1.5,1.5) {$R_{G_2,G_1}$};
\node[color=red] at (1,0) {$\mu_1$};
\node[color=red] at (2,0) {$\mu_2$};

\draw[thick,->] (4.5,0) -- (3.5,0);
\draw[ultra thick,->, color=red] (4.5,0) -- (5.5,0);
\draw[ultra thick,->, color=red] (4.5,0) -- (4.5,1);

\fill[pattern color=gray,pattern=north east lines] (4.5,0) -- (3.6,0) -- (3.6,0.9) -- (4.5,0.9) -- (4.5,0);
\fill[pattern color=red,pattern=north west lines] (4.5,0) -- (5.4,0) -- (5.4,0.9) -- (4.5,0.9) -- (4.5,0);
\node at (4.5,1.5) {$R_{G_2,G_2}$};

\draw[ultra thick,->, color=red] (-4.5,-4) -- (-2.5,-5);
\draw[ultra thick,->, color=red] (-4.5,-4) -- (-2.5,-3);
\fill[pattern color=red, pattern=north east lines] (-4.5,-4) -- (-2.7,-3.1) -- (-2.7,-4.9) -- (-4.5,-4);
\node at (-3.5,-2.75) {$U_{G_1}$};
\node[color=red] at (-3.8,-3.4) {$\tau_1$};
\node[color=red] at (-3.8,-4.8) {$\tau_2$};

\draw[ultra thick,->, color=red] (1,-4) -- (3,-5);
\draw[ultra thick,->, color=red] (1,-4) -- (3,-3);
\fill[pattern color=red,pattern=north east lines] (1,-4) -- (2.8,-3.1) -- (2.8,-4.9) -- (1,-4);
\node at (2,-2.75) {$U_{G_2}$};
\node[color=red] at (2,-4.8) {$\rho$};

\draw[thick,->] (-1,-1) -- (-1,-2);
\draw[thick,->] (1,-1) -- (1,-2);

\draw[gray] (-1.9,-4) -- (-2.3,-4);
\draw[gray] (-1.5,-4) circle (0.4);
\draw[gray] (-1.1,-4) -- (-0.7,-4);
\node[gray] at (-2.5,-4) {$1$};
\node[gray] at (-0.5,-4) {$2$};
\fill[gray] (-1.1,-4) circle (0.05);
\fill[gray] (-1.9,-4) circle (0.05);


\draw[gray] (3.4,-4) circle (0.4);
\draw[gray] (3.8,-4) -- (4.4,-4);
\draw[gray] (4.4,-4) -- (4.9,-3.5);
\draw[gray] (4.4,-4) -- (4.9,-4.5);
\node[gray] at (5.1,-3.5) {$1$};
\node[gray] at (5.1,-4.5) {$2$};
\fill[gray] (3.8,-4) circle (0.05);
\fill[gray] (4.4,-4) circle (0.05);

\end{tikzpicture}
\caption{Groupoid presentation of $\calM_{1,2}^{\trop}$, as described in \S\ref{subsec:groupoidexamples}. The non-pure-dimensionality of the cone complex $R_{G_1,G_1}$ reflects the fact that the 1-edge contractions of $G_1$ have new automorphisms that are not simply restrictions of automorphisms of $G_1$.  We illustrate an instance of Theorem~\ref{thm_universalcurve} in red, depicting a groupoid presentation of the cone space $\Cone{\Gamma}$ where $\Gamma \in \calM_{1,1}^{\trop}(\RR_{\ge 0})$ consists of a single loop of length 1 on a once-marked vertex.  See Example~\ref{ex:universal}. The cone space $\Cone{\Gamma}$ is shown in Figure~\ref{figure_ConeGamma}.}\label{figure_M12stack}

\begin{tikzpicture}
\draw[ultra thick,->, color=red] (0,0) -- (2,0);
\node at (-1,0) {$U_G$};

\draw[gray] (2.8,0) circle (0.4);
\draw[gray] (3.2,0) -- (3.8,0);
\node[gray] at (4,0) {$1$};
\fill[gray] (3.2,0) circle (-0.05);

\draw[ultra thick,->, color=red] (0,3) -- (2,2);
\draw[ultra thick,->] (0,3) -- (2,4);
\node at (-1,3) {$R_{G,G}$};

\draw[thick,->] (0.75,1.75) -- (0.75,0.5);
\draw[thick,->] (1.25,1.75) -- (1.25,0.5);

\end{tikzpicture}
\caption{Groupoid presentation of $\calM_{1,1}^{\trop}$ as described in \S\ref{subsec:groupoidexamples}. The cone complex $R_{G,G}$ evinces the fact that the graph $G$ has a nontrivial automorphism, whereas the contraction of $G$ by its unique edge does not.}\label{figure_M11stack}

\begin{tikzpicture}
\draw[thick, color=red] (-1,0) .. controls (0,0.25) and (0,0.25) .. (1,0);
\draw[thick, color=red] (-1,0) .. controls (0,-0.25) and (0,-0.25) .. (1,0);
\draw[->,thick, color=red] (0,-2) -- (1,0);
\draw[->,thick, color=red] (0,-2) -- (2,-3);
\draw[ultra thin, color=red] (0,-2) -- (-1,0);
\fill[pattern color=red, pattern=north west lines] (1,0)--(0,-2)--(2,-3)
--(3,-1)--(1,0);

\fill[color=red] (0,-2) circle (0.05);
\fill[pattern color=red,pattern=north west lines] (-1,0) .. controls (0,-0.25) and (0,-0.25) .. (0,-0.225) -- (0,-2) -- (-1,0);
\fill[pattern color=red,pattern=north west lines] (1,0) .. controls (0,-0.25) and (0,-0.25) .. (0,-0.225) -- (0,-2) -- (1,0);
\fill[pattern color=red, pattern=north east lines] (-1,0) .. controls (0,0.25) and (0,0.25) .. (0,0.225) -- (0,-0.25) -- (-1,0);
\fill[pattern color=red, pattern=north east lines] (1,0) .. controls (0,0.25) and (0,0.25) .. (0,0.225) -- (0,-0.25) -- (1,0);

\end{tikzpicture}
\caption{The cone space $\operatorname{Cone}(\Gamma)$ for $\Gamma\in\mathcal{M}_{1,1}^{\trop}(\RR_{\ge0})$ a once-marked loop of edge length $1.$ A groupoid presentation of $\operatorname{Cone}(\Gamma)$ is shown in Figure~\ref{figure_M12stack} in red, as described in Example~\ref{ex:universal}.}
\label{figure_ConeGamma}
\end{figure}

\medskip

\subsection{Combinatorial cone stack construction}
\label{sec:ccsc}

We end this section by noting the following alternative construction of the cone stack $\calM_{g,n}^{\trop}$ using the combinatorial characterization of cone stacks in Section~\ref{section_conestacks}.  Recall the category $J_{g,n}$ from Definition~\ref{def:jgn}, whose objects are all stable weighted, $n$-marked, graphs of genus $g$, and whose morphisms are all possible edge contractions followed by isomorphisms.  It will in fact be harmless to pass to a finite category by 
picking one object from each isomorphism class of $J_{g,n}$, making it easier to picture this category. However, at the moment it is convenient, when defining the following functor, not to do so.

Denote by $FI$ the category of finite sets and injections and consider the functor
$J_{g,n}^{op}\to FI$ sending an object in $J_{g,n}$, which has an underlying graph $G$, to its edge set $E(G)$.   As explained in Remark~\ref{rem:make_your_own_cone_stack},
composing with the functor $FI\to \RPC^f$ produces a diagram of cones which is a combinatorial cone stack that describes exactly the cone stack $\mathcal{M}_{g,n}^{\trop}$.


\section{Families and tautological morphisms}\label{section_families}


Having set up the foundations, we show in this section that the family of moduli stacks we introduced enjoys  desirable properties  which are  parallel to the classical theory.  We introduce forgetful morphisms and show they realize the universal family over the moduli space. There are clutching morphisms that make it possible to glue two legs to obtain a new compact edge in a tropical curve. In the identification of the universal family of $\calM_{g,n}^{\trop}$ with $\calM_{g,n+1}^{\trop}$, the natural sections of the forgetful morphism giving the markings are special cases of a clutching map. 
In Section~\ref{sec:tropicalization} we extend these parallels beyond mere analogies by way of a tropicalization map from the moduli space of logarithmic curves to the moduli space of tropical curves, to be introduced in Section~\ref{sec:log-curves}.  

We begin by giving a geometric realization of a tropical curve over a cone $\sigma$ as a family of tropical curves. The reader may follow the upcoming construction looking at Figure \ref{figure_ConeGamma} for an example.
\vspace{0.2cm}

Let $\Gamma$ be a tropical curve over a rational polyhedral cone $\sigma$.  
Consider the following three types of cones associated to the vertices and edges of $\Gamma$. 
\begin{enumerate}[(i)]
\item For every vertex $v$ of $\Gamma$,  take a copy of the cone $\sigma$
\begin{equation*}
\Cone(v) = \sigma\ .
\end{equation*}
\item For every infinite edge $l$, take a copy of the rational polyhedral cone 
\begin{equation*}
\Cone(l)= \sigma \times\R_{\geq 0} \ .
\end{equation*}
\item For every finite edge $e$ of $\Gamma$, consider a copy of the rational polyhedral cone
\begin{equation*}
\Cone(e)= \sigma \times_{\R_{\geq 0}}\R_{\geq 0}^2
\end{equation*}
where the morphism $\R_{\geq 0}^2\rightarrow\R_{\geq 0}$ is given by $(a,b)\mapsto a+b$ and $\sigma\rightarrow\R_{\geq 0}$ is dual to the homomorphism $\N\rightarrow S_\sigma$ given  by $1\mapsto d(e)\in S_\sigma$. 
\end{enumerate}

To each flag $f$ of $\Gamma$ we may associate a face $\sigma(f)$ of each of the three types of cones $\Cone(v)$, $\Cone(l)$, or $\Cone(e)$: In case (i) we take $\sigma(f) = \Cone(v)$.  In case (ii) we assign to $f$ the cone $\sigma \times 0$.  In case (iii) we arbitrarily choose a bijection between the two faces $\sigma \times_{\R_{\geq 0}}(\R_{\geq 0}\times {0})$ and $\sigma \times_{\R_{\geq 0}}(0\times \R_{\geq 0})$ and the two flags of $e$.  In all cases, the projection $\sigma(f) \to \sigma$ is an isomorphism, so its inverse is a section of $\Cone(v)$, $\Cone(l)$, or $\Cone(e)$ over $\sigma$.


{\begin{definition}\label{definition_coneoverGamma}
The \emph{cone} $\Cone(\Gamma)=\Cone_\sigma(\Gamma)$ of $\Gamma$ over $\sigma$ is the cone space defined 
by identifying $\sigma(f)$ with $\Cone(r(f))$, for every flag $f$ of $\Gamma$. 
For every infinite edge $l$, the inclusion $\Cone(l) \to \Cone(\Gamma)$ is called the \emph{$l$-th marking} of the family and denoted by $s_l$.
\end{definition}

\begin{remark}
In the case of a loop $e$, the definition has the effect of identifying the two edges of $\Cone(e)$ into the waffle cone $\sigma \mathop\times_{\R_{\geq 0}} \calW$, pulled back from Example~\ref{example_loopcone}.

It is possible to reformulate the construction more explicitly to introduce $\calW$ without this sleight of hand.  We use the construction (iii), above, only for those edges $e$ that are not loops.  For the loops, we use (iii$'$):

 \begin{enumerate}
\item[(iii$'$)] For every loop $e$ of $\Gamma$ take a copy of the cone space 
\begin{equation*}
\Cone(e)=\sigma \times_{\R_{\geq 0}}\calW
\end{equation*}
where $\calW$ is the waffle cone from Example \ref{example_loopcone}. The morphism $\calW\rightarrow \R_{\geq 0}$ is induced by the morphisms $(a,b)\mapsto a+b$ on charts and the morphism $\sigma\rightarrow\R_{\geq 0}$ by $\N\ni1\mapsto d(e)\in S_\sigma$ as above.  We think of the waffle cone $\calW$ as a ``cone over the loop $e$.'' 
\end{enumerate}

When $e$ is a loop, the section $\sigma(f)$ of $\Cone(e)$ over $\sigma$ may be constructed from either of the following coincident maps:
\begin{gather*}
\sigma \simeq \sigma \times_{\R_{\geq 0}} \bigl( 0 \times \R_{\geq 0} \bigr) \rightarrow \sigma \times_{\R_{\geq 0}} \R_{\geq 0}^2 \rightarrow \sigma \times_{\R_{\geq 0}} \calW \\
\sigma \simeq \sigma \times_{\R_{\geq 0}} \bigl( \R_{\geq 0} \times 0 \bigr) \rightarrow \sigma \times_{\R_{\geq 0}} \R_{\geq 0}^2 \rightarrow \sigma \times_{\R_{\geq 0}} \calW \ .
\end{gather*}

\end{remark}

By construction there is a natural structure morphism $c\mathrel{\mathop:}\Cone(\Gamma)\rightarrow\sigma$ that restricts to the projection to $\sigma$ on each component. For each leg $l$, the composition $c \circ s_l: \sigma \times\R_{\geq 0} \to \sigma$ is the left projection.

\begin{definition} Call $q$ the quotient map from the disjoint union of all the cones that form the identification space $\Cone(\Gamma)$.
We define a function $H_{\sigma,\Gamma}$ from the set of sections of $\Cone(\Gamma)$ to $\Z$, by setting 
\begin{equation} H_{\sigma,\Gamma}(f)= \left\{\begin{array}{cl} h(v) & \text{if there exists $v\in \Gamma $ such that 
$f(\sigma) = q(\Cone(v))$}
\\ 0 & \text{else.}\end{array}\right. \end{equation} 
 We call $H$ the union of all the $H_{\sigma, \Gamma}$'s, as $\sigma$ varies among all possible cones, and $\Gamma$ over all tropical curves over $\sigma$. 
\end{definition}

Consider a face morphism $\phi: \tau\to \sigma$, and the corresponding edge contraction $\Gamma \to \Gamma_\tau$. There is a pullback morphism from the set of $\sigma$-sections of $\Cone(\Gamma)$ to the set of $\tau$-sections of $\Cone(\Gamma_\tau)$. Then we have
$$
 H_{\sigma, \Gamma}(f) \leq H_{\tau, \Gamma_\tau}(\phi^\ast(f)).
$$
In other words, $H$ is upper semi-continuous with respect to pullback via face morphisms. This is illustrated in Figure \ref{fig:usc}.}

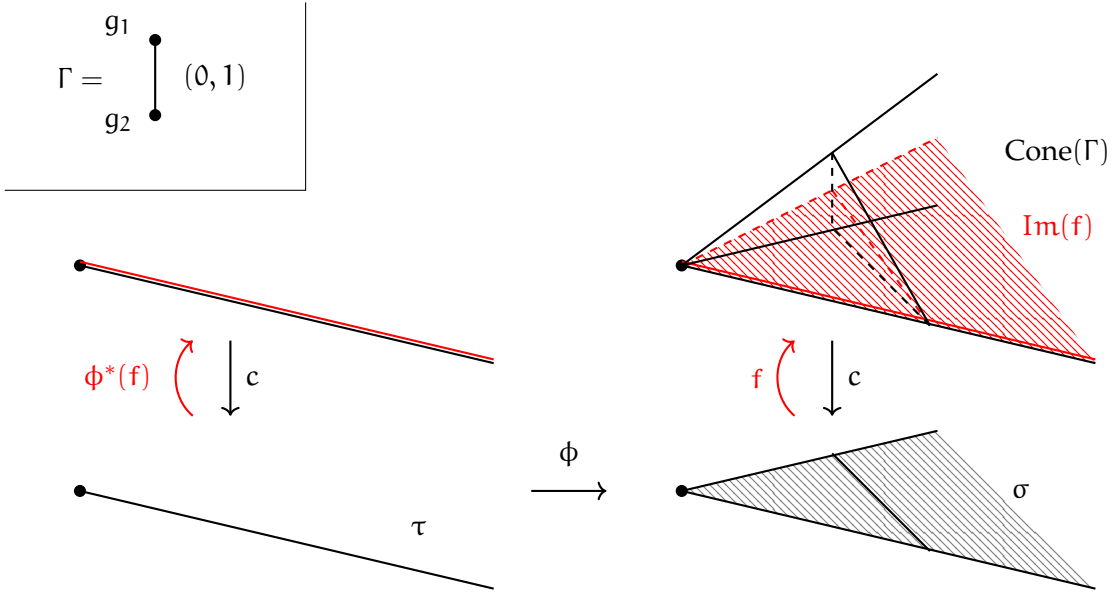
\begin{figure}[t]
\begin{tikzpicture}
\fill[pattern color=gray, pattern=north west lines] (0,0) -- (3.4,0.8) -- (5.5,-1.3) -- (0,0);
\fill (0,0) circle (0.08 cm);
\draw[thick] (0,0) -- (5.5,-1.3);
\draw[thick] (0,0) -- (3.4,.8);
\draw[thick] (2,.5) -- (3.3,-0.8);
\node at (4.5,0) {$\sigma$};

\draw[thick,->] (-2,0) -- (-1,0);
\node at (-1.5,0.5) {$\phi$};
\draw[thick,->] (2,2) -- (2,1);
\draw[thick, color=red,->] (1.5,1) .. controls (1.2,1.25) and (1.2,1.75) .. (1.5,2);
\node at (2.3,1.5) {$c$};
\node[color=red] at (1,1.5) {$f$};
\draw[dashed, thick, color = red] (0,3) -- (3.4,4.7);
\draw[dashed, thick, color = red] (2,4) -- (3.3,2.2);
\fill[pattern color=red, pattern= north west lines] (0,3) -- (3.4,4.7) -- (5.5,1.7)-- (0,3);

\fill (0,3) circle (0.08 cm);
\draw[thick] (0,3) -- (5.5,1.7);
\draw[thick, color = red] (0,3.05) -- (5.5,1.75);
\draw[thick] (0,3) -- (3.4,3.8);
\draw[thick] (0,3) -- (3.4, 5.55);
\draw[dashed, thick]  (3.3,2.2)-- (2,3.5) -- (2,4.5);
\draw[thick]   (2,4.5) -- (3.3,2.2);
\node [color = red] at (5,3.5) {$Im(f)$};
\node at (5,4.5) {$\Cone(\Gamma)$};

\fill (-8,0) circle (0.08 cm);
\draw[thick] (-8,0) -- (-2.5,-1.3);
\draw[thick,->] (-6,2) -- (-6,1);
\draw[thick, color=red,->] (-6.5,1) .. controls (-6.8,1.25) and (-6.8,1.75) .. (-6.5,2);
\node at (2.3-8,1.5) {$c$};
\node[color=red] at (-7.5,1.5) {$\phi^\ast(f)$};
\node at (-3.5,-0.5) {$\tau$};
\fill (-8,3) circle (0.08 cm);
\draw[thick] (-8,3) -- (5.5-8,1.7);
\draw[thick, color = red] (-8,3.05) -- (5.5-8,1.75);

\draw[thick] (-7,5) -- (-7,6);
\fill (-7,5) circle (0.08 cm);
\fill (-7,6) circle (0.08 cm);
\node at (-8,5.5) {$\Gamma =$};
\node at (-7.5,6.2) {$g_1$};
\node at (-7.5,4.9) {$g_2$};
\node at (-6.2,5.5) {$(0,1)$};
\draw (-9,4) -- (-5,4) -- (-5,6.5);

\end{tikzpicture}
\caption{The tropical curve $\Gamma$ over $\sigma$, on the northwest corner of the figure, consists of one edge of length $(0,1)$ and two vertices of genera $g_1 $ and $g_2$.  The right hand side of the picture depicts  $\Cone(\Gamma)$, with the structure morphism $c$ and the section $f(x,y) = (x,y, y/2)$. The face morphism $\phi: \tau \to \sigma$ is illustrated on the left hand side. The tropical curve $\Gamma_\tau$ consists of a vertex of genus $g_1+g_2$, and therefore $\Cone(\Gamma_\tau)$ is itself a ray. Observe that $H(f) = 0$, while $H(\phi^\ast(f)) = g_1+g_2$. }
\label{fig:usc}

\end{figure}

\begin{remark}
For the following proposition, we implicitly make use of the topological realization $|\mathcal C|$ of a cone space $\mathcal C$, defined as the colimit (in topological spaces)
\begin{equation*}
|\mathcal C| = \varinjlim_{\substack{\sigma \to \mathcal C \\ \text{strict}}} |\sigma|
\end{equation*}
where $|\sigma|$ is the usual topological realization of a rational polyhedral cone $|\sigma|$.  See Section~\ref{sec:real-to-top} for more about the topological realization.
\end{remark}

\begin{proposition}\label{prop_preimageisometry}
Let $\Gamma$ be a tropical curve over $\sigma=\RR_{\geq 0}$, so that $\Gamma$ has edge lengths in $\N.$
The preimage $c^{-1}(1)  \subset \Cone(\Gamma)$  of $1\in\R_{\geq 0}$ is the tropical curve $\Gamma$, i.e. it is a  graph isometric to $\Gamma$ with respect to the lattice length on each component of $\Cone(\Gamma)$, and the vertex weighting of $\Gamma$ is recovered as $H|_{|c^{-1}(1)}$.
\end{proposition}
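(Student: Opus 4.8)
The plan is to compute the fiber $c^{-1}(1)$ cone-by-cone, using the explicit description of $\Cone(\Gamma)$ as an identification space of the cones $\Cone(v)$, $\Cone(l)$, and $\Cone(e)$, and then to check that the lattice-length metric it inherits reproduces the metric graph $\Gamma$. Recall from the construction that $c$ restricts to the projection onto $\sigma$ on each component. First I would treat each type of cone. On a vertex cone $\Cone(v)=\sigma=\R_{\geq 0}$ the map $c$ is the identity, so $c^{-1}(1)$ meets $\Cone(v)$ in the single point $\{1\}$, which becomes the vertex $v$ of the fiber. On a leg cone $\Cone(l)=\sigma\times\R_{\geq 0}$, the fiber is the ray $\{1\}\times\R_{\geq 0}$, reproducing the leg $l$. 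The crucial case is a finite edge $e$: here $\Cone(e)=\sigma\times_{\R_{\geq 0}}\R_{\geq 0}^2$, where $\R_{\geq 0}^2\to\R_{\geq 0}$ is $(a,b)\mapsto a+b$ and, unwinding the duality, $\sigma\to\R_{\geq 0}$ is $s\mapsto d(e)\cdot s$. Thus $c^{-1}(1)\cap\Cone(e)=\{(a,b)\in\R_{\geq 0}^2 : a+b=d(e)\}$ is the segment joining $(d(e),0)$ to $(0,d(e))$.

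Second I would carry out the lattice-length computation, which is the technical heart. The lattice underlying $\Cone(e)$ is $N_e=\{(s,a,b)\in\Z^3 : d(e)s=a+b\}$, and the primitive lattice vector in the fiber direction $s=0$ is $(0,1,-1)$. Since the segment runs from $(1,d(e),0)$ to $(1,0,d(e))$, i.e.\ along $d(e)\cdot(0,-1,1)$, its lattice length is exactly $d(e)$, matching the length of $e$ in $\Gamma$. For a loop edge the same computation is run inside $\Cone(e)=\sigma\times_{\R_{\geq 0}}\calW$: the waffle-cone identification $(a,b)\sim(b,a)$ glues the two endpoints $(d(e),0)$ and $(0,d(e))$ of the segment, turning it into a loop of lattice length $d(e)$ based at the image of the vertex, as desired.

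Third I would check that the face identifications defining $\Cone(\Gamma)$ glue these fiber pieces exactly according to the incidence structure of $\GG(\Gamma)$. Each flag $f$ of $e$ corresponds to a face $\sigma(f)$ of $\Cone(e)$ --- namely $b=0$ (so $a=d(e)s$) or $a=0$ --- which at $s=1$ is precisely an endpoint of the segment, and $\sigma(f)$ is identified with $\Cone(r(f))$; hence that endpoint is glued to the vertex $r(f)$. Running over all flags shows that $c^{-1}(1)$ is the geometric realization of $(G,h,m)$, with each finite edge given lattice length $d(e)$ and each leg a ray, so that $c^{-1}(1)$ is isometric to $\Gamma$. Finally, for the weighting: by definition $H$ assigns to the vertex section through $v$ the value $h(v)$ and $0$ to any section whose image at $1$ is not a vertex point, so $H|_{c^{-1}(1)}$ recovers $h$.

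The main obstacle I anticipate is the lattice-length bookkeeping together with the loop case: one must verify that the lattice structure on the fiber product $\Cone(e)$ --- in particular that $(0,1,-1)$ is primitive in $N_e$ --- genuinely yields length $d(e)$ rather than, say, $1$ or the Euclidean $d(e)\sqrt{2}$, and that the waffle-cone gluing of Example~\ref{example_loopcone} correctly produces a loop of the right length rather than a doubled or folded segment. The remaining verification, that the face gluings reproduce the combinatorics of $G$, is routine once the per-cone description is in place.
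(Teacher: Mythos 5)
Your proposal is correct and follows essentially the same route as the paper: compute $c^{-1}(1)$ cone by cone, obtaining a point in each $\Cone(v)$, a ray in each $\Cone(l)$, and the segment $a+b=d(e)$ of lattice length $d(e)$ in each $\Cone(e)$, then note that the gluings of Definition~\ref{definition_coneoverGamma} reassemble the underlying graph of $\Gamma$ and that $H|_{c^{-1}(1)}$ recovers $h$ by definition. Your explicit primitivity check in $N_e=\{(s,a,b)\in\Z^3 : d(e)s=a+b\}$ is a correct elaboration of the lattice-length claim the paper merely asserts and illustrates in Figures~\ref{figure_edgelength2} and~\ref{figure_edgelength3}; the only phrasing to tighten is that the waffle cone identifies just the two boundary rays of $\Cone(e)$ (both faces being glued to $\Cone(v)$), not interior points $(a,b)$ with $(b,a)$, which is precisely why the fiber is a genuine loop of length $d(e)$ rather than a folded segment, as you yourself anticipated.
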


\begin{figure}[tb]
\begin{minipage}{0.49\textwidth}
\centering
\begin{tikzpicture}
\fill[pattern color=gray, pattern=north west lines] (0,0) -- (0,3.6) -- (3.6,3.6)--(3.6,0) -- (0,0);
\fill (0,0) circle (0.08 cm);
\fill (0,1.8) circle (0.08 cm);
\fill (1.8,0) circle (0.08 cm);
\fill (0.9,0.9) circle (0.08 cm);
\fill (1.8,1.8) circle (0.08 cm);
\fill (3.6,0) circle (0.08 cm);
\fill (0,3.6) circle (0.08 cm);
\fill (2.7,2.7) circle (0.08 cm);
\fill (3.6,3.6) circle (0.08 cm);
\fill (3.6,1.8) circle (0.08 cm);
\fill (1.8,3.6) circle (0.08 cm);
\fill (0.9,2.7) circle (0.08 cm);
\fill (2.7,0.9) circle (0.08 cm);
\draw[thick,->] (0,0) -- (4,0);
\draw[thick,->] (0,0) -- (0,4);
\end{tikzpicture}
\end{minipage}
\begin{minipage}{0.49\textwidth}
\centering
\begin{tikzpicture}
\fill[pattern color=gray, pattern=north west lines] (0,0) -- (0,3.6) -- (3.6,3.6)--(3.6,0) -- (0,0);
\fill (0,0) circle (0.08 cm);
\fill (0,1.8) circle (0.08 cm);
\fill (1.8,0) circle (0.08 cm);
\fill (0.9,0.9) circle (0.08 cm);
\fill (1.8,1.8) circle (0.08 cm);
\fill (3.6,0) circle (0.08 cm);
\fill (0,3.6) circle (0.08 cm);
\fill (2.7,2.7) circle (0.08 cm);
\fill (3.6,3.6) circle (0.08 cm);
\fill (3.6,1.8) circle (0.08 cm);
\fill (1.8,3.6) circle (0.08 cm);
\fill (0.9,2.7) circle (0.08 cm);
\fill (2.7,0.9) circle (0.08 cm);
\draw[thick,->] (0,0) -- (4,0);
\draw[thick,->] (0,0) -- (0,4);
\draw[thick] (1.8,0) -- (0,1.8);
\node at (-0.9,1.8) {$c^{-1}(1)$};
\end{tikzpicture}
\end{minipage}
\caption{The monoid $\N^2\oplus_{\Delta,\N,d(e)}\N$ (on the left) and its dual cone for $d(e)=2$.}
\label{figure_edgelength2}

\vspace{1em}
\begin{minipage}{0.49\textwidth}
\centering
\begin{tikzpicture}
\fill[pattern color=gray, pattern=north west lines] (0,0) -- (0,3.6) -- (3.6,3.6)--(3.6,0) -- (0,0);
\fill (0,0) circle (0.08 cm);
\fill (0,1.8) circle (0.08 cm);
\fill (1.8,0) circle (0.08 cm);
\fill (0.6,0.6) circle (0.08 cm);
\fill (1.2,1.2) circle (0.08 cm);
\fill (1.8,1.8) circle (0.08 cm);
\fill (0,3.6) circle (0.08 cm);
\fill (3.6,0) circle (0.08 cm);
\fill (2.4,2.4) circle (0.08 cm);
\fill (3,3) circle (0.08 cm);
\fill (3.6,3.6) circle (0.08 cm);
\fill (0.6,2.4) circle (0.08 cm);
\fill (1.2,3) circle (0.08 cm);
\fill (1.8,3.6) circle (0.08 cm);
\fill (2.4,0.6) circle (0.08 cm);
\fill (3,1.2) circle (0.08 cm);
\fill (3.6,1.8) circle (0.08 cm);
\draw[thick,->] (0,0) -- (4,0);
\draw[thick,->] (0,0) -- (0,4);
\end{tikzpicture}
\end{minipage}
\begin{minipage}{0.49\textwidth}
\centering
\begin{tikzpicture}
\fill[pattern color=gray, pattern=north west lines] (0,0) -- (0,3.6) -- (3.6,3.6)--(3.6,0) -- (0,0);
\fill (0,0) circle (0.08 cm);
\fill (0,1.8) circle (0.08 cm);
\fill (1.8,0) circle (0.08 cm);
\fill (0.6,1.2) circle (0.08 cm);
\fill (1.2,0.6) circle (0.08 cm);
\fill (1.8,1.8) circle (0.08 cm);
\fill (0,3.6) circle (0.08 cm);
\fill (3.6,0) circle (0.08 cm);
\fill (2.4,3) circle (0.08 cm);
\fill (3,2.4) circle (0.08 cm);
\fill (3.6,3.6) circle (0.08 cm);
\fill (0.6,3) circle (0.08 cm);
\fill (1.2,2.4) circle (0.08 cm);
\fill (1.8,3.6) circle (0.08 cm);
\fill (3,0.6) circle (0.08 cm);
\fill (2.4,1.2) circle (0.08 cm);
\fill (3.6,1.8) circle (0.08 cm);
\draw[thick,->] (0,0) -- (4,0);
\draw[thick,->] (0,0) -- (0,4);
\draw[thick] (1.8,0) -- (0,1.8);
\node at (-0.9,1.8) {$c^{-1}(1)$};
\end{tikzpicture}
\end{minipage}
\caption{The monoid $\N^2\oplus_{\Delta,\N,d(e)}\N$ (on the left) and its dual cone for $d(e)=3$.}
\label{figure_edgelength3}

\end{figure}

\begin{proof}
If $e$ is a finite edge of $\Gamma$ that is not a loop, then by definition of the fiber product we have 
\begin{equation*}
\Cone(e)=\big\{(a,b,s)\in\R^2_{\geq 0}\times\R_{\geq 0}\big\vert a+b=d(e)\cdot s\big\}
\end{equation*}
and therefore $c^{-1}(1)$ is precisely the subset determined by $a+b=d(e)$, an edge of lattice length $d(e)$ (see Figures \ref{figure_edgelength2} and \ref{figure_edgelength3}). 
If $e$ is a loop of $\Gamma$, the same argument shows that $c^{-1}(1)$ consists of
one loop of lattice length $d(e)$. Finally, for every infinite edge the preimage $c^{-1}(1)$ is an infinite edge. The construction of $\Cone(\Gamma)$ ensures that the underlying graph of $c^{-1}(1)$ is equal to the underlying graph of $\Gamma$ and the above argument yields that the edge lengths of $c^{-1}(1)$ are equal to the lengths of $\Gamma$. It follows from the definition of $H$ that the function $H|_{|c^{-1}(1)}$ is equal to the vertex weight of $\Gamma$. \end{proof}


\begin{definition}\label{def:uc}
The \emph{universal curve} $\pi: \calX_{g,n}^{\trop} \to \calM_{g,n}^{\trop}$ is the fibered category whose fiber over the object $u_{[\Gamma]}: \sigma \to  \calM_{g,n}^{\trop}$ are the sections of $Cone(\Gamma) \to \sigma$. 
\end{definition}

A section of $\Cone(\Gamma)$ describes a "point" on a graph that is metrized by an abstract monoid $S_\sigma$.  Such a point is described either by a vertex or by a distance $d \in S_\sigma$ away from the vertex incident to a flag.  If the flag has a finite length $d(e)$, then the distance $d$ must be bounded by $d(e)$, as is expressed by the requirement that there be a $d' \in S_\sigma$ such that $d + d' = d(e)$. We formalize this observation in Proposition \ref{prop:univ-curve}.



\begin{proposition} \label{prop:univ-curve}
For a tropical curve $\Gamma \in \mathcal M_{g,n}^{\trop}(\sigma)$ a section of $\Cone(\Gamma)$ is given by the following datum:
\begin{enumerate}[(i)]
\item a vertex of $\Gamma$, or
\item a leg $f$ of $\Gamma$ and a nonzero $d \in S_\sigma$, or
\item an ordered pair of distinct flags $(f, f')$ with $f' = \iota(f)$ (in other words, an edge $e$ of $\Gamma$ with an orientation), and a pair $(d,d')$ of nonzero elements in $S_\sigma$ such that $d + d' = d(e).$
\end{enumerate}
\end{proposition}

Here we work with the  understanding that 
in (iii) 
the choice of $(f,f')$ and $(d,d')$ is equivalent to the choice of $(f',f)$ and $(d',d)$.

\begin{proof}[Proof of Proposition \ref{prop:univ-curve}]
This is a direct consequence of the construction of $\Cone(\Gamma)$.

\begin{enumerate}[(i)]
\item If $s(\sigma) = \Cone(v) = \sigma$ for a vertex $v$ of $\Gamma$, then choose $v$.
\item Suppose $s$ sends $\sigma$ into the interior of $\Cone(f) = \sigma\times \RR_{\ge 0}$ for a leg $f$ of $\Gamma$. Then by right projection $\sigma\times \RR_{\ge 0} \to \RR_{\ge 0}$, the section $s$ exactly determines a nonzero map $$d\colon \sigma \to \RR_{\ge0},$$ that is, a nonzero element $d \in  S_\sigma$.
\item Finally, suppose $s$ sends $\sigma$ into the interior of $\Cone(e) = \sigma\times_{\RR_{\ge 0}} \RR^2_{\ge 0}$ for a finite edge $e = (f,f')$ of $\Gamma$.
Such a section exactly determines a map $$(d,d')\colon \sigma\to \RR_{\ge 0}^2$$ with $d,d'\in S_\sigma\setminus\{0\}$ and $d+d' = d(e)$. 
\end{enumerate}
\end{proof}

\begin{remark} 

Although there is no strictly convex, rational, polyhedral cone $\sigma$ such that $S_\sigma = \mathbb R_{\geq 0}$, one may recognize that if $S_\sigma = \mathbb R_{\geq 0}$ is substituted into the characterization of Proposition~\ref{prop:univ-curve}, it recovers the set of real points of the metrized graph associated to $\Gamma$ (see Section \ref{sec:mon-to-real} for details).
\end{remark}

Definition \ref{def:uc} implies that the universal curve satisfies the universal property that, for any cone $\sigma$ and morphism $u_{[\Gamma]}$ corresponding to a tropical curve $\Gamma$,  we have the $2$-cartesian diagram:
\begin{equation*}\begin{CD}
\Cone(\Gamma)@>U_{[\Gamma]}>> \calX_{g,n}^{\trop}\\
@Vc VV @VV\pi_{g,n}^{\trop} V\\
\sigma @>u_{[\Gamma]}>> \calM_{g,n}^{\trop} .
\end{CD}\end{equation*}

As in the classical theory of  moduli spaces of curves, we  identify the universal curve over $\calM_{g,n}^{\trop}$ with a forgetful morphism from $\calM_{g,n+1}^{\trop}$ to $\calM_{g,n}^{\trop}$.

\begin{definition}\label{def:forgetfulmorph}
Assume that $2g - 2 + n > 0$. Let $\Gamma$ be a tropical curve over $\sigma$ (of genus $g$ with $n+1$ marked legs $l_1,\ldots, l_{n+1}$.  Denote by $\Gamma_\ast$ the tropical curve given by deleting the $(n+1)$-st marked leg $l_{n+1}$ from $\Gamma$.
\begin{enumerate}[(a)]
\item \label{it:forget1}
 If $\Gamma_\ast$ is already stable, then we simply set $\pi_{g,n+1}^{\trop}(\Gamma)=\Gamma_\ast$. 
\item  \label{it:forget2} Suppose $\Gamma_\ast$ is not stable and the vertex $v$ from which $l_{n+1}$ was emanating is connected to the rest of the graph via two distinct edges $e_1$ and $e_2$. In this case we  have $h(v)=0$. In order to define $\pi_{g,n+1}^{\trop}(\Gamma)$  we delete $v$ and the two flags adjacent to it, and create a new edge $e$ of length $d(e_1)+d(e_2)$ by connecting the remaining flags of $e_1$ and $e_2$.
\item  \label{it:forget3} Suppose $\Gamma_\ast$ is not stable and the  genus-zero vertex $v$ to which  $l_{n+1}$ was adjacent is connected to one edge $e$ and one leg $l_i$. Again, we necessarily have $h(v)=0$. In this case, to define $\pi_{g,n+1}^{\trop}(\Gamma)$ we remove  $v$  and the two flags adjacent to it, and label the remaining flag of $e$ with $l_i$. 
\end{enumerate}
Given $\Gamma'\in\calM_{g,n}(\sigma')$ and $\Gamma'\in\calM_{g,n}(\sigma)$, a morphism $\Gamma' \to \Gamma$ naturally induces a morphism $\pi_{g,n+1}^{\trop}(\Gamma') \to \pi_{g,n+1}^{\trop}(\Gamma) $. 
The  \emph{forgetful morphism} is the unique morphism
\begin{equation*}
\pi_{g,n+1}^{\trop}\mathrel{\mathop:}\calM_{g,n+1}^{\trop}\longrightarrow\calM_{g,n}^{\trop} 
\end{equation*}
that is given by $\Gamma\mapsto \pi_{g,n+1}^{\trop}(\Gamma)$ over a cone $\sigma$.
\end{definition}

We also define a function 
\begin{equation*}
h_{n+1}\mathrel{\mathop:}\calM_{g,n+1}^{\trop}\longrightarrow \Z,
\end{equation*}
that, to a tropical curve $\Gamma$ over a rational polyhedral  cone $\sigma$, assigns the integer $h(r(l_{n+1}))$, i.e. the genus of the vertex that the $(n+1)$-th marked leg is attached to. The function $h_{n+1}$ is upper semi-continuous in the face topology, i.e. if $\tau$ is a face of $\sigma$, then $h_{n+1}(\Gamma_{\tau})\geq h_{n+1}(\Gamma)$.

We now provide a precise statement and a proof of Theorem \ref{thm_universalcurve}, namely that the forgetful morphism functions as a universal curve. 
\setcounter{maintheorem}{1}
\begin{maintheorem}
The categories $\calX_{g,n}^{\trop}$ and $\calM_{g,n+1}^{\trop}$ are isomorphic over $\calM_{g,n}^{\trop}$; 
i.e. for any cone $\sigma$ and tropical curve $\Gamma$ over $\sigma$, there is a $2$-cartesian diagram
\begin{equation*}\xymatrix{
	\Cone(\Gamma) \ar[r]^{U_{[\Gamma]}} \ar[d]_c & \calM_{g,n+1}^{\trop} \ar[d]^{\pi_{g,n+1}^{\trop}} \\
	\sigma \ar[r]^{u_{[\Gamma]}} & \calM_{g,n}^{\trop}
} \end{equation*}
depending contravariantly on $\sigma$, and we have $H = h_{n+1} \circ U_{[\Gamma]}$.
\end{maintheorem}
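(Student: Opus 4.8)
The plan is to produce an equivalence of fibered categories $\calX_{g,n}^{\trop} \simeq \calM_{g,n+1}^{\trop}$ commuting with the projections to $\calM_{g,n}^{\trop}$, where the projection on the left is the one from Definition~\ref{def:uc} and on the right is $\pi_{g,n+1}^{\trop}$. By Proposition~\ref{prop:cfg-stack} it suffices to construct this equivalence over the site $\RPC$, i.e.\ fiberwise over cones $\sigma$ and compatibly with morphisms. Once the equivalence is in hand, the asserted $2$-cartesian square follows immediately: Definition~\ref{def:uc} together with the displayed $2$-cartesian diagram following Proposition~\ref{prop:univ-curve} already exhibits $\Cone(\Gamma)$ as the fiber product $\sigma \times_{\calM_{g,n}^{\trop}} \calX_{g,n}^{\trop}$ along $u_{[\Gamma]}$, so substituting $\calM_{g,n+1}^{\trop}$ for $\calX_{g,n}^{\trop}$ yields exactly the claimed pullback, with $U_{[\Gamma]}$ as the top horizontal map.

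First I would set up the object-level bijection, which is essentially forced by Proposition~\ref{prop:univ-curve} and Definition~\ref{def:forgetfulmorph}. An object of $\calX_{g,n}^{\trop}(\sigma)$ is a tropical curve $\Gamma$ over $\sigma$ together with one of the three pieces of data (i)--(iii) of Proposition~\ref{prop:univ-curve}. I would send it to the $(n+1)$-marked curve $\widetilde{\Gamma}$ obtained by adding $l_{n+1}$ at the point it records: in case (i) attach $l_{n+1}$ directly to the vertex $v$; in case (ii) subdivide the leg $f$ by a new genus-zero vertex $u$ placed a distance $d$ from its root, turning the initial segment into a finite edge of length $d$ and attaching both the tail of $f$ and the new leg $l_{n+1}$ at $u$; in case (iii) subdivide the edge $e$ into two edges of lengths $d$ and $d'$ meeting at a new genus-zero vertex $u$ bearing $l_{n+1}$. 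In each case a direct check against the stability condition $2h(v)-2+\val(v)>0$ shows $\widetilde{\Gamma}$ is stable (the new trivalent genus-zero vertices are stable, and existing vertices only gain valence). The inverse assignment sends $\widetilde{\Gamma} \in \calM_{g,n+1}^{\trop}(\sigma)$ to the pair $\big(\pi_{g,n+1}^{\trop}(\widetilde{\Gamma}),\, \text{location of } l_{n+1}\big)$; inspecting the three cases (a)--(c) of Definition~\ref{def:forgetfulmorph} shows the vertex carrying $l_{n+1}$ maps to a vertex, to an interior point of a leg, or to an interior point of an edge of $\pi_{g,n+1}^{\trop}(\widetilde{\Gamma})$ respectively, i.e.\ to data of type (i), (ii), (iii). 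These two assignments are visibly mutually inverse on objects.

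Next I would promote these assignments to functors and verify they are inverse equivalences of groupoids, fibered over $\RPC$. This requires three compatibilities: that isomorphisms of $(\Gamma, s)$ correspond to isomorphisms of $\widetilde{\Gamma}$ fixing $l_{n+1}$, so that automorphism groups match and the groupoids agree rather than merely their object sets; that the constructions are natural under face morphisms $\tau \to \sigma$ and the accompanying edge contractions, where one must track that subdividing and then contracting, or contracting and then subdividing, yield canonically identified curves; and that the flag-bijection chosen in the construction of $\Cone(e)$ and, for loops, the waffle cone $\calW$ of Example~\ref{example_loopcone} are respected, so that the orientation-reversal clause of Proposition~\ref{prop:univ-curve}(iii) matches the symmetry of $\widetilde{\Gamma}$. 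I expect the \textbf{main obstacle} to be exactly this naturality under edge contraction together with the loop case: one must check that the identifications of faces used to build $\Cone(\Gamma)$ in Definition~\ref{definition_coneoverGamma}, including the gluing producing $\calW$, are compatible with the stabilization contractions of Definition~\ref{def:forgetfulmorph}, and that no spurious $2$-isomorphisms are introduced in the process. The remaining identity $H = h_{n+1} \circ U_{[\Gamma]}$ is then immediate from the same case analysis: $H$ evaluates to $h(v)$ precisely when the section lands on a vertex cone $\Cone(v)$, which is exactly case (i), where $l_{n+1}$ is attached to $v$ in $\widetilde{\Gamma}$ and hence $h_{n+1}(\widetilde{\Gamma}) = h(v)$; in cases (ii) and (iii) both sides vanish since the newly created vertex has genus zero.
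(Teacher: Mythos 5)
Your proposal is correct and follows essentially the same route as the paper's proof: the paper likewise constructs the fiberwise equivalence over each cone $\sigma$ via the three-case dictionary of Proposition~\ref{prop:univ-curve} (attach $l_{n+1}$ at a vertex, insert a new vertex on a leg at distance $d$, or subdivide an edge into lengths $d$ and $d'$), takes the inverse to be forgetting $l_{n+1}$ and stabilizing while marking its position, and reads off $H = h_{n+1} \circ U_{[\Gamma]}$ from the same case analysis. The naturality issue you flag as the main obstacle is handled in the paper simply by noting that over a proper face where $d$ or $d'$ restricts to $0$ the $(n+1)$-st leg is attached at the appropriate vertex.
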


\begin{proof}
Let $\sigma$ be a rational polyhedral cone and $\Gamma$ a stable tropical curve $\calM_{g,n}^{\trop}(\sigma)$ that, as an element of $\calM_{g,n}^{\trop}(\sigma)$, gives rise to a morphism $u_{[\Gamma]}\mathrel{\mathop:}\sigma\rightarrow\calM_{g,n}^{\trop}$.  We construct an isomorphism between the fibers of $\mathcal X_{g,n}^{\trop}$ and $\mathcal M_{g,n+1}^{\trop}$ over $u_{[\Gamma]}$.

Suppose that $\gamma \in \mathcal X_{g,n}^{\trop}(\sigma)$ is given by $u_{[\Gamma]}\colon \sigma\to \calM_{g,n}^{\trop}$ and a choice of datum as in Proposition~\ref{prop:univ-curve} (i), (ii), or (iii).
For each of the three cases (i)--(iii) in Proposition~\ref{prop:univ-curve}, we construct a stable tropical curve $\widetilde{\Gamma}$ such that $\pi_{g,n+1}^{\trop}(\widetilde{\Gamma})=\Gamma$ by following the procedure.
\begin{enumerate}[(i)]
\item if $\gamma$ is a vertex of $\Gamma$, add an $(n+1)$-st leg to the tropical curve emanating from $v$;
\item if $\gamma$ is a leg $f$ of $\Gamma$, add a  new vertex  on $f$ at length $d$ away from $r(f)$ and attaching an $(n+1)$-st leg to it; 
\item if $\gamma$ is a finite edge $e$ of $\Gamma$, subdivide the edge $e$ into two edges of lengths $d_1$ and $d_2$ and add an $(n+1)$-st leg to the new vertex.
\end{enumerate}
Note that in cases (ii) and (iii) it is possible that some length $d$ or $d'$ is $0$ when restricted to some proper face of $\tau$. Over such a face the $(n+1)$-st leg is attached at the appropriate vertex.

The inverse of this process is to remove the $(n+1)$-st leg of $\widetilde\Gamma \in \mathcal M_{g,n+1}(\sigma)$ and stabilize the result, marking the position where the $(n+1)$-st leg was attached.
The proof is concluded by observing that the equality of genus weighting functions $H = h_{n+1} \circ U_{[\Gamma]}$ is immediate from the construction above.
\end{proof}

\begin{example}\label{ex:universal}
Let $\sigma = \RR_{\ge 0}$ and let  $\Gamma\in \calM_{1,1}^{\trop}(\sigma)$ be a tropical curve consisting of a single loop based at a once-marked vertex $v$.  We illustrate the result of applying Theorem~\ref{thm_universalcurve} to $\Gamma$, obtaining $\Cone(\Gamma)$ as the fiber product $$ \sigma \times_{\calM_{1,1}^{\trop}} \calM_{1,2}^{\trop}.$$

Consider the groupoid presentations of the cone stacks ${\calM_{1,1}^{\trop}}$ and $\calM_{1,2}^{\trop}$ shown in Figures~\ref{figure_M12stack} and~\ref{figure_M11stack}.  View $\sigma$ as a trivial groupoid, i.e. as $\big(R\rightrightarrows \sigma\big)$ where $R \cong \sigma$ maps isomorphically to $\sigma$ by both source and target maps.  The map $u_{[\Gamma]} \colon \sigma \to U_G$ is an isomorphism of 1-dimensional cones.  The fiber product $ \sigma \times_{\calM_{1,1}^{\trop}} \calM_{1,2}^{\trop}$ is therefore the groupoid shown in red; the subcomplex of $\coprod R_{G_i,G_j}$ (in Figure~\ref{figure_M12stack}) depicted in red consists of the cones which are sent to the ray of $R_{G,G}$ (in Figure~\ref{figure_M11stack}) corresponding to the identity morphism, also shown in red.  

In more detail: write $s,t\colon R\rightrightarrows U$ for the source and target maps of the groupoid presentation of $\mathcal{M}_{1,2}^{\trop}$.  Consider the rays $\lambda_i, \mu_i, \tau_i,$ and $\rho$ as labeled as in Figure~\ref{figure_M12stack}.  For each $i=1,2$, the ray $\lambda_i$ represents the identification of $\tau_i$ isomorphically to $\rho$, and the ray $\mu_i$ represents the inverse identification.  More precisely, we have the following:
$$s(\lambda_i) = \tau_i, \qquad s(\mu_i) = \rho$$
$$t(\lambda_i) = \rho, \qquad t(\mu_i) = \tau_i$$
for $i=1,2$.
The resulting cone space may be described by taking the two red 2-dimensional cones in $U$ and identifying $\tau_1,\tau_2$, and $\rho$. This is shown in Figure~\ref{figure_ConeGamma}.
\end{example}

We now introduce natural tropical analogues of the clutching maps. Let $g= g_1+g_2$ and $[n] = I_1 \cup I_2$  a partition of the index set in two disjoint subsets with $2g_i-2+|I_i|\geq 0$ . We define the morphism of cone stacks
\begin{equation*}
\delta^{\trop}_{g_1,I_1}: \calM_{g_1,I_1 \cup \{\star\}}^{\trop}\times \calM_{g_2, I_2 \cup \{\bullet \}}^{\trop} \times \R_{\geq 0} \longrightarrow \calM_{g, [n]}^{\trop}
\end{equation*}
as follows: Given $\sigma \in \RPC$, an element in $(\calM_{g_1,I_1 \cup \{\star\}}^{\trop}\times \calM_{g_2, I_2 \cup \{\bullet \}}^{\trop} \times \R_{\geq 0})(\sigma)$ consists of a  triple $(\Gamma_1, \Gamma_2, d)$, with $\Gamma_1$ and $\Gamma_2$ tropical curves over $\sigma$  (with the appropriate discrete invariants) and $d\in S_\sigma$. The image 
$\delta^{\trop}_{g_1,I_1}(\Gamma_1, \Gamma_2, d)$ is defined to be the tropical curve whose underlying graph  $\Gamma$ over $\sigma$ is obtained by identifying the leg labeled by $\star$ in $\Gamma_1$ with the leg labeled by $\bullet$ in $\Gamma_2$. Formally, we define $V(\Gamma) = V(\Gamma_1) \cup V(\Gamma_2)$ as well as $F(\Gamma) = F(\Gamma_1) \cup F(\Gamma_2)$, and we redefine  $\iota(\star) = \bullet$ and $\iota(\bullet)=\star$.
Note that the resulting graph is  connected, of genus $g$, with legs labelled by the index set $[n]$, and that there is exactly one new compact edge for which the (generalized) metric is not assigned by the metrics on $\Gamma_1$ and $\Gamma_2$.  We thus extend the generalized metric to all compact edges of $\Gamma$ by assigning length $d$ to the new edge. 

In an analogous way we may also define self-gluing maps:
\begin{equation*}
\delta^{\trop}_{irr}:\calM_{g-1, [n] \cup \{\star,\bullet\}}^{\trop}\times \R_{\geq 0} \longrightarrow\calM_{g,n}^{\trop}.
\end{equation*}

It is worth pointing out that while in the classical theory clutching morphisms are maps between products of moduli spaces, these tropical analogues must be realized as correspondences:
 
\begin{equation*}
\xymatrix@!C=8em{
& \mathcal M^{\trop}_{g_1, I_1 \cup \{\star\}} \times \mathcal M^{\trop}_{g_2, I_2 \cup \{\bullet\}} \times \RR_{\geq 0} \ar[]!<50pt,0pt>;[dr]^{\delta^{\trop}_{g_1,I_1}} \ar[]!<-50pt,0pt>;[dl] \\
\mathcal M^{\trop}_{g_1, I_1 \cup \{\star\}} \times \mathcal M^{\trop}_{g_2, I_2 \cup \{\bullet\}} & & \mathcal M^{\trop}_{g,n}
} \hphantom{\hskip4em}
\end{equation*}
\begin{equation} 
\xymatrix@!C=6em{
& \mathcal M^{\trop}_{g, [n] \cup \{\star,\bullet \}}  \times \RR_{\geq 0} \ar[]!<25pt,0pt>;[dr]!<-10pt,0pt>^{\delta^{\trop}_{irr}} \ar[]!<-25pt,0pt>;[dl] \\
\mathcal M^{\trop}_{g, [n] \cup \{\star,\bullet \}} & & \mathcal M^{\trop}_{g+1,n}
}\label{tropclutch}
\end{equation}

We conclude this section with the observation that in the identification of $\pi_{g,n+1}^{\trop}$ with the universal family, the tautological sections are identified with appropriate clutching morphisms.

\begin{proposition}
Consider the forgetful morphism $\pi_{g,n+1}^{\trop}: \calM_{g,n+1}^{\trop} \to \calM_{g,n}^{\trop}$, and identify it with the universal family over $\calM_{g,n}^{\trop}$ as in  Theorem \ref{thm_universalcurve}. Then the tautological marking $l_{n+1} : \calM_{g,n}^{\trop} \times \R_{\geq 0} \to \calM_{g,n+1}^{\trop}$, attaching the $(n+1)$-st leg to the vertex supporting the $l$-th leg, is identified with the clutching morphism $\delta_{0, \{l,n+1\}}$ through the following commutative diagram.
$$
\xymatrix{
\calM_{g,n+1}^{\trop}  \ar[r]^{\sim} &   \calX_{g,n}^{\trop}\\
 \calM_{0, \{l ,n+1, \bullet \} }^{\trop} \times \calM_{g,[n] \smallsetminus \{l\} \cup \{\star\} }^{\trop}    \times \R_{\geq 0}  \ar[r]_-{\phi}^-{\sim} \ar[u]^{\delta_{0, \{l,n+1\}}} &  \calM_{g,n}^{\trop} \times \R_{\geq 0} \ar[u]_{s_l}
}
$$
\end{proposition}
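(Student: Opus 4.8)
The plan is to make all four morphisms in the square explicit at the level of tropical curves over a cone $\sigma$, and then to observe that the two composites literally produce the same object of $\calM_{g,n+1}^{\trop}(\sigma)$, up to a canonical $2$-isomorphism. First I would pin down the bottom isomorphism $\phi$. Since $2\cdot 0 - 2 + 3 = 1 > 0$ and all three legs $l, n+1, \bullet$ are distinctly labeled, a stable genus-$0$ tropical curve with these legs over any cone is a single genus-$0$ vertex with no edges and no automorphisms; hence $\calM_{0,\{l,n+1,\bullet\}}^{\trop}$ is the terminal cone stack. I would therefore define $\phi$ as the projection forgetting this trivial factor, composed with the relabeling isomorphism $\calM_{g,[n]\smallsetminus\{l\}\cup\{\star\}}^{\trop}\xrightarrow{\sim}\calM_{g,n}^{\trop}$ renaming the leg $\star$ to $l$, and the identity on the $\R_{\geq 0}$ factor. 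This is manifestly an isomorphism of cone stacks.

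Next I would compute the right–then–top composite. By Definition~\ref{definition_coneoverGamma} the tautological section $s_l$ sends a pair $(\Gamma, d)$, with $\Gamma\in\calM_{g,n}^{\trop}(\sigma)$ and $d\in S_\sigma$, to the point of $\Cone(\Gamma)$ lying on the interior of $\Cone(l)=\sigma\times\R_{\geq 0}$ at distance $d$; this is exactly the datum of Proposition~\ref{prop:univ-curve}(ii). Under the isomorphism $\calX_{g,n}^{\trop}\cong\calM_{g,n+1}^{\trop}$ of Theorem~\ref{thm_universalcurve} (case (ii)), this corresponds to the curve $\widetilde\Gamma$ obtained from $\Gamma$ by subdividing the leg $l$ at distance $d$ from $r(l)$: one introduces a genus-$0$ vertex $w$ carrying the outgoing leg $l$ and a new leg $n+1$, joined to $r(l)$ by an edge of length $d$ (on a face of $\sigma$ where $d$ restricts to $0$, the leg $n+1$ is simply attached at $r(l)$).

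Then I would compute the left–then–top composite. Starting from $(\Gamma_0, \Gamma', d)$ with $\Gamma_0$ the unique tripod and $\Gamma'\in\calM_{g,[n]\smallsetminus\{l\}\cup\{\star\}}^{\trop}(\sigma)$, the clutching map $\delta^{\trop}_{0,\{l,n+1\}}$ identifies the leg $\bullet$ of $\Gamma_0$ with the leg $\star$ of $\Gamma'$ and assigns length $d$ to the resulting edge. The output is a genus-$g$, $(n+1)$-marked curve with a genus-$0$ vertex $w$ supporting the legs $l$ and $n+1$, joined to the vertex $r(\star)$ of $\Gamma'$ by an edge of length $d$, and otherwise equal to $\Gamma'$. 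Applying $\phi$ renames $\star$ to $l$ and identifies $\Gamma'$ with $\Gamma$, so that $r(\star)$ becomes $r(l)$ and this glued curve is literally the $\widetilde\Gamma$ above, with the same underlying marked weighted graph and the same generalized metric. This evident equality of the two constructions furnishes the canonical $2$-isomorphism filling the square, which — as flagged in Remark~\ref{rem:caution,caution} — is the extra datum needed to assert commutativity; it is natural in $\sigma$, giving the asserted contravariant dependence.

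I expect the one point requiring care, rather than any serious obstacle, to be the verification through Theorem~\ref{thm_universalcurve} that forgetting the leg $n+1$ from the glued curve and stabilizing returns $\Gamma$, with the marked position recorded at distance $d$ along the leg $l$: this is precisely case (iii) of Definition~\ref{def:forgetfulmorph}, in which the bivalent genus-$0$ vertex $w$ (now incident to the edge of length $d$ and to the leg $l$) is removed and the freed flag of the edge is relabeled $l$. One must also check the degenerate behavior on faces where $d$ restricts to $0$: the edge contracts, $w$ merges into $r(l)$, and both sides attach $n+1$ directly at $r(l)$, consistently with the upper semicontinuity of $h_{n+1}$. The remaining bookkeeping — the relabeling $\star\leftrightarrow l$ and the triviality of the genus-$0$ factor — is routine.
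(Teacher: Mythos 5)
Your proposal is correct and follows essentially the same route as the paper's proof: you identify $\calM_{0,\{l,n+1,\bullet\}}^{\trop}(\sigma)$ as a single point with $\phi$ forgetting it and relabeling $\star$ as $l$, compute that both composites applied to $(\Gamma,d)$ yield the curve $\widetilde\Gamma$ obtained by subdividing the leg $l$ at distance $d$ and attaching the $(n+1)$-st leg, and conclude via the bijection in the proof of Theorem~\ref{thm_universalcurve}. Your additional attention to the degenerate faces where $d$ restricts to $0$ and to the explicit $2$-isomorphism (cf.\ Remark~\ref{rem:caution,caution}) is sound detail that the paper leaves implicit.
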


\begin{proof}
For any cone $\sigma$, we observe that $ \calM_{0, \{l ,n+1, \bullet \} }^{\trop} (\sigma)$ is equal to a single point (a groupoid with one object and no nontrivial automorphisms); the natural isomorphism  $\phi$ in the bottom row of the diagram  forgets the first entry and relabels $l$ the leg marked $\star$ in the second entry. For a pair $(\Gamma, d)$, with $\Gamma$ a tropical curve over $\sigma$ and $d\in S_\sigma$, the image $s_l(\Gamma, d)$ corresponds to the choice of the point $P$ on the $l$-th leg of $\Gamma$ at distance $d$ from the root of $l$. The image $\delta_{0, \{l,n+1\}}(\phi^{-1}(\Gamma, d))$ is the tropical curve $\widetilde\Gamma$ obtained from $\Gamma$ by replacing the leg $l$ with a tripod with one edge of length $d$ attached to the root of $l$ and two unbounded edges labeled $l$ and $n+1$ emanating from the vertex of the tripod. The proof is concluded by noting that  $\widetilde\Gamma$ is the image of  $(\Gamma,P)$ in the bijection used in the proof of Theorem \ref{thm_universalcurve}.
\end{proof}



\section{Generalized edge lengths and topological realization}\label{sec:real}

\bigskip

In this section, we explain several canonical extensions of $\mathcal{M}_{g,n}^{\trop}$.   For example, we explain how our construction generalizes to allow real edge lengths on tropical curves, indeed edge lengths from any integral, saturated, but not necessarily finitely generated, monoid. This is a familiar setting for tropical geometers.  We also explain how to interpret $\mathcal{M}_{g,n}^{\trop}$ as a topological stack.  

All of the constructions we consider here apply more generally to cone stacks over $\RPC$. In fact, each extension that we consider here is obtained by pullbacks along the following sequence of morphisms of topoi (using Proposition \ref{prop_pullback}):

\begin{equation*}
\Top \to \RRPC \to \ShpMon^{\mathrm{op}} \to \RPC
\end{equation*}

We note the following:
\begin{itemize}
\item The geometric stacks over $\Top$ are \emph{topological stacks}.
\item The geometric stacks over $\RRPC$, which we dub \emph{real cone stacks}, coincide with geometric stacks over a category of polyhedral cone complexes defined in analogy with rational polyhedral cone complexes by gluing polyhedral cones over their faces.  (One could alternatively define them directly in analogy with what was done in Section~\ref{section_conestacks}, but we prefer to use the pullback and restriction to do the work for us.)
\item The geometric stacks over $\ShpMon^{\mathrm{op}}$ are the same as \emph{Kato stacks} (as in \cite[Section 2]{Ulirsch_nonArchArtin}) over the category of Kato fans (in the sense of \cite{Kato_toricsing}).
\item The geometric stacks over $\RPC$ are, of course, the familiar {\em cone stacks} of Definition~\ref{def_conestacks}, studied in this paper.
\end{itemize}

We explain each pullback in turn, first formulated for an arbitrary moduli problem in the appropriate geometric context, then for $\mathcal{M}_{g,n}^{\trop}$ specifically.

{Furthermore, we also outline how to use the same idea to {relate} our construction {to one over all commutative monoids} in order to recover the \emph{extended cone complex} of the moduli space of tropical curves~\cite{AbramovichCaporasoPayne_tropicalmoduli} and the \emph{extended tropical curves} \cite[Section 3.3]{Caporaso_tropicalmoduli} (see also \cite{HuszarMarcusUlirsch_troplogclutch&glue} for more details.)}

\subsection{Monoids}
\label{sec:monoids}

Recall that a monoid $M$ is said to be \emph{integral} if it can be embedded inside a commutative group, the initial of which is denoted $M^{\rm gp}$.  We say that $M$ is \emph{saturated} if, in addition to being integral, an element $x \in M^{\rm gp}$ such that $nx \in M$ is in fact in $M$.  Finally, $M$ is said to be \emph{sharp} if the only invertible element of $M$ is $0$.

For the rest of this paper, we will use the word \emph{monoid} always to refer to integral, saturated monoids; we write $\ShpMon$ for the category of sharp (integral, saturated) monoids.  There is a functor
\begin{equation*}
\RPC\to \ShpMon^{\mathrm{op}}
\end{equation*}
that sends a rational polyhedral cone $\sigma$ to the monoid $S_\sigma$.  

This functor identifies $\RPC$ with the 
full subcategory of finitely generated monoids inside of $\ShpMon^{\mathrm{op}}$, and accordingly we think of 
the category $\ShpMon^{\mathrm{op}}$ 
as a category of generalized cones. 

We note that the definition of a family of tropical curves over a rational polyhedral cone  (Definition~\ref{def_tropicalcurve}) involves edge lengths drawn from a monoid, namely the dual monoid $S_\sigma$ associated to a cone $\sigma$.  This yields a natural way to extend  $\calM_{g,n}^{\trop}(\RPC)$ to a category fibered in groupoids $\calM_{g,n}^{\trop}(\ShpMon^{\mathrm{op}})$ over $\ShpMon^{\mathrm{op}}$, allowing edge lengths to be drawn from any sharp monoid.  Since there seems to be no risk of confusion from doing so, we will simply write $\calM_{g,n}^{\trop}$ for both $\calM_{g,n}^{\trop}(\RPC)$ and $\calM_{g,n}^{\trop}(\ShpMon^{\mathrm{op}})$.

In the next subsection, we  put this extension in a more general context, showing how to extend any moduli problem on $\RPC$ to one over $\ShpMon^{\mathrm{op}}$, canonically.

\subsection{From rational polyhedral cones to monoids}
\label{sec:rpc-to-mon}

Our goal in this section is to extend moduli problems from $\RPC$, which we identify with the opposite category of finitely generated, integral, saturated, sharp monoids, to all of $\ShpMon^\mathrm{op}$.
By a moduli problem, we  mean a contravariant functor to $\mathbf{Set}$ or a category fibered in groupoids over the appropriate base category.
In Lemma~\ref{lem:ext-lfp}, we will identify the moduli problems over 
$\ShpMon^{\mathrm{op}}$ obtained in this way as precisely those that are of \emph{locally of finite presentation} (in the sense of Definition~\ref{def:lfp}).

\begin{remark}
In fact, the construction we present here is the standard way
to obtain a morphism of topoi $X \to Y$ from a left exact, continuous functor $Y \to X$, with $X = \ShpMon^{\mathrm{op}}$ and $Y = \RPC$.  
The same construction will be used in \S\ref{sec:real-to-top}.
\end{remark}

If $\calC$ is a presheaf (i.e.~a contravariant functor to $\mathbf{Set}$) on $\RPC$,
it may be extended to a presheaf $\widetilde{\calC}$ on $\ShpMon^{\mathrm{op}}$ by the following formula:
\begin{equation} \label{eqn:rpc-to-mon}
\widetilde \calC(M) = \varinjlim_{S_\sigma \to M} \calC(\sigma) = \varinjlim_{\substack{N \subset M \\ \text{$N$ f.g.}}} \calC(N)
\end{equation}
The first colimit is taken over the category of pairs $(\sigma, \varphi)$ where $\sigma \in \RPC$ and $\varphi \colon S_\sigma \to M$ is a homomorphism of monoids, with an arrow $(\sigma,\phi)\to(\sigma',\phi')$ for every $f\colon\sigma\to\sigma'$ with $\phi\circ f^* = \phi'$.  Since $S_\sigma \to M$ always factors through its image, which is also a finitely generated monoid, this colimit coincides with the second colimit displayed above, this time taken over all finitely generated submonoids of $M$.

The indexing category of finitely generated submonoids of $M$ is filtered, for if $N$ and $P$ are finitely generated submonoids of $M$, then so is the saturated monoid that they generate.  This implies that the assignment $\calC \mapsto \widetilde{\calC}$ preserves fiber products.

Next, the formula~\eqref{eqn:rpc-to-mon} makes sense even if $\calC$ is a category fibered in groupoids, provided the colimit is interpreted $2$-categorially.  Since the colimit~\eqref{eqn:rpc-to-mon} is filtered, this interpretation can be made explicit:
\begin{align*}
\mathbf{Ob}(\widetilde \calC(M)) & = \coprod_{\substack{N \subset M \\ \text{$N$ f.g.}}} \mathbf{Ob}(\calC(N)) \\
\Hom_{\widetilde \calC(M)} \bigl( (N,\alpha), (P,\beta) \bigr) & = \varinjlim_{\substack{N+P \subset Q \subset M \\ \text{$Q$ f.g., sat.}}} \Hom_{\calC(Q)}(\alpha|_Q,\beta|_Q) 
\end{align*}
In the above equations, we have written $(N,\alpha)$ for an object of $\coprod_{N \subset M} \calC(N)$, meaning that $\alpha \in \calC(N)$.  The notation $\alpha|_Q$ refers to the image of $\alpha$ under the restriction morphism $\calC(N) \rightarrow \calC(Q)$ associated to the inclusion $N \subset Q$.  The colimit in the second line is taken over finitely generated, saturated monoids contained in $M$ and containing both $N$ and $P$.
We remark again that because the extension is defined using a filtered colimit, it preserves fiber products.

We can now characterize those categories fibered in groupoids $\calC$ over $\ShpMon^{\mathrm{op}}$ that are induced from $\RPC$.

\begin{definition}\label{def:lfp}
We say that a category fibered in groupoids $\calC$ over $\ShpMon^{\mathrm{op}}$ is \emph{locally of finite presentation} if the natural map $\varinjlim \calC(M_i) \rightarrow \calC(\varinjlim M_i)$ is an isomorphism whenever $\{ M_i \}$ is a filtered diagram of monoids.
\end{definition}

\begin{lemma} 
\label{lem:ext-lfp}
A category fibered in groupoids $\widetilde \calC$ over $\ShpMon^{\mathrm{op}}$ is the extension of a moduli problem over $\RPC$ if and only if $\widetilde \calC$ is locally of finite presentation, in which case the moduli problem it extends is unique up to unique isomorphism.
\end{lemma}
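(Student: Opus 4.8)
The plan is to show that the extension operation $\calC \mapsto \widetilde\calC$ and restriction along the full inclusion $\RPC \hookrightarrow \ShpMon^{\mathrm{op}}$ are mutually inverse, giving an equivalence between moduli problems on $\RPC$ and locally of finite presentation moduli problems on $\ShpMon^{\mathrm{op}}$; all three assertions of the lemma then fall out. Two structural facts set everything up. First, for a moduli problem $\calC$ on $\RPC$ and $N\in\RPC$ finitely generated, the indexing category of~\eqref{eqn:rpc-to-mon} has the terminal object $(N,\mathrm{id})$, so the extension satisfies $\widetilde\calC(N) \simeq \calC(N)$; thus restricting an extension back to $\RPC$ recovers $\calC$ canonically, which will supply the uniqueness clause. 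Second, every $M \in \ShpMon$ is the filtered colimit of its finitely generated saturated submonoids: the saturation inside $M^{\mathrm{gp}}$ of a finitely generated submonoid $N$ is again finitely generated (its group $N^{\mathrm{gp}}$ is free of finite rank, since $M^{\mathrm{gp}}$ is torsion-free by saturatedness and sharpness, so Gordan's lemma applies), and it lands in $M$ because $M$ is saturated, while saturating the submonoid generated by two such submonoids shows the indexing poset is directed.

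For the implication \emph{locally of finite presentation} $\Rightarrow$ \emph{extension}, let $\widetilde\calC$ be lfp and set $\calC := \widetilde\calC\big|_{\RPC}$. For each $M$ the inclusions $N \hookrightarrow M$ of finitely generated saturated submonoids induce a comparison functor $\varinjlim_{N \subseteq M} \calC(N) \to \widetilde\calC(M)$, that is $\widetilde{(\calC)}(M) \to \widetilde\calC(M)$. By the second structural fact this colimit is exactly $\varinjlim \widetilde\calC(N)$ over a filtered diagram with colimit $M$, so the comparison is an equivalence precisely because $\widetilde\calC$ is lfp. Hence $\widetilde\calC \simeq \widetilde{(\calC)}$ is an extension.

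The reverse implication is the technical heart, and the place I expect to spend the most effort. Writing $M = \varinjlim_i M_i$ for a filtered diagram, I must show $\varinjlim_i \widetilde\calC(M_i) \to \widetilde\calC(M)$ is an equivalence of groupoids. The key input is a \emph{factorization lemma}: since every finitely generated commutative monoid is finitely presented (Rédei's theorem), a finitely generated monoid is a finitely presentable object of $\ShpMon$, so any homomorphism $N \to M$ with $N$ finitely generated factors through some $M_i$, and any two factorizations agree after enlarging $i$ — the finitely many generators are realized at a single stage by filteredness, and the finitely many defining relations hold at a further stage. Using the explicit description of objects and $\Hom$-sets of $\widetilde\calC$ recalled just before Definition~\ref{def:lfp}, essential surjectivity follows by factoring the inclusion $N' \hookrightarrow M$ of the submonoid carrying a given object $\alpha \in \calC(N')$ through some $M_i$ and pushing $\alpha$ forward along this factorization; fullness and faithfulness follow the same pattern, applying the factorization lemma once to realize a morphism from the colimit $\Hom$-formula at a finite stage and once more to force equality of two realizations. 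The one subtlety demanding care is checking that filtered colimits in $\ShpMon$ are computed in commutative monoids, so that integrality, saturatedness and sharpness are each preserved (in every case the relevant equation is witnessed at a finite stage) and Rédei's theorem genuinely applies.

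Finally, for uniqueness, if $\widetilde\calC \simeq \widetilde{(\calC_1)} \simeq \widetilde{(\calC_2)}$ then restricting to $\RPC$ and invoking the first structural fact gives canonical equivalences $\calC_1 \simeq \widetilde\calC\big|_{\RPC} \simeq \calC_2$; because the equivalence $\widetilde{(\calC)}\big|_{\RPC}\simeq \calC$ is the canonical one arising from the terminal objects $(N,\mathrm{id})$, this identification is unique, completing the proof.
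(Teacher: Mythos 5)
Your proposal is correct and follows essentially the same route as the paper: Rédei's theorem (finitely generated monoids are finitely presented, hence finitely presentable objects under filtered colimits) yields the extension-implies-lfp direction, while writing each $M$ as the filtered colimit of its finitely generated saturated submonoids and comparing $\widetilde{(\widetilde\calC|_{\RPC})}$ with $\widetilde\calC$ yields the converse and the uniqueness clause. Your explicit groupoid-level verification and the Gordan's-lemma cofinality argument for saturated submonoids merely spell out details the paper's colimit manipulations leave implicit.
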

\begin{proof}
First we show that if $\calC$ is a category fibered in groupoids over $\RPC$ then $\widetilde\calC$ is locally of finite presentation.  Consider a filtered diagram of monoids, $\{ M_i \}$, indexed by $i \in I$, with colimit $M$.  Let $J$ be the category of pairs $(i, N)$ where $i \in I$, and $N$ is a finitely generated submonoid of $M_i$.  Note that if $P \subset M$ is a finitely generated monoid then $P$ is also finitely presented by R\'edei's theorem~\cite[Theorem~5.12]{RG} so there is some $i \in I$ and a finitely generated submonoid $N \subset M_i$ that maps isomorphically to $P$ under the map $M_i \rightarrow M$.  Thus,
\begin{equation*}
\widetilde\calC(M) = \varinjlim_{\substack{P \subset M \\ \text{$P$ f.g., sat.}}}\hskip-1.6ex\calC(P) = \varinjlim_{(i, N) \in J}\hskip-1ex\calC(N) = \varinjlim_i \widetilde\calC(M_i),
\end{equation*}
as required for $\widetilde\calC$ to be locally of finite presentation.

Conversely, suppose that $\calC$ is locally of finite presentation.  Let $\calD$ be the category fibered in groupoids over $\RPC$ obtained by restricting $\calC$ to the finitely generated monoids in $\ShpMon^{\mathrm{op}}$.  Then $\calC$ and $\widetilde\calD$ are both locally of finite presentation, so they are both uniquely determined by their restrictions to $\RPC$, since every monoid is the colimit of the diagram of its finitely generated submonoids.  But $\calC$ and $\widetilde\calD$ both restrict to the same moduli problem on $\RPC$, so $\calC$ and $\widetilde\calD$ must be isomorphic.  Therefore $\calC$ is extended from $\RPC$.
\end{proof}
Lemma \ref{lem:ext-lfp}  allows us to characterize the extension of $\calM_{g,n}^{\trop}$ to all monoids directly.  We abuse notation and write $\calM_{g,n}^{\trop}$ also for the category fibered in groupoids over $\ShpMon^{\mathrm{op}}$ whose fiber over a sharp monoid $P$ is the groupoid of abstract tropical curves metrized by $P$.  We temporarily write $\widetilde\calM_{g,n}^{\trop}$ for the extension of $\calM_{g,n}^{\trop}$ from $\RPC$ to $\ShpMon^{\mathrm{op}}$, as introduced above.

\begin{proposition}
\label{fireworks} 
$\calM_{g,n}^{\trop}$ is locally of finite presentation as a category fibered in groupoids over $\ShpMon^{\mathrm{op}}$.
\end{proposition}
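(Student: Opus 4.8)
The plan is to verify Definition~\ref{def:lfp} directly: given a filtered diagram $\{M_i\}_{i\in I}$ of sharp monoids with colimit $M=\varinjlim_i M_i$, I want to show that the natural comparison functor
\[
\Phi\colon \varinjlim_i \calM_{g,n}^{\trop}(M_i)\longrightarrow \calM_{g,n}^{\trop}(M)
\]
is an equivalence of groupoids. The key structural observation, which I would isolate at the outset, is that an object of $\calM_{g,n}^{\trop}(P)$ consists of a weighted, marked graph $(G,h,m)$ --- data that is purely combinatorial and independent of $P$ --- together with a generalized metric $d\colon E(G)\to P\smallsetminus\{0\}$, and that a morphism over a fixed $P$ is an isomorphism of weighted marked graphs subject only to compatibility conditions relating the two metrics. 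Since $E(G)$ is finite, each object and each morphism is pinned down by the behaviour of finitely many elements of $P$, and this is exactly the feature that makes filtered colimits interact well with $\calM_{g,n}^{\trop}$.

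For essential surjectivity, I would take $\Gamma=(G,h,m,d)\in\calM_{g,n}^{\trop}(M)$ and use that $M=\varinjlim_i M_i$ is a \emph{filtered} colimit: the finitely many elements $d(e)\in M$, for $e\in E(G)$, all lift to a single $M_i$ after passing far enough in $I$. Writing $\tilde d\colon E(G)\to M_i$ for such a lift, the fact that the structure map $M_i\to M$ sends $0$ to $0$ forces $\tilde d(e)\neq 0$ whenever $d(e)\neq 0$, so $\tilde d$ is again a generalized metric and $(G,h,m,\tilde d)$ is an object of $\calM_{g,n}^{\trop}(M_i)$ restricting to $\Gamma$. For fullness and faithfulness I would argue similarly: a morphism $\Phi\alpha\to\Phi\beta$ in $\calM_{g,n}^{\trop}(M)$ is an isomorphism $\psi$ of the underlying weighted marked graphs whose compatibility with the two generalized metrics is expressed by finitely many equations among the edge lengths, holding in $M$. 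Filteredness together with the finiteness of $E(G)$ lets me realize these finitely many equations already in some $M_k$, producing a morphism of the colimit lifting $\psi$; and two lifts that agree in $M$ must be represented by the same graph isomorphism, hence coincide at a finite stage. This shows $\Phi$ is an equivalence, which is precisely the lfp condition.

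I expect the only real obstacle to be bookkeeping in the $2$-categorical (groupoid) colimit, rather than anything substantive: one must be careful that the filtered colimit of groupoids is computed by the explicit formulas recorded in \S\ref{sec:rpc-to-mon}, so that objects and morphisms of $\varinjlim_i\calM_{g,n}^{\trop}(M_i)$ are genuinely represented at finite stages and compared after further restriction. As a cross-check I would also note the conceptual route: show that the directly-defined $\calM_{g,n}^{\trop}$ over $\ShpMon^{\mathrm{op}}$ agrees with the extension $\widetilde\calM_{g,n}^{\trop}$ of its restriction to $\RPC$, and then invoke the ``only if'' direction of Lemma~\ref{lem:ext-lfp}. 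The comparison $\widetilde\calM_{g,n}^{\trop}(M)\to\calM_{g,n}^{\trop}(M)$ is an equivalence by the same finiteness argument, the one extra input being that the finitely generated submonoid of $M$ generated by the edge lengths may be enlarged to a finitely generated \emph{saturated} submonoid (using that $M^{\mathrm{gp}}$ is torsion-free for sharp saturated $M$, together with Gordan's lemma), so that it corresponds to an object of $\RPC$.
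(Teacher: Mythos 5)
Your proposal is correct and rests on exactly the same key observation as the paper's (very terse) proof: a tropical curve over $P$ and any morphism between such involve only the finitely many edge lengths $d(e)\in P$, so all data and equations descend to a finite stage of any filtered diagram. Your direct verification of Definition~\ref{def:lfp} (and the cross-check via Lemma~\ref{lem:ext-lfp}, including the correct remark that the submonoid generated by the edge lengths must be saturated---legitimate since $M^{\mathrm{gp}}$ is torsion-free for sharp saturated $M$ and Gordan's lemma preserves finite generation) is simply a fully spelled-out version of the argument the paper leaves implicit.
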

\begin{proof}
Here we must show that if $\Gamma \in \calM_{g,n}^{\trop}(P)$ then $\Gamma$ is induced from $\Gamma' \in \calM_{g,n}^{\trop}(N)$ for some finitely generated submonoid $N$ of $P$, and that $\Gamma'$ is unique up to unique isomorphism and enlargement of $N$.  Both of these claims are immediate from the fact that the metric on $\Gamma$ only requires the specification of finitely many elements of $P$ as the edge lengths.
\end{proof}

\begin{corollary}
The categories fibered in groupoids $\calM_{g,n}^{\trop}$ and $\widetilde\calM_{g,n}^{\trop}$ are isomorphic.
\end{corollary}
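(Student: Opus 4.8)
The plan is to deduce the corollary almost immediately from Lemma~\ref{lem:ext-lfp} and Proposition~\ref{fireworks}, once we confirm that the two categories fibered in groupoids restrict to the same moduli problem over $\RPC$. The guiding principle is the uniqueness clause of Lemma~\ref{lem:ext-lfp}: a category fibered in groupoids over $\ShpMon^{\mathrm{op}}$ that is locally of finite presentation is determined up to unique isomorphism by its restriction to the full subcategory $\RPC$ of finitely generated (integral, saturated, sharp) monoids.

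First I would note that $\widetilde\calM_{g,n}^{\trop}$ is, by its very construction via the formula~\eqref{eqn:rpc-to-mon}, the extension of $\calM_{g,n}^{\trop}(\RPC)$, and hence is locally of finite presentation by the forward implication of Lemma~\ref{lem:ext-lfp}. On the other hand, Proposition~\ref{fireworks} asserts that the direct extension $\calM_{g,n}^{\trop}$ over $\ShpMon^{\mathrm{op}}$, whose fiber over a sharp monoid $P$ is the groupoid of abstract tropical curves metrized by $P$, is itself locally of finite presentation. Thus both objects are lfp categories fibered in groupoids over $\ShpMon^{\mathrm{op}}$.

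Next I would verify that both restrict to $\calM_{g,n}^{\trop}(\RPC)$ over $\RPC$. For the direct extension this is immediate from the definitions: for a cone $\sigma$ the fiber is the groupoid of tropical curves metrized by $S_\sigma$, which is exactly $\calM_{g,n}^{\trop}(\sigma)$. For $\widetilde\calM_{g,n}^{\trop}$ one uses that when $M = S_\sigma$ is finitely generated and saturated it is the terminal object of the filtered indexing category of its finitely generated submonoids; hence the filtered $2$-colimit defining $\widetilde\calM_{g,n}^{\trop}(S_\sigma)$ collapses to its value at the terminal object, namely $\calM_{g,n}^{\trop}(\sigma)$. With both restrictions identified, the uniqueness clause of Lemma~\ref{lem:ext-lfp} furnishes the desired isomorphism $\calM_{g,n}^{\trop}\cong\widetilde\calM_{g,n}^{\trop}$.

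There is no genuine obstacle at this stage: all the substantive content has already been established, in Proposition~\ref{fireworks} (local finite presentation of the direct extension, which reduces to the observation that a metric on a fixed graph records only finitely many elements of $P$) and in Lemma~\ref{lem:ext-lfp} (the correspondence between lfp stacks over $\ShpMon^{\mathrm{op}}$ and moduli problems over $\RPC$). The only point meriting care is the mild compatibility check that the filtered colimit defining $\widetilde\calM_{g,n}^{\trop}$ agrees, on finitely generated monoids, with the naive fiber; this is exactly the cofinality/terminality argument indicated above.
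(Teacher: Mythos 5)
Your proposal is correct and follows exactly the paper's route: the paper's proof is the one-line ``immediate from Lemma~\ref{lem:ext-lfp} and Proposition~\ref{fireworks},'' and you have simply unpacked the details that the lemma's proof already contains (both objects are locally of finite presentation, both restrict to $\calM_{g,n}^{\trop}(\RPC)$ on finitely generated monoids, and such restrictions determine the lfp extension up to unique isomorphism). Your cofinality observation that the filtered colimit defining $\widetilde\calM_{g,n}^{\trop}(S_\sigma)$ collapses when $S_\sigma$ is finitely generated is the right justification for the restriction check and is consistent with how the paper uses it.
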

\begin{proof}
This is immediate from Lemma~\ref{lem:ext-lfp} and Proposition~\ref{fireworks}.
\end{proof}

\begin{remark}
If $M$ is a monoid, its prime ideals form a topological space called the spectrum.  A Kato fan is a space equipped with a sheaf of monoids such that the pair looks locally like the spectrum of a monoid.  The monoidal analogue of Proposition~\ref{prop:C-stacks} is that the $2$-category of categories fibered in groupoids over monoids is equivalent to the $2$-category of stacks over Kato fans.  Since the category of finitely generated monoids is equivalent to the category of rational polyhedral cones, the category of Kato fans that are locally of finite type is equivalent to the category of rational polyhedral cone complexes \cite[Proposition 3.7]{Ulirsch_functroplogsch}.  This equivalence extends to an equivalence of geometric contexts \cite[Corollary~3.11]{Ulirsch_functroplogsch}, which implies that the $2$-category of cone stacks is equivalent to the $2$-category of (fine and saturated) Kato stacks, defined in \cite[Section~2]{Ulirsch_nonArchArtin}.

In addition, we will see in Theorem~\ref{thm:artin-fans} that cone stacks are also equivalent to {\em Artin fans}.
\end{remark}

\subsection{From monoids to cones}
\label{sec:mon-to-real}

A \emph{polyhedral cone} is a pair $(V,\sigma)$ consisting of a finite-dimensional real vector space $V$ and a strictly convex intersection of finitely many closed half spaces, defined by real linear inequalities, whose linear span is $V$. A morphism $(V_1,\sigma_1)\rightarrow (V_2, \sigma_2)$ is an $\R$-linear map $f\mathrel{\mathop:}V_1\rightarrow V_2$ such that $f(\sigma_1)\subseteq \sigma_2$.  We write $\RRPC$ for the category of polyhedral cones.

Given a polyhedral cone $(V,\sigma)$ we write $\sigma^\vee$ for its dual cone in the dual vector space $V^\ast=\Hom_\R(V,\R)$ of $V$. The dual cone $\sigma^\vee$ may be regarded as a sharp submonoid of $V^\ast$, so the assignment $\sigma \mapsto \sigma^\vee$ determines a functor
\begin{equation*}
\RRPC \to \ShpMon^{\mathrm{op}}.
\end{equation*}So, if $\calC$ is a moduli problem on $\ShpMon^{\mathrm{op}}$ then it may be restricted via this functor to a moduli problem on $\RRPC$.  In particular, we may speak of the real points of $\calC$ by evaluating it on the real cone $\mathbb R_{\geq 0}$, and we may apply this as well to a moduli problem defined a priori only over $\RPC$ using the preceding section.
We note that by virtue of its construction as a restriction, this process preserves fiber products.

\begin{remark}
The construction above is the standard one for a morphism of topoi $X \to Y$ associated to a cocontinuous functor $X \to Y$, with $X = \RRPC$ and $Y = \ShpMon^{\mathrm{op}}$.

It is also possible to construct the morphism of sites $\RRPC \to \RPC$ in one step using the left exact, continuous functor $\RPC \to \RRPC$ that sends a rational polyhedral cone $(N, \sigma)$ to its underlying cone $\sigma$.
However, the order of operations presented here seems to be preferable, since the extension of a moduli problem from $\RPC$ to $\ShpMon^{\mathrm{op}}$ is frequently available ab initio, and passage to $\RRPC$ is then a simple matter of restriction.
\end{remark}

\begin{remark}
In fact, the above construction applies word for word to the larger category of convex cones, i.e.~not-necessarily-finite intersections of real half spaces, since a convex cone $\sigma$, polyhedral or not, produces a sharp monoid $\sigma^\vee$.
\end{remark}

Now, applying this process to the moduli space of tropical curves, we may recognize $\calM_{g,n}^{\trop}(\mathbb R_{\geq 0})$ as the groupoid of graphs metrized by $\mathbb R_{\geq 0}$.

\begin{proposition}
Suppose that $\sigma$ is a polyhedral cone and that $\calM_{g,n}^{\trop,\R}$ denotes the restriction of $\calM_{g,n}^{\trop}$ to $\RRPC$.  Then $\calM_{g,n}^{\trop,\R}(\sigma)$ is the groupoid of tropical curves with edge lengths metrized by $\sigma^\vee$.
\end{proposition}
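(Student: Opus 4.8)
The plan is to recognize this statement as essentially an unwinding of the definition of the restriction along the functor $\RRPC \to \ShpMon^{\mathrm{op}}$, $\sigma \mapsto \sigma^\vee$, together with the tautological description of the fibers of $\calM_{g,n}^{\trop}$ over sharp monoids from the previous subsection. First I would confirm that $\sigma \mapsto \sigma^\vee$ genuinely lands in $\ShpMon$. Integrality is automatic because $\sigma^\vee$ is a submonoid of the group $V^\ast$. Saturation holds because $\sigma^\vee = \{v \in V^\ast : \langle v, u \rangle \geq 0 \text{ for all } u \in \sigma\}$ is cut out by non-strict linear inequalities: if $nv \in \sigma^\vee$ for some integer $n \geq 1$ and some $v \in V^\ast$, then $n\langle v, u\rangle \geq 0$, hence $\langle v, u\rangle \geq 0$, for every $u \in \sigma$. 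Finally, sharpness of $\sigma^\vee$ is equivalent to $\sigma$ spanning $V$, which is part of the definition of a polyhedral cone. Thus the functor $\RRPC \to \ShpMon^{\mathrm{op}}$ used to define $\calM_{g,n}^{\trop,\R}$ is well defined.

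Next I would simply unwind the restriction. By construction $\calM_{g,n}^{\trop,\R}$ is the pullback of the category fibered in groupoids $\calM_{g,n}^{\trop}$ over $\ShpMon^{\mathrm{op}}$ along $\sigma \mapsto \sigma^\vee$, so its fiber over a polyhedral cone $\sigma$ is, by definition of pullback, the fiber of $\calM_{g,n}^{\trop}$ over the object $\sigma^\vee$; that is, $\calM_{g,n}^{\trop,\R}(\sigma) = \calM_{g,n}^{\trop}(\sigma^\vee)$. It then remains to invoke the identification recorded just before Proposition~\ref{fireworks} and justified by its Corollary: the fiber of $\calM_{g,n}^{\trop}$ over an arbitrary sharp monoid $P$ is the groupoid of abstract tropical curves with edge lengths in $P$. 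Taking $P = \sigma^\vee$ yields precisely that $\calM_{g,n}^{\trop,\R}(\sigma)$ is the groupoid of tropical curves metrized by $\sigma^\vee$, as claimed.

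I do not expect any genuine obstacle; the whole argument is bookkeeping with the two morphisms of sites plus the verification that $\sigma^\vee$ is a sharp monoid. The one subtlety worth flagging is that $\sigma^\vee$ is typically \emph{not} finitely generated — for instance $\sigma = \R_{\geq 0}$ gives $\sigma^\vee = \R_{\geq 0}$ — so the statement cannot be read off from the original moduli problem on $\RPC$ alone. It is exactly the extension of $\calM_{g,n}^{\trop}$ to all of $\ShpMon^{\mathrm{op}}$ carried out in Section~\ref{sec:rpc-to-mon}, and the Corollary there identifying this extension with the tautological moduli problem of $P$-metrized tropical curves, that makes the final invocation legitimate.
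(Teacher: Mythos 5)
Your proposal is correct and follows the paper's approach exactly: the paper's own proof is simply ``This is immediate from the construction,'' and your write-up just spells out that immediacy --- checking that $\sigma^\vee$ is a sharp, integral, saturated monoid, identifying $\calM_{g,n}^{\trop,\R}(\sigma)$ with $\calM_{g,n}^{\trop}(\sigma^\vee)$ by definition of restriction, and invoking the corollary to Proposition~\ref{fireworks} identifying the extension over $\ShpMon^{\mathrm{op}}$ with the tautological moduli problem of $P$-metrized curves. Your flag that $\sigma^\vee$ is typically not finitely generated, so the extension of Section~\ref{sec:rpc-to-mon} is genuinely needed, is exactly the right subtlety to note.
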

\begin{proof}
This is immediate from the construction.
\end{proof}

\subsection{From real cones to topological spaces}
\label{sec:real-to-top}\label{section_topstacks}

The functor $\RRPC \to \Top$ that sends a cone $\sigma$ to its underlying topological space $|\sigma|$ preserves fiber products.  Therefore, for any topological space $X$, the category of all pairs $(\sigma, \varphi)$, where $\sigma \in \RRPC$ and $\varphi : X \rightarrow |\sigma|$ is a continuous function, is filtered.  We can therefore extend a moduli problem from $\RRPC$ to $\Top$ in a manner similar to the one by which we extended from rational polyhedral cones to monoids.

We define:
\begin{equation*}
\widetilde\calC(X) = \varinjlim_{\substack{\sigma \in \RRPC \\X \to |\sigma|}}\calC(\sigma)
\end{equation*}

\begin{remark}
The similarity to the construction from Section~\ref{sec:rpc-to-mon} is hardly coincidental, as we are again applying the standard construction of a morphism of topoi from a left exact, continuous functor.  Note that if we want to treat $\Top$ as a geometric context then we must take $\mathbb P$ to be a very general class of local models, such as maps that are locally topological quotients on source and target; we prefer to not pursue this here, since it is only tangentially related to the main objectives of this article.
\end{remark}

One may now stackify $\widetilde\calC$ to obtain a topological stack $\lvert\calC\rvert$.  More generally, if $\calC$ is a moduli problem over $\ShpMon^{\mathrm{op}}$ or over $\RPC$, we define $\lvert\calC\rvert$ by first extending $\calC$ to $\RRPC$ using the sections above and then applying $\lvert\:\cdot\:\rvert$ to the result.  We emphasize that all of these manifestations of $\lvert\:\cdot\:\rvert$ preserve fiber products.

\begin{lemma}
Let $\widetilde\calC$ be constructed as above and let $\ast$ denote a topological space with one point.  Then $\widetilde\calC(\ast) = \calC(\mathbb R_{\geq 0})$.
\end{lemma}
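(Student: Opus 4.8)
The plan is to unwind the colimit defining $\widetilde\calC(\ast)$ and to locate a cofinal object in its index category. Since a continuous map from the one-point space $\ast$ to $|\sigma|$ is simply a choice of point $p\in|\sigma|$, the filtered colimit
\begin{equation*}
\widetilde\calC(\ast)=\varinjlim_{\substack{\sigma\in\RRPC\\ \ast\to|\sigma|}}\calC(\sigma)
\end{equation*}
is indexed by the category $I$ whose objects are the pairs $(\sigma,p)$ with $\sigma$ a polyhedral cone and $p\in|\sigma|$, and whose morphisms $(\sigma,p)\to(\sigma',p')$ are morphisms of cones $f\colon\sigma\to\sigma'$ with $|f|(p)=p'$, compatibly with the defining maps to $\ast$, exactly as in the description of the index category in Section~\ref{sec:rpc-to-mon}.

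The key observation is elementary: a morphism of polyhedral cones $\R_{\geq 0}\to\sigma$ is determined by, and determines, the image $v\in|\sigma|$ of the ray generator $1$, the morphism being $t\mapsto tv$; this is a legitimate morphism of polyhedral cones precisely because $\sigma$ is stable under nonnegative scaling, and it remains valid when $v=0$. Consequently, for every object $(\sigma,p)$ of $I$ there is a \emph{unique} morphism of cones $\iota_p\colon\R_{\geq 0}\to\sigma$ with $\iota_p(1)=p$, namely $t\mapsto tp$. I would use this to show that $(\R_{\geq 0},1)$ is cofinal for the colimit: every $(\sigma,p)$ is joined to it by the unique arrow built from $\iota_p$, and these arrows are compatible with the morphisms of $I$. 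Keeping track of the fact that $\calC$ is contravariant on $\RRPC$, the family $\{\iota_p\}$ makes $(\R_{\geq 0},1)$ a terminal object of the category over which the colimit is effectively taken (equivalently, an initial object of $I$), so the colimit collapses to the single term $\calC(\R_{\geq 0})$, yielding $\widetilde\calC(\ast)=\calC(\R_{\geq 0})$.

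The same cofinality argument applies verbatim when $\calC$ is a category fibered in groupoids rather than a set-valued presheaf: a terminal object is bifinal, so the $2$-colimit is again equivalent to the value $\calC(\R_{\geq 0})$. There is no genuine obstacle here; the only points requiring care are the bookkeeping of the variance, i.e.\ the direction of the arrows in $I$, and the insistence that the distinguished point of $\R_{\geq 0}$ be \emph{nonzero} (we take $1$), since the requirement $\iota_p(p_0)=p$ is uniquely solvable for all $p$ exactly when $p_0\neq 0$. As a sanity check one may verify the statement on a representable $\calC=h_\tau$, where it reduces to the evident bijection between the points of $|\tau|$ and the morphisms of cones $\R_{\geq 0}\to\tau$; that is, $\widetilde{h_\tau}(\ast)=|\tau|=\Hom_{\RRPC}(\R_{\geq 0},\tau)=h_\tau(\R_{\geq 0})$.
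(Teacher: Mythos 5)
Your proof is correct and is essentially the paper's own argument: the paper likewise observes that for any polyhedral cone $\sigma$ and any $x\in|\sigma|$ there is a unique cone morphism $\R_{\geq 0}\to\sigma$ sending $1\mapsto x$, so that $(\R_{\geq 0},1)$ is initial among the pairs $(\sigma,\varphi)$, whence the colimit collapses to $\calC(\R_{\geq 0})$. Your additional bookkeeping about the variance (terminal for the effective colimit versus initial in $I$) and the necessity of choosing a nonzero base point are sound refinements of the same idea, not a different route.
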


\begin{proof}
We may evaluate the colimit explicitly.  If $\sigma$ is a polyhedral cone and $x \in |\sigma|$ is any point, there is a unique morphism $\mathbb R_{\geq 0} \to \sigma$ that takes $1$ to $x$.  This implies that the pair $(\mathbb R_{\geq 0}, 1)$ is initial in the category of all $(\sigma, \varphi)$ where $\sigma \in \RRPC$ and $\varphi : \ast \rightarrow |\sigma|$ is continuous.  The colimit therefore reduces to the evaluation of $\calC$ on $\mathbb R_{\geq 0}$.
\end{proof}

Applying this to $\calM_{g,n}^{\trop}$, we obtain a topological stack underlying the moduli stack of tropical curves.  Theorem~\ref{thm_universalcurve} now implies

\begin{proposition}\label{cor_topuniversalcurve}
Given a point $[\Gamma]\in\big\vert \calM_{g,n}^{\trop}\big\vert$ corresponding to a tropical curve $\Gamma$. Then the preimage $\vert\pi_{g,n+1}\vert^{-1}\big([\Gamma]\big)$ of $[\Gamma]$ (in the sense of topological stacks) under the forgetful morphism
\begin{equation*}
\vert \pi_{g,n+1}\vert\mathrel{\mathop:}\big\vert\calM_{g,n+1}^{\trop}\big\vert\longrightarrow\big\vert\calM_{g,n}^{\trop}\big\vert
\end{equation*}
is homeomorphic to $\Gamma$. 
\end{proposition}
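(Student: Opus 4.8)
The plan is to deduce this from Theorem~\ref{thm_universalcurve} by applying the topological realization functor $\vert\,\cdot\,\vert$ and exploiting the fact—emphasized repeatedly in Section~\ref{sec:real}—that $\vert\,\cdot\,\vert$ preserves fiber products.

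First I would unwind what the point $[\Gamma]$ is. By the last Lemma of Section~\ref{sec:real-to-top}, a map $\ast\to\vert\calM_{g,n}^{\trop}\vert$ from the one-point space is the same datum as an object of $\calM_{g,n}^{\trop,\R}(\ast)=\calM_{g,n}^{\trop}(\R_{\geq 0})$, i.e.\ a tropical curve $\Gamma$ over the real cone $\R_{\geq 0}$ (a metric graph with real edge lengths). Because $(\R_{\geq 0},1)$ is initial among pairs $(\sigma,\varphi\colon\ast\to\vert\sigma\vert)$, this object is precisely the morphism $u_{[\Gamma]}\colon\R_{\geq 0}\to\calM_{g,n}^{\trop}$ appearing in Theorem~\ref{thm_universalcurve}, and $[\Gamma]$ is the image of $1\in\R_{\geq 0}$ under $\vert u_{[\Gamma]}\vert$.

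Next I would apply $\vert\,\cdot\,\vert$ to the $2$-cartesian square of Theorem~\ref{thm_universalcurve} for $\sigma=\R_{\geq 0}$. Since $\vert\,\cdot\,\vert$ preserves fiber products, this yields a $2$-cartesian square of topological stacks
\begin{equation*}\begin{CD}
\vert\Cone(\Gamma)\vert @>>> \vert\calM_{g,n+1}^{\trop}\vert \\
@V\vert c\vert VV @VV\vert\pi_{g,n+1}\vert V \\
\R_{\geq 0} @>\vert u_{[\Gamma]}\vert>> \vert\calM_{g,n}^{\trop}\vert .
\end{CD}\end{equation*}
Using the inclusion $\{1\}\hookrightarrow\R_{\geq 0}$ (through which the classifying map of $[\Gamma]$ factors) and pasting cartesian squares, I then compute
\begin{equation*}
\vert\pi_{g,n+1}\vert^{-1}([\Gamma])=\{1\}\times_{\vert\calM_{g,n}^{\trop}\vert}\vert\calM_{g,n+1}^{\trop}\vert=\{1\}\times_{\R_{\geq 0}}\vert\Cone(\Gamma)\vert=\vert c\vert^{-1}(1)=c^{-1}(1).
\end{equation*}

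Finally I would invoke Proposition~\ref{prop_preimageisometry} to identify $c^{-1}(1)$ with $\Gamma$. The one point that needs care is that Proposition~\ref{prop_preimageisometry} is stated for integer edge lengths, whereas here $\Gamma$ carries real edge lengths; however, its proof computes $c^{-1}(1)$ explicitly as a union of segments (and loops) of length $d(e)$, and this computation is valid verbatim for $d(e)\in\R_{\geq 0}=S_{\R_{\geq 0}}$, producing a space homeomorphic (indeed isometric) to the metric graph $\Gamma$. I expect the genuine obstacle to be not this computation but the careful identification of the abstract stack point $[\Gamma]$ with the morphism $u_{[\Gamma]}$ and the justification of the pasting of cartesian squares \emph{after} stackification; both hinge entirely on the preservation of fiber products by $\vert\,\cdot\,\vert$, which guarantees that the honest fiber of $\vert\pi_{g,n+1}\vert$ over $[\Gamma]$ is computed by the realized cone $\Cone(\Gamma)$ and hence is a genuine topological space rather than a stack with extra automorphisms.
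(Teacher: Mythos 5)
Your overall strategy---realize the $2$-cartesian square of Theorem~\ref{thm_universalcurve}, use that $\lvert\:\cdot\:\rvert$ preserves fiber products because it is built from filtered colimits, and finish by computing $c^{-1}(1)$---is exactly the paper's, and your identification of the point $[\Gamma]$ with a morphism from the real cone $\R_{\geq 0}$ is correct. But there is one genuine gap: you apply ``the $2$-cartesian square of Theorem~\ref{thm_universalcurve} for $\sigma=\R_{\geq0}$'' where $\R_{\geq0}$ is the \emph{real} cone. Theorem~\ref{thm_universalcurve} is stated and proved only for $\sigma\in\RPC$, and the construction of $\Cone(\Gamma)$ in Section~\ref{section_families} (the cones $\sigma\times_{\R_{\geq0}}\R_{\geq0}^2$ dual to $1\mapsto d(e)\in S_\sigma$, the waffle cone, etc.) likewise lives entirely in $\RPC$/$\RPCC$. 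There is no rational polyhedral cone whose dual monoid is $\R_{\geq0}$---the paper flags exactly this point in the remark following Proposition~\ref{prop:univ-curve}---so the square you feed into $\lvert\:\cdot\:\rvert$ does not exist as stated. Your closing caveat about Proposition~\ref{prop_preimageisometry} being stated for integer lengths addresses only the final step of the computation; the same rational-versus-real issue already infects the invocation of Theorem~\ref{thm_universalcurve} and the (in the paper, undefined) object $\Cone(\Gamma)$ for a curve with real edge lengths.

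The fix is precisely the extra step the paper's proof takes. Since a tropical curve has finitely many edges, its edge lengths generate a finitely generated submonoid of $\R_{\geq0}$, so by local finite presentation (Proposition~\ref{fireworks}, via Lemma~\ref{lem:ext-lfp}) the map $\R_{\geq0}\to\calM_{g,n}^{\trop}$ factors through a rational polyhedral cone $\sigma$, via a map corresponding to a tropical curve $\Gamma'$ over $\sigma$. One then writes
\begin{equation*}
\R_{\geq0}\mathop\times_{\calM_{g,n}^{\trop}}\calM_{g,n+1}^{\trop}\simeq\R_{\geq0}\mathop\times_{\sigma}\Bigl(\sigma\mathop\times_{\calM_{g,n}^{\trop}}\calM_{g,n+1}^{\trop}\Bigr)\simeq\R_{\geq0}\mathop\times_{\sigma}\Cone(\Gamma'),
\end{equation*}
applying Theorem~\ref{thm_universalcurve} legitimately over $\sigma$, and concludes with your computation: the fiber of $\vert\Cone(\Gamma')\vert\to\vert\sigma\vert$ over the image of $1\in\vert\R_{\geq0}\vert$ is the underlying topological space of $\Gamma$ (your appeal to the explicit edge-by-edge computation in the proof of Proposition~\ref{prop_preimageisometry} is the right way to see this, now carried out over the rational cone $\sigma$). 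With that factorization inserted, your argument coincides with the paper's.
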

\begin{proof}
Let $\R_{\geq 0} \to \calM_{g,n}^{\trop}$ be a morphism corresponding to a tropical curve $\Gamma$ with real edge lengths.  We note first that because $\lvert\:\cdot\:\rvert$ was defined using a filtered colimit, it respects fibered products, and there is a natural isomorphism
\begin{equation*}
\lvert\mathbb R_{\geq 0}\rvert \mathop\times_{\lvert \calM_{g,n}^{\trop} \rvert} \lvert \calM_{g,n+1}^{\trop} \rvert \simeq \bigl\vert \mathbb R_{\geq 0} \mathop\times_{\calM_{g,n}^{\trop}} \calM_{g,n+1}^{\trop} \bigr\vert ,
\end{equation*}
 where the fiber product in the middle should be understood over $\ShpMon^{\mathrm{op}}$.  The map $\mathbb R_{\geq 0} \to \calM_{g,n}^{\trop}$ factors through a rational polyhedral cone $\sigma$ and map $\sigma \to \calM_{g,n}^{\trop}$ of rational cone stacks, corresponding to a tropical curve $\Gamma'$ over $\sigma$.  We have
\begin{equation*}
\mathbb R_{\geq 0} \mathop\times_{\calM_{g,n}^{\trop}} \calM_{g,n+1}^{\trop} \simeq \mathbb R_{\geq 0} \mathop\times_{\sigma} \sigma \mathop\times_{\calM_{g,n}^{\trop}} \calM_{g,n+1}^{\trop} \simeq \mathbb R_{\geq 0} \mathop\times_{\sigma} \Cone(\Gamma')
\end{equation*}
where the last identification is Theorem~\ref{thm_universalcurve}.  It is easily seen that the fiber over $1 \in |\mathbb R_{\geq 0}| \to |\sigma|$ of $|\Cone(\Gamma')| \to |\sigma|$ is the underlying topological space of $\Gamma$.
\end{proof}

{
\subsection{General monoids and extended tropical curves}\label{sec:com-mon}
There is nothing in the definition of a tropical curve that requires the monoid in which its edge lengths take values to be sharp, integral, and saturated.  We could just as easily allow the monoid to be arbitrary, even noncommutative, or even just a pointed set.  Allowing values in monoids with absorbing elements will allow us to treat the case of \emph{extended tropical curves}, as introduced in \cite{Caporaso_tropicalmoduli} and \cite{AbramovichCaporasoPayne_tropicalmoduli}. 

Let $\ComMon$ be the category of all commutative monoids, not necessarily sharp, integral, or saturated.  

\begin{definition}
Define $\calMbar^{\rm trop}_{g,n}$ to be the category fibered in groupoids over $\ComMon^{\rm op}$, whose fiber over a commutative monoid $P$ is groupoid of stable tropical curves in the sense of Definition~\ref{def_tropicalcurve}, but with edge length in the monoid $P$.
\end{definition}

Denote by $\Rbar_{\geq 0}$ the set $\R_{\geq 0}\sqcup\{\infty\}$ with the monoid structure that extends the additive monoid structure on $\R_{\geq 0}$ by setting $a+\infty=\infty$ for all $a\in\Rbar_{\geq 0}$.

An \emph{extended tropical curve} in the sense of \cite{Caporaso_tropicalmoduli} and \cite{AbramovichCaporasoPayne_tropicalmoduli} is a tropical curve with edge lengths in the monoid $\Rbar_{\geq 0}$. Observe that the set $\calMbar_{g,n}^{trop}\big(\Rbar_{\geq 0}\}\big)$ of $\Rbar_{\geq 0}$-valued points is nothing but the set-theoretic moduli space of extended tropical curves studied in \cite{AbramovichCaporasoPayne_tropicalmoduli}.

The inclusion of $\ShpMon$ in $\ComMon$ has a left adjoint that sends a commutative monoid to the initial sharp, integral, saturated monoid to which it admits a morphism.  As a left adjoint this functor preserves colimits, so its opposite functor preserves limits.  There is therefore a morphism of sites
\begin{equation*}
\Phi : \ShpMon^{\rm op} \to \ComMon^{\rm op}
\end{equation*}
in which both source and target are given the chaotic topologies, {and the pullback functor $\Phi^\ast$ sends a monoid to its associated sharp, integral, saturated monoid, as described above.}

For the following proposition, let $\Phi_!$ denote the left adjoint to $\Phi^\ast$; {the functor $\Phi_!$} is induced from the inclusion of sharp, integral, saturated monoids in all commutative monoids.  To be precise, if $F$ is a category fibered in groupoids over $\ShpMon^{\rm op}$ then, for any commutative monoid $M$, we have the following formula:
\begin{equation*}
\Phi_! F(M) = \varinjlim_{\substack{N \to M \\ N \in \ShpMon}} F(N)
\end{equation*}

{
The following proposition implies that working in the category of all commutative monoids does not change the moduli space of curves:  the same system of charts, by the same rational polyhedral cones, still glues to give $\calMbar^{\rm trop}_{g,n}$.
}

{\begin{proposition}
{There are natural equivalences 
\begin{equation*}
\Phi^\ast \calMbar^{\rm trop}_{g,n} \simeq \calM^{\rm trop}_{g,n} \qquad \textrm{ and } \qquad \Phi_! \calM^{\rm trop}_{g,n} \simeq \calMbar^{\rm trop}_{g,n}.\end{equation*}}
\end{proposition}
\begin{proof}
The first of these assertions is immediate from the definitions.  For the second, observe that the first assertion and the adjunction between $\Phi_!$ and $\Phi^\ast$ induce a morphism
\begin{equation*}
\rho : \Phi_! \calM^{\rm trop}_{g,n} \to \calMbar^{\rm trop}_{g,n}
\end{equation*}
that we argue is an isomorphism.  

Indeed, suppose that $M$ is a commutative monoid and $\overline \Gamma \in \calMbar^{\rm trop}_{g,n}(M)$ then let $R$ be the category of all tuples $(N, \Gamma, \varphi)$ where $N \in \ShpMon$, $\Gamma \in \calM^{\rm trop}_{g,n}(N)$, and $\varphi : \Gamma \to \overline \Gamma$ is a morphism in $\calMbar^{\rm trop}_{g,n}$.  The associated groupoid of the category $R$, i.e. the initial of all maps from R to
groupoids,  is the fiber of $\rho$ over $\overline \Gamma$.  We need to show that the associated groupoid of $R$ is trivial.

We do so by constructing a deformation retraction --- that is, an endofunctor with a natural transformation to the identity endofunctor --- onto a subcategory with a final object.  Upon passage to the associated groupoid, this natural transformation will become an isomorphism, because all morphisms in a groupoid are isomorphisms.

First, if $(N, \Gamma, \varphi)$ is an object of $R$, let $N'$ be the free commutative monoid generated by the edges of $\Gamma$, let $\Gamma'$ be the tropical curve with the same underlying graph as $\Gamma$, but labelled in the canonical way by $N'$, and let $\varphi' : \Gamma' \to \overline\Gamma$ be the edge contraction obtained by composing $\Gamma' \to \Gamma \to \overline\Gamma$.  This construction respects morphisms in $R$, and there is a canonical natural transformation from the functor $(N,\Gamma,\varphi) \mapsto (N',\Gamma',\varphi')$ to the identity functor on $R$.  Passing to associated groupoids, this functor turns into an equivalence of categories between the groupoid associated to $R$ onto the groupoid associated to the full subcategory $R'$ of $R$ consisting of triples $(N,\Gamma,\varphi)$ where $N$ is freely generated by the edges of $\Gamma$.

Now we observe that the category $R'$ has a final object, where the underlying graph of $\Gamma$ coincides with that of $\overline\Gamma$, and $N$ is freely generated by its edges.  The associated groupoid is therefore trivial, i.e. has a unique morphism from any
object to any other.
\end{proof}
}

In \cite{HuszarMarcusUlirsch_troplogclutch&glue}, the authors work with the moduli stacks $\calMbar_{g,n}^{trop}$  and show that there are natural clutching morphisms
\begin{equation*}
\overline{\delta}_{g_1,I_1}^{trop}\colon \calMbar_{g_1,I_1\cup\{\star\}}^{trop}\times \calMbar_{g_2,I_2\cup\{\bullet\}}^{trop}\longrightarrow \calMbar_{g,[n]}^{trop}
\end{equation*}
and
\begin{equation*}
\overline{\delta}_{irr}^{trop}\colon \calMbar_{g-1,[n]\cup\{\star,\bullet\}}^{trop}\longrightarrow \calMbar_{g,[n]}^{trop}
\end{equation*}
of categories fibered in groupoids over the category of pointed monoids, i.e. monoids with an absorbing element $\infty$ for which we have $a+\infty=\infty$ for all $a$ in the monoid,  
that are given by assigning to the new edge the length $\infty$. We refer the reader to \cite{HuszarMarcusUlirsch_troplogclutch&glue} for details.

\begin{remark} In \cite{HuszarMarcusUlirsch_troplogclutch&glue} the authors also develop the notion of \emph{pointed logarithmic structures} (using sheaves of pointed monoids instead of sheaves of monoids). This notion is flexible enough to treat the classical clutching morphisms as morphisms of pointed logarithmic stacks and not as correspondences as in Section \ref{section_clutching} below. The main result of \cite{HuszarMarcusUlirsch_troplogclutch&glue} is that these morphisms naturally commute with the process of tropicalization.
\end{remark}
}



\bigskip

\part{Tropicalization and logarithmic geometry}
\bigskip

\section{Lifting cone stacks to the logarithmic category}
\label{sec:log-generalities}


Our goal in this section is to extend tropical moduli problems to logarithmic moduli problems, which will be achieved via a functor $a^*$ (Definition~\ref{def:a*}) that lifts cone stacks to algebraic stacks with logarithmic structure.  In fact, $a^*$ defines an equivalence of $2$-categories between cone stacks and {\em Artin fans}, as in Theorem~\ref{thm_artinfans}.  The discussion in this section is not specific to the tropical moduli space of curves.  

The results of this section set up Sections~\ref{sec:log-curves} and Section~\ref{sec:tropicalization}, which \emph{are} specific to  the moduli space of tropical curves.  We will see that by associating to a logarithmic curve its dual tropical curve, we obtain a smooth morphism from the moduli space of logarithmic curves to the moduli space of tropical curves.  In Section~\ref{sec:tropicalization} we show that all of the tautological morphisms between the moduli spaces of logarithmic curves are compatible with their tropical analogues, as introduced in Section~\ref{section_families}.

We start by giving a brief review of logarithmic geometry and refer the reader to \cite{Kato_logstr} and \cite{Abramovichetal_log&moduli} for more details as well as a survey of its applications to compactifications of moduli spaces.
Throughout this section, we work over an algebraically closed field $k$, although many of the results presented here are valid over a more general base, with anodyne modifications (e.g., replacing finite type assumptions with finite presentation).  Unqualified, a relative term is applied relative to $k$:  for example, `locally of finite type' means `locally of finite type over $k$'.

\subsection{Logarithmic structures}

A reader who is not already familiar with logarithmic structures may find the discussion of logarithmic curves in Sections~\ref{sec:valuation} and~\ref{sec:log-curves-defs} useful as an example while reading this section.

Throughout this section all monoids are assumed integral and saturated.  Most treatments of logarithmic geometry restrict attention to monoids that are also finitely generated, but we find the added generality convenient.  For example, the monoid of positive elements in the value group of a non-discretely valued field is not finitely generated, but arises naturally as the characteristic monoid of a logarithmic structure in the example of Section~\ref{sec:valuation}.

A logarithmic structure on a scheme $X$ is an \'etale sheaf of monoids $M_X$ and a homomorphism $\varepsilon\mathrel{\mathop:}M_X \rightarrow \mathcal O_X$ of unital semigroups (the latter with its multiplicative semigroup\footnote{Since $\mathcal O_X$ is not integral, our conventions forbid us from calling this a homomorphism of monoids.} structure) such that every unit in $\mathcal O_X^\ast$ has a unique preimage in $M_X$.  We write $\ell : \mathcal O_X^\ast \rightarrow M_X$ for the unique section of $\varepsilon$ over $\mathcal O_X^\ast$.  The \emph{characteristic monoid} of the logarithmic structure is $\overline{M}_X = M_X / \ell(\mathcal O_X^\ast)$.  A morphism of logarithmic structures on $X$ is a homomorphism of sheaves of monoids $M_X \rightarrow M'_X$ fitting into a commutative triangle:
\begin{equation*} \xymatrix@R=10pt{
M_X \ar[dr] \ar[dd] \\
& \mathcal O_X \\
M'_X \ar[ur]
} \end{equation*}
If $X$ is a logarithmic scheme, we write $\underline X$ for its underlying scheme.

Suppose that $P$ is a monoid, that $X$ is a scheme, and that $P \rightarrow \Gamma(X, \mathcal O_X)$ is a homomorphism of monoids.  There is an initial logarithmic structure $M_X \rightarrow \mathcal O_X$ among those equipped with a factorization $P \rightarrow \Gamma(X, M_X) \rightarrow {\Gamma(X, \mathcal O_X)}$.  This is called the \emph{associated logarithmic structure}, and the map $P \rightarrow \Gamma(X, \mathcal O_X)$ is called a \emph{chart} for $M_X$. A logarithmic structure on a scheme that admits charts \'etale-locally is called \emph{quasicoherent}.  We will have no use for logarithmic structures that are not quasicoherent, so \textit{from now on, all logarithmic structures will be assumed quasicoherent.}

A logarithmic structure is called \emph{coherent} if it has local charts by finitely generated monoids.  A logarithmic scheme is said to be \emph{locally of finite type} if its underlying scheme is locally of finite type  and its logarithmic structure is coherent.

\begin{example} For example, if $D$ is a smooth Cartier divisor on $X$ with local equation $t$, then the logarithmic structure associated to the chart given by sending $n\in \mathbb{N}$ to $t^n$ is the usual divisorial logarithmic structure for $D$. This includes the special case of $\Spec R$ for $R$ a discrete valuation ring.
\end{example}

\begin{example} 
Suppose that $D$ is a normal crossings divisor in a smooth scheme $X$.  This implies that \'etale-locally in $X$, we may write $D$ as the vanishing locus of a product $x_1 \cdots x_r$, and that the $x_i$ are uniquely determined up to scaling and permutation.  We obtain a chart $\mathbb N^r \to \Gamma(U, \mathcal O_X)$ for each sufficiently small \'etale map $U \to X$.  This serves as a chart for a logarithmic structure, $M_X$.

If $p$ is a point of $X$ then the rank of $\overline M_X$ is the number of branches of $D$ incident to $p$.  For example, if $X = \mathbb A^2$ and $D$ is the union of the axes, then $\overline M_X$ will have rank~$0$ away from $D$, rank~$1$ at all points of $D$ other than the origin, and rank~$2$ at the origin.
\end{example}

Recall that a monoid is called {\em fine} if it is finitely generated and integral.  We say that a logarithmic scheme is {\em fine and saturated} if there is an \'etale cover over which it has fine and saturated charts.

If $S$ is a logarithmic scheme that is locally of finite type then $S$ has a locally finite stratification into locally closed subschemes on which the characteristic monoid $\overline M_S$ is 
constant.
Indeed, to demonstrate this, we may assume that $S = \Spec A$ is affine and has a global chart $\varepsilon\colon P\to A$ by a finitely generated monoid $P$.  For any element $\alpha$ of $P$, we get a function $\varepsilon(\alpha) \in A$, which partitions $S$ into an open subset $D(\alpha)$, where $\varepsilon(\alpha)$ does not vanish, and a closed subset $V(\alpha)$, where $\varepsilon(\alpha)$ does vanish.  If $P$ is viewed as the set of integral points of a rational polyhedral cone then $D(\alpha) = D(\beta)$ whenever $\alpha$ and $\beta$ are contained in the interior of the same face of $P$.  This implies that the collection of closed subsets $V(\alpha)$, as $\alpha$ ranges through $P$, is finite, which implies that the $V(\alpha)$ determine the claimed finite stratification of $S$. 

Generalizing the concept of a chart, we consider a morphism of schemes $\pi : X \rightarrow S$ and a logarithmic structure $M_S$ on $S$.  We define $\pi^\ast M_S$ to be the initial logarithmic structure on $X$ equipped with a commutative square:
\begin{equation*} \xymatrix{
\pi^{-1} M_S \ar[r] \ar[d] &   \pi^{-1} \mathcal O_S \ar[d] \\
\pi^\ast M_S \ar[r] & \mathcal O_X
} \end{equation*}

A \emph{morphism of logarithmic schemes} $(X, M_X) \rightarrow (S, M_S)$ consists of a morphism of schemes $\pi : X \rightarrow S$ and a morphism of logarithmic structures $\pi^\ast M_S \rightarrow M_X$.  The morphism is called \emph{strict} if $\pi^\ast M_S \rightarrow M_X$ is an isomorphism.

If $S = \Spec K$ is the spectrum of an algebraically closed field then \'etale sheaves on $S$ are simply sets.  A logarithmic structure on $S$ is therefore nothing but a homomorphism $M_S \rightarrow K$ admitting a section $K^\ast \to M_S$.  The monoid $M_S$ then splits, non-canonically, as $\overline M_S \times K^\ast$.

\subsection{Morphisms between logarithmic schemes and cone stacks}
\label{sec:log-to-cs}

In this section, we will show how a moduli problem on rational polyhedral cones can be extended, canonically, to a moduli problem on logarithmic schemes.

Suppose that $\calC$ is a category fibered in groupoids over $\RPC$.  
We define an associated category fibered in groupoids $\mathcal A_\calC$ over the category of logarithmic schemes by setting
\begin{equation} \label{eqn:mon-to-log}
\mathcal A_\calC(S) = \calC \bigl( \Gamma(S, \overline M_S) \bigr)
\end{equation}
for a logarithmic scheme $S$, where we have extended $\calC$ to a functor defined on all monoids as in Section~\ref{sec:rpc-to-mon}. 
Note that the extension of $\calC$ to monoids that are not finitely generated was important  so that we could make sense of $\mathcal A_\calC(S)$ when $S$ is not of finite type. 

Unfortunately, $\mathcal A_\calC$ is not a stack in general.  It does, at least, turn out to be a stack when $\calC = \sigma$ is a cone, as the following lemma demonstrates.

\begin{lemma}[\cite{Olsson_LOG} Proposition 5.17]\label{lemma_Artincone}
Suppose that $\sigma$ is a rational polyhedral cone.  The stack $\mathcal A_\sigma$ is isomorphic to $\big[V_\sigma\big/T\big]$ where $V_\sigma = \Spec k[S_\sigma]$  is the toric variety associated to $\sigma$, with its standard logarithmic structure, and $T$ is its dense torus.  In particular, the stack $\mathcal A_\sigma$ is representable by an algebraic stack with a logarithmic structure.
\end{lemma}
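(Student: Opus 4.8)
We need to prove that for a rational polyhedral cone $\sigma$, the stack $\mathcal{A}_\sigma$—whose $S$-points are $\Hom(S_\sigma, \Gamma(S,\overline M_S))$—is isomorphic to the quotient stack $[V_\sigma/T]$, where $V_\sigma = \Spec k[S_\sigma]$ is the affine toric variety with its toric log structure and $T$ is the dense torus. The key input is that the log structure on $V_\sigma$ is the one associated to the chart $S_\sigma \to k[S_\sigma]$, and that $T = \Spec k[M]$ (with $M = S_\sigma^{\rm gp}$) acts on $V_\sigma$ in the standard toric way.

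**The approach.** The plan is to unwind both sides into explicit groupoid descriptions and match them. First I would recall Olsson's description of $\mathcal A_\sigma = \mathrm{Log}$-type stack: by definition $\mathcal A_\sigma(S) = \Hom_{\ShpMon}(S_\sigma, \Gamma(S,\overline M_S))$, i.e.\ a global section of $\overline M_S$ for each generator, compatible with the monoid relations. The point is that giving such a map $S_\sigma \to \Gamma(S,\overline M_S)$ is the same, after choosing an \'etale-local lift through $M_S \to \overline M_S$, as giving a map $S_\sigma \to \Gamma(S,M_S)$, i.e.\ a chart, together with the unit ambiguity of the lift. Composing a chart with $\varepsilon\colon M_S \to \mathcal O_S$ produces a homomorphism $S_\sigma \to \Gamma(S,\mathcal O_S)$, which is exactly a morphism of schemes $\underline S \to V_\sigma = \Spec k[S_\sigma]$ together with the pullback log structure.

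**Matching the groupoids.** The quotient stack $[V_\sigma/T]$ has $S$-points the groupoid of $T$-torsors $E \to S$ together with a $T$-equivariant map $E \to V_\sigma$. I would exhibit the equivalence by sending a $T$-torsor with equivariant map to its induced log structure: the map $E \to V_\sigma$ pulls back the toric chart $S_\sigma \to k[S_\sigma]$, and the $T = \Spec k[M]$-action precisely encodes the torsor as the ambiguity of lifting $S_\sigma \to \overline M_S$ to $S_\sigma \to M_S$. Concretely, the two lifts of a local chart through $M_S \to \overline M_S$ differ by a map $S_\sigma^{\rm gp} = M \to \mathcal O_S^\ast$, which is a point of $T$; globalizing, the sheaf of such lifts is a $T$-torsor, giving the functor $\mathcal A_\sigma \to [V_\sigma/T]$. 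For the inverse, I would send $(E \to S,\, E \to V_\sigma)$ to the composite $S_\sigma \to \Gamma(S,\overline M_S)$ obtained by descending the chart along $E \to S$. I would then check these are mutually inverse equivalences of fibered categories and compatible with morphisms of $S$.

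**Main obstacle.** The essential content—and the step I expect to require the most care—is verifying that the $T$-torsor structure genuinely corresponds to the choice of lift of the characteristic chart to a genuine chart of $M_S$, i.e.\ that the exact sequence $1 \to \mathcal O_S^\ast \to M_S^{\rm gp} \to \overline M_S^{\rm gp} \to 1$ is what makes the lifting ambiguity into a torsor under $T = \Hom(M, \mathbb{G}_m)$. Rather than reprove this from scratch, I would cite \cite{Olsson_LOG} (Proposition 5.17), whose statement is exactly the lemma, and simply indicate that $\mathcal A_\sigma$ as defined via \eqref{eqn:mon-to-log} agrees with Olsson's stack $\mathcal{L}og$ restricted to the chart $S_\sigma$; the identification $\mathcal A_\sigma(S) = \Hom(S_\sigma,\Gamma(S,\overline M_S))$ coincides with the groupoid of log structures on $S$ with a map from the constant log structure $S_\sigma$, which Olsson computes to be $[V_\sigma/T]$. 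The only thing requiring verification beyond the citation is that our formula \eqref{eqn:mon-to-log} reproduces Olsson's moduli description, which is immediate from the definition of $\mathcal A_\calC$ with $\calC = \sigma$.
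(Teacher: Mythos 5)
Your proposal is correct, but it takes a genuinely different route from the paper. You unwind $[V_\sigma/T](S)$ as the groupoid of $T$-torsors with equivariant maps to $V_\sigma$, identify the torsor with the sheaf of lifts of the characteristic chart $S_\sigma \to \overline M_S$ through $M_S \to \overline M_S$, and then defer the crux to citing Olsson's Proposition 5.17. The paper instead argues entirely in the functor of points on logarithmic schemes: $V_\sigma$ with its standard logarithmic structure represents $X \mapsto \Hom\bigl(S_\sigma, \Gamma(X, M_X)\bigr)$, the torus $T$ represents $X \mapsto \Hom\bigl(S_\sigma, \Gamma(X, \mathcal O_X^\ast)\bigr)$, and therefore $[V_\sigma/T]$ represents the quotient functor $X \mapsto \Hom\bigl(S_\sigma, \Gamma(X, M_X/\mathcal O_X^\ast)\bigr) = \mathcal A_\sigma(X)$ --- a self-contained two-line argument requiring no external citation. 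The underlying mathematical content is the same in both: everything hinges on \'etale-local liftability of a map $S_\sigma \to \overline M_S$ to $M_S$ (which holds because $S_\sigma^{\rm gp}$ is a finitely generated free abelian group and $M_S$ is the preimage of $\overline M_S$ in $M_S^{\rm gp}$) together with freeness of the $\mathcal O_S^\ast$-translation action (integrality of $M_S$); in your picture these facts make the lifts a torsor and kill automorphisms, while in the paper's picture they identify the stackification of the presheaf quotient with the sheaf $\Hom\bigl(S_\sigma, \Gamma(\cdot\,, \overline M)\bigr)$. What your approach buys is an explicit torsor-theoretic dictionary between charts and the quotient presentation; what the paper's buys is brevity and it makes visible the rigidification phenomenon (the quotient stack represents a \emph{set-valued} functor on logarithmic schemes) highlighted in the remark following the lemma. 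One point you should make explicit rather than leave implicit: the local \emph{existence} of a chart lift, without which the assertion that the sheaf of lifts is a $T$-torsor has no content.
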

\begin{proof}
Note that $V_\sigma$ represents the functor sending a logarithmic scheme $X$ to $\Hom\bigl(S_\sigma, \Gamma(X, M_X) \bigr)$ and that $T$ represents the functor sending $X$ to $\Hom\bigl(S_\sigma, \Gamma(X, \mathcal O_X^\ast) \bigr)$.
Therefore $\big[V_\sigma\big/T\big]$ represents the quotient functor 
\begin{equation*}
X\longmapsto\Hom\bigl(S_\sigma, \Gamma(X, M_X/\mathcal O_X^\ast) \bigr) = \mathcal A_\sigma(X) \ .
\end{equation*} \end{proof}

\begin{remark}
Even though $\big[V_\sigma\big/T\big]$ is an algebraic stack and not an algebraic space, when it is equipped with the logarithmic structure from the lemma, $\big[V_\sigma\big/T\big]$ represents a \emph{functor} (not a category fibered in groupoids) over logarithmic schemes (see \cite[Proposition 5.17]{Olsson_LOG}).  This is an example of a remarkable rigidification that occurs in logarithmic geometry.  A related phenomenon is that logarithmic blowups behave like injections:  a logarithmic blowup of a logarithmic scheme $X$ is defined as a \emph{subfunctor} of the functor represented by $X$ \cite[Definition~3.8]{Kato_LogMod}; see Lemma~\ref{lem:puncture} for an example of such a construction.
\end{remark}

\begin{remark}\label{remark_a*}
The assignment $\sigma \mapsto \mathcal A_\sigma$ determines a continuous, left exact functor from $\RPC$ to presheaves on the strict \'etale site on the category of logarithmic schemes.  By passing to the associated sheaf, it therefore determines a morphism of sites $a : \Et(\mathbf{LogSch}) \rightarrow \RPC$ where $a^\ast \sigma = \mathcal A_\sigma$.
\end{remark}

\begin{definition}\label{def:a*}
Inspired by the notation in Remark \ref{remark_a*}, we write $a^\ast \calC$ for the stackification of the category fibered in groupoids $\mathcal A_\calC$.
\end{definition}

\begin{corollary} \label{cor:a*}
If $\calC$ is a cone stack then $a^\ast \calC$ is representable by an algebraic stack with a logarithmic structure that is locally of finite presentation (i.e., locally of finite type and quasiseparated).  Moreover, if $\calC$ has a strict cover by finitely many cones then $a^\ast \calC$ is quasicompact.
\end{corollary}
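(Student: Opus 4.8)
The plan is to reduce the statement to the case of a single cone, which is exactly Lemma~\ref{lemma_Artincone}, by applying $a^\ast$ to a groupoid presentation of $\calC$. First I would record the structural facts about cone stacks established earlier. Since $\calC$ is a cone stack it admits a strict, surjective, representable cover $U\to\calC$ by a cone complex $U=\coprod_i\sigma_i$; setting $R=U\times_\calC U$ one has $[U/R]\simeq\calC$ with both structure maps $s,t\colon R\to U$ strict covers. Moreover, by the Remark following Proposition~\ref{prop:C-stacks} the diagonal of $\calC$ is representable by cone complexes, so $R$ is itself a cone complex, i.e.\ a disjoint union $\coprod_j\rho_j$ of cones. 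When $\calC$ admits a strict cover by finitely many cones, $U$ (and hence $R$) may be taken to have finitely many cones.

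Next I would apply $a^\ast$. Recall that $a^\ast$ is the sheafification of $\mathcal A_{(-)}$, and that $\mathcal A_{(-)}$ is built (Section~\ref{sec:rpc-to-mon}) from the filtered-colimit extension to monoids followed by evaluation on $\Gamma(S,\overline M_S)$; both operations preserve fiber products, and sheafification preserves finite limits and all colimits. Hence $a^\ast$ preserves fiber products and disjoint unions and, exactly as in the formal argument behind Proposition~\ref{prop_pullback}, commutes with the formation of quotient stacks. Thus $a^\ast\calC\simeq\big[a^\ast U\big/a^\ast R\big]$, where by Lemma~\ref{lemma_Artincone}
\[
a^\ast U=\coprod_i\mathcal A_{\sigma_i}=\coprod_i\big[V_{\sigma_i}/T\big],\qquad a^\ast R=\coprod_j\mathcal A_{\rho_j}=\coprod_j\big[V_{\rho_j}/T\big]
\]
are disjoint unions of algebraic stacks carrying logarithmic structures, each $V_{\sigma_i}=\Spec k[S_{\sigma_i}]$ being an affine toric variety of finite type over $k$.

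To conclude algebraicity I would pass to the toric atlases. The smooth surjections $V_{\sigma_i}\to\mathcal A_{\sigma_i}$ assemble into a smooth surjection $\widetilde U:=\coprod_iV_{\sigma_i}\to a^\ast U$ from a scheme, so the composite $\widetilde U\to a^\ast\calC$ is smooth and surjective, and $a^\ast\calC$ is the quotient of the groupoid $\widetilde R\rightrightarrows\widetilde U$ with $\widetilde R=\widetilde U\times_{a^\ast\calC}\widetilde U=\widetilde U\times_{a^\ast U}a^\ast R\times_{a^\ast U}\widetilde U$. The main obstacle, and the one genuine input beyond Lemma~\ref{lemma_Artincone}, is verifying that $a^\ast$ carries strict covers to \emph{representable smooth} (indeed strict étale) morphisms: concretely, that a face morphism $\tau\to\sigma$ induces the torus-invariant open immersion $\mathcal A_\tau=[V_\tau/T]\hookrightarrow[V_\sigma/T]=\mathcal A_\sigma$, and that an arbitrary strict morphism is locally of this form. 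Granting this, the maps $[V_{\rho_j}/T]\to[V_{\sigma_i}/T]$ coming from $s,t$ are representable and smooth, so the fibers $V_{\sigma_i}\times_{[V_{\sigma_i}/T]}[V_{\rho_j}/T]$ are schemes and $\widetilde R$ is an algebraic space, with $\widetilde R\rightrightarrows\widetilde U$ a smooth groupoid in algebraic spaces. The standard criterion (the algebraic analogue of Proposition~\ref{prop_groupoidquot}) then shows $a^\ast\calC=[\widetilde U/\widetilde R]$ is an algebraic stack, whose representable diagonal may alternatively be seen as $a^\ast(\Delta_\calC)$, the affine (toric) diagonals of the $\mathcal A_\sigma$ guaranteeing representability by algebraic spaces. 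This is precisely the toric/logarithmic computation underlying the correspondence, carried out for Artin fans in \cite{AbramovichWise_invariance}.

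Finally I would address the finiteness claims. Each $\mathcal A_{\sigma_i}=[V_{\sigma_i}/T]$ is of finite type over $k$ with affine diagonal, hence quasiseparated, so $a^\ast\calC$ is locally of finite type and quasiseparated, i.e.\ locally of finite presentation over $k$. If $\calC$ admits a strict cover by finitely many cones, then $\widetilde U=\coprod_{i=1}^nV_{\sigma_i}$ is quasicompact; since a stack admitting a quasicompact smooth atlas is quasicompact, $a^\ast\calC$ is quasicompact, completing the proof.
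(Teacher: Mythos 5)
You have correctly isolated a one-step strategy (a single groupoid presentation $[U/R]\simeq\calC$, apply $a^\ast$, pass to toric atlases), but it rests on a false identification: you assert that $R=U\times_\calC U$, being representable by a cone complex, ``is a disjoint union $\coprod_j\rho_j$ of cones,'' and then apply Lemma~\ref{lemma_Artincone} to each $\rho_j$. A cone complex is glued from cones along face morphisms and in general does not decompose as a coproduct of cones. This already fails in the simplest example: for $\calM_{1,1}^{\trop}$ with $U=U_G\cong\RR_{\geq 0}$, the complex $R_{G,G}$ (Figure~\ref{figure_M11stack}) consists of two rays sharing a common origin --- the origin records the unique automorphism of the contracted graph while the two rays record the two automorphisms of $G$ --- so $R$ is connected and not a disjoint union of cones. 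Consequently you cannot write $a^\ast R=\coprod_j[V_{\rho_j}/T]$, and your $\widetilde R\rightrightarrows\widetilde U$ is not yet known to be a groupoid in algebraic spaces: you first need that $a^\ast$ of a cone complex (indeed, of a cone space, since fiber products of cones over $\calC$ are only cone spaces in general) is an algebraic stack mapping representably to $a^\ast U\times a^\ast U$. That is exactly the content your shortcut was meant to bypass, and it is where the paper's proof spends its effort: a stratified induction treating first unions of faces of a single cone (glued as open substacks $\mathcal A_\tau\subseteq\mathcal A_\sigma$), then cone spaces injecting into a single cone (after showing every strict cover of such a space factors through strict \emph{sub}cones), then arbitrary cone spaces as colimits of strict equivalence relations $\calC_1\rightrightarrows\calC_0$ with $\calC_1\subset\calC_0\times\calC_0$ (so that $\calC_1$ falls under the previous case), and only then cone stacks, as quotients of $a^\ast\calC_0$ by groupoids whose projections are representable and \'etale.

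Your remaining ingredients are sound and would complete the argument once this gap is filled. The step you ``grant'' --- that a face morphism $\tau\to\sigma$ induces the torus-invariant open immersion $\mathcal A_\tau\hookrightarrow\mathcal A_\sigma$, and that strict morphisms of cones are face embeddings --- is indeed immediate from the functor-of-points description in Lemma~\ref{lemma_Artincone} (dualize to the localization $S_\sigma\to S_\tau$), and the paper dispatches it in one line. One repair along your lines: cover $R$ strictly by its cones $\rho_j$, observe that the resulting subfunctors $V_{\sigma_i}\times_{\mathcal A_{\sigma_i}}\mathcal A_{\rho_j}\times_{\mathcal A_{\sigma_{i'}}}V_{\sigma_{i'}}$ are schemes gluing along open immersions to exhibit $\widetilde R$ as an algebraic space --- but making this precise essentially reconstructs the paper's intermediate cases. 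Finally, your parenthetical claim that $R$ may be taken to have finitely many cones is unjustified in general, but also unnecessary: quasicompactness needs only the finite atlas $\widetilde U=\coprod_{i=1}^n V_{\sigma_i}$, matching the paper's closing paragraph.
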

\begin{proof}
Lemma \ref{lemma_Artincone} proves this when $\calC$ is a cone. Consider next the case where $\calC$ is a union of faces of a single cone.  If $\tau \rightarrow \sigma$ is a strict morphism (i.e., the embedding of a face), it follows immediately from the construction of $\mathcal A_\sigma$ in the lemma that $\mathcal A_\tau \subseteq \mathcal A_\sigma$ is an open embedding.  If $\calC$ has a strict embedding in a cone, then $\calC$ is a union of open subcones and therefore $a^\ast \calC$ is a union of open substacks of $\mathcal A_\sigma$, hence is algebraic.

More generally, suppose that $\calC$ is a cone space that injects (not necessarily strictly) into a cone $\sigma$. By definition, there is a covering family $\tau \rightarrow \calC$ of strict morphisms from cones.  If $\tau$ is one of those cones then the image of $\tau$ in $\sigma$ is a cone $\tau'$, necessarily contained in $\calC$, and the map $\tau \rightarrow \tau'$ is surjective.  Furthermore, $\tau \rightarrow \tau'$ is strict, since if $\omega \rightarrow \tau'$ is any morphism then $\tau \mathbin\times_{\tau'} \omega \simeq \tau \mathbin\times_{\calC} \omega \rightarrow \tau'$ is the base change of the strict morphism $\tau \rightarrow \calC$, hence is strict itself.  A strict surjective  morphism between cones is an isomorphism, so $\tau \rightarrow \tau'$ is an isomorphism.  That is $\calC$ has a cover by a union of strict \emph{sub}cones $\tau$.  But then the open substacks $a^\ast \tau$ cover $a^\ast \calC$ and $a^\ast \calC$ is therefore algebraic.

Now suppose that $\calC$ is a cone space.  Then $\calC$ is the quotient of a disjoint union of cones $\calC_0$ by a strict equivalence relation $\calC_1 \rightrightarrows \calC_0$ (which implies $\calC_1 \subset \calC_0 \times \calC_0$).  As $a^\ast$ preserves colimits, $a^\ast \calC$ is the colimit of $a^\ast \calC_1 \rightrightarrows a^\ast \calC_0$.  But $a^\ast \calC_1$ and $a^\ast \calC_0$ are representable by algebraic stacks, and the maps $a^\ast \calC_1 \rightarrow a^\ast \calC_0$ are local isomorphisms, so $a^\ast \calC$ is the quotient of an algebraic stack by an \'etale equivalence relation, hence is algebraic.

Finally, suppose $\calC$ is a cone stack.  We can present $\calC$ as the quotient of a disjoint union of cones $\calC_0$ by a strict groupoid $\calC_1 \rightrightarrows \calC_0$ where $\calC_1$ is a cone space.  As before, $a^\ast \calC_1$ and $a^\ast \calC_0$ are representable by algebraic stacks; moreover, the projections $\calC_1 \rightarrow \calC_0$ are strict morphisms of cone spaces, so the projections $a^\ast \calC_1 \rightarrow a^\ast \calC_0$ are representable by \'etale morphisms of algebraic spaces.  Therefore $\calC$ is the quotient of an algebraic stack by a groupoid whose projections are representable by algebraic spaces and \'etale; hence it is an algebraic stack.

For the claim about local finite presentation, note that $a^\ast \sigma$ is of finite presentation for any cone $\sigma$, so if $\calC$ is a cone stack then $a^\ast \calC$ has an \'etale cover by algebraic stacks that are locally of finite presentation, hence is itself locally of finite presentation.  Each of the $a^\ast \sigma$ is quasicompact, so if this cover is finite then $a^\ast \calC$ is quasicompact.
\end{proof}

\subsection{Artin fans}

The construction of $a^\ast$ in the last section serves as an interface between logarithmic geometry and tropical geometry.  This interface has been introduced elsewhere under the heading of \emph{Artin fans}.  We recall this terminology here, and indicate how it gives another perspective on cone stacks.

\begin{definition}
An \emph{Artin cone} is an algebraic stack with logarithmic structure that can be presented as the quotient $\calA_\sigma=\big[V_\sigma\big/T\big]$ of an affine toric variety $V_\sigma=\Spec k[S_\sigma]$ by its dense torus $T$.  An \emph{Artin fan} is a logarithmic algebraic stack that has a strict \'etale cover by a disjoint union of Artin cones.%
\end{definition}

In \cite{Ulirsch_nonArchArtin} and early drafts of \cite{AbramovichChenMarcusWise_boundedness} the term `Artin fan' was used more inclusively for logarithmic algebraic stacks that are logarithmically \'etale over the base field $k$.  We have elected to define it more restrictively here for the sake of Theorem~\ref{thm:artin-fans}.

\begin{lemma}\label{lemma_logetale}
Artin fans are logarithmically \'etale over the base.
\end{lemma}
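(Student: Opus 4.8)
The plan is to reduce to a single Artin cone and then verify the infinitesimal lifting criterion for formal log \'etaleness directly on the functor of points. Since being logarithmically \'etale over $k$ is local on the source in the strict \'etale topology, and an Artin fan $\mathcal X$ admits, by definition, a strict \'etale cover $U = \coprod_i \calA_{\sigma_i} \to \mathcal X$ by a disjoint union of Artin cones, it suffices to treat each Artin cone $\calA_\sigma = \big[V_\sigma/T\big]$. Indeed, a strict \'etale morphism is in particular log \'etale, so if each $\calA_{\sigma_i} \to \Spec k$ is log \'etale then so is the composite $U \to \Spec k$; combined with the fact that $U \to \mathcal X$ is a log \'etale cover, the cancellation property of log \'etale morphisms then yields that $\mathcal X \to \Spec k$ is log \'etale.

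For a single Artin cone I would use the characterization of log \'etaleness as \emph{formally log \'etale} together with \emph{locally of finite presentation}. The latter is already established in Corollary~\ref{cor:a*}. For the former, recall from Lemma~\ref{lemma_Artincone} and the remark following it that $\calA_\sigma$ represents the \emph{functor} on logarithmic schemes
\begin{equation*}
T \longmapsto \Hom\bigl(S_\sigma, \Gamma(T, \Mbar_T)\bigr).
\end{equation*}
The infinitesimal criterion asks that for every strict square-zero extension $T_0 \hookrightarrow T$ over $k$ the restriction map $\calA_\sigma(T) \to \calA_\sigma(T_0)$ be a bijection. The key point is that a strict square-zero thickening $i\colon T_0 \hookrightarrow T$ induces a canonical isomorphism of characteristic sheaves $\Mbar_T \cong \Mbar_{T_0}$: the underlying \'etale sites of $T$ and $T_0$ agree, strictness gives $M_{T_0} = i^\ast M_T$, and units lift along the square-zero ideal (so $\calO_T^\ast \to \calO_{T_0}^\ast$ is surjective), whence the quotients by units coincide. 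Therefore $\Gamma(T, \Mbar_T) = \Gamma(T_0, \Mbar_{T_0})$ and the restriction map is a bijection, as required.

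Assembling these, each $\calA_\sigma \to \Spec k$ is formally log \'etale and locally of finite presentation, hence log \'etale, and the reduction of the first paragraph finishes the proof. The main thing to get right is not a hard computation but the bookkeeping around the definitions: one must use the functor-of-points formulation rather than a naive smooth atlas (a smooth presentation $V_\sigma \to \calA_\sigma$ has positive relative dimension, hence is \emph{not} itself log \'etale over $k$), and one must justify that log \'etaleness may be checked strict-\'etale-locally on the source, so that the passage from Artin cones to the whole Artin fan --- including the descent of lifts and their uniqueness across the groupoid presentation --- is legitimate. An alternative, more geometric route would bypass the functor of points: the toric variety $V_\sigma$ with its standard logarithmic structure is log smooth over $k$ with $\Omega^1_{V_\sigma/k}(\log)$ free of rank $\dim T$, spanned by the $d\log$ of the torus characters, and these are precisely the differentials along the torsor $V_\sigma \to \big[V_\sigma/T\big]$; hence the relative log cotangent complex of $\calA_\sigma$ over $k$ vanishes, again giving log \'etaleness.
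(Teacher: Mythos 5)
Your proposal is correct and follows essentially the same route as the paper's proof: reduce to a single Artin cone via the strict \'etale cover, note that $\calA_\sigma$ is locally of finite presentation, and verify the formal lifting criterion using the fact that a strict square-zero extension induces an isomorphism $\Gamma(S',\Mbar_{S'})\xrightarrow{\sim}\Gamma(S,\Mbar_S)$, so that the unique lift is automatic on the functor $T\mapsto\Hom\bigl(S_\sigma,\Gamma(T,\Mbar_T)\bigr)$. Your added justifications (why the characteristic sheaves agree, why the reduction to cones is legitimate, and the cotangent-complex alternative) are sound elaborations of details the paper leaves implicit.
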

\begin{proof}
Since Artin fans are modeled strict \'etale locally on Artin cones, it is sufficient to show that Artin cones are logarithmically \'etale over the base.  An Artin cone is certainly locally of finite presentation, since toric varieties and tori are.  Therefore it is sufficient to verify the formal criterion.

We consider a strict, square-zero extension $S \subseteq S'$, and a lifting problem:
\begin{equation*} \xymatrix{
S \ar[r] \ar[d] & \mathcal A_\sigma  \\
S' \ar@{-->}[ur] 
} \end{equation*}
We wish to show that there is a unique lift.  The map $S \rightarrow \mathcal A_\sigma$ comes from an element of $\Hom\bigl(S_\sigma, \Gamma(S, \overline M_S) \bigr)$.  But
\begin{equation*}
\Gamma(S', \overline M_{S'}) \rightarrow \Gamma(S, \overline M_S)
\end{equation*}
is an isomorphism because of $S'$ is a strict infinitesimal extension of $S$.  The unique lifting is therefore automatic.
\end{proof}

\begin{example}
There are algebraic stacks with logarithmic structures that are logarithmically \'etale over the base field $k$, but do not admit a strict \'etale cover by Artin cones.  One such example is the quotient of $\mathbb A^1$ by the $\mathbb G_m$-action $t . x = t^2 x$. 
\end{example}

\begin{theorem} \label{thm:artin-fans}
The functor $a^\ast$ defines an equivalence between the $2$-category of cone stacks and the $2$-category of Artin fans.
\end{theorem}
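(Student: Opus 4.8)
The plan is to construct an explicit quasi-inverse to $a^\ast$ and to leverage the combinatorial description of cone stacks from Proposition~\ref{prop:C-stacks}. First I would verify that $a^\ast$ actually lands in the $2$-category of Artin fans: by Lemma~\ref{lemma_Artincone} we have $a^\ast\sigma = \mathcal{A}_\sigma = \big[V_\sigma/T\big]$, an Artin cone, and if $\calC$ is a cone stack then its defining strict groupoid presentation $\calC_1 \rightrightarrows \calC_0$ by disjoint unions of cones is carried by the colimit- and fiber-product-preserving functor $a^\ast$ to a strict \'etale groupoid of Artin cones presenting $a^\ast\calC$. This is essentially the content of the proof of Corollary~\ref{cor:a*}, and it exhibits $a^\ast\calC$ as an Artin fan.

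For the quasi-inverse I would define a $2$-functor $b$ sending an Artin fan $\calX$ to the category $b(\calX)$ whose objects are strict morphisms $\mathcal{A}_\sigma \to \calX$ from Artin cones and whose morphisms are commuting triangles over the open immersions $\mathcal{A}_\tau \hookrightarrow \mathcal{A}_\sigma$ corresponding to faces $\tau \preceq \sigma$, with projection $(\mathcal{A}_\sigma\to\calX)\mapsto\sigma$ to $\RPC^f$. This mirrors the assignment $\calC \mapsto \{\text{strict maps from cones to }\calC\}$ used in the essential-surjectivity part of Proposition~\ref{prop:C-stacks}. The first task is to check that $b(\calX)$ is a combinatorial cone stack, i.e.\ fibered in groupoids over $\RPC^f$ in the sense of Definition~\ref{def_combinatorialconestack}. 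The lifting and uniqueness axioms rest on an \emph{initial strict factorization} statement for Artin fans: every morphism $\mathcal{A}_\sigma \to \calX$ factors initially through a strict morphism $\mathcal{A}_\tau \to \calX$. I would prove this by transporting the argument of Lemma~\ref{lem:initial} almost verbatim, the inputs being that $\calX$ admits a strict \'etale cover by Artin cones, that a strict surjection of Artin cones is an isomorphism, and that an Artin cone admits no nontrivial strict \'etale covers --- every strict \'etale surjection onto $\mathcal{A}_\sigma$ has a section, because a strict \'etale map leaves the characteristic monoid unchanged and is therefore split near the deepest stratum $BT \subset \mathcal{A}_\sigma$.

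With $b$ in hand I would establish the two natural equivalences $a^\ast b(\calX) \simeq \calX$ and $b\,a^\ast\calC \simeq \calC$, which together yield the equivalence without a separate full-faithfulness argument. For the first, the cones indexing the objects of $b(\calX)$ form a strict cover $\coprod_\sigma \sigma \to b(\calX)$; applying the colimit-preserving $a^\ast$ produces a strict \'etale cover $\coprod_\sigma \mathcal{A}_\sigma \to a^\ast b(\calX)$ by the same Artin cones that cover $\calX$, and since $a^\ast$ preserves fiber products the groupoid relations match. For the second, the key point is the comparison of strict Hom-groupoids
\begin{equation*}
\Hom_{\mathrm{strict}}(\mathcal{A}_\sigma, a^\ast\calC) \simeq \Hom_{\mathrm{strict}}(\sigma, \calC),
\end{equation*}
natural in $\sigma \in \RPC^f$, which identifies $b(a^\ast\calC)$ with the combinatorial cone stack of strict maps from cones to $\calC$; by the essential-surjectivity half of Proposition~\ref{prop:C-stacks} the latter is $\calC$ itself. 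This comparison follows by combining Lemma~\ref{lem:initial} for $\calC$ with its Artin-fan analogue established above for $a^\ast\calC$, both sides being described by the same non-degenerate factorization data. Finally I would check that $b$ is genuinely a $2$-functor and that the two equivalences are $2$-natural, bearing in mind the warning of Remark~\ref{rem:caution,caution} that diagrams of stacks carry the data of $2$-isomorphisms.

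The step I expect to be the main obstacle is the initial strict factorization for Artin fans, the Artin-geometric incarnation of Lemma~\ref{lem:initial}. Everything else is formal manipulation of colimits, fiber products, and the $a^\ast$ construction; but this step requires the genuinely algebro-geometric inputs that strict \'etale covers of Artin cones split and that strict surjections of Artin cones are isomorphisms, which is where the logarithmic structure does the real work.
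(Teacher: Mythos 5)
Your proposal is correct in strategy, but it takes a genuinely different route from the paper. The paper never constructs a quasi-inverse: it first computes $\Hom(\mathcal A_\sigma, \mathcal A_\tau) = \Hom_{\RPC}(\sigma,\tau)$ directly, using the representability of Lemma~\ref{lemma_Artincone} together with the observation that $\Gamma\big(\mathcal A_\sigma, \overline M_{\mathcal A_\sigma}\big) = S_\sigma$ (all sections of the characteristic monoid on $\Spec k[S_\sigma]$ are torus-invariant), so cones and Artin cones are equivalent categories; it then notes that $a^\ast$ matches strictness, \'etaleness, and surjectivity on the nose, so a logarithmic algebraic stack has a strict \'etale cover by Artin cones exactly when the corresponding cone stack has a strict cover by cones, and the equivalence of the two $2$-categories of geometric stacks follows from this matching of local models and covers. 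You instead build an explicit inverse $b$ out of strict Artin-cone maps, mirroring the essential-surjectivity half of Proposition~\ref{prop:C-stacks}, with the Artin-fan initial factorization lemma as the engine---this is essentially the Abramovich--Wise route (their Proposition~2.3.11), which the remark preceding Lemma~\ref{lem:initial} points out is \emph{equivalent} to Lemma~\ref{lem:initial} once Theorem~\ref{thm:artin-fans} is known; you reverse that logical order, proving the Artin-fan statement first and deriving the equivalence from it. Your approach buys a concrete description of the inverse functor and an independent proof of the factorization result; the paper's buys brevity, concentrating all the algebraic geometry in the single global-sections computation. One caution: your splitting argument for strict \'etale covers of $\mathcal A_\sigma$ is not quite self-contained as sketched---``strict \'etale maps leave the characteristic monoid unchanged'' does not by itself produce a section near the deepest stratum $BT$. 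To conclude that a strict \'etale map of Artin cones hitting the closed stratum is an isomorphism, you need to know that any morphism $\mathcal A_\tau \to \mathcal A_\sigma$ is induced by a monoid homomorphism $S_\sigma \to S_\tau$, which is exactly the computation $\Gamma\big(\mathcal A_\tau, \overline M_{\mathcal A_\tau}\big) = S_\tau$ at the heart of the paper's proof; so your route does not actually avoid that step, it only relocates it inside the factorization lemma.
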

\begin{proof}
First we observe that $\Hom(\mathcal A_\sigma, \mathcal A_\tau) = \Hom_{\RPC}(\sigma, \tau)$.  Indeed, by definition of $\mathcal A_\tau$ (Equation~\ref{eqn:mon-to-log}),
\begin{equation*}
\Hom(\mathcal A_\sigma, \mathcal A_\tau) = \Hom\bigl(S_\tau, \Gamma(\mathcal A_\sigma, \overline M_{\mathcal A_\sigma}) \bigr) .
\end{equation*}

Writing $\mathcal A_\sigma$ as the quotient of the toric variety $\Spec k[S_\sigma]$ by its dense torus, we find that $\Gamma\big(\mathcal A_\sigma, \overline M_{\mathcal A_\sigma}\big)$ can be identified with the torus invariants in $\Gamma\big(\Spec k[S_\sigma], \overline M_{\Spec k[S_\sigma]}\big)$.  But the latter monoid 
may be identified with $S_\sigma$, as all sections are torus invariant.  Therefore
\begin{equation*}
\Hom(\mathcal A_\sigma, \mathcal A_\tau) = \Hom(S_\tau, S_\sigma) = \Hom(\sigma, \tau) .
\end{equation*} 
This proves that $a^\ast$ induces an equivalence of categories from rational polyhedral cones to Artin cones.

As a morphism $\sigma \rightarrow \tau$ is strict if and only if $\mathcal A_\sigma \rightarrow \mathcal A_\tau$ is strict and \'etale, and $\sigma \rightarrow \tau$ is surjective if and only if $\mathcal A_\sigma \rightarrow \mathcal A_\tau$ is surjective, we find that an algebraic stack with logarithmic structure has a strict \'etale cover by Artin cones if and only if $\calC$ has a strict cover by rational polyhedral cones.  That is, $a^\ast$ gives an equivalence of categories between cone stacks and Artin fans.
\end{proof}

\section{Tropicalizing the moduli space of logarithmic curves}
\label{sec:log-curves}

In Sections~\ref{sec:monoids} and~\ref{sec:rpc-to-mon}, we saw how to make sense of a family of tropical curves over an arbitrary sharp monoidal space.  In this section, we  define tropical curves over \emph{logarithmic schemes}. We show that the resulting moduli problem on logarithmic schemes is represented by an algebraic stack of finite presentation, logarithmically \'etale over $k$, and that it admits a tropicalization map from the moduli stack of logarithmic curves that is strict and smooth (Theorem \ref{thm:strictsmooth}).  

Technically, this material is not a direct application of Section~\ref{sec:rpc-to-mon}, because logarithmic structures sometimes require the \'etale topology and logarithmic schemes must therefore sometimes be seen as monoidal \emph{topoi}.  We do not engage with these technicalities directly --- the reader who is comfortable with them will not find it difficult to translate Section~\ref{sec:rpc-to-mon} into that language --- and our discussion  focuses only on logarithmic schemes, as opposed to monoidal topoi in general. 

\subsection{Logarithmic curves over valuation rings}
\label{sec:valuation}

 To motivate the construction that  follows, we analyze a special case in which a logarithmic structure arises.  Let $K$ be a valued field with valuation ring $R$, and suppose that $X$ is a family of stable marked curves over $S = \Spec R$ with smooth generic fiber.  Let $\eta$ be the generic point of $S$ and let $X_\eta$ be the fiber of $X$ over the generic point.  

Let $G$ be the dual graph of the special fiber of $X$.  We can label each vertex by the genus of the normalization of the corresponding component.  An edge $e$ of the dual graph corresponds to a node of the special fiber, in an \'etale neighborhood of which, the family $X$ admits an \'etale $S$-morphism to
\begin{equation*}
\Spec R[x,y] / (xy - t_e) 
\end{equation*}
for some nonzero $t_e \in R$.  The element $t_e$ is uniquely determined by $e$ up to scaling by an element of $R^\ast$, so we call the image of $t_e$ in $(R \smallsetminus \{ 0 \}) / R^\ast$ the length of the edge $e$.  In fact, the element $t_e$ can be presented more canonically by viewing the orbit $R^\ast t_e$ as the set of generators for a principal ideal in $R$ that is dual to the deformation space of the node $e$.  For this reason, we refer to $t_e$ (up to scaling) as the \emph{smoothing parameter} for $e$.

As we will see in Section~\ref{sec:log-curves-defs}, the monoid $(R \smallsetminus \{ 0 \}) / R^\ast$ is the stalk $\Mbar_{S,s}$ of the characteristic monoid $\overline M_S$ of a logarithmic structure on $S$ at its closed point $s$.  Labelling each edge of the graph by the smoothing parameter $t_e \in (R \smallsetminus \{ 0 \}) / R^\ast = \overline M_{S,s}$, we have produced a tropical curve with a metric valued in $\Mbar_{S,s}$. 

Logarithmic structures give just enough additional information on the special fiber to carry out this construction without explicit reference to the smoothing family.  This is particularly convenient for applications to moduli problems, where a smoothing family may not be available.


\subsection{Logarithmic curves and their tropicalizations}
\label{sec:log-curves-defs}

Suppose $S$ is a logarithmic scheme.  A \emph{loga\-rithmic curve} over $S$ is a logarithmically smooth morphism of logarithmic schemes $\pi : X \rightarrow S$ that is proper, has geometrically connected fibers, is integral, and is saturated.  Rather than define all of these terms, we recall a local structure theorem for logarithmic curves~\cite[Theorem 1.3]{Kato_logsmoothcurves} that also characterizes them:

\begin{theorem}[F.\ Kato] \label{thm:log-curve-local}
Suppose that $X$ is a logarithmic curve over a fine and saturated logarithmic scheme $S$.  Then every geometric point $x$ of $X$ has an \'etale neighborhood $V$ with a strict \'etale $S$-morphism  $\pi : V \rightarrow S$,  such that:
\begin{enumerate}[(i)]
\item $V = \Spec \mathcal O_S[u]$, with $M_V = \pi^\ast M_S$;
\item $V = \Spec \mathcal O_S[u]$, with $M_V = \pi^\ast M_S \oplus \N \upsilon$ with $\varepsilon(\upsilon) = u$;
\item $V = \Spec \mathcal O_S[x,y] / (xy - t)$ for some $t \in \mathcal O_S$, and
\begin{equation*}
M_V = \pi^\ast M_S \oplus \N \alpha \oplus \N \beta \big/ (\alpha + \beta = \delta)
\end{equation*}
for some $\delta \in M_S$, and with $\varepsilon(\alpha) = x$, $\varepsilon(\beta) = y$, and $\varepsilon(\delta) = t$.
\end{enumerate}
\end{theorem}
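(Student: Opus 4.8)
The statement is F.\ Kato's structure theorem, so the plan is to prove the implication that a logarithmic curve admits one of the three stated local forms at each geometric point; the converse characterization then follows by checking log smoothness of each model directly. First I would reduce to a purely local problem: fix a geometric point $x$ of $X$ lying over $s$ in $S$, and replace $S$ by a strict étale neighborhood carrying a fine, saturated chart $\beta\colon P\to M_S$ with $P=\overline M_{S,s}$. Since $\pi\colon X\to S$ is logarithmically smooth, K.\ Kato's chart criterion \cite{Kato_logstr} provides, étale-locally near $x$, a chart $P\to Q$ of $\pi$ with $Q$ fine such that the kernel and the torsion of the cokernel of $P^{\mathrm{gp}}\to Q^{\mathrm{gp}}$ are finite of order invertible on $X$, and such that the induced morphism
\begin{equation*}
X\longrightarrow S\times_{\A_P}\A_Q
\end{equation*}
is strict and classically smooth, where $\A_P=\Spec k[P]$. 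Because $\pi$ is additionally required to be integral and saturated, the chart $P\to Q$ may be taken integral and saturated as well.

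Next I would carry out the monoid classification. The relative dimension of $\pi$ equals $1$, and it splits as the dimension of the strict smooth factor $X\to S\times_{\A_P}\A_Q$ plus the rank of $Q^{\mathrm{gp}}/P^{\mathrm{gp}}$, the toric part. Working at the geometric point $x$ lets me pass to sharp monoids and reduce to classifying $\overline M_{X,x}$ as an integral, saturated extension of $P$ of relative rank at most $1$. There are exactly three possibilities, dictated by how the single unit of relative dimension is distributed:
\begin{enumerate}[(i)]
\item relative rank $0$ with smooth part of dimension $1$: then $Q=P$, the map is strict, and $\overline M_{X,x}\simeq P$;
\item relative rank $1$ contributed by a free generator, smooth part of dimension $0$: then $Q=P\oplus\N\upsilon$ and $\overline M_{X,x}\simeq P\oplus\N$;
\item relative rank $1$ contributed non-freely: here integrality and saturation force $Q=P\oplus\N\alpha\oplus\N\beta\big/(\alpha+\beta=\delta)$ for some $\delta\in P$, so $\overline M_{X,x}$ is the nodal monoid.
\end{enumerate}
The crucial point is that integrality together with saturation of $P\to Q$ single out the nodal monoid among all rank-one relative extensions and exclude more degenerate local models; this is precisely the constraint forcing the geometric fibers to be \emph{nodal} rather than to acquire worse singularities.

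Finally I would translate each monoid case back into the asserted scheme-theoretic model by computing the toric morphism $\A_Q\to\A_P$ and composing with the strict smooth factor. In case (i) the toric part is trivial and strict smoothness of relative dimension $1$ yields $V=\Spec \mathcal O_S[u]$ with $M_V=\pi^\ast M_S$. In case (ii) the extra free generator contributes $M_V=\pi^\ast M_S\oplus\N\upsilon$ with $\varepsilon(\upsilon)=u$, the marked point. In case (iii) the monoid $Q$ presents $\A_Q\to\A_P$ étale-locally as $\Spec \mathcal O_S[x,y]/(xy-t)\to S$ with $t=\varepsilon(\delta)$, from which one reads off $M_V=\pi^\ast M_S\oplus\N\alpha\oplus\N\beta/(\alpha+\beta=\delta)$ with $\varepsilon(\alpha)=x$ and $\varepsilon(\beta)=y$.

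The main obstacle will be the monoid classification above---specifically, showing that integrality and saturation force precisely the presentation $\alpha+\beta=\delta$ in the non-free rank-one case, with no extra parameters or torsion, and that the chart $P\to Q$ can genuinely be arranged with $P=\overline M_{S,s}$ after shrinking. A secondary technical point is the passage from the abstract relative-monoid statement to the honest étale-local scheme model, which relies on smoothness of the strict factor to lift the toric local picture to $X$ itself.
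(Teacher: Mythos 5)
A preliminary remark on the comparison you asked for: the paper contains no proof of this statement at all --- it is imported verbatim from F.~Kato \cite[Theorem 1.3]{Kato_logsmoothcurves}, so the only proof to measure yours against is Kato's original one. Your sketch is in fact a reconstruction of exactly that argument: choose a chart by $P = \overline M_{S,s}$, invoke K.~Kato's chart criterion for logarithmic smoothness \cite{Kato_logstr} to factor the curve as a strict smooth morphism $X \to S \times_{\A_P} \A_Q$, match the relative dimension $1$ against the rank of $Q^{\mathrm{gp}}/P^{\mathrm{gp}}$, classify the possible extensions $P \to Q$, and translate back through the toric models. So the strategy is the correct and standard one, not a genuinely different route.

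The genuine gap is the step you yourself flag as ``the main obstacle'': the monoid classification is asserted, not proved, and it is where the entire content of the theorem lives. Concretely, two families of local models must be excluded, and your trichotomy silently assumes they are. First, K.~Kato's criterion only controls the torsion of $Q^{\mathrm{gp}}/P^{\mathrm{gp}}$ up to finite groups of order invertible on $X$, so even in your ``rank $0$'' and ``rank $1$ free'' cases one must rule out $\mu_m$-cover models in which $Q^{\mathrm{gp}}/P^{\mathrm{gp}}$ has torsion; the inference rank $0$ $\Rightarrow$ $Q = P$ is not automatic. Second, in the non-free rank-one case there are Kummer-type extensions $Q = \langle P, q \rangle$ with $mq = \delta \in P$, $m \geq 2$, whose model $x^m = t$ is logarithmically smooth and integral but \emph{not} saturated: this is precisely the job the saturation hypothesis does, and your one-line appeal to ``integrality and saturation'' must be expanded into an actual proof that the only injective, local, integral, saturated morphisms of sharp fs monoids $P \to Q$ with $Q^{\mathrm{gp}}/P^{\mathrm{gp}}$ of rank at most one are $Q = P$, $Q = P \oplus \N$, and $Q = (P \oplus \N\alpha \oplus \N\beta)/(\alpha + \beta = \delta)$ with $\delta \in P$ nonzero. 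Two smaller points also need support: the claim that a chart of an integral and saturated morphism can itself be chosen integral and saturated is a theorem, not a formality, and requires a reference; and in case (iii) you should note that the strict smooth factor has relative dimension zero, hence is \'etale, which is what actually produces the \'etale neighborhood $V$ mapping strictly to the model $\Spec \mathcal O_S[x,y]/(xy-t)$, while in case (i) the factor has relative dimension one and must still be presented \'etale-locally as $\Spec \mathcal O_S[u]$.
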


\begin{figure}[h] \label{fig:tropical-curve}
\begin{tikzpicture}
\draw[very thick] (0,0) .. controls (2,1) and (4,-1) .. (6,0);
\draw[very thick] (0.5,6) .. controls (2.5,7) and (4.5,5) .. (6.5,6);
\draw[very thick] (5.5,4.375) .. controls (4.875,3) and (4.875,3) .. (5.325,1.625);
\draw[very thick] (5.625,6.25) .. controls (5.125,5) and (5.125,5) .. (5.5,3.75);
\draw[very thick] (5.325,2.25) .. controls (4.75,1) and (4.75,1) .. (5.25,-0.5);

\draw[very thick] (1.875,6.75) .. controls (0,2.5) and (2,3.5) .. (1,-0.25);

\node at (3,-0.5) {$\N$};
\node at (6.5,-0.5) {$\N\oplus P$};
\node at (4.5,1) {$P$};
\node at (6.5,2) {$\N^2\oplus_\N P$};
\node at (4.625,3) {$P$};
\node at (6.625,4) {$\N^2\oplus_\N P$};
\node at (4.75,5) {$P$};
\node at (6.5,6.5) {$\N\oplus P$};
\node at (3.5,6.375) {$\N$};
\node at (0.5,3.25) {$0$};

\node at (0.875,-0.875) {$X$};
\node at (5.5,-1) {$X_s$};

\end{tikzpicture}
\caption{A logarithmically smooth degeneration of a smooth algebraic curve $X$ with marked sections over the spectrum of a discrete valuation ring $R$. Many different monoids $P$ are possible, but the choice consistent with Section~\ref{sec:valuation} is $P=(R - \{ 0 \}) / R^\ast$.}
\end{figure}
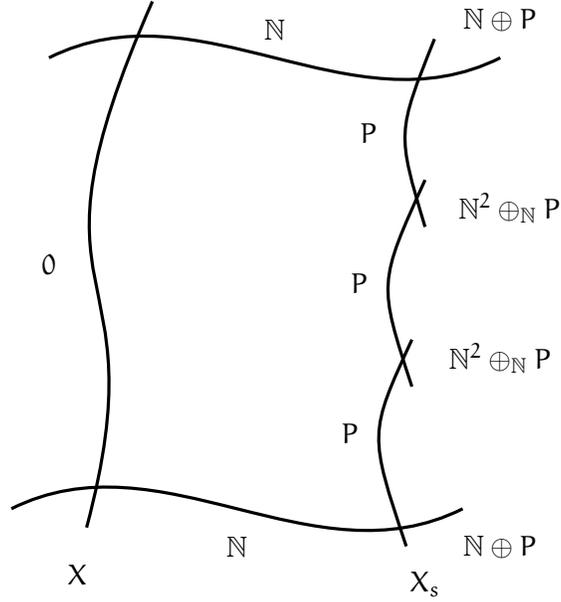

In particular, the stalk of $\overline M_X$ at the point $x$ in the statement of the theorem has one of the following forms:
\begin{enumerate}[(i)]
\item $\overline M_{X,x} \simeq \overline M_{S,\pi(x)}$ if $x$ is a smooth point of $\underline X$;
\item $\overline M_{X,x} \simeq \overline M_{S,\pi(x)} \oplus \mathbb N \upsilon$ if $x$ is a marked point of $\underline X$; and
\item $\overline M_{X,x} \simeq (\overline M_{S,\pi(x)} \oplus \mathbb N\alpha \oplus \mathbb N \beta) / (\alpha + \beta = \delta) \simeq \overline M_{S,\pi(x)} \oplus_{\mathbb N} \mathbb N^2$ if $x$ is a node of $\underline X$.
\end{enumerate}
In the last case, the element $\delta$ is known as the \emph{smoothing parameter} for the node $x$, as explained in Section~\ref{sec:valuation}.  

If $X$ is a logarithmic curve over a logarithmic point $S$, the observation above permits us to give the dual graph of $X$ the structure of a tropical curve, metrized by the characteristic monoid of $S$. 

\begin{definition}\label{def_tropicalization}
Let $S$ be a logarithmic scheme whose underlying scheme is the spectrum of an algebraically closed field and let $X$ be a logarithmic curve over $S$.  The \emph{dual tropical curve}  $\Gamma_X$ of $X$ is the tropical curve consisting of: 
\begin{enumerate}[(i)]
\item one vertex $v$ for each irreducible component $X_v$ of $X$, weighted by the genus of the normalization of $X_v$;
\item a flag $l_i$ incident to the vertex $v$ for each marked point $x_i$ on $X_v$, marked with the same label as $x_i$;
\item for each node $x_e$ of $X$, an edge incident to the two vertices  corresponding to components of $X$ adjacent to $x$, together with the edge length $d(e)=\delta_e$, the smoothing parameter of $x_e$.
\end{enumerate}
\end{definition}
We refer the reader to Figure \ref{figure_dualgraph} in the introduction for a picture of this construction.
\begin{remark}
The characteristic monoid at a node $x_e$ can alternatively be described as:
\begin{enumerate}[(i)]
\item[(iii$'$)] $\overline M_{X,x} \simeq \big\{ (\rho, \sigma) \in \overline M_{S,\pi(x)} \times \overline M_{S,\pi(x)} \: \big| \: \rho - \sigma \in \mathbb Z \delta \big\}$
\end{enumerate}
The advantage of this description is that it makes clear the generization maps associated to the two branches at the node:  if $x'$ is the generic point of a component incident to $x$ (so $\overline M_{X,x'} = \overline M_{S,s}$ and there is a specialization $x' \leadsto x$) then the corresponding map $\overline M_{X,x} \rightarrow \overline M_{X,x'} = \overline M_{S,s}$ is one of the two projections.  The identification between (iii) and (iii$'$) sends $\gamma + a \alpha + b \beta$ to $(\gamma + a \delta, \gamma + b \delta)$; {conversely, if $\rho -\sigma =  r\delta $, then  $(\rho,\sigma)$ is sent to $\sigma+ r\alpha$ if $r \geq 0$, and to $\rho - r\beta$ if $r \leq 0$}.  

This allows us to give a tropical interpretation to the sections of $\overline M_X$.  Recall that to specify an $\mathbb R_{\geq 0}$-valued affine linear function with integer slope along an interval is the same as to give a pair of nonnegative real numbers (the values of the function at the endpoints of the interval) whose difference is an integer multiple of the interval's length.  Thus sections of $\overline M_X$ near the nodes can be viewed as piecewise affine linear functions on the dual tropical curve of $X$ that have integer slopes along the edges, taking values in $\overline M_S$.

The same interpretation is also valid near the marked points.  Indeed, to give a $\mathbb R_{\geq 0}$-valued affine linear function with integer slope along an infinite ray, we should specify a nonnegative value at the endpoint of the ray, and a nonnegative slope.  Extrapolating to $\overline M_S$-valued functions, we find that affine linear functions along a ray correspond to elements of $\overline M_S \times \mathbb N$, which is the local structure of $\overline M_X$ near a marked point.%

It follows immediately from these observations that sections of $\overline M_X^{\rm gp}$ may be identified with piecewise linear functions on the dual graph of $X$, taking values in $\overline M_S^{\rm gp}$ and having integer slopes along the edges.  We view the sections of $\overline M_X$ as the submonoid of functions taking values $\geq 0$.

\end{remark}

Now we fix a logarithmic scheme $S$ whose underlying scheme is the spectrum of an algebraically closed field.  Assume that the characteristic monoid $\overline M_S$ is finitely generated and define $\sigma$ to be the rational polyhedral cone such that $S_\sigma = \overline M_S$.  The construction indicated above defines a functor
\begin{equation*}
\trop_{g,n,S}\colon\mathcal M_{g,n}^{\log}(S) \longrightarrow \mathcal M_{g,n}^{\trop}(\sigma) \ .
\end{equation*}

We now extend this construction to families over logarithmic schemes:
\begin{definition} \label{def:tropical-curve-log-base}
Let $S$ be a logarithmic scheme that is locally of finite type.  A \emph{tropical curve} over $S$ consists of
\begin{enumerate}[(i)]
\item \label{item:tropical-curve-1} a tropical curve $\Gamma_s$ with edge lengths in $\Mbar_s$ (see Section~\ref{section_modulifunctor}) for each geometric point $s$ of $S$;
\item \label{item:tropical-curve-2} a weighted edge contraction  $\Gamma_s \to \Gamma_t$ for each geometric specialization (see Appendix~\ref{sec:etale-specialization}) $t \leadsto s$ in $S$; such that
\item \label{item:tropical-curve-4} if $t \leadsto s$ is a geometric specialization then the map $\Gamma_s \to \Gamma_t$ is the contraction of weighted marked graphs in which $\Gamma_t$ is metrized by the composition
\begin{equation*}
E\big(\GG(\Gamma_s)\big)\xlongrightarrow{d_{\Gamma_s}}\Mbar_{S,s}\longrightarrow\Mbar_{S,t},
\end{equation*} 
with the usual convention that we contract edges of length zero.
\end{enumerate}
We write $\widetilde{\mathcal M}_{g,n}^{\trop}$ for the fibered category over logarithmic schemes of finite type whose fiber over $S$ is the category of families of genus~$g$ tropical curves over $S$ with $n$ marked points.
\end{definition}

\begin{remark}
We have defined $\widetilde{\mathcal M}_{g,n}^{\trop}$ only for logarithmic schemes locally of finite type because the general definition is more technical.  However, we will see soon that $\widetilde{\mathcal M}_{g,n}^{\trop}$ extends canonically to all logarithmic schemes when we recognize it as the extension of $\mathcal M_{g,n}^{\trop}$ to the category of logarithmic schemes.
\end{remark}

\begin{lemma}
$\widetilde{\mathcal M}_{g,n}^{\trop}$ is a stack in the strict \'etale topology on the category of logarithmic schemes that are locally of finite type.
\end{lemma}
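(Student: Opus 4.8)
The plan is to verify the two defining properties of a stack over the strict \'etale site: that the $\Isom$-presheaves are sheaves (descent for morphisms), and that descent data for objects are effective. The guiding principle is that both the objects of $\widetilde{\mathcal M}_{g,n}^{\trop}(S)$ and the morphisms between them are assembled entirely out of geometric-point data --- namely the tropical curves $\Gamma_s$ metrized by $\overline M_{S,s}$, the weighted edge contractions $\Gamma_s\to\Gamma_t$ indexed by geometric specializations $t\leadsto s$, and the cospecialization homomorphisms $\overline M_{S,s}\to\overline M_{S,t}$ --- and that every one of these ingredients is visibly \'etale-local.

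The key input, which I would isolate first, is that strict \'etale morphisms interact well with this geometric-point data. If $f\colon T\to S$ is strict \'etale then for every geometric point $t$ of $T$ the induced map $\overline M_{S,f(t)}\to\overline M_{T,t}$ is an isomorphism; a strict \'etale cover $\{S_i\to S\}$ is jointly surjective on geometric points; and, by the theory of geometric specializations recalled in Appendix~\ref{sec:etale-specialization}, a geometric specialization lifts (uniquely, once the endpoints are lifted) along an \'etale morphism, compatibly with the cospecialization maps on characteristic monoids. Consequently the pullback $f^\ast\Gamma$ is defined pointwise by $(f^\ast\Gamma)_t=\Gamma_{f(t)}$ together with the contractions and cospecialization maps transported along $f$, which also confirms that $\widetilde{\mathcal M}_{g,n}^{\trop}$ is fibered in groupoids to begin with.

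For descent of morphisms, let $\{S_i\to S\}$ be a strict \'etale cover and let $\Gamma,\Gamma'\in\widetilde{\mathcal M}_{g,n}^{\trop}(S)$ be equipped with isomorphisms $\phi_i\colon\Gamma|_{S_i}\to\Gamma'|_{S_i}$ agreeing on the overlaps $S_i\times_S S_j$. Since an isomorphism of tropical curves over a base is a collection of isomorphisms $\Gamma_s\xrightarrow{\sim}\Gamma'_s$ over geometric points compatible with specialization, I would define $\phi_s$ by lifting $s$ to some $S_i$ and setting $\phi_s=(\phi_i)_s$; the agreement on double overlaps makes this independent of the lift, and compatibility with specialization follows by lifting specializations as above. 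Uniqueness (separatedness of $\Isom$) is immediate, since any two isomorphisms that agree after pullback to a jointly surjective cover agree on every geometric point and hence coincide.

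Descent of objects proceeds along the same lines: given descent data $(\Gamma^{(i)})$ with gluing isomorphisms satisfying the cocycle condition, I would build $\Gamma$ over $S$ by setting $\Gamma_s=\Gamma^{(i)}_{s_i}$ for a chosen lift $s_i$ of $s$, using the gluing isomorphisms to identify different choices, and obtaining the contractions $\Gamma_s\to\Gamma_t$ for $t\leadsto s$ by lifting the specialization to the cover. I expect the main obstacle to be precisely the coherent treatment of the specialization maps --- verifying that the edge contractions descend and continue to satisfy the metrization compatibility condition~(\ref{item:tropical-curve-4}) of Definition~\ref{def:tropical-curve-log-base} independently of all the choices of lifts --- but this reduces to bookkeeping organized by the cocycle condition once the \'etale-local behavior of geometric specializations and cospecialization maps from Appendix~\ref{sec:etale-specialization} is in hand.
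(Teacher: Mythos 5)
Your proposal is correct and takes essentially the same route as the paper: the paper's proof likewise constructs the descended object pointwise on geometric points by choosing lifts to the cover, uses the overlap isomorphisms and the cocycle condition for well-definedness, constructs the generization maps by lifting specializations in the same way, and leaves the remaining verifications (including the $\Isom$-sheaf condition you spell out) to the reader. One minor imprecision worth noting: along a strict \'etale map a specialization $t \leadsto s$ lifts uniquely once a lift of $s$ is fixed, via the isomorphism of strict henselizations $\mathcal O_{T,\tilde s} \simeq \mathcal O_{S,s}$, and this lift \emph{determines} the lift of $t$ --- you cannot prescribe both endpoints independently --- but since well-definedness is checked on double overlaps anyway, this does not affect your argument.
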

\begin{proof}
We sketch how to construct an object of $\widetilde{\mathcal M}_{g,n}^{\trop}$ from a descent datum and leave the remaining verifications to the reader.

Let $U_i \rightarrow S$ be a strict \'etale covering family of a scheme $S$ and let $\Gamma_\bullet$ be a descent datum for $\widetilde{\mathcal M}_{g,n}^{\trop}$ over $U_\bullet$.  For each geometric point $s$ of $S$, choose a lift of $s$ to some $U_i$ and define $\Gamma_s = {\Gamma_i}_s$.  This is well-defined up to isomorphism, as a second lift $s \rightarrow U_j$ would induce a map $s \rightarrow U_{ij}$ and therefore an isomorphism ${\Gamma_i}_s \simeq {\Gamma_{ij}}_s \simeq {\Gamma_j}_s$.  The cocycle condition guarantees that the isomorphisms
\begin{gather*}
{\Gamma_i}_s \simeq {\Gamma_{ij}}_s \simeq {\Gamma_j}_s \simeq {\Gamma_{jk}}_s \simeq {\Gamma_k}_s \\
{\Gamma_i}_s \simeq {\Gamma_{ik}}_s \simeq {\Gamma_k}_s
\end{gather*}
both factor as ${\Gamma_i}_s \simeq {\Gamma_{ijk}}_s \simeq {\Gamma_k}_s$, hence coincide.  A similar argument constructs the generization maps associated to each geometric specialization $t \leadsto s$.
\end{proof}

Using this language, Definition \ref{def_tropicalization} actually defines a \emph{tropicalization morphism}
\begin{equation*}
\trop_{g,n}\colon\mathcal M_{g,n}^{\log} \longrightarrow \widetilde{\mathcal M}_{g,n}^{\trop}
\end{equation*}
of stacks over logarithmic schemes $S$ locally  of finite type:
consider an $S$-point of $\mathcal M_{g,n}^{\log}$, which is to say, a logarithmic curve of genus $g$ with $n$ marked points over $S$.  In Definition \ref{def_tropicalization} we have already constructed a tropical curve $\Gamma_s=\Gamma_{X_s}$ at each geometric point $s$ of $S$.  If $t \leadsto s$ is a specialization of geometric points then we can verify easily that the dual graph of $X_t$ is obtained from the dual graph of $X_s$ by collapsing those edges whose lengths map to $0$ in $\overline M_{S,t}$ because an element $\delta \in M_{S,s}$ maps to zero in $\overline M_{S,t}$ if and only if $\varepsilon(\delta)$ maps to a unit in $\mathcal O_{S,t}$, and this is equivalent to the smoothing of the corresponding node.  

\begin{remark}
Consider the situation where $S$ is a logarithmic scheme and $\overline M_S$ is a \emph{constant} sheaf of monoids.  If $\Gamma$ is a tropical curve over $S$, and if $t \leadsto s$ is a specialization of geometric points of $S$, then condition~\ref{item:tropical-curve-4} in Definition \ref{def:tropical-curve-log-base} above guarantees that $\Gamma_s \rightarrow \Gamma_t$ is an isomorphism.

If $S$ is merely assumed to be locally of finite type, then $S$ has a locally finite stratification into locally closed subsets on which $\overline M_S$ is locally constant.  The previous paragraph implies that a family of weighted, marked, metrized graphs $\Gamma$ over $S$ satisfying conditions~\ref{item:tropical-curve-1} through~\ref{item:tropical-curve-4} of the definition of a tropical curve, is locally constant on each stratum.
\end{remark}


\subsection{Algebraicity of the stack of tropical curves over logarithmic schemes}

In this section, we  prove that $\widetilde{\mathcal M}_{g,n}^{\trop}$ is isomorphic to $a^\ast \mathcal M_{g,n}^{\trop}$ over logarithmic schemes of finite type.  By Corollary~\ref{cor:a*} and Lemma~\ref{lemma_logetale}, this  implies the first half of Theorem \ref{thm_tropicalization} from the introduction, namely that $\widetilde{\mathcal M}_{g,n}^{\trop}$ is representable by an algebraic stack with a logarithmic structure that is logarithmically \'etale over $k$.  

To begin, we construct a natural family of maps
\begin{equation*}
\Phi : a^\ast \mathcal M_{g,n}^{\trop}(S) \longrightarrow \widetilde{\mathcal M}_{g,n}^{\trop}(S)
\end{equation*}
for every logarithmic scheme $S$ locally of finite type.  This means that, for every $S$, and every morphism $f : S \rightarrow a^\ast \mathcal M_{g,n}^{\trop}$, we want to build an associated family of tropical curves $\Phi(f)$.

Since $\widetilde{\mathcal M}_{g,n}^{\trop}$ is a stack, we can use the universal property of the associated stack $a^\ast \mathcal M_{g,n}^{\trop}$ (note that restriction from the strict \'etale site on the category of all logarithmic schemes to the strict \'etale site on the category of logarithmic schemes that are locally of finite type over $k$ preserves stackification) to specify $\Phi$ by giving maps
\begin{equation} \label{eqn:to-sheafify}
\mathcal M_{g,n}^{\trop}\bigl(\Gamma(S, \overline M_S)\bigr) \longrightarrow \widetilde{\mathcal M}_{g,n}^{\trop}(S)
\end{equation}
for each logarithmic scheme $S$ locally of finite type over $k$, in a manner that is natural in $S$.  An object of the domain is a tropical curve $\Gamma$, metrized by $\Gamma(S, \overline M_S)$.  The maps
\begin{equation*}
\Gamma(S, \overline M_S) \longrightarrow \overline M_{S,s}
\end{equation*}
induce tropical curves $\Gamma_s$, metrized by $\overline M_{S,s}$, for each geometric point $s$ of $S$, and the generization maps
\begin{equation*}
\overline M_{S,s} \longrightarrow \overline M_{S,t}
\end{equation*}
associated to geometric specializations $t \leadsto s$ induce maps $\Gamma_s \rightarrow \Gamma_t$.  This gives us the required map~\eqref{eqn:to-sheafify}.

\begin{lemma} \label{lem:isom-ext}
Suppose that $S$ is a logarithmic scheme locally of finite type, that $\Gamma$ and $\Gamma'$ are tropical curves over $S$ and $s$ is a geometric point of $S$.  If $\phi_s : \Gamma_s \xrightarrow{\sim} \Gamma'_s$ is an isomorphism then there is an \'etale neighborhood $U$ of $s$ and a unique extension of $\phi$ to $U$.
\end{lemma}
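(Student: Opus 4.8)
The plan is to pass to a strict \'etale neighborhood of $s$ on which each family is rigidly determined by its fiber at $s$, and then to propagate $\phi_s$ by functoriality of edge contraction. First I would choose, as is standard for fine and saturated logarithmic schemes, a strict \'etale neighborhood $U$ of $s$ carrying a global chart $P \to \mathcal{O}_U$ with $P = \overline{M}_{S,s}$ that is exact at $s$, so that $\overline{M}_{S,s}$ is maximal on $U$ and $s$ lies in the closed (deepest) stratum $Z_0$ of $U$. The induced morphism $U \to \mathcal{A}_\sigma$ to the Artin cone with $S_\sigma = P$ is then strict, and every geometric point $u$ of $U$ has $\overline{M}_{S,u}$ a localization of $P$ and specializes to a point of $Z_0$, since every point of $\mathcal{A}_\sigma$ specializes to its unique closed point. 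Using the local finiteness of the characteristic stratification recalled in the discussion of logarithmic structures, I may shrink $U$ further so that $Z_0$ is connected.

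Next I would observe that on such a $U$ both families $\Gamma$ and $\Gamma'$ are determined by their fibers at $s$. On $Z_0$ the sheaf $\overline{M}_S$ is constant, so by the Remark following Definition~\ref{def:tropical-curve-log-base} the restrictions $\Gamma|_{Z_0}$ and $\Gamma'|_{Z_0}$ are locally constant; as $Z_0$ is connected they are the constant families with fibers $\Gamma_s$ and $\Gamma'_s$. For an arbitrary geometric point $u$ of $U$, pick a specialization $u \leadsto s_0$ with $s_0 \in Z_0$; condition~(iii) of Definition~\ref{def:tropical-curve-log-base} then forces $\Gamma_u$ to be the weighted edge contraction of $\Gamma_{s_0} \cong \Gamma_s$ along $P = \overline{M}_{S,s} \to \overline{M}_{S,u}$, and likewise $\Gamma'_u$ is the contraction of $\Gamma'_s$ along the same map.

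Then I would build the extension. Since $\phi_s$ is an isomorphism of tropical curves over $P$, it matches edge lengths, so the edges of $\Gamma_s$ contracted along $P \to \overline{M}_{S,u}$ correspond under $\phi_s$ exactly to the edges of $\Gamma'_s$ contracted along that map; by functoriality of weighted edge contraction (Section~\ref{section_modulifunctor}), $\phi_s$ therefore descends to a unique isomorphism $\phi_u \colon \Gamma_u \xrightarrow{\sim} \Gamma'_u$ compatible with the contraction maps $\Gamma_s \to \Gamma_u$ and $\Gamma'_s \to \Gamma'_u$. Being all induced from the single isomorphism $\phi_s$, the $\phi_u$ are automatically compatible with the specialization maps of $\Gamma$ and $\Gamma'$ and assemble into a morphism of families $\phi \colon \Gamma|_U \to \Gamma'|_U$ restricting to $\phi_s$. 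Uniqueness is built in: any extension is forced to equal $\phi_s$ on the connected constant stratum $Z_0$, and at a general $u$ an isomorphism compatible with the surjective contraction $\Gamma_{s_0} \to \Gamma_u$ is uniquely determined.

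I expect the main obstacle to be the first paragraph, namely pinning down the neighborhood $U$ on which $s$ is the deepest point, every geometric point specializes into a \emph{connected} closed stratum, and $\overline{M}_{S,s}$ is realized as the chart monoid. This is where the exactness of charts for fine saturated logarithmic structures together with the upper semicontinuity and local finiteness of the characteristic monoid must be combined with care; once the family is known to be controlled by its fiber over the constant stratum $Z_0$, the propagation and uniqueness of $\phi_s$ are formal consequences of the functoriality of edge contraction.
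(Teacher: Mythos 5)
Your overall strategy---shrink to an \'etale neighborhood on which the stratum through $s$ is closed and connected, observe that the families are constant there, and propagate $\phi_s$ using the uniqueness of contraction-compatible isomorphisms---is the paper's strategy. But your first paragraph contains a genuine gap at exactly the step the whole argument rests on: the claim that \emph{every} geometric point $u$ of $U$ admits a specialization $u \leadsto s_0$ with $s_0 \in Z_0$, justified ``since every point of $\mathcal A_\sigma$ specializes to its unique closed point.'' Specializations do not lift along the strict smooth map $U \to \mathcal A_\sigma$: the existence of a specialization in the target says nothing about $U$. Worse, the claim itself is false. Take $U = \mathbb A^1$ with the divisorial logarithmic structure at the origin and $s$ the origin, so $Z_0 = \{0\}$; a geometric point over the closed point $u = 1$ admits no nontrivial specialization at all (an \'etale specialization induces a Zariski one, and $\overline{\{u\}} = \{u\}$), in particular none into $Z_0$. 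Since your second paragraph defines $\Gamma_u$, $\Gamma'_u$, and then $\phi_u$ by choosing such a specialization, the construction as written simply does not reach such points.

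The repair is what the paper does, and you already have all the ingredients. Arrange instead---using the local finiteness of the characteristic stratification---that $p^{-1}Z$ is the unique closed stratum of $U$ (and connected). Then the closure of every stratum $W$ of $U$ contains a point of $p^{-1}Z$, so there exists \emph{some} $w \in W$ and a specialization $w \leadsto z$ with $z \in p^{-1}Z$; this is weaker than your claim but suffices. Uniqueness of the contraction-compatible isomorphism produces $\phi_w$, and the Remark following Definition~\ref{def:tropical-curve-log-base}---which you invoked only for $Z_0$---shows the families are locally constant on \emph{every} stratum, so $\phi_w$ extends over $W$, with uniqueness guaranteeing independence of the choice of $w$. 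With this substitution your propagation and uniqueness arguments go through essentially verbatim, and the chart/Artin-cone apparatus of your first paragraph becomes unnecessary.
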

\begin{proof}
Let $Z$ be the stratum of $S$ containing $s$.  We can find an \'etale neighborhood $p : U \rightarrow S$ of $s$ such that $U$ is of finite type and
\begin{enumerate}[(a)]
\item $\overline M_S$ is constant on $p^{-1} Z$;
\item $p^{-1} Z$ is connected; 
\item $p^{-1} Z$ is the unique closed stratum of $U$.
\end{enumerate}
Then $\Gamma_t = \Gamma_s$ and $\Gamma'_t = \Gamma'_s$ for all points $t \in p^{-1} Z$ (since $p^{-1} Z$ is connected), so $\phi$ extends to $p^{-1} Z$.  But the closure of every stratum of $U$ contains a point of $p^{-1} Z$.
Hence if $W$ is a stratum of $U$ then there is some $w \in W$ and some $z \in p^{-1}Z$ and a specialization $w \leadsto z$.  Since $\Gamma_w$ and $\Gamma'_w$ are, respectively, the contractions of the edges in $\Gamma_z$ and $\Gamma'_z$ whose lengths generize to $0$ in $\overline M_w$, there is a \emph{unique} isomorphism $\phi_w : \Gamma_w \simeq \Gamma'_w$ making the diagram 
\begin{equation*} \xymatrix{
\Gamma_z \ar[r] \ar[d]_{\phi_z} & \Gamma_w \ar[d]^{\phi_w} \\
\Gamma'_z \ar[r] & \Gamma'_w
} \end{equation*}
commute. As before, this extends to an isomorphism over the whole stratum $W$ containing $w$.  The uniqueness guarantees that this extension does not depend on the choice of $w$ in $W$.
\end{proof}

\begin{lemma}
The morphism $\Phi$ constructed above is an isomorphism for all logarithmic schemes $S$ locally of finite type.
\end{lemma}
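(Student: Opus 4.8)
The plan is to verify that $\Phi$ is \emph{fully faithful} and \emph{locally essentially surjective} on the strict étale site of logarithmic schemes locally of finite type. Since both $a^\ast\calM_{g,n}^{\trop}$ (by Corollary~\ref{cor:a*}) and $\widetilde{\calM}_{g,n}^{\trop}$ are stacks, these two properties together force $\Phi$ to be an isomorphism: given an object $\Gamma$ of $\widetilde{\calM}_{g,n}^{\trop}(S)$, local essential surjectivity produces a strict étale cover $U_i\to S$ and objects $\xi_i$ with $\Phi(\xi_i)\simeq\Gamma|_{U_i}$, full faithfulness transports the induced isomorphisms on overlaps to gluing data for the $\xi_i$ satisfying the cocycle condition, and descent assembles a global $\xi$ with $\Phi(\xi)\simeq\Gamma$.

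For full faithfulness it suffices to check that $\Phi$ induces an isomorphism of the sheaves $\Isom_{a^\ast\calM}(\xi,\xi')\to\Isom_{\widetilde\calM}(\Phi\xi,\Phi\xi')$ on the strict étale site, which may be done on stalks at geometric points. Because $a^\ast\calM_{g,n}^{\trop}$ is the stackification of $S\mapsto\calM_{g,n}^{\trop}\bigl(\Gamma(S,\overline M_S)\bigr)$, we may work strict étale locally and assume $\xi,\xi'$ are tropical curves $\Gamma^{(1)},\Gamma^{(2)}$ metrized by $\Gamma(S,\overline M_S)$. At a geometric point $s$, the stalk of the source $\Isom$ sheaf is $\colim_{U}\Isom_{\calM^{\trop}(\Gamma(U,\overline M_U))}(\Gamma^{(1)}|_U,\Gamma^{(2)}|_U)$; since $\overline M_{S,s}=\colim_U\Gamma(U,\overline M_U)$ is a filtered colimit of sharp monoids and $\calM_{g,n}^{\trop}$ is locally of finite presentation (Proposition~\ref{fireworks}), this colimit equals $\Isom_{\calM^{\trop}(\overline M_{S,s})}(\Gamma^{(1)}_s,\Gamma^{(2)}_s)$. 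On the other hand, Lemma~\ref{lem:isom-ext} shows precisely that the stalk at $s$ of the target $\Isom$ sheaf is the set of isomorphisms $\Gamma^{(1)}_s\xrightarrow{\sim}\Gamma^{(2)}_s$ of tropical curves over $s$, as every such isomorphism extends, uniquely, to a neighborhood. Since $\Phi$ identifies these two descriptions via the identity, it is fully faithful.

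For local essential surjectivity, let $\Gamma\in\widetilde\calM_{g,n}^{\trop}(S)$ and fix a geometric point $s$ lying in a stratum $Z$. As in the proof of Lemma~\ref{lem:isom-ext}, choose a strict étale neighborhood $p\colon U\to S$ of finite type on which $\overline M_S$ is constant along the connected set $p^{-1}Z$, with $p^{-1}Z$ the unique closed stratum of $U$, and shrink $U$ so that the logarithmic structure admits a chart by $P=\overline M_{S,s}$ centered at $s$. The chart provides a section of the restriction $\Gamma(U,\overline M_U)\to\overline M_{S,s}$, so the edge lengths $d_e\in\overline M_{S,s}$ of $\Gamma_s$ lift to nonzero elements $\widetilde d_e\in\Gamma(U,\overline M_U)$, defining a tropical curve $\widetilde\Gamma$ with underlying graph $\GG(\Gamma_s)$ metrized by $\Gamma(U,\overline M_U)$, that is, an object of $\calM_{g,n}^{\trop}(\Gamma(U,\overline M_U))$. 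For any geometric point $t$ of $U$, there is a point $z\in p^{-1}Z$ with $t\leadsto z$, so the restriction $\Gamma(U,\overline M_U)\to\overline M_{U,t}$ factors through $\overline M_{U,z}=\overline M_{S,s}$; hence $\Phi(\widetilde\Gamma)_t$ is the contraction of $\Gamma_s$ along the edges whose lengths vanish in $\overline M_{U,t}$, which is exactly $\Gamma_t$ by condition~\ref{item:tropical-curve-4} of Definition~\ref{def:tropical-curve-log-base}. As the specialization maps match as well, $\Phi(\widetilde\Gamma)\simeq\Gamma|_U$, and $\Gamma$ lies in the essential image over $U$.

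The main obstacle is the essential surjectivity step: one must show that a family of tropical curves over $S$, which is a priori only a compatible system of fibers $\Gamma_s$ and contraction maps, actually arises strict étale locally from a \emph{single} tropical curve metrized by the global characteristic monoid $\Gamma(U,\overline M_U)$. This is where the geometry of the logarithmic structure genuinely enters --- through the choice of a neighborhood with a unique closed stratum carrying the maximal monoid and a chart centered there --- together with the observation that restriction maps on characteristic monoids factor through the stalk at the deepest stratum. By contrast, full faithfulness is essentially formal once Proposition~\ref{fireworks} and Lemma~\ref{lem:isom-ext} are in hand.
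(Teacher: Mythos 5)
Your overall architecture is sound and, at its core, coincides with the paper's: both arguments localize using the fact that source and target are stacks, choose a chart of $\overline M_S$ by $\overline M_{S,s}$, use it to lift the fiber $\Gamma_s$ to a tropical curve metrized by $\Gamma(U,\overline M_U)$, and lean on Lemma~\ref{lem:isom-ext} to control isomorphisms. What you do differently is to separate full faithfulness out as an explicit Isom-sheaf computation on stalks, combining Proposition~\ref{fireworks} (local finite presentation, so $\varinjlim_U \calM_{g,n}^{\trop}(\Gamma(U,\overline M_U)) \simeq \calM_{g,n}^{\trop}(\overline M_{S,s})$) with Lemma~\ref{lem:isom-ext} (stalks of the target Isom sheaf are isomorphisms of fibers). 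That computation is correct and makes explicit something the paper compresses: the paper instead proves in one stroke that each $\Gamma$ admits, locally, a lift that is unique up to unique isomorphism, delegating all uniqueness to Lemma~\ref{lem:isom-ext}.

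There is, however, a genuine error in your essential-surjectivity step: the claim that \emph{every} geometric point $t$ of $U$ admits a specialization $t \leadsto z$ with $z \in p^{-1}Z$ is false. Take $U = \mathbb A^1$ with the divisorial logarithmic structure at the origin, so $p^{-1}Z$ is the origin; a closed geometric point $t$ in the open stratum specializes to nothing (its closure is itself), so the restriction $\Gamma(U,\overline M_U) \to \overline M_{U,t}$ does not factor through $\overline M_{U,z}$, and your direct identification $\Phi(\widetilde\Gamma)_t \cong \Gamma_t$ breaks down at exactly such points. What is true --- and what the proof of Lemma~\ref{lem:isom-ext} actually establishes --- is only that each stratum $W$ contains \emph{some} point $w$ with a specialization $w \leadsto z$ into $p^{-1}Z$; one then propagates the isomorphism along $W$ using local constancy of both families on strata, with uniqueness guaranteeing independence of choices and compatibility with further specializations. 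Rather than redoing this delicate bookkeeping (and doing it incorrectly), the efficient repair is the paper's move: since the composite $\overline M_{S,s} \to \Gamma(U,\overline M_U) \to \overline M_{U,s}$ is the identity, your $\widetilde\Gamma$ satisfies $\Phi(\widetilde\Gamma)_s = \Gamma_s$ on the nose, and Lemma~\ref{lem:isom-ext} then yields a unique extension of this isomorphism to an \'etale neighborhood of $s$, which is exactly local essential surjectivity. Your closing remark that ``the specialization maps match as well'' is precisely the nontrivial content you cannot wave through; it is the substance of Lemma~\ref{lem:isom-ext} and should be cited, not reproved via the false factorization claim.
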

\begin{proof}
We fix $\Gamma \in \widetilde{\mathcal M}_{g,n}^{\trop}(S)$ and argue that, up to unique isomorphism, there is a unique lift of $\Gamma$ to $a^\ast \mathcal M_{g,n}^{\trop}(S)$.  Since both $a^\ast \mathcal M_{g,n}^{\trop}$ and $\widetilde{\mathcal M}_{g,n}^{\trop}$ are stacks, this is a local assertion on $S$.  We can therefore localize freely in $S$ and so can  assume that there is a geometric point $s$ and a chart of $\overline M_S$ by $\overline M_{S,s}$.  Using the map
\begin{equation*}
\overline M_{S,s} \rightarrow \Gamma(S, \overline M_S)
\end{equation*}
we extend $\Gamma_s$ to $\Gamma' \in \mathcal M_{g,n}^{\trop}\bigl(\Gamma(S, \overline M_S)\bigr)$.  Note that if $\Gamma$ is to lift to any object of $\mathcal M_{g,n}^{\trop}\bigl(\Gamma(S, \overline M_S)\bigr)$ it must be to $\Gamma'$, as the composition
\begin{equation*}
\mathcal M_{g,n}^{\trop}\bigl(\Gamma(S, \overline M_S)\bigr) \rightarrow \widetilde{\mathcal M}_{g,n}^{\trop}(S) \rightarrow \widetilde{\mathcal M}_{g,n}^{\trop}(s) = \mathcal M_{g,n}^{\trop}\bigl( \overline M_{S,s}\bigr)
\end{equation*}
is an isomorphism.  Let $\Gamma''$ be the image of $\Gamma'$ in $\widetilde{\mathcal M}^{\trop}_{g,n}(S)$.  Then by Lemma~\ref{lem:isom-ext}, there is a unique extension of the isomorphism $\Gamma''_s \simeq \Gamma_s$ to an \'etale neighborhood of $s$, which is precisely what is needed.
\end{proof}

\begin{corollary} \label{cor:rep}
The stack $\widetilde{\mathcal M}_{g,n}^{\trop}$ is representable by an algebraic stack of finite presentation (i.e., locally of finite presentation, quasicompact, and quasiseparated) over $k$ with a logarithmic structure that is logarithmically \'etale over $k$.  
\end{corollary}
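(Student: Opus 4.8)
The plan is to deduce the corollary by transporting the structural properties of $a^\ast \mathcal M_{g,n}^{\trop}$ across the isomorphism $\Phi$ just established. By the preceding lemma, $\Phi$ identifies $\widetilde{\mathcal M}_{g,n}^{\trop}$ with $a^\ast \mathcal M_{g,n}^{\trop}$ as stacks over logarithmic schemes locally of finite type, so it suffices to verify the asserted properties for $a^\ast \mathcal M_{g,n}^{\trop}$. By Theorem~\ref{thm_geometric}, the tropical moduli stack $\mathcal M_{g,n}^{\trop}$ is a cone stack, and Corollary~\ref{cor:a*} then shows that $a^\ast \mathcal M_{g,n}^{\trop}$ is representable by an algebraic stack with a logarithmic structure that is locally of finite presentation (in the sense of that corollary: locally of finite type and quasiseparated). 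This already yields all of the desired properties except quasicompactness and logarithmic étaleness.

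For quasicompactness I would invoke the second assertion of Corollary~\ref{cor:a*}, which requires exhibiting a strict cover of $\mathcal M_{g,n}^{\trop}$ by finitely many cones. This is precisely what the proof of Theorem~\ref{thm_geometric} furnishes: taking $U = \coprod_G U_G$ with $G$ ranging over the finitely many isomorphism classes of stable, weighted, $n$-marked graphs of genus $g$ that are maximal with respect to edge contraction, Remark~\ref{rem:maximalonly} guarantees that $U \to \mathcal M_{g,n}^{\trop}$ is surjective, while Lemma~\ref{lemma_atlas=strict} guarantees that it is strict. Since each $U_G \cong \sigma_G = \R_{\geq 0}^{E(G)}$ is a single cone and there are only finitely many such $G$, this exhibits $\mathcal M_{g,n}^{\trop}$ as a cone stack with a strict cover by finitely many cones. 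Hence $a^\ast \mathcal M_{g,n}^{\trop}$ is quasicompact, and combined with the above it is of finite presentation over $k$.

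For logarithmic étaleness over $k$, I would pass through the theory of Artin fans: by Theorem~\ref{thm:artin-fans} the functor $a^\ast$ carries the cone stack $\mathcal M_{g,n}^{\trop}$ to an Artin fan, and Lemma~\ref{lemma_logetale} asserts that every Artin fan is logarithmically étale over the base. Transporting these properties back along $\Phi$ completes the argument. The proof is thus essentially an assembly of the results already established, and the only point requiring genuine verification rather than citation is the finiteness of the combinatorial cover used for quasicompactness; but this reduces immediately to the standard finiteness of the set of isomorphism classes of stable weighted marked graphs of fixed genus $g$ and number of markings $n$, which is already implicit in the construction of the atlas in Section~\ref{section_geometrization}.
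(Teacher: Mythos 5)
Your overall route is the paper's: identify $\widetilde{\calM}_{g,n}^{\trop}$ with $a^\ast\calM_{g,n}^{\trop}$ via $\Phi$, get algebraicity and local finite presentation from Corollary~\ref{cor:a*}, get quasicompactness from the finite strict cover $U=\coprod_G U_G$ by cones (finitely many isomorphism classes of stable weighted marked graphs of genus $g$ with $n$ legs), and get logarithmic \'etaleness from the Artin-fan results (Lemma~\ref{lemma_logetale}). All of that matches the paper's proof.

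However, there is a genuine gap at quasiseparation. You dispose of it by reading it off from the parenthetical in Corollary~\ref{cor:a*}, but that cannot carry the weight: quasicompactness of the diagonal is not a strict-\'etale-local property, and the proof of Corollary~\ref{cor:a*} only produces an \'etale cover by Artin cones, which does not bound the diagonal. Indeed, quasiseparation \emph{fails} for general cone stacks and Artin fans: take the zero cone with a trivial action of an infinite discrete group $G$; this is a perfectly good cone stack, and $a^\ast$ of it is $BG$ with trivial logarithmic structure, whose diagonal has fiber $G$ and is therefore not quasicompact. So quasiseparation of $a^\ast\calM_{g,n}^{\trop}$ is a special finiteness property of this particular moduli problem, and the paper proves it separately: given $f,g\colon Z\rightrightarrows \widetilde{\calM}_{g,n}^{\trop}$ with $Z$ quasicompact, one may locally factor $f$ and $g$ through Artin cones $\mathcal A_\sigma$ and $\mathcal A_\tau$, so the equalizer is pulled back from $\mathcal A_\sigma\times_{\widetilde{\calM}_{g,n}^{\trop}}\mathcal A_\tau = a^\ast\bigl(\sigma\times_{\calM_{g,n}^{\trop}}\tau\bigr)$, and the crucial input is Lemma~\ref{lemma_atlas=strict}: the fiber product $\sigma\times_{\calM_{g,n}^{\trop}}\tau$ is a cone complex with \emph{finitely many} cones (finitely many faces, finitely many edge contractions, finite graph automorphisms), so $\mathcal A_\sigma\times_{\widetilde{\calM}_{g,n}^{\trop}}\mathcal A_\tau\to\mathcal A_\sigma\times\mathcal A_\tau$ is quasicompact. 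Your proof should replace the appeal to Corollary~\ref{cor:a*} for this point with an argument of this kind, invoking the quasicompactness clause of Lemma~\ref{lemma_atlas=strict}, which you cite only for strictness.
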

\begin{proof}
Since $\widetilde{\mathcal M}_{g,n}^{\trop} \simeq a^\ast \mathcal M_{g,n}^{\trop}$, it is algebraic and locally of finite presentation.  We only need to verify that it is quasicompact and quasiseparated.  For the former, we note that $\mathcal M_{g,n}^{\trop}$ can be covered by finitely many cones.  For quasiseparation, we must show that the equalizer of a pair of map $f, g : Z \rightarrow \widetilde{\mathcal M}_{g,n}^{\trop}$ is quasicompact if $Z$ is quasicompact.  This is a local assertion on $Z$, so we can assume that there are cones $\sigma$ and $\tau$ and maps $\sigma \rightarrow \mathcal M_{g,n}^{\trop}$ and $\tau \rightarrow \mathcal M_{g,n}^{\trop}$ such that $f$ and $g$ factor through $\mathcal A_\sigma \rightarrow \widetilde{\mathcal M}_{g,n}^{\trop}$ and $\mathcal A_\tau \rightarrow \widetilde{\mathcal M}_{g,n}^{\trop}$, respectively.  Therefore the equalizer is the pullback of $\displaystyle \mathcal A_\sigma \mathop\times_{\widetilde{\mathcal M}_{g,n}^{\trop}} \mathcal A_\tau = a^\ast \bigl( \sigma \mathop\times_{\mathcal M_{g,n}^{\trop}} \tau \bigr)$ along the map $Z \rightarrow \mathcal A_\sigma \times \mathcal A_\tau$.  But by $\displaystyle \sigma \mathop\times_{\mathcal M_{g,n}^{\trop}} \tau$ has a finite cover by cones by Lemma~\ref{lemma_atlas=strict}, so the map $\displaystyle \mathcal A_\sigma \mathop\times_{\widetilde{\mathcal M}_{g,n}^{\trop}} \mathcal A_\tau \rightarrow \mathcal A_\sigma \times \mathcal A_\tau$ is quasicompact, as required.  
\end{proof}

\subsection{The tropicalization map for the moduli space of curves}
\label{sec:log-to-trop}

\begin{corollary} \label{cor:ext}
The morphism $\trop_{g,n}\colon \mathcal M_{g,n}^{\log} \rightarrow a^\ast \mathcal M_{g,n}^{\trop}$ extends to all logarithmic schemes. 
\end{corollary}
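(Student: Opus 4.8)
The plan is to deduce the extension purely formally from the fact that both the source and target are locally of finite presentation, together with the identification on finite-type bases furnished by the preceding lemma. First I would record that the target $a^\ast \mathcal M_{g,n}^{\trop}$ is already defined on all of $\mathbf{LogSch}$ by Definition~\ref{def:a*}, and that by Corollary~\ref{cor:a*} it is an algebraic stack with logarithmic structure that is \emph{locally of finite presentation}. On the source side, $\mathcal M_{g,n}^{\log}$ has underlying algebraic stack $\overline{\mathcal M}_{g,n}$ (with its natural logarithmic structure), which is of finite type over $k$, so $\mathcal M_{g,n}^{\log}$ is likewise locally of finite presentation as a stack on $\mathbf{LogSch}$. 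Thus the problem is to extend a morphism between two locally finitely presented stacks from the full subcategory of logarithmic schemes locally of finite type to all logarithmic schemes.

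Next I would invoke the standard limit formalism, adapted to logarithmic schemes. Concretely: every affine logarithmic scheme $S = (\Spec A, M_S)$ may be written as a cofiltered limit $S = \varprojlim_i S_i$ of logarithmic schemes of finite type over $k$ with affine, strict transition maps, obtained by writing $A$ as a filtered colimit of finitely generated $k$-subalgebras and spreading out a (quasicoherent) chart of $M_S$ over a suitable $S_i$. Because $\mathcal M_{g,n}^{\log}$ is locally of finite presentation, any logarithmic curve $X/S$ descends to a logarithmic curve $X_i/S_i$ for some index $i$; I would then set $\trop_{g,n}(X/S)$ to be the pullback along $S \to S_i$ of $\trop_{g,n}(X_i/S_i) \in a^\ast\mathcal M_{g,n}^{\trop}(S_i)$, where the latter is defined on finite-type bases by the previous lemma via the isomorphism $\widetilde{\mathcal M}_{g,n}^{\trop} \simeq a^\ast\mathcal M_{g,n}^{\trop}$.

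The remaining work is to check that this assignment is independent of the chosen model and that the locally-defined morphisms glue. Both follow from finite presentation of the target: two descents $X_i/S_i$ and $X_j/S_j$ of the same $X/S$ agree after pulling back to a common refinement, so their images in $a^\ast\mathcal M_{g,n}^{\trop}(S)=\varinjlim_i a^\ast\mathcal M_{g,n}^{\trop}(S_i)$ are canonically identified; since both stacks satisfy strict-\'etale descent, the constructions over an affine cover of a general $S$ glue to a global morphism, the coherence of the gluing $2$-isomorphisms being guaranteed by the uniqueness built into the filtered colimit. The construction visibly restricts to the given $\trop_{g,n}$ on finite-type bases, which also secures uniqueness of the extension.

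The main obstacle I anticipate is bookkeeping rather than conceptual: one must set up the logarithmic version of absolute Noetherian approximation carefully, in particular spreading out the chart of the logarithmic structure and the logarithmic curve compatibly, and then verify that the resulting transition and descent $2$-isomorphisms satisfy the cocycle condition. Once finite presentation of both stacks is in hand --- which is exactly what Corollaries~\ref{cor:a*} and~\ref{cor:rep} provide on the tropical side --- this is the routine, if slightly technical, extension-by-approximation argument, and no genuinely new input about tropical curves is required.
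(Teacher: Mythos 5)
Your argument is correct and is essentially the paper's own proof: the paper likewise reduces to affine $S$ with a global chart, writes $S$ as a cofiltered limit of logarithmic schemes $S_i$ of finite type, and defines the extension via local finite presentation of both sides through the chain $\mathcal M_{g,n}^{\log}(S) \xleftarrow{\sim} \varinjlim \mathcal M_{g,n}^{\log}(S_i) \rightarrow \varinjlim a^\ast\mathcal M_{g,n}^{\trop}(S_i) \xrightarrow{\sim} a^\ast\mathcal M_{g,n}^{\trop}(S)$, omitting exactly the naturality bookkeeping you spell out. One minor caveat: your claim that the transition maps can be taken \emph{strict} fails when the chart monoid of $M_S$ is not finitely generated (one must approximate the monoid along with the ring), but strictness is never actually used, so nothing in the argument breaks.
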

\begin{proof}
It is sufficient to describe the map 
\begin{equation*}
\mathcal M_{g,n}^{\log}(S) \longrightarrow a^\ast \mathcal M_{g,n}^{\trop}(S)
\end{equation*}
locally on $S$, since $a^\ast \mathcal M_{g,n}^{\trop}$ is a stack.  We can therefore assume that the underlying scheme of $S$ is affine and that the logarithmic structure has a global chart.  In that case, $S$ is the cofiltered limit of affine morphisms of logarithmic schemes of finite type $S_i$.  As $\mathcal M_{g,n}^{\log}$ is  locally of finite presentation, we obtain
\begin{equation*}
\mathcal M_{g,n}^{\log}(S) \xleftarrow{\sim} \varinjlim \mathcal M_{g,n}^{\log}(S_i) \rightarrow \varinjlim a^\ast \mathcal M_{g,n}^{\trop}(S_i) \xrightarrow{\sim}a^\ast\mathcal M_{g,n}^{\trop}(S) .
\end{equation*}
We omit the verification of the naturality of the extension.
\end{proof}

\begin{theorem} \label{thm:strictsmooth}
The tropicalization map $\trop_{g,n}\colon\mathcal M_{g,n}^{\log} \rightarrow a^\ast \mathcal M_{g,n}^{\trop}$ is strict, smooth, and surjective.
\end{theorem}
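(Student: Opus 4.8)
The plan is to treat the three assertions separately, relying on the isomorphism $\widetilde{\mathcal M}_{g,n}^{\trop}\simeq a^\ast\mathcal M_{g,n}^{\trop}$ established above, together with the fact that $a^\ast\mathcal M_{g,n}^{\trop}$ is an Artin fan (Theorem~\ref{thm:artin-fans}) and is therefore logarithmically \'etale over $k$ (Lemma~\ref{lemma_logetale}). \textbf{Strictness.} Strictness is a statement about characteristic monoids and may be checked at geometric points, so I would fix a geometric point of $\mathcal M_{g,n}^{\log}$, that is, a logarithmic curve $X$ over a logarithmic point $S$ with dual graph $G$ and node (equivalently edge) set $E=E(G)$. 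By Kato's local structure theorem (Theorem~\ref{thm:log-curve-local}), the characteristic monoid of $\mathcal M_{g,n}^{\log}$ at $X$ is freely generated by the nodes, namely $\mathbb N^{E}$, with one generator $[e]$ per smoothing parameter $\delta_e$; this amounts to identifying the minimal logarithmic structure on $\mathcal M_{g,n}^{\log}$ with the normal-crossings boundary structure of $\overline{\mathcal M}_{g,n}$. On the target, the image point lies in the Artin cone $\mathcal A_{\sigma_G}$ with $\sigma_G=\mathbb R_{\ge0}^{E}$, whose characteristic monoid is $S_{\sigma_G}=\mathbb N^{E}$. Unwinding Definition~\ref{def_tropicalization}, the map on characteristic monoids induced by $\trop_{g,n}$ sends $[e]$ to the edge length $d(e)=\delta_e$, hence is the identity $\mathbb N^{E}\to\mathbb N^{E}$, so $\trop_{g,n}$ is strict.

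\textbf{Smoothness.} Since $\trop_{g,n}$ is strict, its logarithmic smoothness is equivalent to smoothness of the underlying morphism of algebraic stacks, so it suffices to prove that $\trop_{g,n}$ is logarithmically smooth. Here I would invoke the cancellation property for log smooth morphisms: if $g$ is logarithmically \'etale and $g\circ f$ is logarithmically smooth, then $f$ is logarithmically smooth. Applying this with $f=\trop_{g,n}$ and $g\colon a^\ast\mathcal M_{g,n}^{\trop}\to\Spec k$, the composite $g\circ f$ is the structure morphism $\mathcal M_{g,n}^{\log}\to\Spec k$, which is logarithmically smooth because the moduli stack of logarithmic curves is logarithmically smooth over $k$, while $g$ is logarithmically \'etale by Lemma~\ref{lemma_logetale}. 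Thus $\trop_{g,n}$ is logarithmically smooth, and by strictness it is smooth.

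\textbf{Surjectivity.} Since $a^\ast\mathcal M_{g,n}^{\trop}$ is covered by the Artin cones $\mathcal A_{\sigma_G}$ as $G$ ranges over stable weighted $n$-marked graphs of genus $g$, it suffices to produce, for each such $G$ and each tropical curve $\Gamma$ with $\GG(\Gamma)=G$ and edge lengths in a sharp monoid $P$, a logarithmic curve tropicalizing to $\Gamma$. I would construct it by hand: take the nodal curve $X_0$ over $\Spec k$ whose components are smooth curves of genus $h(v)$ glued according to $G$, with marked points prescribed by the legs, equip the base with the standard logarithmic point $(\Spec k, k^\ast\oplus P)$, and declare the smoothing parameter at the node $x_e$ to be $\delta_e=d(e)\in P$. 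Kato's theorem, read in reverse, guarantees that these local models assemble into a logarithmic curve whose dual tropical curve is exactly $\Gamma$ (Definition~\ref{def_tropicalization}). Hence every point of $a^\ast\mathcal M_{g,n}^{\trop}$ lies in the image, and $\trop_{g,n}$ is surjective.

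\textbf{Main obstacle.} I expect the strictness computation to be the genuinely delicate step: one must pin down the characteristic monoid of $\mathcal M_{g,n}^{\log}$ and verify that tropicalization identifies smoothing parameters with edge lengths \emph{compatibly under generization}, so that the identification holds not merely pointwise but as an isomorphism of characteristic sheaves. Once this is in place, smoothness and surjectivity reduce cleanly to the formal properties of Artin fans and to the explicit realization of abstract tropical curves recorded earlier in the paper.
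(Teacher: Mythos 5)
Your surjectivity argument and your treatment of smoothness essentially coincide with the paper's. The paper likewise observes that, since $a^\ast\calM_{g,n}^{\trop}$ is logarithmically \'etale over $k$ (Lemma~\ref{lemma_logetale}), $\trop_{g,n}$ is logarithmically smooth if and only if $\calM_{g,n}^{\log}$ is logarithmically smooth over $k$ --- your cancellation argument is exactly this reduction --- and then it actually \emph{proves} that last fact via the infinitesimal criterion and the vanishing of $H^2(X, T_X^{\log})$, where you merely assert it; finally both arguments deduce ordinary smoothness from strictness plus logarithmic smoothness via \cite[Proposition 3.8]{Kato_logstr}. For surjectivity the paper is even terser than you are: it simply notes that any stable tropical curve with lengths in a sharp monoid $P$ can be realized by a logarithmic curve over $(\Spec k, k^\ast\oplus P)$, which is the construction you spell out.

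The genuine gap is in your strictness step, and it is exactly where you locate your ``main obstacle.'' You assert that the characteristic monoid of $\calM_{g,n}^{\log}$ at a geometric point $[X]$ is $\N^{E(G)}$, freely generated by the smoothing parameters, citing Theorem~\ref{thm:log-curve-local}. But that theorem describes the local structure of a logarithmic curve $X$ over a base $S$; it says nothing about the logarithmic structure of the \emph{moduli stack}. The statement you need --- that $\calM_{g,n}^{\log}$ is representable by $\Mbar_{g,n}$ with its normal-crossings boundary structure, equivalently that every logarithmic curve is uniquely pulled back from a ``minimal'' one whose base characteristic monoid is free on the nodes --- is F.~Kato's representability theorem, and it is essentially \emph{equivalent} to the strictness you are trying to prove: strictness of $\trop_{g,n}$ says precisely that the logarithmic structure of $\calM_{g,n}^{\log}$ is pulled back from $a^\ast\calM_{g,n}^{\trop}$, whose characteristic monoids are the free monoids $\N^{E(G)}$ by Lemma~\ref{lemma_atlas=strict}. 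In the paper's framework, where $\calM_{g,n}^{\log}$ is a priori only a stack over logarithmic schemes, strictness is moreover \emph{defined} by the unique-lifting property of diagram~\eqref{eqn:strict-lift} for a morphism of logarithmic structures $M'_S \to M_S$ on a fixed scheme, so a pointwise computation of characteristic monoids is not even well posed until one knows the source is a logarithmic algebraic stack (for the target this is Corollary~\ref{cor:rep}, but for the source it is the unproven input). The paper's proof closes this circle directly: given $(X,M_X)$ over $(S,M_S)$ and a tropical curve $\Gamma'$ over $(S,M'_S)$ inducing the tropicalization of $X$, it constructs $M'_X$ \'etale-locally on $X$, case by case at smooth points, marked points, and nodes, taking at a node $\overline N = (\overline M'_{S,\pi(x)} \oplus \N\alpha' \oplus \N\beta')/(\alpha'+\beta'=\delta')$ with $\delta'$ the new edge length, and verifies that the result is a logarithmic curve over $(S,M'_S)$ inducing both $(X,M_X)$ and $\Gamma'$. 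That hands-on construction is exactly the minimality statement your computation presupposes. Your proof is repairable in two ways: cite F.~Kato's theorem on the moduli of log smooth curves as an external input (after which your stalkwise identification does suffice, since a morphism of characteristic sheaves that is an isomorphism on stalks at geometric points is an isomorphism, resolving your generization worry), or carry out the lifting construction, which is what the paper does and what keeps the argument self-contained.
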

\begin{proof}
Given a stable tropical curve $\Gamma$ (of genus $g$ with $n$ marked legs) and edge lengths in a sharp monoid $P$, we can always find a logarithmic curve over $S=\big(\Spec k,k^\ast\oplus P\big)$ whose dual tropical curve is $\Gamma$. This shows that $\trop_{g,n}$ is surjective.

As $a^\ast \mathcal M_{g,n}^{\trop}$ is logarithmically \'etale over the base,  $\mathcal M_{g,n}^{\log}$ is logarithmically smooth over $a^\ast \mathcal M_{g,n}^{\trop}$ if and only if it is logarithmically smooth over the base.  Since $\mathcal M_{g,n}^{\log}$ is locally of finite presentation, it is sufficient to invoke the infinitesimal lifting criterion, and obstructions to infinitesimal deformations of a logarithmic curve $X$ are classified by $H^2(X, T_X^{\log})$, which vanishes because $X$ is a curve and $T_X^{\log}$ is quasicoherent.
As soon as we show that $\trop_{g,n}$ is strict, we will also know that it is smooth (in the usual, non-logarithmic sense), since strict logarithmically smooth morphisms are smooth \cite[Proposition 3.8]{Kato_logstr}.

To check that $\mathcal M_{g,n}^{\log}$ is strict over $a^\ast \mathcal M_{g,n}^{\trop}$, we must show that if $S$ is a scheme with a morphism of logarithmic structures $M'_S \to M_S$, then every diagram of solid arrows, as depicted in~\eqref{eqn:strict-lift}, has a unique lift:
\begin{equation} \label{eqn:strict-lift} \vcenter{\xymatrix{
(S, M_S) \ar[r] \ar[d] & \mathcal M_{g,n}^{\log} \ar[d] \\
(S, M'_S) \ar[r] \ar@{-->}[ur] & a^\ast \mathcal M_{g,n}^{\trop}
}} \end{equation}
We therefore assume that we have a logarithmic curve $(X, M_X)$ over $(S, M_S)$, a tropical curve $\Gamma'$ over $(S, M'_S)$ such that the family of tropical curves associated to $(X, M_X)$ is the family $\Gamma$ induced from $\Gamma'$ by the morphism of logarithmic structures $M'_S \rightarrow M_S$.  We wish to show that there is a unique logarithmic curve $(X, M'_X)$ over $(S, M'_S)$ inducing both $(X, M_X)$ and $\Gamma'$.  

We will construct the logarithmic structure $M'_X$ \'etale-locally on $X$.  Suppose that $x$ is a geometric point of $X$ and choose an \'etale neighborhood $U$ of $x$ such that one of the situations enumerated in Theorem~\ref{thm:log-curve-local} applies to $M_X$ and such that $\Gamma(U, \overline M_X) \rightarrow \overline M_{X,x}$ is a bijection.  Case by case, we define:
\begin{enumerate}[(i)]
\item $\overline N = \overline M'_{S,\pi(x)}$ if $x$ is a smooth point of $X$;
\item $\overline N = \overline M'_{S,\pi(x)} \oplus \mathbb N$ if $x$ is a marked point of $X$;
\item $\overline N = (\overline M'_{S,\pi(x)} \oplus \mathbb N\alpha' \oplus \mathbb N\beta') / (\alpha' + \beta' = \delta')$ if $x$ is a node of $X$ and $\delta'$ is the length of the corresponding edge in $\Gamma'$.
\end{enumerate}
In all cases, there is a canonical map from $\overline N$ to $\overline M_X$ over $U$, by virtue of the local structure of $\overline M_X$.  This is immediate in the first two cases, but requires some justification in the third:  if $\delta$ denotes the length of the edge corresponding to $x$ in $\Gamma$ then $\overline M_{X,x} \simeq (\overline M_{S,\pi(x)} \oplus \overline{\mathbb N} \alpha \oplus \overline{\mathbb N}\beta) / (\alpha + \beta = \delta)$ and the map $\overline M'_{S,\pi(x)} \rightarrow \overline M_{S,\pi(x)}$ carries $\delta'$ to $\delta$.  The elements $\alpha$ and $\beta$ are associated with branches of the node $x$ and therefore we choose that $\alpha'$ and $\beta'$ should be associated with the same branches, there is a unique map $\overline N \rightarrow \overline M_{X,x}$ sending $\alpha'$ to $\alpha$ and $\beta'$ to $\beta$.

Since $\Gamma(U, \overline M_X) \rightarrow \overline M_{X,x}$ is a bijection, we have a map $\overline N \rightarrow \overline M_U$, where we view $\overline N$ as a constant sheaf on $U$ and we write $\overline M_U$ for the restriction of $\overline M_X$ to $U$.  Let $N = M_U \mathbin\times_{\overline M_U} \overline N$.  Then we have a monoid homomorphism $N \rightarrow M_U \rightarrow \mathcal O_U$ and we take $M'_X$ to be the associated logarithmic structure.

To complete the construction of $M'_X$, we only need to see that these constructions are compatible.  But if we restrict $M'_U$ to the complement of the marked points and nodes, we get $\pi^\ast M'_S$, as required.  Finally, it is immediate from Theorem~\ref{thm:log-curve-local} that $(X, M'_X)$ is a logarithmic curve, and it is immediate from the construction that $(X, M'_X)$ induces both $(X, M_X)$ and $\Gamma'$ via the natural maps.
\end{proof}

\subsection{Non-Archimedean tropicalization -- revisited}\label{section_nonArchtrop}

Let $\calX$ be an algebraic stack that is locally of finite type over a trivially valued field $k$. Recall from \cite[Section 5]{Ulirsch_nonArchArtin} that Thuillier's generic fiber functor $(.)^\beth$ (as defined in \cite[Section 1]{Thuillier_toroidal}) extends to a functor that sends $\calX$ to a (strict) non-Archimedean stack $\calX^\beth$. The underlying topological space of $\calX^\beth$ may be constructed as follows:

A \emph{point} of $\calX^\beth$ is a morphism $\Spec R\rightarrow \calX$ from a valuation ring $R$ (of rank one) extending the base field $k$. One may think of $R$ as being, e.g., the ring $k[[t]]$ of formal power series, the ring $k[[t^\Q]]$ of Puiseux series, or the ring $k[[t^\R]]$ of Hahn series. Two points $\Spec R\rightarrow \calX$ and $\Spec R'\rightarrow \calX$ are said to be equivalent if there is a valuation ring $R''$ extending both $R$ and $R'$ that makes the diagram
\begin{equation*}\begin{CD}
\Spec R'' @>>> \Spec R' \\
@VVV @VVV\\
\Spec R @>>>  \calX 
\end{CD}\end{equation*}
commute in the sense of $2$-categories. The underlying topological space $\big\vert \calX^\beth\big\vert$ is the set of equivalence classes of points of $\calX^\beth$. The association $\calX\mapsto \big\vert \calX^\beth\big\vert$ is functorial and, whenever there is a smooth and surjective morphism $U\rightarrow \calX$ that is locally of finite type, the induced map $\big\vert U^\beth\big\vert\rightarrow \big\vert\calX^\beth\big\vert$ is a topological quotient map.

We  conclude this section with the proof of Corollary \ref{cor_trop=anal} from the introduction.

\begin{proof}[Proof of Corollary \ref{cor_trop=anal}]
Let $\Spec R\rightarrow a^\ast\calM_{g,n}^{trop}$ be a morphism from a valuation ring (endowed with its divisorial logarithmic structure as in Section \ref{sec:valuation} above) and suppose  that the morphism is logarithmic. This map corresponds to a family of tropical curves over $\Spec R$. The tropical curve in the generic fiber is trivial, and the edge lengths of the tropical curve $\Gamma$ in the special fiber are in the multiplicative monoid $R/R^\ast$. 

Given a non-Archimedean extension $R'$ of $R$, the induced map $\Spec R'\rightarrow a^\ast \calM_{g,n}^{trop}$ is also logarithmic and gives rise to a tropical curve $\Gamma'$ that agrees with $\Gamma$ on the level of underlying graphs, but its edge lengths formally lie in the multiplicative monoid $R'/(R')^\ast$. Since $R'$ extends the valuation on $R$, the valuations of the edge lengths on $\Gamma$ and $\Gamma'$ are equal as elements in $\R_{\geq 0}$. So we obtain a well-defined element $[\Gamma]$ in the set-theoretic moduli space $M_{g,n}^{trop}$ of stable $n$-marked tropical curves of genus $g$. 

Conversely, given an element $[\Gamma]$ in $M_{g,n}^{trop}$ we may always find a family of tropical curves over a valuation ring $R$ whose special fiber is $\Gamma$. Finally, suppose we are given two families of tropical curves $\Gamma$ and $\Gamma'$ over valuation rings $R'$ and $R$, whose classes $[\Gamma]$ and $[\Gamma']$ in $M_{g,n}^{trop}$ are equal. Let $R''$ be the valuation ring of a common non-Archimedean extension of the quotient fields of both $R$ and $R'$.  
Since $[\Gamma]=[\Gamma']$ in $M_{g,n}^{trop}$, the base changes of $\Gamma$ and $\Gamma'$ to $R''$ (with edge lengths in $R''/(R'')^\ast$) are isomorphic and thus $\Gamma$ and $\Gamma'$ represent the same point in $a^\ast \calM_{g,n}^{trop}$.

This shows that there is a natural injection from  $M_{g,n}^{trop}$ into a the open and dense subset of $a^\ast \calM_{g,n}^{trop}$, whose points can be represented by a logarithmic morphism $\Spec R\rightarrow a^\ast \calM_{g,n}^{trop}$. Let us now show that the diagram  
\begin{equation*}\begin{CD}
\calM_{g,n}^{an}@>\trop_{g,n}^{\ACP}>>M_{g,n}^{trop}\\
@V\subseteq VV @VV\subseteq V\\
\calMbar_{g,n}^\beth @>\trop_{g,n}^\beth>>  \big(a^\ast \calM_{g,n}^{trop}\big)^\beth
\end{CD}\end{equation*}
commutes, i.e., that the restriction of the analytic map  $\trop_{g,n}^\beth$ to $\calM_{g,n}^{an}$ is nothing but the non-Archimedean tropicalization map from \cite{AbramovichCaporasoPayne_tropicalmoduli}. A point $x$ in $\calM_{g,n}^{an}$ is represented by a morphism $\Spec K\rightarrow \calM_{g,n}$ from a non-Archimedean extension $K$ of $k$ and, since  $\calMbar_{g,n}$ is proper, we may extend this to a logarithmic map $\Spec R\rightarrow\calMbar_{g,n}$ from the valuation ring $R$ of a finite extension of $K$. It is a consequence of the description of  $\trop_{g,n}$ in Section \ref{sec:log-curves-defs}, i.e., of Theorem \ref{thm_tropicalization}, that the image of this point in $(a^\ast\calM_{g,n}^{trop}\big)^\beth$ under $\trop_{g,n}^\beth$ is precisely the tropicalization of $x$ in the sense of \cite{AbramovichCaporasoPayne_tropicalmoduli}. 

Finally, since $\trop_{g,n}$ is smooth and surjective, it induces the quotient topology on $\big(a^\ast \calM_{g,n}^{trop}\big)^\beth$. Moreover, as the non-Archimedean tropicalization map $\trop_{g,n}^{\ACP}$ can be identified with a deformation retraction onto the non-Archimedean skeleton of $\calM_{g,n}^{an}$ by \cite[Theorem 1.2.1]{AbramovichCaporasoPayne_tropicalmoduli}, it is also a topological quotient map and therefore the injection between $M_{g,n}^{trop}\hookrightarrow\big(a^\ast \calM_{g,n}^{trop}\big)^\beth$ is continuous. 
\end{proof}

\section{Tautological morphisms in logarithmic geometry}
\label{sec:tropicalization}

\subsection{Logarithmic clutching}\label{section_clutching}

In \cite{Knudsen_projectivityII}, Knudsen introduced clutching maps whose images define the boundary stratification in the moduli spaces of curves:  given $g= g_1+g_2$ and $[n] = I_1 \cup I_2$  a partition of the index set in two disjoint subsets with $2g_i-2+|I_i|\geq 0$, we have 
\begin{gather*}
\delta_{g_1,I_1}: \calMbar_{g_1, I_1 \cup \{\star\}} \times \calMbar_{g_2, I_2 \cup \{\bullet\}} \rightarrow \calMbar_{g, n} \\
\delta_{irr}:\calMbar_{g, [n] \cup \{\star,\bullet \}} \rightarrow \calMbar_{g + 1,n} \ .
\end{gather*}
These morphisms do not extend to morphisms of logarithmic schemes, since the interior of the domain maps into the boundary of the codomain, and there are no morphisms from a scheme with trivial logarithmic structure to one with an everywhere nontrivial logarithmic structure.  In the logarithmic category, we  instead have correspondences:
\begin{equation} \xymatrix@!C=25pt{
& Q \ar[dr] \ar[dl] \\
\mathcal M^{\log}_{g_1, I_1 \cup \{\star\}} \times \mathcal M^{\log}_{g_2, I_2 \cup \{\bullet\}} & & \mathcal M^{\log}_{g,n}
} \qquad \xymatrix@!C=25pt{
& R \ar[dr] \ar[dl] \\
\mathcal M^{\log}_{g, [n] \cup \{\star,\bullet \}} & & \mathcal M^{\log}_{g+1,n}
}\label{logclutch}\end{equation}
At issue here is that, because logarithmic geometry synthesizes algebraic and tropical geometry, a logarithmic clutching map must simultaneously induce an algebraic clutching map and a tropical one.  We have seen in Section~\ref{section_clutching} that tropical clutching requires the specification of the length of the newly minted edge, so logarithmic clutching  requires an extra tropical parameter.  This parameter is manifested in an extra generator in the characteristic monoids $\overline M_Q$ and $\overline M_R$.  

However, in order for the morphisms $Q \to \calM_{g,n}^{\log}$ and $R \to \calM_{g+1,n}^{\log}$ to be well-defined, the logarithmic structures $M_Q$ and $M_R$ cannot be trivial extensions of $\calM_{g_1,I_1 \cup \{ \star \}}^{\log} \times \calM_{g_2,I_2\cup\{\bullet\}}^{\log}$ and $\calM_{g,[n]\cup\{\star,\bullet\}}^{\log}$, respectively.  
Every element of the characteristic monoid of a logarithmic scheme induces a line bundle and cosection on that scheme.  In the case of $Q$ and $R$, the extra parameter is the smoothing parameter of the new node, and therefore the associated line bundle is the conormal bundle $\mathbb L_\star \otimes \mathbb L_\bullet$ (here $\mathbb L_\star$ and $\mathbb L_\bullet$ denote the cotangent line bundles at the marked points) to its image. 

We define $Q$ and $R$ by pulling back the logarithmic structure from $\mathcal M^{\log}_{g,n}$ and $\mathcal M^{\log}_{g+1,n}$, respectively, via the classical (non-logarithmic) clutching maps $\delta_{g_1, I_1}$ and $\delta_{irr}$.

\begin{remark}
At present, we do not have a good modular description of $Q$ or $R$.  In principle, it should be possible to describe them as moduli spaces of logarithmic curves with a fixed tropicalization, but it is unclear how to describe the scheme structure on such a space in a modular way.
\end{remark}


By construction, the maps $Q \rightarrow \mathcal M^{\log}_{g_1, I_1 \cup \{\star\}} \times \mathcal M^{\log}_{g_2, I_2 \cup \{\bullet\}}$ and $R \rightarrow \mathcal M_{g,[n]\cup \{\star,\bullet\}}^{\log}$, which we  need to construct, are isomorphisms on the underlying algebraic stacks, but they are not strict.  We construct just the map $Q \rightarrow \mathcal M^{\log}_{g_1, I_1 \cup \{\star\}}$, since the remaining constructions are analogous.

A logarithmic morphism $S \rightarrow Q$ composes with $Q \rightarrow \mathcal M^{\log}_{g,n}$ to give a logarithmic curve $X$ over $S$.  Since the underlying algebraic stack of $Q$ is $\calMbar_{g_1, I_1 \cup \{\star\}} \times \calMbar_{g_2, I_2 \cup \{\bullet\}}$, the underlying curve $\underline X$ of $X$ comes with a splitting into a curve $\underline Y$ of genus $g_1$ (corresponding to the composition $\underline S \rightarrow \underline Q \rightarrow \overline {\mathcal M}_{g_1, I_1 \cup \{\star\}}$) and a curve of genus $g_2$.

We have a closed embedding $\underline Y \subseteq \underline X$ and by pullback, $\underline Y$ inherits a logarithmic structure from $X$.  However, this does not make $\underline Y$ into a logarithmic curve over $S$ because it is not logarithmically smooth at the preimage of the node.  To fix this, we modify the logarithmic structure.

Let $y \in \underline Y$ be the preimage of the node of $\underline X$ and let $s$ be its image in $S$.  By the local structure of a logarithmic scheme $\overline M_{X,y}$ is generated by $\overline M_{S,s}$ and two elements $\alpha$ and $\beta$ whose sum is an element of $\overline M_{S,s}$.  Each of $\alpha$ and $\beta$ corresponds to a $\mathcal O_{X,y}^\ast$-coset of $M_{X,y}$ and then generates an ideal in $\mathcal O_{X,y}$.  We choose the labelling of $\alpha$ and $\beta$ so that $\alpha$ is the one whose image under $M_{X,y} \rightarrow \mathcal O_{X,y} \rightarrow \mathcal O_{Y,y}$ generates the ideal of $y$.  Finally, we define $\overline M_{Y,y}$ to be the submonoid of $\overline M_{X,y}$ generated by $\overline M_{S,s}$ and $\alpha$.  The preimage of $\overline M_{Y,y}$ in $M_{X,y} \oplus_{\mathcal O_{X,y}^\ast} \mathcal O_{Y,y}^\ast$ determines a logarithmic structure $M_{Y,y}$ on $\underline Y$ near $y$.  We define $Y$ to be $\underline Y$, equipped with the logarithmic structure $M_{Y,y}$ near $y$ and with the pullback of $M_X$ away from $y$. 

\begin{lemma}
$Y$ is a logarithmic curve over $S$.
\end{lemma}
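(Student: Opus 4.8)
The plan is to verify that $Y \to S$ satisfies the local characterization of logarithmic curves provided by F.~Kato's Theorem~\ref{thm:log-curve-local}; since that theorem \emph{characterizes} logarithmic curves, it suffices to exhibit, étale-locally at every geometric point of $Y$, one of the three standard local models, after first noting that the underlying morphism $\underline Y \to \underline S$ is a family of stable marked curves (being pulled back from the universal curve over $\overline{\mathcal M}_{g_1, I_1 \cup \{\star\}}$), hence proper with geometrically connected fibers. Away from the distinguished point $y$ the logarithmic structure $M_Y$ is by construction the restriction of $M_X$, and $\underline Y$ is there an open part of the logarithmic curve $X$, so the models (i), (ii), (iii) carry over verbatim at the smooth points, the markings in $I_1$, and the internal nodes of $\underline Y$; integrality and saturatedness of $\pi^\ast M_S \to M_Y$ persist as well, since the monoid homomorphisms occurring in those models are integral and saturated. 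The entire content of the lemma is therefore the analysis at $y$.

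At $y$ I would record the local picture inherited from $X$: locally $\underline X = \Spec \mathcal O_S[x,y]/(xy-t)$ with $\overline M_{X,y} \simeq (\overline M_{S,s}\oplus \N\alpha\oplus\N\beta)/(\alpha+\beta=\delta)$, where $\varepsilon(\alpha)=x$, $\varepsilon(\beta)=y$, $\varepsilon(\delta)=t$. The labelling is fixed so that $\varepsilon(\alpha)=u$ generates the ideal of $y$ in $\mathcal O_{Y,y}$, while $\varepsilon(\beta)$ restricts to $0$ on the branch $\underline Y$; and $\overline M_{Y,y}$ is the submonoid of $\overline M_{X,y}$ generated by $\overline M_{S,s}$ and $\alpha$. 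The \textbf{key step} is to show that $\overline M_{Y,y}\cong \overline M_{S,s}\oplus\N$, freely generated by $\alpha$ over $\overline M_{S,s}$, which together with $\varepsilon(\alpha)=u$ places $(Y,M_Y)$ in the marked-point model (ii). Using the alternative description (iii$'$), the identification $\gamma + a\alpha + b\beta \mapsto (\gamma+a\delta,\gamma+b\delta)$ sends $\gamma + a\alpha \mapsto (\gamma+a\delta,\gamma)$, so the homomorphism $\overline M_{S,s}\oplus\N \to \overline M_{X,y}$, $(\gamma,a)\mapsto(\gamma+a\delta,\gamma)$, has image exactly $\overline M_{Y,y}$. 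Injectivity reduces to the implication $a\delta = a'\delta \Rightarrow a=a'$: at a genuine node $\delta \neq 0$, and since $\overline M_{S,s}$ is sharp, integral, and saturated, a relation $n\delta = 0$ would force $\delta$ to be a unit, hence $0$; cancellativity then gives $a=a'$, establishing freeness. I would then identify the preimage logarithmic structure $M_{Y,y}$ with the logarithmic structure associated to the evident chart $\overline M_{S,s}\oplus\N \to \mathcal O_{Y,y}$ sending $\alpha\mapsto u$, which exhibits model (ii).

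Finally I would check that this local modification glues with the pullback structure on the punctured neighbourhood of $y$: on $\underline Y\smallsetminus\{y\}$ the coordinate $u$ is invertible, so $\alpha$ generizes to $0$ and $\overline M_Y$ generizes to $\overline M_S$, which is precisely the generization behaviour at a marked point. This confirms that $M_Y$ is a well-defined fine and saturated logarithmic structure on $\underline Y$ and that $\star = y$ is genuinely a marking, completing the identification of $Y\to S$ with a logarithmic curve. The \textbf{main obstacle} is the freeness computation at $y$, namely ruling out any relation between $\alpha$ and $\overline M_{S,s}$; this is exactly where the hypotheses that $\delta$ is a nonzero smoothing parameter and that the characteristic monoids are sharp and saturated are indispensable.
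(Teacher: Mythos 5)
Your proof is correct and takes essentially the same route as the paper: reduce via F.~Kato's local characterization, note that everything is immediate away from $y$ since there the structure is restricted from the logarithmic curve $X$, and identify $\overline M_{Y,y}$ with $\overline M_{S,s}\oplus\N\alpha$ so that $y$ becomes a marked point in model (ii). The only difference is that you explicitly verify the freeness of the submonoid generated by $\overline M_{S,s}$ and $\alpha$ (using the description (iii$'$), the nonvanishing of $\delta$ at a genuine node, and sharpness plus integrality of $\overline M_{S,s}$), a step the paper's proof asserts without comment.
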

\begin{proof}
We know that $Y$ is connected and proper, so we only need to verify that it has the right local structure.  This is immediate away from $y$, since $X$ is a logarithmic curve over $S$.  At $y$, the characteristic monoid $\overline M_{Y,y}$ is the submonoid of $\overline M_{S,s} \oplus \mathbb N \alpha \oplus \mathbb N \beta / (\alpha + \beta = \delta)$ generated by $\overline M_{S,s}$ and $\alpha$.  But this is simply $\overline M_{S,s} \oplus \mathbb N \alpha$, so $y$ is indeed a marked point of a logarithmic curve.
\end{proof}

By the universal property of $\mathcal M^{\log}_{g_1, I_1 \cup \{\star\}}$, the logarithmic curve $Y$ induces a morphism $S \rightarrow \mathcal M^{\log}_{g_1, I_1 \cup \{\star\}}$.  Our construction is evidently functorial in logarithmic schemes over $Q$, so it induces the required morphism $Q \rightarrow \mathcal M^{\log}_{g_1, I_1 \cup \{\star\}}$.
\begin{definition}
We define the \emph{logarithmic clutching morphisms} to be the morphisms
\begin{gather*}
\delta^{log}_{g_1,I_1}: Q\longrightarrow \calM^{log}_{g, n} \\
\delta^{log}_{irr}: R \longrightarrow \calM^{log}_{g + 1,n} \ .
\end{gather*}
introduced in diagram \eqref{logclutch}.
\end{definition}

The next theorem shows that the logarithmic clutching morphisms commute with the tropicalization morphism introduced in Corollary \ref{cor:ext} and Theorem \ref{thm:strictsmooth}.
\begin{theorem}\label{thm_clutching}
\begin{enumerate}[(i)]
\item Let $\delta^{log}_{g_1,I_1}$ be the logarithmic clutching morphism, and $\delta^{\trop}_{g_1,I_1}$ the tropical clutching morphism for the same discrete data. There exists a tropicalization morphism
\begin{equation*}
\trop\colon Q \longrightarrow  a^\ast \big(\calM^{\trop}_{g_1, I_1 \cup \{\star\}} \times \calM^{\trop}_{g_2, I_2 \cup \{\bullet\}}  \times \mathbb R_{\geq 0}\big)
\end{equation*}
such that diagram~\eqref{eqn:Q-trop},  commutes:
\begin{equation}\label{eqn:Q-trop}\vcenter{
\xymatrix@C=4em{
\mathcal M^{\log}_{g_1, I_1 \cup \{ \star \}} \times \mathcal M^{\log}_{g_2, I_2 \cup \{ \bullet \}} \ar[d]_{\trop} & \ar[l] Q \ar[r]^{\delta^{log}_{g_1,I_1}} \ar[d]_{\trop} & \calM^{log}_{g, n} \ar[d]^{\trop}\\
a^\ast \bigl( \mathcal M^{\trop}_{g_1, I_1 \cup \{ \star \}} \times \mathcal M^{\trop}_{g_2,I_2 \cup \{\bullet\}} \bigr) & \ar[l] a^\ast \big(\calM^{\trop}_{g_1, I_1 \cup \{\star\}} \times \calM^{\trop}_{g_2, I_2 \cup \{\bullet\}}  \times \mathbb R_{\geq 0}\big) \ar[r]^-{a^\ast\delta^{\trop}_{g_1,I_1}} & a^\ast \calM^{\trop}_{g, n} 
}}
\end{equation}
\item Let $\delta_{irr}^{log}$ and $\delta_{irr}^{\trop}$ be the logarithmic and tropical self-gluing maps. There is a tropicalization morphism
\begin{equation*}
\trop\colon R\longrightarrow a^\ast\big(\calM_{g,[n]\cup\{\star,\bullet\}}^{\trop}\times \R_{\geq 0}\big)
\end{equation*}
that makes the analogous diagram commutative:
\begin{equation*} 
\xymatrix@C=4em{ 
\mathcal M_{g,[n]\cup\{\star,\bullet\}}^{\log} \ar[d]_{\trop} & \ar[l] R \ar[rr]^{\delta^{log}_{irr}} \ar[d]_{\trop} & & \calM^{log}_{g+1, n} \ar[d]^{\trop}\\
a^\ast \bigl( \mathcal M_{g,[n]\cup\{\star,\bullet\}}^{\trop} \bigr) & \ar[l] a^\ast \big(\calM^{\trop}_{g, [n] \cup \{\star,\bullet\}}   \times \mathbb R_{\geq 0}\big) \ar[rr]^{\hspace{1cm}a^\ast\delta^{\trop}_{irr}}& & a^\ast \calM^{\trop}_{g+1, n} 
}
\end{equation*}
\end{enumerate}
\label{thm:clutch}
\end{theorem}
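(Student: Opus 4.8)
The plan is to construct the middle tropicalization morphism $\trop\colon Q\to a^\ast\bigl(\calM^{\trop}_{g_1, I_1 \cup \{\star\}}\times\calM^{\trop}_{g_2, I_2 \cup \{\bullet\}}\times\R_{\geq0}\bigr)$ explicitly and then verify the two squares of \eqref{eqn:Q-trop} separately; part~(ii) will follow by the same argument. First I would build the morphism. Because $a^\ast$ is the pullback along a morphism of topoi (Remark~\ref{remark_a*}), it preserves finite products, so the target is $a^\ast\calM^{\trop}_{g_1, I_1 \cup \{\star\}}\times a^\ast\calM^{\trop}_{g_2, I_2 \cup \{\bullet\}}\times a^\ast\R_{\geq0}$ and it is enough to specify three coordinates. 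The first two are the composites of the morphisms $Q\to\calM^{\log}_{g_i,\ldots}$ constructed in Section~\ref{section_clutching} (which send a logarithmic curve $X/S$ to its genus-$g_i$ logarithmic subcurve $Y_i$) with the tropicalization morphism of Corollary~\ref{cor:ext}. The third uses the identification $a^\ast\R_{\geq0}=\mathcal A_{\R_{\geq0}}$ of Lemma~\ref{lemma_Artincone}, under which a map $Q\to a^\ast\R_{\geq0}$ is a global section of $\overline{M}_Q$; here I take the smoothing parameter $\delta\in\Gamma(Q,\overline{M}_Q)$ of the clutching node, the extra generator of $\overline{M}_Q$ coming from the pullback of $M_{\calM^{\log}_{g,n}}$ along the classical clutching map. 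These coordinates are natural in $S$, so they assemble into $\trop$.

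The left square commutes essentially by construction. Its bottom arrow is $a^\ast$ of a projection, hence simply forgets the $a^\ast\R_{\geq0}$-factor; so the middle-then-down composite returns the pair $(\trop Y_1,\trop Y_2)$, which is exactly the pair produced by the top arrow $X\mapsto(Y_1,Y_2)$ followed by $\trop\times\trop$.

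The right square is the essential point and reduces to the identity $\trop(X)=\delta^{\trop}_{g_1,I_1}(\trop Y_1,\trop Y_2,\delta)$ of morphisms $Q\to a^\ast\calM^{\trop}_{g,n}$. Via the isomorphism $a^\ast\calM^{\trop}_{g,n}\simeq\widetilde{\calM}^{\trop}_{g,n}$ of Section~\ref{sec:log-curves}, such a morphism is a family of tropical curves, determined by its geometric fibers together with the contractions attached to specializations (Definition~\ref{def:tropical-curve-log-base}); so I would match the two families fiberwise and over specializations. At a geometric point $s$ the left side is the dual tropical curve $\Gamma_{X_s}$ (Definition~\ref{def_tropicalization}) and the right side is $\delta^{\trop}_{g_1,I_1}((\Gamma_{Y_1})_s,(\Gamma_{Y_2})_s,\delta_s)$. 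Since $\underline X$ is the classical clutching of $\underline Y_1$ and $\underline Y_2$ along the node obtained by gluing the markings $\star$ and $\bullet$, the dual graph of $X_s$ is the union of those of $Y_{1,s}$ and $Y_{2,s}$ with a new edge $e$ joining the vertices that carried $\star$ and $\bullet$ (those two legs now removed), with unchanged vertex weights and $d(e)=\delta_s$ --- precisely the output of $\delta^{\trop}_{g_1,I_1}$. Compatibility over a specialization $t\leadsto s$ holds because on each side an edge is contracted exactly when its length generizes to $0$; moreover the edge $e$ is never contracted, as $\underline X$ carries the clutching node over all of $Q$ and hence $\delta$ is nowhere $0$.

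I expect the main obstacle to be the $2$-categorical bookkeeping in the right square (cf.\ Remark~\ref{rem:caution,caution}): the logarithmic structure of $Q$ is pulled back only along the \emph{non}-logarithmic clutching map and the maps $Q\to\calM^{\log}_{g_i,\ldots}$ are not strict, so I must exhibit the section $\delta$ canonically and check that the canonical identification ``dual graph of a clutched curve $=$ clutching of dual graphs'' supplies the required $2$-isomorphisms compatibly across the whole diagram, not merely on isomorphism classes. Part~(ii) is entirely parallel: one replaces $Y_1,Y_2$ by the single genus-$g$ logarithmic subcurve $Y$ of $X$ obtained by separating the two branches at the self-node, records $\delta$ as the $\R_{\geq0}$-coordinate, and verifies $\Gamma_{X_s}=\delta^{\trop}_{irr}((\Gamma_Y)_s,\delta_s)$, the new edge now joining the two vertices that carried $\star$ and $\bullet$ (possibly as a loop).
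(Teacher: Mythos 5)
Your proposal matches the paper's proof in all essentials: the paper likewise builds $\trop\colon Q\to a^\ast\bigl(\calM^{\trop}_{g_1,I_1\cup\{\star\}}\times\calM^{\trop}_{g_2,I_2\cup\{\bullet\}}\times\R_{\geq0}\bigr)$ coordinatewise, using the composites $Q\to\calM^{\log}_{g_i,\ldots}\to a^\ast\calM^{\trop}_{g_i,\ldots}$ for the first two factors and the globally well-defined smoothing parameter of the distinguished node, viewed as a section of $\overline M_Q$, for the $a^\ast\R_{\geq0}$-factor, then observes that the left square commutes by definition and verifies the right square on geometric points by the same cut-and-reglue comparison of dual graphs. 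Your extra care about specializations and the $2$-categorical identifications is a sound refinement of what the paper leaves implicit, not a different route.
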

\begin{proof}
We only consider $Q$ and leave the analogous proof for $R$ to the avid reader. First we construct the map
\begin{equation*}
\trop_{g,n}\colon Q \rightarrow a^\ast \mathcal M^{\trop}_{g_1, I_1 \cup \{\star\}}\times a^\ast \mathcal M^{\trop}_{g_2, I_2 \cup \{\bullet\}} \times a^\ast \mathbb R_{\geq 0} = a^\ast (\mathcal M^{\trop}_{g_1, I_1 \cup \{\star\}} \times \mathcal M^{\trop}_{g_2, I_2 \cup \{\bullet\}} \times \mathbb R_{\geq 0}) \ .
\end{equation*}
On the first two factors, we  use the composition
\begin{equation*}
Q \rightarrow \mathcal M^{\log}_{g_1, I_1 \cup \{\star\}} \times \mathcal M^{\log}_{g_2, I_2 \cup \{\bullet\}} \rightarrow a^\ast \mathcal M^{\trop}_{g_1, I_1 \cup \{\star\}} \times a^\ast\mathcal M^{\trop}_{g_2, I_2 \cup \{\bullet\}} .
\end{equation*}
For the final factor, note that by definition, a morphism from $Q$ into $a^\ast \mathbb R_{\geq 0}$ corresponds to a global section of $\overline M_Q$.  We choose the smoothing parameter for the node $y$ analyzed above.  Note that this is a well-defined global section of $\overline M_Q$, as the node $y$ is well-defined globally in $Q$. 

Now we observe that diagram~\eqref{eqn:Q-trop} commutes.
This holds by definition for the square on the left. To prove the commutativity of the square on the right, it is sufficient to consider geometric points.  If $s$ is a geometric point of $Q$ corresponding to a logarithmic curve $X$ by the upper horizontal arrow, then the image of $s$ in $a^\ast M^{\trop}_{g,n}$ is the dual graph of $X$.  Going around the other way, $s$ maps to the pair of dual graphs $\Gamma'$ and $\Gamma''$ obtained by deleting the edge corresponding to the distinguished node of $X$ and replacing it in each factor with a leg.  The last factor of the center vertical map records the length $\delta$ of the edge that was deleted.  The horizontal arrow on the lower right then assembles a graph $\Gamma$ by deleting the legs of $\Gamma'$ and $\Gamma''$ that were just added and replaces them with an edge of length $\delta$.  This is clearly the dual graph of $X$, so the diagram commutes.
\end{proof}

\subsection{Puncturing logarithmic curves}
\label{sec:puncturing}

In the classical theory of $\Mbar_{g,n}$, there is a canonical way to assign an $(n+1)$-pointed stable curve to the choice of a point on an $n$-pointed curve. In this section we show an analogous construction for logarithmic curves. We refer to this process as \emph{puncturing}, because that is the effect it has on the \emph{Kato--Nakayama space} of the logarithmic curve (as constructed in \cite{KatoNakayama_rounding}).

\begin{lemma} \label{lem:puncture}
Suppose that $\pi : X \rightarrow S$ is a logarithmic curve over $S$ and that $x : S \rightarrow X$ is a section (in the category of logarithmic schemes).  There exists a universal logarithmic modification $Y \rightarrow X$ such that $x$ factors through a section $y$ of $Y$ and the image of $y$ does not meet any of the nodes or marked points of $Y$.
\end{lemma}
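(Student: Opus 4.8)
The plan is to construct $Y$ as a logarithmic modification dictated by the tropical position of the section, namely the subdivision of the dual graph that inserts a new vertex exactly where the section sits and thereby separates it from the nodes and marked points. First I would translate the datum of the logarithmic section $x$ into tropical data. By Theorem~\ref{thm:log-curve-local}, \'etale-locally around the image of $x$ the curve $X$ has one of the three standard forms, and there is nothing to prove when the section lands in case (i), since it already avoids nodes and marked points. In case (ii) (a marked point, $\overline M_{X,x}\simeq\overline M_{S,\pi(x)}\oplus\mathbb N\upsilon$) or case (iii) (a node, $\overline M_{X,x}\simeq(\overline M_{S,\pi(x)}\oplus\mathbb N\alpha\oplus\mathbb N\beta)/(\alpha+\beta=\delta)$), the logarithmic section supplies a homomorphism $x^\ast\overline M_X\to\overline M_S$ which, through the local structure, records an element $d=x^\ast(\alpha)\in\overline M_S$ and, in the nodal case, a complementary $d'=x^\ast(\beta)$ with $d+d'=\delta$. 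By Proposition~\ref{prop:univ-curve} these are precisely the data of a section of $\Cone(\Gamma_{X_s})\to\sigma_s$, i.e.\ a point of the dual tropical curve lying at distance $d$ into the relevant edge or leg.

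Next I would build the subdivision. The section picks out, compatibly over all of $S$ (this compatibility being exactly the content of $x$ being a section of \emph{logarithmic} schemes rather than a pointwise choice), a ray in the cone $\Cone(e)=\sigma\times_{\R_{\geq 0}}\R^2_{\geq 0}$, or in $\Cone(l)=\sigma\times\R_{\geq 0}$ in the marked case. Subdividing $\Cone(e)$ along this ray inserts a new vertex into the tropical edge, splitting it into subedges of lengths $d$ and $d'$; this is a subdivision of the tropicalization of $X$ in the sense of cone stacks. I would then take $Y\to X$ to be the associated logarithmic modification, which by Kato's theory \cite{Kato_LogMod} exists, is proper and logarithmically \'etale, is an isomorphism away from the node or marked-point locus, and --- crucially for the statement --- is realized as a \emph{subfunctor} of the functor represented by $X$ on logarithmic schemes: concretely, $Y(T)\subseteq X(T)$ consists of those logarithmic maps $T\to X$ whose induced tropical data factor through the subdivided cone.

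The verification then runs as follows. On $Y$ the preimage of the node or marked point is the new vertex, corresponding to a smooth, unmarked component; the section $x$, whose tropical image now sits at a \emph{vertex}, lifts to a section $y\colon S\to Y$ meeting neither the two new nodes nor any marked point, since $d,d'\neq 0$ places the vertex strictly interior to the old edge (respectively, at finite distance along the old leg). For universality I would argue that any logarithmic modification $X'\to X$ through which $x$ factors as a section avoiding nodes and marked points must, on tropicalizations, refine the subdivision defining $Y$: the requirement that $x$ land at a vertex forces the inserted ray to be a face of the fan of $X'$, and our subdivision is the minimal one with this property. Hence $X'\to X$ factors uniquely through $Y\to X$, which is the asserted universal property.

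The main obstacle I anticipate is the global and functorial packaging of this subdivision. The section meets the node only over the stratum of $S$ where $\delta\neq 0$, and the tropical point $(d,d')$ varies with the stratification of $\overline M_S$, so I must check that the local subdivisions glue to a single logarithmic modification over all of $S$ and that the subfunctor description is stable under base change. This is exactly where the cone-stack formalism and the functor $a^\ast$ (Corollary~\ref{cor:a*}, Theorem~\ref{thm:artin-fans}) do the work: they let me perform the subdivision once on the Artin fan associated to $\trop_{g,n}(X)$ and pull it back, rather than gluing charts by hand. Confirming that this pullback agrees with the naive local construction and carries the claimed universal property is the technical heart of the argument.
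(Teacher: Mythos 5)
Your construction is correct, and at bottom it produces the same modification as the paper: subdividing $\Cone(e)$ (resp.\ $\Cone(l)$) along the graph of the section is exactly the paper's subfunctor of $X$ on which $\alpha$ is comparable to $\pi^\ast x^\ast\alpha$ and $\beta$ to $\pi^\ast x^\ast\beta$ (resp.\ $\upsilon$ comparable to $\pi^\ast x^\ast\upsilon$), in the three local cases of Theorem~\ref{thm:log-curve-local}. The difference is packaging. The paper never tropicalizes: it defines $Y$ chart by chart via these comparability conditions on sections of $\overline M_X$, and the ``technical heart'' you deferred then dissolves, because the conditions are symmetric in $\alpha,\beta$ (hence monodromy-invariant) and automatically hold away from the section's contact point --- at a smooth point one of $\alpha,\beta$ restricts to $0$ and the other to $\delta$, so the local subfunctors glue to $X$ elsewhere with no appeal to $a^\ast$, and the strata where $d$ or $d'$ generizes to $0$ (note $d,d'\neq 0$ is only guaranteed at geometric points where $x$ actually meets the node or marking) are handled uniformly. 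For the lift, in place of your ``section lands at a vertex,'' the paper uses $x^\ast\pi^\ast x^\ast\gamma = x^\ast\gamma$ to factor $x$ through the locus $\gamma=\pi^\ast x^\ast\gamma$, where $\overline M_Y^{\rm gp}=\pi^\ast\overline M_S^{\rm gp}$, so $y$ is smooth and unmarked. What your route buys is an explicit universality argument (any admissible modification must have the section's ray as a face of its subdivision), which the paper leaves implicit in the word ``subfunctor.'' One small slip: the new vertex is not the preimage of the node per se, but a new $2$-valent rational component of $Y$ carrying two nodes (resp.\ one node and the marking), through whose smooth unmarked locus $y$ passes.
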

\begin{proof}
We construct $Y$ as a subfunctor of $X$.  We give the construction locally near each geometric point of $X$ and then sheafify the result.  Over the complement of the image of $x$, we take $Y$ to coincide with $X$.  Near the image of $x$, we have the usual three possibilities, the latter two of which are illustrated in Figures~\ref{fig:through-mark} and~\ref{fig:through-node}:
\begin{enumerate}[(i)]
\item If $x(s)$ is a smooth point of $X_s$ then $x(t)$ is also in the smooth locus of $X_t$ for all $t$ in an \'etale neighborhood of $s$.  We take $Y = X$ in this neighborhood.
\item If $x(s)$ is a marked point of $X_s$ then $x(t)$ is either the same marked point or a smooth point for all $t$ in an \'etale neighborhood $U$ of $s$.  We have a canonical identification $\overline M_{X,x(s)} \simeq \overline M_{S,s} \oplus \mathbb N \upsilon$, and the section $\upsilon$ extends to an \'etale neighborhood of $x(s)$ inside $\pi^{-1} U$.  On this neighborhood, we take $Y$ to be the subfunctor of $X$ where the sections $\pi^\ast s^\ast \upsilon$ and $\upsilon$ are comparable (i.e., either $\pi^\ast s^\ast \upsilon - \upsilon \in \overline M_Y$ or $\upsilon - \pi^\ast s^\ast \upsilon \in \overline M_Y$).  We note that away from this marked point, $\upsilon$ restricts to $0$, so that $\pi^\ast s^\ast \upsilon$ and $\upsilon$ are already comparable in $X$, and therefore $Y$ agrees with $X$ away from the marked point.
\item If $x(s)$ is a node of $X_s$ then there is an \'etale neighborhood $U$ of $s$ in $S$ such that $x(t)$ is the same node of $X_t$ or is in the smooth locus of $X_t$.  We have an identification $\overline M_{X,x(s)} \simeq (\overline M_{S,s} \oplus \mathbb N\alpha \oplus \mathbb N\beta) / (\alpha + \beta = \delta)$ for some $\delta \in \overline M_{S,s}$, and this identification is unique up to exchange of $\alpha$ and $\beta$.  The sections $\alpha$ and $\beta$ extend to an \'etale neighborhood of $x(s)$ in $\pi^{-1} U$.  On this neighborhood, we take $Y$ to be the subfunctor of $X$ where $\alpha$ is comparable to $\pi^\ast s^\ast \alpha$ and $\beta$ is comparable to $\pi^\ast s^\ast \beta$.%
\footnote{In fact, $\alpha \leq \pi^\ast s^\ast \alpha$ if and only if $\beta \geq \pi^\ast s^\ast \beta$, and $\alpha \geq \pi^\ast s^\ast \alpha$ if and only if $\beta \leq \pi^\ast s^\ast \beta$, so that each of these conditions is equivalent to the other.  We include both so that the construction is manifestly unchanged by exchange of $\alpha$ and $\beta$.}  %
Now, note that if $x'$ is a smooth point of the same \'etale neighborhood then one of $\alpha$ or $\beta$ restricts to $0$ and the other restricts to $\delta$.  But the elements $\pi^\ast s^\ast \alpha$ and $\pi^\ast s^\ast \beta$ restrict to $s^\ast \alpha$ and $s^\ast \beta$ under the identification $\overline M_{X,x'} \simeq \overline M_{S,\pi(x')}$, and as $s^\ast \alpha + s^\ast \beta = s^\ast \delta = \delta$, we determine that the restrictions of $\pi^\ast s^\ast \alpha$ and $\pi^\ast s^\ast \beta$ are already comparable to $\alpha$ and $\beta$, respectively, at $x'$.  That is $Y$ agrees with $X$ away from the node.
\end{enumerate}

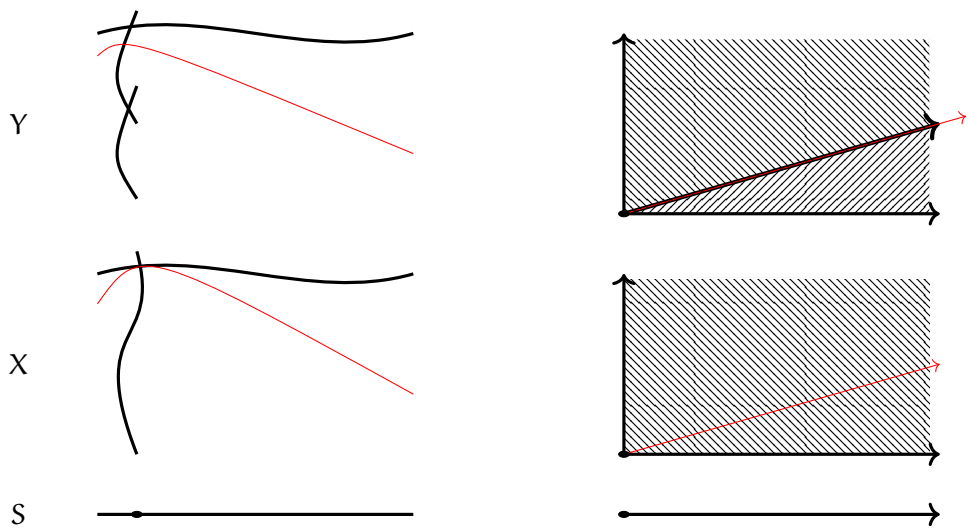
\begin{figure}
\begin{center}
\begin{tikzpicture}[xscale=.7, yscale=.4]
\draw[very thick, name path=mark2] (0,14) .. controls (2,15) and (4,13) .. (6,14);
\draw[very thick, name path=exceptional] (.75,14.75) .. controls (.25, 12.5) .. (.75,11);
\draw[very thick, name path=exceptional] (.75,12.25) .. controls (.25, 9.875) .. (.75,8.5);
\draw[red] (0,13.25) .. controls (.5,14) .. (6,10);

\node at (-1.5,11) {$Y$};

\draw[very thick, name path=mark] (0,6) .. controls (2,7) and (4,5) .. (6,6);
\draw[very thick, name path=fiber] (.75,6.75) .. controls (1.25,3.5) and (-.25,4.5) .. (.75,0);

\node at (-1.5,3) {$X$};

\draw[very thick] (0,-2) -- (6,-2);
\draw[fill=black] (.75,-2) circle (.1);
\node at (-1.5,-2) {$S$};

\path [name intersections={of=mark and fiber,by=X}];

\draw[red] (0,5) .. controls ($(X)+(0,.75)$)  .. (6,2);

\begin{scope}[shift={(10,0)}]
\draw[very thick,->] (0,8) -- (0,14);
\draw[very thick,->] (0,8) -- (6,8);
\draw[ultra thick,->] (0,8) -- (6,11);
\draw[->,red] (0,8) -- (6.5,11.25);

\fill[pattern=north west lines] (0,8) -- (5.8,10.9) -- (5.8,13.8) -- (0,13.8);
\fill[pattern=north east lines] (0,8) -- (5.8,10.9) -- (5.8,8);

\draw[very thick,->] (0,0) -- (6,0);
\draw[very thick,->] (0,0) -- (0,6);

\fill[pattern=north west lines] (0,0) -- (5.8,0) -- (5.8,5.8) -- (0,5.8);

\draw[red,->] (0,0) -- (6,3);

\draw[very thick,->] (0,-2) -- (6,-2);

\draw[fill=black] (0,0) circle (.1);
\draw[fill=black] (0,-2) circle (.1);
\draw[fill=black] (0,8) circle (.1);
\end{scope}
\end{tikzpicture}
\end{center}
\caption{On the left is a logarithmic curve $X$ over $S$ with a section passing through a marked point, along with the logarithmic modification $Y$ of $X$ where the section passes through the smooth locus.  On the right are the tropicalizations.}
\label{fig:through-mark}
\end{figure}

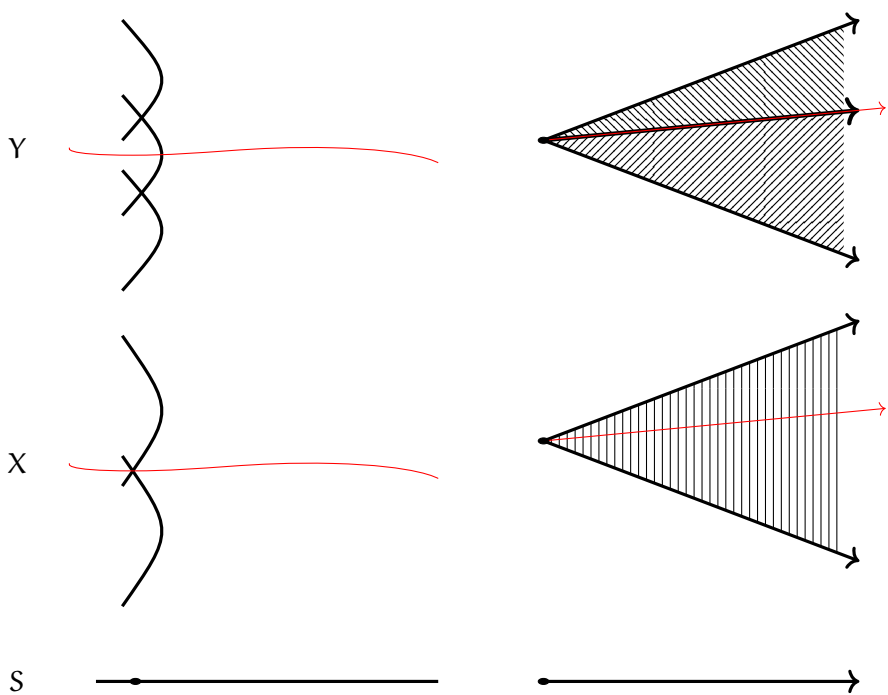
\begin{figure}
\begin{center}
\begin{tikzpicture}[xscale=.7, yscale=.4]
\draw[very thick,name path=A] (0,5.5) .. controls (1,3) .. (0,.5);
\draw[very thick,name path=B] (0,1.5) .. controls (1,-1) .. (0,-3.5);
\path [name intersections={of=A and B,by=X}];
\draw[red] plot [smooth,tension=1] coordinates { (-1,1.25) (X) (4,1.25) (6,.75) };
\node at (-2,1.25) {$X$};

\draw[very thick] (0,16) .. controls (1,14) .. (0,12);
\draw[very thick] (0,13.5) .. controls (1,11.5) .. (0,9.5);
\draw[very thick] (0,11) .. controls (1,9) .. (0,7);
\draw[red] plot [smooth,tension=1] coordinates { (-1,11.75) ($(X)+(0,10.5)$) (4,11.75) (6,11.25) };
\node at (-2, 11.75) {$Y$};

\node at (-2,-6) {$S$};
\draw[very thick] (-.5,-6) -- (6,-6);
\draw[fill=black] (.25,-6) circle (.1);

\begin{scope}[shift={(8,2)}]
\draw[very thick,->] (0,-8) -- (6,-8);
\draw[fill=black] (0,-8) circle (.1);

\draw[very thick,->] (0,0) -- (6,4);
\draw[very thick,->] (0,0) -- (6,-4);

\draw[very thick,->] (0,10) -- (6,14);
\draw[very thick,->] (0,10) -- (6,6);
\draw[ultra thick,->] (0,10) -- (6,11);

\fill[pattern=vertical lines] (0,0) -- (5.7,3.8) -- (5.7,-3.8);
\fill[pattern=north west lines] (0,10) -- (5.7,13.8) -- (5.7,11);
\fill[pattern=north east lines] (0,10) -- (5.7,6.2) -- (5.7,11);

\draw[red,->] (0,0) -- ($(6,1) + 1/12*(6,1)$);
\draw[red,->] (0,10) -- ($(6,11) + 1/12*(6,1)$);

\draw[fill=black] (0,0) circle (.1);
\draw[fill=black] (0,10) circle (.1);
\end{scope}
\end{tikzpicture}
\end{center}
\caption{The figure on the left shows a logarithmic curve $X$ over $S$ with a section passing through a node in red.  Above $X$ is the logarithmic modification $Y$ in which the unique lift of the section factors through the smooth locus.  On the right are the tropicalizations.}
 \label{fig:through-node}
\end{figure}

Since these modifications are all locally pulled back from modifications of families of toric curves, it is immediate that $Y$ is representable by a family of logarithmic curves over $S$.  It remains to lift $x$ to a section of $Y$ and verify it passes through the smooth locus.

Each of the local considerations above involves the imposition of order between local sections $\pi^\ast x^\ast \gamma$ and $\gamma$ of $\overline M_{X}$.  As $x^\ast \pi^\ast x^\ast \gamma = x^\ast \gamma$ and $x^\ast \gamma$ are certainly comparable in $\overline M_S$, the section $x : S \to X$ factors uniquely through a section $y : S \to Y$.  In fact, $y$ factors through the locus where $\pi^\ast x^\ast \gamma$ and $\gamma$ are equal; as $\gamma$ generates $\overline M_X^{\rm gp}$, and therefore also $\overline M_Y^{\rm gp}$, over $\overline M_S^{\rm gp}$ in this neighborhood, we conclude that $\overline M_Y^{\rm gp} = \pi^\ast \overline M_S^{\rm gp}$ near the image of $Y$.  That is, $y$ factors through the smooth locus of $Y$ over $S$.
\end{proof}

Using the lemma, we may puncture $Y$:  let $j : Y_0 \rightarrow Y$ be the inclusion of the complement of the section $y$, and let  $M_Y = \mathcal O_Y \mathbin\times_{j_\ast \mathcal O_{Y_0}} \mathcal O_{Y_0}^\ast$ be the logarithmic structure associated to the divisor $y(S)$ in the total space of $Y$.

\begin{remark}
The section $y$ is not a morphism of logarithmic schemes from $S$ to $Y$ when $Y$ is equipped with the logarithmic structure described above.  It can only be promoted to one if we replace the logarithmic structure of $S$ with $y^\ast M_Y$.  Tropically, this corresponds to the choice of a distance along the leg of the dual tropical curve that corresponds to $y$.
\end{remark}

\begin{remark}
The whole process can be repeated if we have more than one section of $X$, with one crucial difference that has the effect of modifying the base.  Suppose that $x_1$ and $x_2$ are two sections of $X$ over $S$.  We apply Lemma~\ref{lem:puncture} to obtain $Y \to X$ and a lift of $x_1$ to a logarithmic section $y_1$ of $Y$ over $S$ that does not meet any nodes or marked points of $Y$.  However, $x_2$ does not automatically lift to $Y$.  Indeed, $Y$ was constructed (locally) as the universal object in which $\pi^\ast x_1^\ast \alpha$ and $\alpha$ are comparable.  Thus $x_2 : S \to X$ will factor through $Y$ if, and only if, $x_2^\ast \pi^\ast x_1^\ast \alpha = x_1^\ast \alpha$ and $x_2^\ast \alpha$ are comparable.

In general, $x_1^\ast \alpha$ and $x_2^\ast \alpha$ need not be comparable in $S$, but there is a universal logarihtmic modification $\tilde S \to S$ in which they are.  Writing $\tilde Y$ for the base change of $Y$ to $\tilde S$, we can then apply Lemma~\ref{lem:puncture} to the section $y_2$ induced from $x_2$ and find a universal $Z \to \tilde Y$ with sections $z_1$ and $z_2$ lifting both $x_1$ and $x_2$.

One could approach this construction another way.  According to Lemma~\ref{lem:puncture}, there are modifications $Y_1$ and $Y_2$ of $X$ such that the section $x_i$ of $X$ lifts to a section $y_i$ of $Y_i$ that lies in its strict locus, relative to $S$.  One may attempt to find a common modification $Z = Y_1 \mathop\times_X Y_2$ of $X$, but $Y_{12}$ is not necessarily flat over $S$ and neither of the $y_i$ necessarily lifts to $Y_{12}$.  However, there is a modification $\tilde S \to S$ (in fact, a universal one) such that the base change $Z$ of $Y_{12}$ is flat over $\tilde S$ and the sections $y_1$ and $y_2$ lift to well-defined sections of $Y_{12}$ over $\tilde S$.  This is an example of (toroidal) flattening~\cite{RaynaudGruson,AbramovichKaru,Molcho}.
\end{remark}

\subsection{Tropicalizing the universal curve}

Let $\mathcal X_{g,n}^{\trop}$ denote the universal curve over $\mathcal M_{g,n}^{\trop}$ and let $\mathcal X_{g,n}^{\log}$ denote the universal curve over $\mathcal M_{g,n}^{\log}$.  In this section, we construct a commutative square:
\begin{equation*} \xymatrix{
\mathcal X_{g,n}^{\log} \ar[r]^-{\trop} \ar[d] & a^\ast \mathcal X_{g,n}^{\trop} \ar[d] \\
\mathcal M_{g,n}^{\log} \ar[r]^-{\trop_{g,n}} & a^\ast \mathcal M_{g,n}^{\trop}
}
\end{equation*}

In other words, for every logarithmic scheme $S$, and every logarithmic curve $X$ over $S$, we must give an $S$-morphism
\begin{equation*}
X \rightarrow S \mathbin\times_{a^\ast \mathcal M_{g,n}^{\trop}} a^\ast \mathcal X_{g,n}^{\trop} .
\end{equation*}
These are stacks, so it is sufficient to make the construction locally in $S$.  We can therefore assume that the map $S \rightarrow a^\ast \mathcal M_{g,n}^{\trop}$ factors through $a^\ast \sigma$ for some strict map $\sigma \rightarrow \mathcal M_{g,n}^{\trop}$.  With this reduction, Theorem~\ref{thm_universalcurve} informs us that we require the upper horizontal arrow of a commutative diagram
\begin{equation*} \xymatrix{
X \ar[r]^-f \ar[d] & a^\ast \Cone(\Gamma) \ar[d] \\
S \ar[r] & a^\ast \sigma
} \end{equation*}
where $\Gamma$ is the tropical curve associated to $\sigma \rightarrow \mathcal M_{g,n}^{\trop}$.  The map $f$ can be constructed \'etale-locally  in $X$, and the local construction is immediate from the definition of $\Cone(\Gamma)$ in Section~\ref{section_families} and the local description of logarithmic curves in and following Theorem~\ref{thm:log-curve-local}.

\subsection{Stabilization and forgetting markings}
\label{sec:forgetting}

Let $S$ be a logarithmic scheme.  Suppose that $2g - 2 + n > 0$ and that $X$ is a family of logarithmic curves over $S$ that has $n$ markings and fibers of genus $g$, but is not necessarily stable.  Then the underlying family of curves $\underline X$ over $\underline S$ has a stabilization $\tau : \underline X \rightarrow \underline Y$ over $\underline S$.  We give $\underline Y$ a logarithmic structure by defining $M_Y = \tau_\ast M_X$.

\begin{lemma} \label{lem:stab}
Let $X$ be a logarithmic curve over $S$, and let $\tau : \underline X \rightarrow \underline Y$ be a morphism that contracts an unstable, connected subcurve of $\underline X$ to a point of $\underline Y$.  Then $M_Y = \tau_\ast M_X$ is a logarithmic structure on $\underline Y$ and it makes $Y = (\underline Y, M_Y)$ into a logarithmic curve over $S$.  The tropicalization of $Y$ is the stabilization of the tropicalization of $X$. 
\end{lemma}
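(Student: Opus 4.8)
The plan is to verify the lemma locally on $\underline Y$, since being a logarithmic structure and being a logarithmic curve are both local conditions, and the stabilization $\tau$ is an isomorphism away from the contracted subcurve. Away from the image point $y \in \underline Y$ of the contracted subcurve, $\tau$ is an isomorphism, so $M_Y = \tau_\ast M_X$ agrees with $M_X$ and there is nothing to check. The entire content is therefore concentrated at the single point $y$, and the task is to compute the stalk $\overline M_{Y,y}$ and confirm it has one of the three shapes permitted by Kato's local structure theorem (Theorem~\ref{thm:log-curve-local}).

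The key steps I would carry out are as follows. First I would recall that the contracted subcurve $E = \tau^{-1}(y)$ is, by the definition of stabilization, a connected, unstable, arithmetic-genus-zero configuration of rational components meeting the rest of $\underline X$ in a single point (or carrying the legs that force destabilization); the stabilization collapses $E$ to the point $y$. I would then compute $\overline M_{Y,y} = \Gamma(E, \overline M_X)$, using that $\tau_\ast$ computes global sections of $\overline M_X$ along the fiber $E$. The crucial input is the ``piecewise-linear function'' interpretation of sections of $\overline M_X$ developed in the remarks following Definition~\ref{def_tropicalization}: a section of $\overline M_X$ over $E$ is a piecewise linear function on the dual graph of $E$, with integer slopes along edges, taking values in $\overline M_S$, and pinned to the correct boundary behavior at the attaching node and at any marked legs. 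Because $E$ has genus zero and is connected, such a function is determined by its value and slopes at the one point where $E$ meets the stable part (respectively at the surviving leg), so $\Gamma(E,\overline M_X)$ reduces precisely to $\overline M_{S,s}$, to $\overline M_{S,s}\oplus\N$, or to the node monoid $(\overline M_{S,s}\oplus\N\alpha\oplus\N\beta)/(\alpha+\beta=\delta)$, matching cases (i), (ii), (iii) of Theorem~\ref{thm:log-curve-local}. This simultaneously shows $M_Y$ is a coherent logarithmic structure and that $Y$ is logarithmically smooth, hence a logarithmic curve; properness and connectedness of $Y$ are inherited from $X$ since $\tau$ is proper with connected fibers.

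For the final sentence, that the tropicalization of $Y$ is the stabilization of the tropicalization of $X$, I would argue directly on dual graphs. The dual graph $\GG(\Gamma_Y)$ is obtained from $\GG(\Gamma_X)$ by deleting the genus-zero subtree corresponding to $E$ and either merging the two surviving flags into a single edge (whose length is the sum of the two edge lengths, as the computation of $\delta = \alpha+\beta$ above records) or transferring the surviving leg, exactly matching the weighted edge-contraction/stabilization procedure of the tropical forgetful morphism in Definition~\ref{def:forgetfulmorph}. The genus-zero condition on the contracted components guarantees the vertex weights are preserved, so the two operations agree on the nose.

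The main obstacle I anticipate is the computation of the pushforward stalk $\overline M_{Y,y} = \Gamma(E,\overline M_X)$ in full rigor, particularly ensuring that higher cohomological contributions or monodromy around the configuration $E$ do not perturb the answer, and that the identification $\tau_\ast M_X = M_Y$ is genuinely the \emph{divisorial} or expected logarithmic structure rather than some larger non-saturated object. Controlling this requires careful use of the genus-zero, tree-like structure of $E$ together with the integrality and saturatedness hypotheses on logarithmic curves, so that the global-sections monoid comes out fine and saturated of the correct rank. Once this stalk computation is pinned down, the remaining verifications are formal.
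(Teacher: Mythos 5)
Your overall architecture---reduce to the stalk at the image point $y$, interpret sections of $\overline M_X$ over the fiber as piecewise linear functions on its dual graph, and match against the three local shapes of Theorem~\ref{thm:log-curve-local}---is the same as the paper's, but there is a genuine gap at the central step, and it is exactly the point you flagged as an anticipated obstacle and then dismissed. You compute $\overline M_{Y,y}$ as $\Gamma(\tau^{-1}y,\overline M_X)$, which implicitly assumes that pushforward commutes with passing to the characteristic monoid, i.e.\ that $\overline{\tau_\ast M_X}=\tau_\ast\overline M_X$. This fails, and the failure is not a technicality. Pushing the exact sequence $0\to\mathcal O_X^\ast\to M_X\to\overline M_X\to 0$ forward along $\tau$ and restricting to the fiber (which requires knowing that formation of $\tau_\ast M_X$ commutes with base change; the paper cites \cite[Lemma~B.4]{AbramovichMarcusWise_comparison} for this) yields an exact sequence $0\to\mathcal O_{Y,y}^\ast\to M_{Y,y}\to\tau_\ast\overline M_{X_y}\to\Pic(\tau^{-1}y)$, and $\Pic(\tau^{-1}y)\cong\Z^V$ is \emph{nonzero} for a tree of rational curves (line bundles are classified by multidegree). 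Hence $\overline M_{Y,y}$ is not the full monoid of piecewise linear functions on the dual graph of the fiber, but only the kernel of the boundary map to $\Pic$, which records the sum of outgoing slopes at each interior vertex: the stalk consists of exactly those piecewise linear functions that are \emph{linear} (balanced) on the interior.

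This matters because without the balancing constraint your ``determined by boundary data'' claim is false and the resulting monoid is too big. For instance, contract a single $\mathbb P^1$ meeting the rest of the curve at one node with smoothing parameter $\delta$: arbitrary piecewise linear functions are pairs of values $(a,b)\in\overline M_{S,s}^2$ at the external and interior vertices with $b-a\in\Z\delta$, a monoid strictly larger than $\overline M_{S,s}$; only after imposing zero slope at the interior vertex do you recover $\overline M_{Y,y}=\overline M_{S,s}$, as case~(i) requires. So the ``higher cohomological contribution'' you hoped the tree structure would kill is $R^1\tau_\ast\mathcal O_X^\ast$, it does not vanish, and it is precisely what cuts the answer down to the correct stalk---your appeal to genus zero is backwards: the obstruction is essential rather than absent. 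Two smaller omissions: you assert but do not prove that $\tau_\ast M_X$ is a logarithmic structure (the paper deduces this from $\tau_\ast\mathcal O_X=\mathcal O_Y$ and a diagram chase showing $\varepsilon^{-1}\mathcal O_Y^\ast\to\mathcal O_Y^\ast$ is bijective), and in case~(iii) the contracted subcurve meets the rest of $\underline X$ at \emph{two} points, not one. With the kernel-of-$\Pic$ correction in place, your case analysis and your dual-graph argument for the tropicalization statement go through as in the paper.
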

\begin{proof}
First we verify that $M_Y$ is a logarithmic structure on $Y$.  We note that $\tau_\ast \mathcal O_X = \mathcal O_Y$ since $\tau$ is proper with integral fibers and reduced codomain, and that therefore $\tau_\ast \mathcal O_X^\ast = \mathcal O_Y^\ast$.  The map 
\begin{equation*}
M_Y = \tau_\ast M_X \rightarrow \tau_\ast \mathcal O_X = \mathcal O_Y
\end{equation*}
makes $M_Y$ into a prelogarithmic structure.  To show it is a logarithmic structure, we need to verify that $\varepsilon^{-1} \mathcal O_Y^\ast \rightarrow \mathcal O_Y^\ast$ is a bijection.  We have a commutative diagram:
\begin{equation*} \xymatrix{
\varepsilon^{-1} \mathcal O_Y^\ast \ar[r] \ar[d] & \varepsilon^{-1} \tau_\ast \mathcal O_X^\ast \ar[d] \ar[r] &  \tau_\ast \varepsilon^{-1} \mathcal O_X^\ast \ar[dl] \\
\mathcal O_Y^\ast \ar[r] & \tau_\ast \mathcal O_X^\ast
} \end{equation*}
Thus the horizontal arrows on the left side of the diagram are isomorphisms.  The horizontal arrow on the right side of the diagram is an isomorphism because $\tau_\ast$ is left exact.  The diagonal arrow is an isomorphism because $M_X$ is a logarithmic structure, so we deduce that the vertical arrow on the left is an isomorphism, as required.

Now we check that $Y$ is a logarithmic curve.  Since $Y$ is proper over $S$ with connected geometric fibers, we only need to analyze the local structure at a geometric point $y$ of $Y$ lying above a geometric point $s$ of $S$.  If the fiber is a point there is nothing to show.  The other possibilities are:\footnote{It is sufficient, by induction, to assume that the $\tau^{-1} y$ is an irreducible rational curve, which simplifies the subsequent argument somewhat.  However, we feel that the argument, while more complicated, is instructive concerning the relationship between logarithmic geometry and tropical geometry.  We are confident that a reader who prefers the inductive argument can make the necessary substitutions and attendant simplifications.}
\begin{enumerate}[(i)]
\item $y$ is a smooth point of $\underline Y$ and the fiber is a tree of rational curves with no marked points;
\item $y$ is a marked point of $\underline Y$, in which case the fiber is a tree of rational curves with one marked point;
\item $y$ is a node and the fiber is a tree of rational curves with two marked points.
\end{enumerate}
In each case, we can compute using the exact sequence
\begin{equation*}
0 \rightarrow \mathcal O_X^\ast \rightarrow M_X \rightarrow \overline M_X \rightarrow 0 .
\end{equation*}
By \cite[Lemma~B.4]{AbramovichMarcusWise_comparison}, the formation of $\tau_\ast M_X$ commutes with base change.  Restricting to the fiber over $y$ and pushing forward via $\tau$, we get another exact sequence:
\begin{equation*}
0 \rightarrow \mathcal O_{Y,y}^\ast \rightarrow M_{Y,y} \rightarrow \tau_\ast \overline M_{X_y} \rightarrow \Pic(\tau^{-1} y) 
\end{equation*}
We can identify $\Pic(\tau^{-1} y)$ with $\mathbb Z^V$, where $V$ is the set of vertices of the dual graph of $\tau^{-1} y$, since line bundles on trees of rational curves are determined up to isomorphism by their multidegrees.

Let $s$ be the image of $y$ in $S$ and let $\Gamma$ be the dual graph of $\tau^{-1} y$.  By this, we mean that $\Gamma$ has 
\begin{enumerate}[(a)]
\item one vertex for each component of $\tau^{-1} y$;
\item one vertex for each node of $X_s$ joining $\tau^{-1} y$ to the rest of $X_s$ (note that if $\tau^{-1} y$ is joined to a component $Z$ of $X_s$ by two different nodes then we add \emph{two} nodes to $\Gamma$);\footnote{In actual fact, we should have a dual graph with legs of finite length in this case.  Rather than go through the effort necessary to define such a thing, we add vertices at the ends of these finite length edges.}
\item one leg for each marked point of $\tau^{-1} y$;
\item two vertices are connected by an edge if the corresponding components of $X_s$ are connected by a node.
\end{enumerate}
This graph inherits a metric valued in $\overline M_{S,s}$ from the dual graph of $X_s$. 

We can identify $\tau_\ast \overline M_{X_y}$ with the piecewise linear functions on $\Gamma$, valued in $\overline M_{S,s}$, that are linear with integer slopes along the edges.  The map
\begin{equation} \label{eqn:boundary}
\tau_\ast \overline M_{X_y} \rightarrow \Pic(\tau^{-1} y) \simeq \mathbb Z^V
\end{equation}
sends a piecewise linear function to the sum of the outgoing slopes at each vertex.  The kernel therefore consists of the piecewise linear functions that are \emph{linear} on the interior of the dual graph of $\tau^{-1} y$ where, by convention, linearity at a vertex means that the sum of the outgoing slopes is zero, and the interior consists of those vertices that correspond to components of $\tau^{-1} y$.

\begin{enumerate}[(i)]
\item If $y$ is a smooth, unmarked point of $\underline Y$ then $\tau^{-1} y$ has only one external vertex and no legs, so it supports no non-constant linear functions.  Therefore the kernel of~\eqref{eqn:boundary} is $\overline M_{S,s}$.  We deduce that $\overline M_{Y,y} = \overline M_{S,s}$, and therefore that $M_{Y,y}$ is the pullback of $M_{S,s}$ in this case.

\item If $y$ is a marked point, then $\tau^{-1} y$ has one external vertex and one leg.  This means that a piecewise linear function on $\tau^{-1} y$ that is linear on the interior is determined by its value on the external vertex and the outgoing slope on the leg (which must be a non-negative integer).  Thus $\overline M_{Y,y} = \overline M_{S,s} \oplus \mathbb N$.

\item If $y$ is a node then $\tau^{-1} y$ has two external vertices and no legs, so an element of $\overline M_{Y,y}$ is determined uniquely by the two values at the external vertices.  These values are arbitrary, provided their difference is a multiple of the distance, measured along the dual graph of $\tau^{-1} y$, between the two internal vertices.  This allows us to identify the fiber of 
\begin{equation*}
\overline M_{Y,y} = \big\{ (\rho, \sigma) \in \overline M_{S,s} \times \overline M_{S,s} \: \big| \: \rho - \sigma \in \mathbb Z \delta \big\}
\end{equation*}
where $\delta$ is the length of the path connecting the two external vertices of $\tau^{-1} y$.  This is precisely the form (iii$'$) of Section~\ref{sec:log-curves-defs}.
\end{enumerate}

In each of the three cases, we get the appropriate local structure of a logarithmic curve (see Section~\ref{sec:log-curves-defs} above), as required.
\end{proof}

The following theorem is a consequence of Lemma \ref{lem:puncture} and Lemma \ref{lem:stab}. 

\begin{theorem} \label{thm:twodiag}
Denote by $\mathcal X_{g,n}^{\log}$ the universal curve over $\mathcal M_{g,n}^{\log}$. We have the following commutative diagram:
\begin{equation*}
\xymatrix{
\mathcal X_{g,n}^{\log} \ar[r]^-{\sim} \ar[d]_{\trop} & \mathcal M_{g,n+1}^{\log} \ar[r] \ar[d]^{\trop_{g,n+1}} & \mathcal M_{g,n}^{\log} \ar[d]^{\trop} \\
a^\ast \mathcal X_{g,n}^{\trop} \ar[r]^-{\sim} & a^\ast \mathcal M_{g,n+1}^{\trop_{g,n}} \ar[r] & a^\ast \mathcal M_{g,n}^{\trop}
}
\end{equation*}
\end{theorem}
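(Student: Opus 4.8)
The plan is to verify the commutativity of the two squares separately, in each case reducing to the explicit combinatorial description of tropicalization on geometric points, where a logarithmic curve $X$ over a logarithmic point $S$ is sent to its dual tropical curve $\Gamma_X$. Since $\mathcal M_{g,n}^{\log}$, $\mathcal M_{g,n+1}^{\log}$, $a^\ast\mathcal M_{g,n}^{\trop}$ and $a^\ast\mathcal M_{g,n+1}^{\trop}$ are all stacks locally of finite presentation, and a commutative diagram of such stacks is pinned down by its $2$-isomorphisms (Remark~\ref{rem:caution,caution}), it suffices to produce the requisite natural isomorphisms on $S$-points for arbitrary logarithmic $S$ and then to check, after localizing and passing to geometric points, that the two circuits around each square induce the same operation on dual tropical curves.

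For the left-hand square, recall that the top isomorphism $\mathcal X_{g,n}^{\log}\xrightarrow{\sim}\mathcal M_{g,n+1}^{\log}$ sends a logarithmic curve $X/S$ together with a section $p\colon S\to X$ to the $(n+1)$-marked logarithmic curve produced by the puncturing construction of Lemma~\ref{lem:puncture}: one first forms the universal logarithmic modification $Y\to X$ through whose smooth locus $p$ factors, and then marks the image of $p$. The bottom isomorphism is $a^\ast$ applied to the isomorphism $\mathcal X_{g,n}^{\trop}\cong\mathcal M_{g,n+1}^{\trop}$ of Theorem~\ref{thm_universalcurve}, under which, for a cone $\sigma$ carrying a tropical curve $\Gamma$, a section of $\Cone(\Gamma)$ over $\sigma$ is classified by the data of Proposition~\ref{prop:univ-curve}(i)--(iii) (a vertex, a point on a leg, or a point on an edge). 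The key point is that the logarithmic modification $Y\to X$ tropicalizes to exactly the subdivision of $\Cone(\Gamma_X)$ underlying that classification: as depicted in Figures~\ref{fig:through-mark} and~\ref{fig:through-node}, puncturing through a marked point subdivides the corresponding leg, puncturing through a node subdivides the corresponding edge, and puncturing at a smooth point introduces a new vertex. Combining this with the commutative square built in the subsection \emph{Tropicalizing the universal curve} (which compares $\trop$ with the universal-curve structure maps) shows that the two isomorphisms intertwine the tropicalization maps, so the left square commutes.

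For the right-hand square, the horizontal maps are the forgetful morphisms, obtained by forgetting the $(n+1)$-st marking and stabilizing. Given an $(n+1)$-marked logarithmic curve $\widetilde X/S$, forgetting $x_{n+1}$ yields a possibly unstable $n$-marked logarithmic curve whose stabilization $X/S$ is furnished by Lemma~\ref{lem:stab}. The final assertion of that lemma is precisely that the dual tropical curve of $X$ is the stabilization of the dual tropical curve of $\widetilde X$; this matches, case by case, the combinatorial recipe of the tropical forgetful morphism $\pi_{g,n+1}^{\trop}$ of Definition~\ref{def:forgetfulmorph}, in which one deletes the leg $l_{n+1}$ and then stabilizes the resulting weighted graph (cases (a)--(c)). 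Hence tropicalizing and then forgetting agrees with forgetting and then tropicalizing, and the right square commutes.

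The main obstacle is the bookkeeping in the stabilization step: Lemma~\ref{lem:stab} contracts an entire connected unstable subcurve---a tree of rational curves---in a single stroke, whereas Definition~\ref{def:forgetfulmorph} stabilizes the dual graph one vertex at a time, so one must check that the two procedures agree in every degenerate configuration (a rational tail carrying the forgotten leg, a rational bridge, or a longer chain). Here one exploits the description, in the proof of Lemma~\ref{lem:stab}, of $\tau_\ast\overline M_{X_y}$ as the monoid of piecewise-linear functions on the dual graph of the contracted fiber, whose kernel computes the characteristic monoid of the stabilization; tracing the three local cases (i)--(iii) there shows that the induced contraction of dual graphs is exactly the iterated weighted edge-contraction of the tropical forgetful map, including the convention that edges of length zero are contracted. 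Finally, once the left square and the outer rectangle are known, the right square follows formally (and conversely): the outer rectangle is nothing but the compatibility of $\trop$ with the universal-curve structure map established in the preceding subsection, so it in fact suffices to verify either one of the two squares together with that rectangle.
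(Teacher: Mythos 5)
Your proposal is correct and follows essentially the same route as the paper: both squares are checked on geometric points, the left one via the puncturing construction of Lemma~\ref{lem:puncture}, whose three local cases (smooth point, marked point, node) are matched against the universal-curve data of Theorem~\ref{thm_universalcurve}, and the right one via the final assertion of Lemma~\ref{lem:stab}. Your worry about reconciling one-stroke contraction of unstable trees with vertex-by-vertex stabilization is already absorbed by Lemma~\ref{lem:stab} (whose proof handles whole trees of rational curves via the piecewise-linear-function description of $\tau_\ast\overline M_{X_y}$), and is in any case mild here since forgetting a single marking destabilizes at most one component; your closing observation that the right square follows formally from the left square together with the outer rectangle is a small valid shortcut the paper does not exploit.
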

We observe that Theorem \ref{thm:twodiag} contains several statements: the commutativity of the left square shows that in the logarithmic category the universal curve over $\calM_{g,n}^{log}$ may be identified with the forgetful  morphism $\pi^{log}_{g,n+1}$, and that this identification commutes with tropicalization; the commutativity of the right square asserts that the forgetful morphisms commute with tropicalization.
\begin{proof}
The universal curve $\mathcal X_{g,n}^{\log}$ over $\mathcal M_{g,n}^{\log}$ is an example of  a logarithmic curve with a section. The construction following Lemma \ref{lem:puncture} gives a morphism of logarithmic moduli problems:
\begin{equation*}
\mathcal X_{g,n}^{\log} \rightarrow \mathcal M_{g,n+1}^{\log}.
\end{equation*}
The first part of Lemma \ref{lem:stab}  defines the morphism $\pi^{log}_{g,n+1}$, by forgetting the last marked point and stabilizing the result; this allows to view $\calM^{log}_{g,n+1}$ as a curve over $\calM^{log}_{g,n}$ with a section (namely the image in the stabilization of the marked point that is forgotten) and this constructs the inverse morphism 
\begin{equation*}
\mathcal M_{g,n+1}^{\log} \rightarrow \mathcal X_{g,n}^{\log}.
\end{equation*}
The last sentence in Lemma \ref{lem:stab} implies the commutativity of the right square.

For the commutativity of the diagram on the left, it is sufficient to consider geometric points.  We analyze the logarithmic curve $Y$ obtained by puncturing a logarithmic curve $X$ over a geometric point $S$ at a section $x$:
\begin{enumerate}[(i)]
\item If $x$ lies in the smooth locus of the underlying curve of $X$ then puncturing simply replaces the logarithmic structure  of $X$ with one having relative rank~$1$ at $x$.  The tropicalization of $Y$ is therefore obtained from $X$, by attaching a leg to the vertex whose corresponding component of $X$ contains the image of $x$.
\item If $x$ lies at a marked point of the underlying curve of $X$ then the canonical section $\alpha$ of $\overline M_{X,x}$ pulls back under $\overline M_{X,x} \rightarrow \overline M_{S}$ to a section $d$ of $\overline M_S$.  Modifying $X$ so that $\alpha$ is comparable to $d$ subdivides the leg corresponding to $\alpha$ at a distance $d$ from the vertex to which it is attached.  Then we add a leg to the new vertex, as in the previous case.
\item Finally, if $x$ lies at a node of the underlying curve of $X$ then $\overline M_{X,x}$ contains two generators $\alpha_1$ and $\alpha_2$ that sum to an element $d \in \overline M_S$.  These map, under $\overline M_{X,x} \rightarrow \overline M_S$ to elements $d_1$ and $d_2$ of $\overline M_S$ such that $d_1 + d_2 = d$.  Modifying $X$ to make $d_i$ comparable to $\alpha_i$ has the effect of subdividing the edge into two edges, of lengths $d_1$ and $d_2$.  We then add a leg to the new vertex, as in the first case.
\end{enumerate}
In all cases, the tropical curve we get is the same as the one constructed in the proof of Theorem~\ref{thm_universalcurve}, and the proof is complete.
\end{proof}

\appendix

\section{\'Etale specialization}
\label{sec:etale-specialization}

Let $S$ be a scheme. By definition a \emph{geometric point} of $S$ is a point of the small \'etale site.  Naively, a geometric point is a morphism from the spectrum of an algebraically closed field to $S$.  However, we wish to identify two geometric points $\Spec K \rightarrow S$ and $\Spec L \rightarrow S$ when the latter is induced from a field extension $K \subseteq L$.  This can be done rather efficiently by observing that a field extension $K \subseteq L$ induces an equivalence of \'etale sites $\et(\Spec K) \rightarrow \et(\Spec L)$ by pullback (both are equivalent to the category of sets).  This observation justifies our definition of a geometric point $S$ as a functor $s^\ast : \et(S) \rightarrow \mathbf{Sets}$ that is induced from a morphism $s : \Spec K \rightarrow S$, where $K$ is the spectrum of an algebraically closed field.

The geometric points of $S$ form a category in which the morphisms are natural transformations.  By convention, the direction of these transformations is opposite the usual direction for functors $\et(S) \rightarrow \mathbf{Sets}$.  That is, a \emph{specialization} from a geometric point $t$ of $S$ to a geometric point $s$ of $S$ is a natural transformation $s^\ast \rightarrow t^\ast$.  A specialization of $t$ to $s$ is denoted $t \leadsto s$.

It is worth noting that if $U$ is \'etale over $S$ and $s$ is a geometric point of $S$ then $s^\ast U$ is simply the fiber of $U$ over $s$.  If $t \leadsto s$ and $\varphi : s^\ast \rightarrow t^\ast$ is the corresponding natural transformation then $\varphi_U : s^\ast U \rightarrow t^\ast U$ takes points of the fiber over $s$ to points of the fiber over $t$.  Now suppose that $U$ is an \'etale neighborhood of $s$, so that we have a distinguished $s' \in s^\ast U$.  Then $\varphi_U(s')$ is a distinguished element of $t^\ast U$.  In other words, a specialization from $t$ to $s$ is a choice of point in $t^\ast U$ for every \'etale neighborhood $U$ of $s$.

Passing to the limit over all \'etale neighborhoods of $s$, we obtain the strict henselization $\mathcal O_{S,s}$ of $S$ at $s$.  As we have just seen, a specialization $t \leadsto s$ gives a factorization of $t \rightarrow S$ through each \'etale neighborhood of $s$ in $S$, so it gives a factorization through $\Spec \mathcal O_{S,s}$.  Conversely, it is straightforward to reverse the reasoning and see that each such map gives an \'etale specialization.

Every \'etale specialization $t \leadsto s$ induces a Zariski specialization.  Indeed, if $t$ lifts to every \'etale neighborhood of $s$ then the image of $t$ in $S$ is contained in every open neighborhood of $s$, since \'etale maps are open.  However, a geometric point can specialize to another in more than one way.  

\begin{example}
For example, let $S$ be an irreducible nodal curve.  In the Zariski topology there is just one specialization 
from the generic point of $S$ to the node.
However, let $s$ be a geometric point of $S$ situated at the node and let $t$ be a geometric point situated at the generic point.  If $U$ is any sufficiently small \'etale neighborhood of $s$ then $U$ is reducible, with two points above $t$.  Therefore there are two specializations $t \leadsto s$.
\end{example}




\bibliographystyle{amsalpha}
\bibliography{biblio}{}

\end{document}